\newtheorem{definition}{Definition}
\newtheorem{lemma}{Lemma}
\newtheorem{proposition}{Proposition}
\newtheorem{remark}{Remark}
\newtheorem{theorem}{Theorem}
\newtheorem{corollary}{Corollary}
\newcommand{\enstq}[2]{\left\{#1\mathrel{}\middle|\mathrel{}#2\right\}}
\newcommand{\norm}[1]{\left\|#1\right\|}
\newcommand{\N}{\mathbb{N}}
\newcommand{\Z}{\mathbb{Z}}
\newcommand{\R}{\mathbb{R}}
\newcommand{\duality}[2]{\left\langle #1,#2\right\rangle}
\newcommand{\inner}[2]{\left( #1,#2\right)}
\newcommand{\abs}[1]{\left\lvert #1 \right\rvert}
\newcommand{\Cinf}{C^{\infty}}
\newcommand{\isdef}{\mathrel{\mathop:}=}
\newcommand{\opFromTo}[3]{#1 : #2 \longrightarrow #3}
\newcommand\reallywidecheck[1]{%
	\savestack{\tmpbox}{\stretchto{%
			\scaleto{%
				\scalerel*[\widthof{\ensuremath{#1}}]{\kern-.6pt\bigwedge\kern-.6pt}%
				{\rule[-\textheight/2]{1ex}{\textheight}}
			}{\textheight}%
		}{0.5ex}}%
	\stackon[1pt]{#1}{\scalebox{-1}{\tmpbox}}%
}
\newcommand\conj[1]{\overline{#1}}
\title{Pseudo-differential analysis of the Helmholtz layer potentials on open curves} 
\date{}
\author{Martin Averseng \thanks{Centre de Mathématiques Appliquées (UMR 7641), Ecole Polytechnique, Route de Saclay, 91128  PALAISEAU Cedex.}}
\begin{document}

\maketitle

\begin{abstract}
	We introduce two new classes of pseudo-differential operators on open curves. They correspond via a change of variables to subclasses of the periodic  pseudo-differential operators, which respectively stabilize even and odd functions. The resulting symbolic calculus can be applied to the analysis of the Helmholtz weighted layer potentials on open curves. In particular, we build some low order parametrices of the layer potentials which take the form of square roots of tangential operators. This gives some foundation for the construction of efficient preconditioners for the Helmholtz scattering problem by a screen in 2D. 
\end{abstract}

\section*{Introduction}

The Helmholtz scattering Dirichlet and Neumann problems by a 1-dimensional screen in a 2D context can be recast into first-kind integral equations involving two boundary integral operators on an open curve $\Gamma$, namely the single-layer potential $S_k$ and the hypersingular operator $N_k$. More precisely, some ``weighted" versions of those layer potentials, first introduced in \cite{bruno2012second}, are considered here: 
\[S_{k,\omega}\phi := S_k \left(\frac{\phi}{\omega}\right)\,, \quad N_{k,\omega} \phi := N_k(\omega\phi)\]
where $\omega$ is a ``weight" defined on the curve $\Gamma$.  
In this work, we introduce a theoretical framework to analyze those operators in terms of pseudo-differential properties. To this end two classes of pseudo-differential operators are defined, containing $S_{k,\omega}$ and $N_{k,\omega}$ and allowing to see them as operators of order $-1$ and $1$ respectively. The symbolic calculus available in those classes allows to build some simple low-order parametrices for $S_{k,\omega}$ and $N_{k,\omega}$. This provides the theoretical foundation for the preconditioning strategy exposed in \cite{alougesAverseng}. Although the behavior of those preconditioners is not fully explained, we believe that the work proposed here gives convincing arguments for their practical efficiency.

The pseudo-differential analysis presented here differs significantly from classical works dealing with pseudo-differential operators on singular manifolds \cite{melrose,schulze1,schulze2}, which rely on Mellin transforms. One notable exception is \cite[Chap. 11]{saranen2013periodic}, where the analysis of the layer potentials on open curves is brought back to the analysis of periodic pseudo-differential operators (see e.g. \cite{turunen1998symbol}) using a change of variables. Parametrices of those operators are derived and discretized using truncation of Fourier series. This method is very well suited for discretization with trigonometric polynomials. 

In contrast, here we describe a way to bring the analysis ``back to the original curve" and the parametrices are defined intrinsically, involving only tangential differential operators defined on the curve. The resulting operators can thus be easily discretized by any standard numerical method (such as piecewise linear polynomials in \cite{alougesAverseng}). The main difficulty throughout is that the inverse change of variables is singular. This brings some unusual behavior, as the non-uniqueness of the symbol for instance. 

The outline is as follows. In the first section, we study two interpolating scales of Hilbert spaces $T^s$ and $U^s$ introduced in \cite{alougesAverseng} that play an analogous role as Sobolev spaces in standard pseudo-differential theory. The new classes of pseudo-differential operators on open curves, respectively in the scales $T^s$ and $U^s$ are introduced in the second section. In the third section, we introduce the operators $S_{k,\omega}$ and $N_{k,\omega}$ and give some properties needed for the analysis. The low-order parametrices for $k \neq 0$ are studied in the last section.  

Throughout all this article, the letter $C$ denotes a generic constant in estimates of the form $ a \leq C b$. Its value may change from line to line, but is independent of the relevant parameters defining $a$ and $b$.

\section{Spaces \texorpdfstring{$T^s$ and $U^s$}{Ts and Us}}

\subsection{Sobolev spaces of periodic even and odd functions}

We consider the torus $\mathbb{T}_{2\pi} = \mathbb{R}/2\pi \mathbb{Z}$, and denote by $L^2_{per}$ the set of square integral functions on $\mathbb{T}_{2\pi}$. This is a Hilbert space for the scalar product
\[\inner{u}{v}_{L^2_{per}} \isdef \frac{1}{2\pi}\int_{-\pi}^{\pi} u(\theta) \conj{v(\theta)} d\theta\,.\]
Any function in $u \in L^2_{per}$ can be expanded in Fourier series
\[u = \sum_{n \in \mathbb{Z}} \mathcal{F}u(n) e_n \quad  \textup{in } L^2_{per}\]
where $e_n : \theta \mapsto e^{in\theta}$. The coefficients $\mathcal{F}u(n)$ are obtained by orthogonal projection $\mathcal{F}u(n) =(u,e_n)_{L^2_{per}}.$ For all $s \geq 0$, the Sobolev space $H^s_{per}$ is the set of functions $u\in L^2_{per}$ that satisfy
\[\norm{u}_{H^s_{per}}^2 \isdef \sum_{n \in \Z} (1 + n^2)^{s} \abs{\mathcal{F}u(n)}^2 < +\infty\,.\]
This is a Hilbert space, and we denote by $\inner{\cdot}{\cdot}_{H^s_{per}}$ its scalar product. For all $s \geq 0$, a function $u \in H^{s}_{per}$ is also interpreted as a distribution on the torus through the natural identification
\[\forall v \in C^\infty(\mathbb{T}_{2\pi}), \quad u(v) \isdef \frac{1}{2\pi}\int_{-\pi}^{\pi} u(\theta) v(\theta) d \theta\,.\]
This way, one can generalize the definition of $H^s_{per}$ to all real $s$ as the set of distributions $u$ such that the coefficients $\mathcal{F}u(n) \isdef u(\overline{e_n})$
satisfy ${\norm{u}_{H^s_{per}} < +\infty}$.
For all real $s$, $H^s_{per}$ can be decomposed into the direct sum $H^s_{per} = H^s_e \oplus H^s_o$
where $H^s_e$ (resp. $H^s_o$) is the set of even (resp. odd) functions in $H^s_{per}$. By definition, a distribution $u$ is even (resp. odd) if
\[\forall n \in \Z, \quad \mathcal{F}u(n) = \mathcal{F}u(-n) \quad \left(\textup{resp. } \mathcal{F}u(n) = - \mathcal{F}u(-n)\right)\,.\]
We take the notation $L^2_e \isdef H^0_e$ and $L^2_o = H^0_o$. Clearly, the families 
$${\left(n \mapsto \cos(n\theta)\right)_{n \geq 0}} \quad \textup{and} \quad {\left(n \mapsto \sin((n+1)\theta)\right)_{n \geq 0}}$$ 
provide Hilbert basis respectively of $L^2_e$ and $L^2_o$. We denote
\[H^{\infty}_{per} = \bigcap_{s \geq 0} H^{s}_{per}, \quad H^{-\infty}_{per} = \bigcup_{s \leq 0} H^{s}_{per}\]
and similarly for $H^{\infty}_e, H^{\infty}_o, H^{-\infty}_e$ and $H^{-\infty}_o$. 

\subsection{Definition of $T^s$ and $U^s$}

\label{sec:analyticalSetting}

Let $\omega$ be defined by 
\[\omega(x) = \sqrt{1 - x^2}\]
and let 
\[L^2_\frac{1}{\omega} = \enstq{u \in L^1_{loc}(-1,1)}{ \int_{-1}^1 \frac{\abs{u(x)}^2dx}{\sqrt{1 - x^2}} < + \infty}\,,\]
$$L^2_{\omega} \isdef \enstq{u \in L^1_\textup{loc}(-1,1)} {\int_{-1}^{1} {\abs{u(x)}^2}{\sqrt{1 - x^2} }dx< + \infty}.$$
Following the notations of \cite{mclean2000strongly}, we denote the Banach duality products of $L^2_\frac{1}{\omega}$ and $L^2_\omega$ respectively by $\duality{\cdot}{\cdot}_\frac{1}{\omega}$ and $\duality{\cdot}{\cdot}_\omega$ and the inner products respectively by $\inner{\cdot}{\cdot}_\frac{1}{\omega}$ and $\inner{\cdot}{\cdot}_\omega$. We take the normalization as
\[\inner{u}{v}_{\frac{1}{\omega}} = \duality{u}{\overline v}_\frac{1}{\omega} \isdef \frac{1}{\pi}\int_{-1}^{1} \frac{u(x) \overline{v(x)}}{\omega(x)}dx\,,\]
\[\inner{u}{v}_{\omega} = \duality{u}{\overline v}_{\omega} \isdef \frac{1}{\pi}\int_{-1}^{1} {u(x) \overline{v(x)}}{\omega(x)}dx\,.\]
We further denote $\norm{u}_{\frac{1}{\omega}}^2 \isdef\inner{u}{u}_{\frac{1}{\omega}}^2$ and $\norm{u}_{{\omega}}^2 \isdef\inner{u}{u}_{{\omega}}^2$. To $u \in L^2_\frac{1}{\omega}$ and $v \in L^2_\omega$, one can associate two functions $\mathcal{C}u$ and $\mathcal{S}v$ respectively in $L^2_e$ and $L^2_o$ by 
\[\mathcal{C}u(\theta) \isdef u(\cos\theta), \quad \mathcal{S}u(\theta) \isdef \sin\theta u(\cos\theta)\,.\]
Using the change of variables $x = \cos\theta$, one can check that the mappings 
\[\mathcal{C} : L^2_\frac{1}{\omega} \to L^2_{e}\,, \quad \mathcal{S} : L^2_{\omega} \to L^2_{o}\]
are isometric. Let us now introduce the Chebyshev polynomials of first and second kind (see e.g. \cite{mason2002chebyshev})
\[\forall x \in [-1,1], \quad T_n(x) = \cos(n\arccos(x)), \quad U_n(x) = \frac{\sin((n+1)\arccos(x))}{\sqrt{1 - x^2}}\,.\]
Both $T_n$ and $U_n$ are polynomials of degree $n$. Since 
\[\mathcal{C}T_n(\theta) = \cos(n\theta),\quad \mathcal{S}U_n(\theta) = \sin((n+1)\theta)\,,\]
the families $(T_n)_{n \geq 0}$ and $(U_n)_{n \geq 0}$ provide Hilbert basis of $L^2_\frac{1}{\omega}$ and $L^2_\omega$ respectively. Consequently, any functions $u \in L^2_\frac{1}{\omega}$, $v \in L^2_\omega$ can be expanded in Fourier-Chebyshev series of first and second kind respectively:
\[u = \sum_{n \geq 0} \hat{u}_n T_n \textup{ in } L^2_\frac{1}{\omega}\,, \quad v = \sum_{n \geq 0}{\check{v}_n} U_n \textup{ in } L^2_\omega \]
where the coefficients $\hat{u}_n$ and $\check{v_n}$ are obtained by orthogonal projection
\[\hat{u}_n = \frac{1}{\pi} \int_{-1}^{1} \frac{u(x) T_n(x)}{\omega(x)}dx, \quad \check{v}_n = \frac{1}{\pi}\int_{-1}^{1}v(x) U_n(x)\omega(x)\,dx\,.\]
Notice the difference in the accentuation for the coefficients in the series of first and second kind. One  has
\[\frac{1}{\pi} \int_{-1}^1 \frac{T_n(x) T_m(x)\,dx}{\omega(x)} = \begin{cases}
1 & \textup{if } n = m = 0\,, \\
\frac{1}{2} &\textup{if } n = m \neq 0\,,\\
0 & \textup{if } n \neq m\,,
\end{cases}\,\]
\[\frac{1}{\pi} \int_{-1}^1 U_n(x) U_m(x) \omega(x)\,dx = \begin{cases} \frac{1}{2} & \textup{if } n= m\,,\\
0 & \textup{if } n \neq m\,.
\end{cases}\]
The Parseval identity is transported to
\[\frac{1}{\pi}\int_{-1}^{1} \frac{\abs{u(x)}^2}{\omega(x)} = |\hat{u}_0|^2 + \frac{1}{2}\sum_{n = 1}^{+\infty} \abs{\hat{u}_n}^2, \quad \frac{1}{\pi}\int_{-1}^{1} {\abs{v(x)}^2}{\omega(x)} = \frac{1}{2}\sum_{n = 0}^{+ \infty} \abs{\check{v}_n}^2\,.\]

\begin{definition}
	For all $s \in \R$, we define $T^s$ as the set of formal series 
	\[u = \sum_{n = 0}^{+ \infty} \hat{u}_n T_n\]
	such that
	\[\norm{u}_{T^s}^2 \isdef \abs{\hat{u}_0}^2 + \frac{1}{2}\sum_{ n = 0}^{+ \infty} (1 + n^2)^s\abs{\hat{u}_n}^2 < + \infty\,.\]
	Similarly, $U^s$ is the set of formal series
	\[u = \sum_{n = 0}^{+ \infty} \check{u}_n U_n\]
	such that
	\[\norm{u}_{U^s}^2 \isdef \frac{1}{2}\sum_{ n = 0}^{+ \infty} (1 + n^2)^s\abs{\check{u}_n}^2 < + \infty\,.\]
	The scalar products 
	\[\inner{u}{v}_{T^s} \isdef \hat{u}_0 \conj{\hat{v}_0} + \frac{1}{2}\sum_{n \geq 0} (1 + n^2)^s \hat{u}_n \conj{\hat{v}_n}, \quad \inner{u}{v}_{U^s} \isdef \frac{1}{2}\sum_{n \geq 0} (1 + n^2)^s \check{v}_n \conj{\check{v}_n}\]
	endow $T^s$ and $U^s$ with a structure of Hilbert space for all $s$. For $s \geq 0$, the series defining elements of $T^s$ and $U^s$ are convergent in $L^2_\frac{1}{\omega}$ and $L^2_\omega$ respectively. Thus, $T^s$ and $U^s$ are naturally identified to subspaces of $L^2_\frac{1}{\omega}$ and $L^2_\omega$ respectively. Let $T^{\infty} = \cap_{s \in \R} T^s$ and $U^{\infty} = \cap_{s \in \R} U^s$. To $u \in T^s$, $v \in U^s$, one can associate the linear forms denoted by $\duality{u}{\cdot}_{\frac{1}{\omega}}$ and $\duality{v}{\cdot}_\omega$ and defined by
	\[\forall \varphi \in T^{\infty}, \quad \duality{u}{\varphi}_\frac{1}{\omega} \isdef \hat{u}_0 \hat{\varphi_0} + \frac{1}{2}\sum_{n = 0}^{+ \infty}\hat{u}_n\hat{\varphi}_n\,,\]
	\[\forall \varphi \in U^{\infty}, \quad \duality{v}{\varphi}_\omega \isdef \frac{1}{2}\sum_{n = 0}^{+ \infty}\check{v}_n \check{\varphi}_n\,.\]
	For $s \geq 0$, those linear forms coincide with
	\[\forall \varphi \in T^{\infty}, \quad \duality{u}{\varphi}_\frac{1}{\omega} \isdef \frac{1}{\pi}\int_{-1}^{1} \frac{u(x)\varphi(x)\,dx}{\omega(x)}\,,\]
	\[\forall \varphi \in U^{\infty}, \quad  \duality{v}{\varphi}_\omega = \frac{1}{\pi}\int_{-1}^{1}{v(x)\varphi(x)}{\omega(x)}\,dx \,\]
	justifying the notation. 
	For all $s$, the duals of $T^s$ and $U^s$ are the sets of linear forms
	\[\varphi \mapsto \duality{u}{\varphi}_\frac{1}{\omega}, \quad \varphi \mapsto \duality{v}{\varphi}_\omega\]
	respectively, where $u \in T^{-s}$, $v \in U^{-s}$. Finally, let 
	$$ T^{-\infty} = \displaystyle\bigcup_{s \in \R} T^s\,, \quad U^{-\infty} = \displaystyle\bigcup_{s \in \R} U^s\,.$$
\end{definition}
\noindent The spaces $T^s$ and $U^s$ thus introduced correspond to those of \cite{alougesAverseng}. 

\subsection{Basic properties}

Let $s_1,s_2 \in _R$, $\theta \in (0,1)$ and let $s = \theta s_1 + (1-\theta)s_2$. It is easy to check that
\[\forall u \in T^\infty, \quad \norm{u}_{T^s} \leq \norm{u}_{T^{s_1}}^\theta \norm{u}_{T^{s_2}}^{1 - \theta}\]
and 
\[\forall u \in U^\infty, \quad \norm{u}_{U^s} \leq \norm{u}_{U^{s_1}}^\theta \norm{u}_{U^{s_2}}^{1 - \theta}\,.\]
Therefore, 
\begin{lemma}
	$(T^s)_{s \in \R}$ and $(U^s)_{s \in \R}$ are exact interpolation scales. 
\end{lemma}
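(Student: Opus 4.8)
The plan is to reduce the statement to the interpolation of weighted sequence spaces and then invoke the theory of Hilbert scales. Via the (isometric) coefficient maps $u \mapsto (\hat u_n)_{n\ge 0}$ and $v \mapsto (\check v_n)_{n\ge 0}$, each $T^s$ and each $U^s$ is a weighted $\ell^2$ space: writing $\lambda_n \isdef 1 + n^2$ and fixing the $s$-independent weights $w_0 = \tfrac32$, $w_n = \tfrac12$ for $n\ge 1$ in the first scale and $w_n = \tfrac12$ for $n \ge 0$ in the second, one has $\norm{u}_{T^s}^2 = \sum_n w_n \lambda_n^s \abs{\hat u_n}^2$ and $\norm{v}_{U^s}^2 = \sum_n w_n \lambda_n^s \abs{\check v_n}^2$. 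The decisive feature — already reflected in the log-convexity inequalities stated just above — is that the weight is log-linear in $s$: for $s = \theta s_1 + (1-\theta)s_2$ one has $\lambda_n^s = (\lambda_n^{s_1})^\theta (\lambda_n^{s_2})^{1-\theta}$ while $w_n$ is unchanged. Note also $\lambda_n \ge 1$ and $\lambda_0 = 1$, so the anomalous weight of the zeroth coefficient in the first scale is harmless.

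Next I would recast each scale as being generated by a single positive self-adjoint operator. Let $\Lambda$ be the diagonal operator defined by $\Lambda T_n = \lambda_n T_n$, self-adjoint on $T^0$ equipped with its own inner product $\inner{\cdot}{\cdot}_{T^0}$ (and analogously $\Lambda U_n = \lambda_n U_n$ on $U^0$). Then $\Lambda \ge I$ and $\norm{u}_{T^s} = \norm{\Lambda^{s/2}u}_{T^0}$ for every $s$, so $(T^s)_s$ and $(U^s)_s$ are Hilbert scales. Recalling that \emph{exact} interpolation scale means that every operator $A$ bounded on both $T^{s_1}$ and $T^{s_2}$, with norms $M_1, M_2$, is bounded on $T^s$ with $\norm{A}_{T^s\to T^s} \le M_1^\theta M_2^{1-\theta}$ (constant exactly $1$), the log-convexity of the \emph{norms} is the scalar shadow of this full operator statement, which is what actually needs proving.

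To get the sharp constant I would run Stein's three-lines argument rather than the raw norm inequalities. On the dense subspace of finitely supported coefficient sequences, set $s(z) = z s_1 + (1-z)s_2$ and consider the analytic family $B(z) = \Lambda^{s(z)/2} A \Lambda^{-s(z)/2}$ on $T^0$. Since $\Lambda^{it}$ is unitary on $T^0$ (its eigenvalues $\lambda_n^{it}$ have modulus one) and commutes with every power of $\Lambda$, a short computation gives $\norm{B(it)}_{T^0\to T^0} = \norm{A}_{T^{s_2}\to T^{s_2}} = M_2$ and $\norm{B(1+it)}_{T^0\to T^0} = \norm{A}_{T^{s_1}\to T^{s_1}} = M_1$ for all real $t$. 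Applying the three-lines theorem to $z \mapsto \inner{B(z)u}{v}_{T^0}$ then yields $\norm{B(\theta)}_{T^0\to T^0} \le M_1^\theta M_2^{1-\theta}$, which is exactly $\norm{A}_{T^s\to T^s} \le M_1^\theta M_2^{1-\theta}$; the argument for $(U^s)_s$ is verbatim the same. Alternatively, one may simply quote that a scale of weighted $\ell^2$ spaces is an exact complex-interpolation scale, e.g. from the interpolation appendix of \cite{mclean2000strongly}.

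The main obstacle is not the idea but the functional-analytic bookkeeping that makes the three-lines step rigorous: $\Lambda$ is unbounded, so $B(z)$ is a priori only defined on a domain, and one must first establish the estimates on finitely supported sequences — where $z \mapsto B(z)u$ is entire, $T^0$-valued and uniformly bounded on the strip — and then pass to the limit by density, using $\Lambda \ge I$ to control the fractional powers uniformly. The remaining ingredients (the isometry of the coefficient maps, the log-linearity of the weight, and the unitarity of $\Lambda^{it}$) are routine and are in fact already encapsulated in the log-convexity inequalities preceding the statement.
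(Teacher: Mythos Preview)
Your proposal is correct and in fact more detailed than the paper's own argument. The paper simply records the log-convexity inequalities $\norm{u}_{T^s} \le \norm{u}_{T^{s_1}}^\theta \norm{u}_{T^{s_2}}^{1-\theta}$ (and the analogous one for $U^s$) immediately before the lemma and writes ``Therefore'', treating those inequalities as the content of the statement for a Hilbert scale. You, by contrast, make explicit the passage from the scalar inequality to the operator interpolation bound with constant~$1$: you identify the generating operator $\Lambda$, observe that $\norm{\cdot}_{T^s} = \norm{\Lambda^{s/2}\cdot}_{T^0}$, and then either run Stein's three-lines argument or cite the weighted-$\ell^2$ interpolation result in \cite{mclean2000strongly}. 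This is the standard way to upgrade log-convexity of norms to an exact interpolation property for a Hilbert scale, and your bookkeeping (the $s$-independent weight $w_0=\tfrac32$ coming from $\lambda_0=1$, the unitarity of $\Lambda^{it}$, the density/finite-support reduction) is handled correctly.

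In short: same underlying idea, but you supply the justification the paper leaves implicit. The paper's brevity is defensible because the equivalence ``log-convex Hilbert scale $\Rightarrow$ exact interpolation scale'' is classical; your version has the advantage of being self-contained.
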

\begin{lemma}
	$C^{\infty}([-1,1])$ is dense in $T^s$ and $U^s$ for all $s$
\end{lemma}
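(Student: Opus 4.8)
The plan is to reduce the statement to two facts: that finitely supported Chebyshev expansions are dense in the weighted sequence spaces defining $T^s$ and $U^s$, and that every $f\in C^\infty([-1,1])$ in fact belongs to $T^\infty$ and to $U^\infty$, so that $C^\infty([-1,1])$ sits inside every $T^s$ and $U^s$ while containing a dense subset.

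First I would settle the density of polynomials. Since $(T_n)_{n\geq 0}$ and $(U_n)_{n\geq 0}$ are polynomials of degree $n$, their linear span is the whole space of polynomials. For $u=\sum_n\hat u_nT_n\in T^s$, the partial sums $u_N=\sum_{n=0}^N\hat u_nT_n$ are polynomials, and $\norm{u-u_N}_{T^s}^2$ is the tail of the convergent series defining $\norm{u}_{T^s}^2$, hence tends to $0$; the identical argument works verbatim in $U^s$. Thus the polynomials are dense in $T^s$ and in $U^s$ for every $s$.

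Second, and this is the only substantial point, I would show $C^\infty([-1,1])\subseteq T^\infty$ and $C^\infty([-1,1])\subseteq U^\infty$. This both makes the statement meaningful for $s<0$ (where $T^s,U^s$ are spaces of formal series rather than of functions) and shows that $C^\infty([-1,1])$ is a subset of every $T^s$ and $U^s$. Here I would use the isometries $\mathcal{C}$ and $\mathcal{S}$ in the ``good'' direction: extending $f\in C^\infty([-1,1])$ to $\tilde f\in C^\infty(\R)$, the function $\mathcal{C}f(\theta)=\tilde f(\cos\theta)$ is smooth, $2\pi$-periodic and even, while $\mathcal{S}f(\theta)=\sin\theta\,\tilde f(\cos\theta)$ is smooth, $2\pi$-periodic and odd. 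By the classical rapid decay of the Fourier coefficients of smooth periodic functions, the cosine coefficients of $\mathcal{C}f$ and the sine coefficients of $\mathcal{S}f$ decay faster than any power of $n$; by the identities $\mathcal{C}T_n(\theta)=\cos(n\theta)$ and $\mathcal{S}U_n(\theta)=\sin((n+1)\theta)$ together with the isometry property, these coefficients are exactly $\hat f_n$ and $\check f_n$. Hence $\norm{f}_{T^s}<+\infty$ and $\norm{f}_{U^s}<+\infty$ for all $s$.

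Combining the two steps finishes the proof: the polynomials form a dense subset of $T^s$ (resp.\ $U^s$) that is contained in $C^\infty([-1,1])\subseteq T^s$ (resp.\ $U^s$), so $C^\infty([-1,1])$ is itself dense. The main obstacle is the second step, and more precisely the verification that $\mathcal{C}f$ and $\mathcal{S}f$ stay smooth across the endpoints $x=\pm1$, i.e.\ at $\theta\in\pi\Z$. This requires some care, but it is harmless because we only ever compose smooth maps ($f$ with $\cos$, times $\sin$); the singular direction is the inverse change of variables $\theta=\arccos x$, which we deliberately never invoke. It is worth stressing that the reverse inclusion --- that every smooth even (resp.\ odd) periodic function arises from a $C^\infty([-1,1])$ function --- is genuinely delicate (it is essentially Whitney's theorem on even functions) and is not needed here.
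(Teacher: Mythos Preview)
Your proof is correct, and your first step is exactly the paper's proof: truncation of the Chebyshev series gives polynomial approximants converging in $T^s$ (resp.\ $U^s$). The paper stops there, treating the inclusion $C^\infty([-1,1])\subset T^s$ (resp.\ $U^s$) as understood. Your second step --- verifying via $\mathcal{C}$ and $\mathcal{S}$ that smooth functions have rapidly decaying Chebyshev coefficients --- is a legitimate addition that the paper only establishes later (in the lemma $T^\infty=U^\infty=C^\infty([-1,1])$), so you are being more careful about the order of presentation without changing the underlying argument.
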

\begin{proof}
	Any $u \in T^s$ (resp. $v \in U^s$) is the limit in $T^s$ (resp. $U^s$) of the sequence of polynomials
	\[u_N = \sum_{n = 0}^N \hat{u}_n T_n, \quad \textup{(resp. } v_N = \sum_{n = 0}^N \check{v}_n U_n)\,. \qedhere\]
\end{proof}
In view of the previous result, the maps $\mathcal{C}$ and $\mathcal{S}$ can be continuously extended respectively to the whole $T^{-\infty}$ and $U^{-\infty}$ by the definitions
\begin{equation}
\label{defCS}
\mathcal{C} T_n = (\theta \mapsto \cos(n\theta)), \quad \mathcal{S}U_n = (\theta \mapsto \sin((n+1) \theta))\,.
\end{equation}
\begin{lemma}
	\label{lemChar}
	For all $s$, $\mathcal{C}$ and $\mathcal{S}$ induce the following isomorphisms:
	\[\mathcal{C}: T^s \to H^s_e, \quad \mathcal{S}: U^s \to H^s_o\,.\]
\end{lemma}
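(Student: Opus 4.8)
The plan is to reduce the statement to an equivalence of norms, exploiting that $\mathcal{C}$ and $\mathcal{S}$ send the Chebyshev bases onto the trigonometric bases via \eqref{defCS}. First I would expand the even cosines and odd sines in the exponential basis $(e_m)_{m \in \Z}$: namely $\cos(n\theta) = \frac{1}{2}(e_n + e_{-n})$ for $n \geq 1$, $\cos(0\cdot\theta) = e_0$, and $\sin((n+1)\theta) = \frac{1}{2i}(e_{n+1} - e_{-(n+1)})$. Applying $\mathcal{C}$ termwise to $u = \sum_{n \geq 0}\hat{u}_n T_n$ then yields $\mathcal{F}(\mathcal{C}u)(0) = \hat{u}_0$ and $\mathcal{F}(\mathcal{C}u)(\pm n) = \frac{1}{2}\hat{u}_n$ for $n \geq 1$, so that $\mathcal{C}u$ is indeed even; similarly, for $v = \sum_{n \geq 0}\check{v}_n U_n$ the only nonzero Fourier coefficients of $\mathcal{S}v$ are $\mathcal{F}(\mathcal{S}v)(\pm(n+1)) = \pm\frac{1}{2i}\check{v}_n$, so $\mathcal{S}v$ is odd.

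Next I would read off the periodic Sobolev norms from these coefficients by the definition of $\norm{\cdot}_{H^s_{per}}$. In the cosine case a direct Parseval computation gives $\norm{\mathcal{C}u}_{H^s_{per}}^2 = \abs{\hat{u}_0}^2 + \frac{1}{2}\sum_{n \geq 1}(1+n^2)^s\abs{\hat{u}_n}^2$, which differs from $\norm{u}_{T^s}^2$ only through the coefficient of $\abs{\hat{u}_0}^2$ ($1$ instead of $\frac{3}{2}$). Since $\frac{3}{2}\abs{\hat{u}_0}^2 \leq \norm{u}_{T^s}^2$, this gives the two-sided bound $\frac{2}{3}\norm{u}_{T^s}^2 \leq \norm{\mathcal{C}u}_{H^s_{per}}^2 \leq \norm{u}_{T^s}^2$, hence $\mathcal{C}$ and its inverse are bounded between $T^s$ and $H^s_e$. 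In the sine case, reindexing by $m = n+1$ produces $\norm{\mathcal{S}v}_{H^s_{per}}^2 = \frac{1}{2}\sum_{n \geq 0}(1+(n+1)^2)^s\abs{\check{v}_n}^2$, to be compared with $\norm{v}_{U^s}^2 = \frac{1}{2}\sum_{n \geq 0}(1+n^2)^s\abs{\check{v}_n}^2$.

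The only point demanding care is this index shift. I would show there exist constants $0 < c \leq C$, depending only on $s$, such that $c(1+n^2)^s \leq (1+(n+1)^2)^s \leq C(1+n^2)^s$ for every $n \geq 0$; this follows because the ratio $(1+(n+1)^2)/(1+n^2) = 1 + (2n+1)/(1+n^2)$ is bounded above (by $\frac{5}{2}$, attained at $n=1$) and below (by $1$) uniformly in $n$, so raising it to the power $s$ of either sign keeps it between two positive constants. Summation then yields $c\norm{v}_{U^s}^2 \leq \norm{\mathcal{S}v}_{H^s_{per}}^2 \leq C\norm{v}_{U^s}^2$, so $\mathcal{S}$ and its inverse are bounded between $U^s$ and $H^s_o$.

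Finally, surjectivity onto the even, resp. odd, subspace is immediate from the basis correspondence \eqref{defCS}: any element of $H^s_e$ expands as a cosine series $\sum_{n \geq 0} a_n \cos(n\theta)$ and any element of $H^s_o$ as a sine series $\sum_{n \geq 1} b_n \sin(n\theta)$, and reading off the coefficients produces the unique preimage in $T^s$, resp. $U^s$. Combined with the two-sided norm bounds, this shows that $\mathcal{C} : T^s \to H^s_e$ and $\mathcal{S} : U^s \to H^s_o$ are isomorphisms for every real $s$. The main obstacle is purely the bookkeeping of the zeroth Chebyshev coefficient for $\mathcal{C}$ and of the index shift $n \mapsto n+1$ for $\mathcal{S}$; once these are handled, the result is a direct consequence of Parseval's identity in $L^2_{per}$.
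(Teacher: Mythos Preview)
Your proof is correct. The paper states this lemma without proof, treating it as an immediate consequence of the definitions \eqref{defCS} together with the Parseval identity. Your argument supplies exactly the routine verification the paper omits: expressing the Fourier coefficients of $\mathcal{C}u$ and $\mathcal{S}v$ in terms of $(\hat u_n)$ and $(\check v_n)$, and then checking the two-sided norm equivalence. The two minor bookkeeping points you flag---the extra $\tfrac12|\hat u_0|^2$ coming from the fact that the $T^s$-sum in the paper starts at $n=0$, and the index shift $n\mapsto n+1$ for $\mathcal{S}$---are handled correctly, and your bound $1\le (1+(n+1)^2)/(1+n^2)\le 5/2$ is sharp. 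There is nothing to add.
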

\begin{corollary}
	For all $s' > s$, the following inclusions are compact:
	\[T^{s'} \subset T^s \quad \textup{and} \quad U^{s'} \subset U^s\,.\]
\end{corollary}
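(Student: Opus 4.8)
The plan is to realize the inclusion $T^{s'}\hookrightarrow T^s$ as the operator-norm limit of finite-rank operators, which is the standard criterion for compactness between Hilbert spaces. The argument is diagonal in the Chebyshev basis, so the two cases ($T$ and $U$) are identical and I would treat only $T$, transcribing the proof verbatim for $U$ with $\check u_n,U_n$ in place of $\hat u_n,T_n$.

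First I would record that, by the very definition of the norms, for $u=\sum_{n\geq0}\hat u_n T_n$ one has $\norm{u}_{T^\sigma}^2=\sum_{n\geq0}w_n^{(\sigma)}\abs{\hat u_n}^2$ with $w_0^{(\sigma)}=\tfrac32$ independent of $\sigma$ and $w_n^{(\sigma)}=\tfrac12(1+n^2)^\sigma$ for $n\geq1$. The essential point is that, since $s'>s$, the ratio $w_n^{(s)}/w_n^{(s')}=(1+n^2)^{s-s'}$ (for $n\geq1$) is a decreasing function of $n$ tending to $0$.

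Then I would introduce the truncation maps $P_N u:=\sum_{n=0}^N\hat u_n T_n$, each of which has finite-dimensional range (the polynomials of degree $\leq N$), so that the composition of $P_N$ with the inclusion $\iota:T^{s'}\to T^s$ is finite-rank. For the tail I estimate
\[\norm{(I-P_N)u}_{T^s}^2=\frac12\sum_{n>N}(1+n^2)^{s}\abs{\hat u_n}^2\leq(1+N^2)^{s-s'}\,\norm{u}_{T^{s'}}^2,\]
where I bounded $(1+n^2)^{s-s'}\leq(1+N^2)^{s-s'}$ for all $n>N$ by the monotonicity above. Consequently $\norm{\iota-\iota P_N}_{T^{s'}\to T^s}\leq(1+N^2)^{(s-s')/2}\to0$ as $N\to\infty$, so $\iota$ is a norm limit of finite-rank operators and hence compact.

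I do not expect a genuine obstacle here; the one point to handle with care is that the tail estimate is uniform in $u$, so that the truncations converge to $\iota$ in \emph{operator} norm and not merely strongly — strong convergence alone would not yield compactness. As an alternative one could bypass the computation entirely by invoking \autoref{lemChar}: since $\mathcal{C}$ identifies the inclusion $T^{s'}\subset T^s$ with $H^{s'}_e\subset H^s_e$, which is the restriction to a closed subspace of the classical compact Rellich embedding $H^{s'}_{per}\subset H^s_{per}$, compactness is inherited; but the direct diagonal argument is self-contained and equally short.
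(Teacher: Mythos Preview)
Your proof is correct. The diagonal truncation argument is clean, and you rightly stress that the tail bound is uniform in $u$ so that $\iota P_N\to\iota$ in operator norm.

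The paper, however, does not argue this way: the statement is presented as an immediate corollary of \autoref{lemChar}, i.e.\ precisely the alternative you mention at the end. Via the isomorphisms $\mathcal{C}:T^s\to H^s_e$ and $\mathcal{S}:U^s\to H^s_o$, the inclusions $T^{s'}\subset T^s$ and $U^{s'}\subset U^s$ are conjugate to the restrictions of the Rellich embedding $H^{s'}_{per}\hookrightarrow H^s_{per}$ to the closed subspaces of even and odd functions, and compactness is inherited. Your direct approach is self-contained and avoids appealing to the periodic Rellich theorem, at the modest cost of a short explicit computation; the paper's route is one line but imports an external fact. In this fully diagonal setting the two arguments are essentially the same computation wearing different clothes.
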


\noindent The polynomials $T_n$ and $U_n$ are connected by the formulas:
\begin{equation}
\label{TnAsUn}
T_0 = U_0, \quad T_1 = \frac{U_1}{2}, \quad \text{ and } \quad \forall n \geq 2, \quad T_n = \frac{1}{2}\left(U_n - U_{n-2}\right),
\end{equation}
\begin{equation}
\label{UnAsTn}
\forall n \in \N, \quad U_{2n} = 2\sum_{j = 0}^n T_{2j} - 1, \quad U_{2n+1} = 2\sum_{j=0}^n T_{2j+1}.
\end{equation}
Let us define the continuous maps $I : T^{-\infty} \to U^{-\infty}$ by 
\[\reallywidecheck{I \varphi}_0 = \hat{\varphi}_0 - \frac{\hat{\varphi}_2}{2}, \quad \reallywidecheck{I \varphi}_j = \frac{\hat{\varphi}_j - \hat{\varphi}_{j+2}}{2} \textup{ for } j \geq 1,\]
and $J: U^{\infty} \to T^{\infty}$ by
\[\widehat{J\varphi}_{0} = \sum_{n=0}^{+ \infty} \check{\varphi}_{2n}, \quad  \widehat{J\varphi}_j = \duality{\varphi}{T_j}_\frac{1}{\omega} =  2\sum_{n=0}^{+\infty} \check{\varphi}_{j + 2n} \textup{ for } j \geq 1.\]
One can show that $J$ is continuous for example using Hardy's inequality \cite{hardy1920note}: for all  $(u_n)_n \in l^2(\N)$, there holds
\[ \sum_{n = 1}^{+ \infty} \left(\sum_{k = n}^{+ \infty} \frac{u_k}{k}\right)^2 \leq C \sum_{k = 1}^{+ \infty} u_k^2\,.\]
In view of eqs.~\eqref{TnAsUn} and \eqref{UnAsTn}, $I$ and $J$ should be seen as identification mappings. Accordingly, for $u \in T^s$ and $v \in U^s$, we write $u = v$ if $Iu = v$ or $u = Jv$. The continuities of $I$ and $J$ then express the fact that there hold continuous inclusions $T^{-\infty} \subset U^{-\infty}$ and $U^{\infty} \subset T^{\infty}$. The spaces $(T^s)_s$ and $(U^s)_s$ are interlaced as follows:
\begin{lemma}
	\label{inclusionsTsUs}
	There hold the following continuous inclusions:
	\[\forall s \in \R, \quad T^s \subset U^s\,,\] 
	\[\forall s \geq 1, \quad U^s \subset T^{s-1}\,.\]
\end{lemma}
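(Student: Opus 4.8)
The plan is to prove the two inclusions by establishing, at each scale, the continuity of the identification maps $I : T^s \to U^s$ and $J : U^s \to T^{s-1}$, for which explicit formulas for the coefficients are available. The first inclusion is elementary and requires no restriction on $s$. Starting from $\reallywidecheck{Iu}_0 = \hat{u}_0 - \hat{u}_2/2$ and $\reallywidecheck{Iu}_j = (\hat{u}_j - \hat{u}_{j+2})/2$ for $j \geq 1$, each coefficient of $Iu$ is a finite combination of $\hat{u}_j$ and $\hat{u}_{j+2}$, so $\abs{\reallywidecheck{Iu}_j}^2 \leq C(\abs{\hat{u}_j}^2 + \abs{\hat{u}_{j+2}}^2)$ by the bound $(a+b)^2 \leq 2(a^2+b^2)$. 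The only genuine point is that the weight $(1+j^2)^s$ carried by $\reallywidecheck{Iu}_j$ in $\norm{Iu}_{U^s}$ is comparable to the weight $(1+(j+2)^2)^s$ attached to $\hat{u}_{j+2}$ in $\norm{u}_{T^s}$; this follows from the two-sided estimate $(1+j^2) \leq (1+(j+2)^2) \leq 5(1+j^2)$, valid for all $j \geq 0$, which gives $(1+j^2)^s \leq C_s (1+(j+2)^2)^s$ for every real $s$ with $C_s = \max(1,5^{\abs{s}})$. Summing and reindexing then yields $\norm{Iu}_{U^s} \leq C\norm{u}_{T^s}$, hence $T^s \subset U^s$ for all $s \in \R$.

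The second inclusion is the substantial one, and this is where Hardy's inequality and the hypothesis $s \geq 1$ enter. From $\widehat{Jv}_j = 2\sum_{n\geq 0}\check{v}_{j+2n}$ (and $\widehat{Jv}_0 = \sum_{n\geq 0}\check{v}_{2n}$), the coefficients of $Jv$ are tail sums of the $\check{v}_m$ over indices of fixed parity. Splitting the even- and odd-indexed subsequences $A_q = \check{v}_{2q}$ and $B_q = \check{v}_{2q+1}$ turns these into ordinary tails $\sum_{q\geq p} A_q$ and $\sum_{q\geq p} B_q$, so it suffices to treat one of them, say the even part, and bound $\sum_{p\geq 0}(1+4p^2)^{s-1}\abs{\sum_{q\geq p}A_q}^2$ by $C\sum_{q\geq 0}(1+4q^2)^s\abs{A_q}^2$. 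Setting $W_q = (1+4q^2)^{s/2}A_q$ and absorbing the outer weight, I would write $c_p = (1+4p^2)^{(s-1)/2}\sum_{q\geq p}A_q = \sum_{q\geq p} K(p,q) W_q$ with kernel $K(p,q) = (1+4p^2)^{(s-1)/2}(1+4q^2)^{-s/2}$ for $q \geq p$, so that the target estimate reads $\sum_p \abs{c_p}^2 \leq C\sum_q \abs{W_q}^2$.

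The crucial observation is that for $s \geq 1$ and $q \geq p$ one has $(s-1)/2 \geq 0$ and $(1+4p^2) \leq (1+4q^2)$, whence $K(p,q) \leq (1+4q^2)^{-1/2} \leq 1/(2q)$ for $q \geq 1$. This is exactly where $s \geq 1$ is used: it lets me discard the troublesome numerator weight and dominate the kernel pointwise by the Hardy kernel $1/q$. After isolating the finitely many terms involving $q=0$, the remaining sum is controlled by $\sum_{p\geq 1}\big(\sum_{q\geq p}\abs{W_q}/q\big)^2$, which Hardy's inequality bounds by $C\sum_q \abs{W_q}^2 = C\sum_q (1+4q^2)^s\abs{A_q}^2$; the same computation shows en passant that the tail sums are well defined. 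Treating the odd part identically and adding the two contributions gives $\norm{Jv}_{T^{s-1}} \leq C\norm{v}_{U^s}$, that is $U^s \subset T^{s-1}$ for $s \geq 1$.

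I expect the main obstacle to be precisely this weighted reduction to Hardy's inequality: one must arrange the weights so that the remaining kernel is pointwise dominated by $1/q$, and it is the sign condition $s-1 \geq 0$ that makes this possible. This also clarifies the structure of the statement, namely why the inclusion degrades by one unit of regularity and why one cannot expect it to persist for $s < 1$.
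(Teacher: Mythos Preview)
Your proof is correct and follows essentially the same route as the paper: the first inclusion is handled by the trivial estimate on the coefficients of $I$, and the second by pushing the outer weight $(1+n^2)^{(s-1)/2}$ inside the tail sum (which requires $s\geq 1$) and then invoking Hardy's inequality. The only cosmetic difference is that you respect the step-$2$ structure of the tails by splitting into even and odd indices, whereas the paper simply bounds $|\widehat{Jv}_n|\leq C\sum_{k\geq n}|\check{v}_k|$ by the full tail before proceeding; both are equivalent up to harmless constants.
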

\begin{proof}
	The continuity of $I$ from $T^s$ to $U^s$ is immediate, implying the first inclusion. For the second one, let $s\geq1$. By a density argument, it suffices to show that for all $u \in U^{\infty}$,
	\[\norm{Ju}_{T^{s-1}} \leq C \norm{u}_{U^s}\,.\]
	Let $u \in U^{\infty}$. One has
	\begin{eqnarray*}
		\norm{Ju}_{T^{s-1}}^2 &\leq&  \sum_{n=0}^{+\infty}(1+n^2)^{s-1} \abs{ \widehat{Ju}_n}^2 \\
		&\leq&  C\sum_{n=0}^{+\infty}(1+n^2)^{s-1} \left(\sum_{k=n}^{+\infty} \abs{\check{u}_k}\right)^2\\
		&\leq& C\sum_{n=0}^{+ \infty}\left(\sum_{k=n}^{+\infty}(1+k^2)^{\frac{s-1}{2}} \abs{\check{u}_k})\right)^2\,.
	\end{eqnarray*}	
	Notice that the last step is only possible if $s \geq 1$. 
	Furthermore, the $U^s$ norm of $u$ is equal (up to a multiplicative factor) to the $l^2$ norm of the sequence $\left((1+n^2)^{s/2} \abs{\check{u}_n}\right)_{n \geq 1}$. By the Hardy's inequality stated above, the sequence $(r_n)_n$ defined by 
	\[\forall n \geq 0, \quad r_n \isdef \sum_{k=n}^{+ \infty} (1+k^2)^{\frac{s-1}{2}} \abs{\check{u}_k}\] 
	thus lies in $l^2(\N)$ with a $l^2$ norm controlled by $\norm{u}_{U^s}$. Combining this with the previous inequalities provides the announced estimate
	\[\norm{Ju}_{T^{s-1}}^2 \leq C \norm{u}_{U^s}^2\,,\]
	from which the result follows. 
\end{proof}
\noindent The next results give some precision on the case $s < 1$.
\begin{lemma}
	\label{weakInclusions}
	For $s > \frac{1}{2}$, and for any $\varepsilon > 0$, 
	\[\forall s>\frac{1}{2}, \quad U^s \subset T^{s - 1 - \varepsilon}\,.\]
	Finally, $U^{\frac{1}{2}}$ is not continuously embedded in $T^{s}$ for any $s \in \R$. 
\end{lemma}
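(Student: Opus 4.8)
\emph{First inclusion.} Since $T^{s-1}\subset T^{s-1-\varepsilon}$ for every $\varepsilon>0$ (the $T$-scale is decreasing in the exponent), it suffices to establish the continuous inclusion $U^s\subset T^{s-1}$, from which the whole family $U^s\subset T^{s-1-\varepsilon}$ follows at once. By density of the polynomials in $U^s$, I would reduce this to the estimate $\norm{Ju}_{T^{s-1}}\leq C\norm{u}_{U^s}$ for $u\in U^\infty$. Writing $a_k\isdef\abs{\check{u}_k}\geq 0$, the definition of $J$ gives $\abs{\widehat{Ju}_n}\leq 2\sum_{k\geq n}a_k$ for all $n\geq 0$, so everything comes down to the weighted Hardy-type inequality
\[\sum_{n\geq 0}(1+n^2)^{s-1}\Big(\sum_{k\geq n}a_k\Big)^2\leq C\sum_{k\geq 0}(1+k^2)^s a_k^2\,.\]
The obstruction pointed out after \autoref{inclusionsTsUs} was that for $s<1$ the front weight $(1+n^2)^{s-1}$ can no longer be pushed inside the inner sum by monotonicity; the plan is to circumvent this by a Cauchy--Schwarz splitting instead.

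\emph{Proof of the weighted inequality.} I would fix an exponent $\beta$ with $1/2<\beta<s$; such a $\beta$ exists \emph{precisely because} $s>1/2$, and this is the only place the hypothesis enters. Applying Cauchy--Schwarz to the inner sum through the factorization $a_k=(1+k^2)^{-\beta/2}\cdot(1+k^2)^{\beta/2}a_k$ and using that $\sum_{k\geq n}(1+k^2)^{-\beta}\leq C(1+n^2)^{1/2-\beta}$ (convergent since $\beta>1/2$) yields
\[\sum_{n\geq 0}(1+n^2)^{s-1}\Big(\sum_{k\geq n}a_k\Big)^2\leq C\sum_{n\geq 0}(1+n^2)^{s-\frac12-\beta}\sum_{k\geq n}(1+k^2)^\beta a_k^2\,.\]
Exchanging the order of summation (legitimate by Tonelli, all terms being nonnegative) and using $\sum_{n=0}^k(1+n^2)^{s-\frac12-\beta}\leq C(1+k^2)^{s-\beta}$ (which holds because $\beta<s$) collapses the right-hand side to $C\sum_k(1+k^2)^s a_k^2=C\norm{u}_{U^s}^2$. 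This proves $\norm{Ju}_{T^{s-1}}\leq C\norm{u}_{U^s}$, hence $U^s\subset T^{s-1}\subset T^{s-1-\varepsilon}$ for all $\varepsilon>0$.

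\emph{The negative statement.} To show that $U^{1/2}$ embeds continuously in no $T^s$, the plan is to exhibit a sequence in $U^\infty$ along which the ratio $\norm{Ju}_{T^s}/\norm{u}_{U^{1/2}}$ blows up, for every fixed $s$ simultaneously. Guided by the fact that at the threshold the admissible window $1/2<\beta<s=1/2$ is empty, I would take $\check{u}^{(N)}_k=(k\log k)^{-1}$ for $2\leq k\leq N$ and $0$ otherwise. Then $\norm{u^{(N)}}_{U^{1/2}}^2\approx\sum_{2\leq k\leq N}(1+k^2)^{1/2}(k\log k)^{-2}\approx\sum_{k\geq 2}(k(\log k)^2)^{-1}$, a convergent series, so these norms stay bounded uniformly in $N$. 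On the other hand the zeroth coefficient of the image, $\widehat{Ju^{(N)}}_0=\sum_{n\geq 0}\check{u}^{(N)}_{2n}$, is comparable to $\sum_{2\leq k\leq N}(k\log k)^{-1}\approx\log\log N\to+\infty$; since $\norm{Ju^{(N)}}_{T^s}\geq\abs{\widehat{Ju^{(N)}}_0}$ \emph{with no weight, irrespective of $s$}, the embedding constant cannot be finite. The conceptual heart of the lemma, and the step I expect to be most delicate to pin down cleanly, is exactly this threshold phenomenon at $s=1/2$: the logarithmic divergence of $\sum_k(1+k^2)^{-1/2}$ is what both kills the Cauchy--Schwarz argument of the first part (no admissible $\beta$) and drives the counterexample, so the role of the hypothesis $s>1/2$ must be made transparent rather than incidental.
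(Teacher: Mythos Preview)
Your proof is correct, and for the first assertion it is in fact \emph{stronger} than the paper's. The paper applies Cauchy--Schwarz with the full weight $(1+k^2)^{-s}$ inside the tail sum, obtaining
\[
\Big(\sum_{k\geq n}|\check u_k|\Big)^2 \leq \|u\|_{U^s}^2\sum_{k\geq n}(1+k^2)^{-s}\leq C(1+n^2)^{1/2-s}\|u\|_{U^s}^2,
\]
which, after multiplying by $(1+n^2)^{\sigma}$ and summing, only converges for $\sigma<s-1$; hence the $\varepsilon$-loss in the statement. Your parametrized splitting with $\beta\in(1/2,s)$ avoids this: it is precisely a proof of the discrete weighted Hardy inequality with weights $w_n=(1+n^2)^{s-1}$ and $v_k=(1+k^2)^s$, for which the Muckenhoupt condition $\sup_N\big(\sum_{n\leq N}w_n\big)\big(\sum_{k\geq N}v_k^{-1}\big)<\infty$ holds exactly when $s>1/2$. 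Thus you actually establish $U^s\subset T^{s-1}$ for all $s>1/2$, extending the paper's \autoref{inclusionsTsUs} (which required $s\geq 1$) and rendering the $\varepsilon$ in \autoref{weakInclusions} superfluous.

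For the negative statement your argument is essentially the paper's: the same test sequence $\check u_k=(k\log k)^{-1}$ is used, you package it as a family of truncations with bounded $U^{1/2}$-norm and diverging zeroth $T$-coefficient, while the paper phrases it as the failure of $\langle u_N,T_0\rangle_{1/\omega}$ to converge. Both exploit that $\widehat{Ju}_0$ picks out only the even indices, and both hinge on $\sum(k\log k)^{-1}=+\infty$ while $\sum(k(\log k)^2)^{-1}<\infty$.
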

\begin{proof}
	Let $s > \frac{1}{2}$ and let $u \in U^s$. Using Cauchy-Schwarz's inequality, one has
	\[\left(\sum_{k = n}^{+\infty} \abs{\check{u}_k}\right)^2 \leq \norm{u}_{U^s}^2 \sum_{k = n}^{+\infty} \frac{1}{(1+ n^2)^{s}}\,.\]
	For the second term of the right hand side there holds the classical estimate
	\[\sum_{k = n}^{\infty} \frac{1}{(1 + n^2)^s} \leq C(1 + n^2)^{-s + \frac{1}{2}}\,,\]
	Let $\sigma < s - 1$. We thus have 
	\[\norm{Ju}_{T^\sigma} \leq C\sum_{n = 0}^{+\infty} (1 + n^2)^{\sigma - s + \frac{1}{2}} \norm{u}_{U^s}^2\,.\]
	Since $\sigma - s > 0$, the infinite sum converges and this proves the inclusion $U^s \subset T^{\sigma}$. 
	For the second statement, let
	\[u = \sum_{n = 0}^{+\infty} \frac{1}{n \ln n} U_n\,.\]
	One can check that $u \in U^{\frac{1}{2}}$. Let us assume by contradiction that there is a continuous inclusion $U^{\frac{1}{2}} \subset T^{s}$ for some $s$. The sequence of polynomials
	\[u_N = \sum_{n = 0}^{N} \check{u}_n U_n\]
	converges to $u$ in $U^{\frac{1}{2}}$. By continuity of the inclusion $U^\frac{1}{2} \to T^{s}$,  the sequence  $(\duality{u_N}{T_0}_\frac{1}{\omega})_{N\in \N}$ must converge with limit $\duality{u}{T_0}_\frac{1}{\omega}$. But
	\[\duality{u_N}{T_0}_\frac{1}{\omega} = \sum_{n=0}^{N} \check{u}_n\duality{U_n}{T_0}_\frac{1}{\omega} = \sum_{k = 0}^{\lfloor \frac{N}{2} \rfloor} \frac{1}{2k \ln(2k)}\,, \]
	where, in the last equality, we have used the identity
	\[\duality{U_n}{T_0}_\frac{1}{\omega} = \begin{cases}
	1 &\text{ if } n \text{ is even },\\
	0 &\text{ otherwise }.
	\end{cases}\]
	This can be checked for example using eq.  \eqref{UnAsTn}. 
	The last sum diverges to $+\infty$ when $N$ goes to infinity, giving the contradiction. 
\end{proof}
\noindent As a corollary of \autoref{inclusionsTsUs}, we have:
\begin{corollary}
	\label{corTinfUinf}
	\[T^{\infty} = U^{\infty}\,.\]
\end{corollary}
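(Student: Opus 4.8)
The plan is to derive both set inclusions directly from the two continuous embeddings furnished by \autoref{inclusionsTsUs}, using the fact that both scales are nested. First I would record the monotonicity: since the weights $(1+n^2)^s$ are nondecreasing in $s$, one has $T^{s'} \subset T^s$ and $U^{s'} \subset U^s$ whenever $s' \geq s$. Consequently the intersections defining $T^\infty$ and $U^\infty$ are governed by the behavior as $s \to +\infty$, so that
\[
T^\infty = \bigcap_{s \in \R} T^s = \bigcap_{s \geq 0} T^s, \qquad U^\infty = \bigcap_{s \in \R} U^s = \bigcap_{s \geq 1} U^s\,.
\]
This reduction is what makes the threshold $s \geq 1$ in the second inclusion of \autoref{inclusionsTsUs} harmless.

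For the inclusion $T^\infty \subset U^\infty$, I would take any $u \in T^\infty$. For every $s \in \R$ we have $u \in T^s$, and the first inclusion of \autoref{inclusionsTsUs} gives $T^s \subset U^s$; hence $u \in U^s$ for all $s$, that is $u \in U^\infty$.

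For the reverse inclusion $U^\infty \subset T^\infty$, I would take $u \in U^\infty$. For every $s \geq 1$ we have $u \in U^s$, and the second inclusion of \autoref{inclusionsTsUs} yields $U^s \subset T^{s-1}$, so $u \in T^{s-1}$. As $s$ ranges over $[1,+\infty)$ the exponent $s-1$ ranges over $[0,+\infty)$, so $u \in T^\sigma$ for every $\sigma \geq 0$; by the monotonicity recorded above this means $u \in \bigcap_{\sigma \geq 0} T^\sigma = T^\infty$. Combining the two inclusions gives $T^\infty = U^\infty$.

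I do not expect any genuine obstacle here: the only point requiring a moment's care is that the embedding $U^s \subset T^{s-1}$ is available only for $s \geq 1$, but since $T^\infty$ is insensitive to low regularity indices this restriction is immaterial, as made explicit by the reduction in the first paragraph.
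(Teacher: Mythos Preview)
Your proof is correct and is exactly the argument the paper intends: the corollary is stated in the paper without proof, simply as an immediate consequence of \autoref{inclusionsTsUs}, and you have spelled out precisely those two inclusions together with the harmless observation that the restriction $s\geq 1$ is irrelevant when intersecting over all $s$.
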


\subsection{Regularity properties}
We now investigate some regularity properties of the elements of $T^s$ and $U^s$. 
\begin{lemma}
	\label{LemInjectionsContinues}
	For all $\varepsilon >0$, if $u \in T^{\frac{1}{2} + \varepsilon}$, then $u$ is continuous and
	\[ \exists C : \forall x \in [-1,1], \quad \abs{u(x)} \leq C \norm{u}_{T^{1/2 + \varepsilon}}.\]	
	Similarly, if $u \in U^{3/2 + \varepsilon}$, then $u$ is continuous and 
	\[ \exists C : \forall x \in [-1,1], \quad \abs{u(x)} \leq C \norm{u}_{U^{3/2 + \varepsilon}}.\]
\end{lemma}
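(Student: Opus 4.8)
The plan is to exploit the fact that both norms control the $\ell^1$ size of the Chebyshev coefficients, together with uniform pointwise bounds on the basis functions $T_n$ and $U_n$. Recall that $T_n(x) = \cos(n\arccos x)$ satisfies $\abs{T_n(x)} \leq 1$ for every $x \in [-1,1]$, whereas the second-kind polynomials, written as $U_n(\cos\theta) = \sin((n+1)\theta)/\sin\theta$, only satisfy the weaker estimate $\abs{U_n(x)} \leq n+1$ on $[-1,1]$, with equality attained at the endpoints $x = \pm 1$. This single order of growth of the $U_n$ at the endpoints is exactly what accounts for the shift from the threshold $1/2$ in the first-kind scale to $3/2$ in the second-kind scale.

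For the first assertion, I would write $\abs{u(x)} \leq \sum_{n \geq 0} \abs{\hat u_n}\abs{T_n(x)} \leq \sum_{n \geq 0}\abs{\hat u_n}$ and estimate the right-hand side by Cauchy--Schwarz, inserting the weight $(1+n^2)^{(1/2+\varepsilon)/2}$:
\[\sum_{n \geq 0} \abs{\hat u_n} \leq \left(\sum_{n \geq 0}(1+n^2)^{-(1/2+\varepsilon)}\right)^{1/2}\left(\sum_{n \geq 0}(1+n^2)^{1/2+\varepsilon}\abs{\hat u_n}^2\right)^{1/2}.\]
The second factor is bounded by $C\norm{u}_{T^{1/2+\varepsilon}}$ by definition of the norm, and the first factor is a convergent series precisely because its general term behaves like $n^{-(1+2\varepsilon)}$ with $1+2\varepsilon > 1$. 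The $U^s$ case is identical except that the extra factor $\abs{U_n(x)} \leq n+1$ must be carried through, so the relevant weight series becomes $\sum_{n\geq 0}(n+1)^2(1+n^2)^{-(3/2+\varepsilon)}$; its general term behaves like $n^{-1-2\varepsilon}$, hence it converges, and the same Cauchy--Schwarz step yields $\sum_{n\geq 0}(n+1)\abs{\check u_n} \leq C\norm{u}_{U^{3/2+\varepsilon}}$. This produces the two pointwise bounds.

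It remains to justify continuity. Here I would observe that the above estimates show the Chebyshev series converge \emph{absolutely and uniformly} on $[-1,1]$: the partial sums $u_N = \sum_{n=0}^N \hat u_n T_n$ (resp.\ $\sum_{n=0}^N \check u_n U_n$) are polynomials, hence continuous, and $\norm{u - u_N}_\infty$ is bounded by the tail of the convergent series $\sum\abs{\hat u_n}$ (resp.\ $\sum (n+1)\abs{\check u_n}$), so $u_N \to u$ uniformly. A uniform limit of continuous functions is continuous, and since for $s \geq 0$ the series also converges in $L^2_\frac{1}{\omega}$ (resp.\ $L^2_\omega$), the continuous limit coincides with the $L^2$-class of $u$, which legitimizes the pointwise statement. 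There is no serious obstacle in this argument; the only point requiring care is the endpoint growth of the $U_n$, which is what forces $3/2$ rather than $1/2$ and which must not be overlooked when passing the absolute value inside the sum.
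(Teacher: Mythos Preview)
Your argument is correct and complete. It differs, however, from the route taken in the paper. For the $T^s$ case the paper invokes the isometry $\mathcal{C}:T^s\to H^s_e$ (\autoref{lemChar}) and the classical Sobolev embedding $H^{1/2+\varepsilon}_{per}\subset C^0(\mathbb{T}_{2\pi})$, writing $u(x)=\varphi(\arccos x)$ with $\varphi\in H^{1/2+\varepsilon}_e$. For the $U^s$ case the paper does not argue directly but instead applies the inclusion $U^{3/2+\varepsilon}\subset T^{1/2+\varepsilon'}$ established in \autoref{weakInclusions} and then reduces to the first assertion. Your approach is more self-contained: it works entirely within the Chebyshev representation, relies only on the elementary bounds $\abs{T_n}\leq 1$ and $\abs{U_n}\leq n+1$ together with Cauchy--Schwarz, and in particular does not require either the Sobolev embedding theorem or \autoref{weakInclusions}. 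The paper's proof is shorter because those tools are already available; yours has the merit of making transparent why the critical exponents are $1/2$ and $3/2$, namely the single extra power of $n$ in the uniform bound for $U_n$.
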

\begin{proof}
	Let $u \in T^{1/2 + \varepsilon}$. Then $u = \varphi(\arccos(x))$ where $\varphi \in H^{1/2 + \varepsilon}_{per}$. The first statement  follows from the continuity of $\arccos$ and the Sobolev embedding in $H^{\frac{1}{2}+\varepsilon}_{per} \subset C^0$. 
	The second statement is deduced from the first and the continuous inclusion $U^{s} \subset T^{s-1 - \varepsilon}$ proved in \autoref{weakInclusions}. 
\end{proof}	
Let us now introduce some differential operators. Let $\partial_x$ be the derivation operator and $\omega$ the operator $u(x) \mapsto \omega(x)u(x)$ with $\omega(x) = \sqrt{1 - x^2}$, $T_n$ and $U_n$ satisfy the following identities:
\begin{eqnarray}
-(\omega\partial_x)^2 T_n &=& n^2T_n\,, \label{cheb1}\\
-(\partial_x\omega)^2 U_n &=& (n+1)^2U_n\, .\label{cheb2}
\end{eqnarray}
These are just the differential equations defining $T_n$ and $U_n$  written in divergence form. Notice that here and in the following, $\partial_x\omega$ denotes the composition of operators $\partial_x$ and $\omega$ and not the function $x \mapsto \partial_x\omega(x)$. One can also check the identities 
\begin{align}
\partial_x T_n &= n U_{n-1}\,, \label{der1} \\
-\omega \partial_x \omega U_n &= (n+1)T_{n+1}\,. \label{der2}
\end{align}
The first one is obtained for example from the definition of $T_n$, from which we deduce the second one after using $-(\omega \partial_x)^2 T_{n+1} = (n+1)^2 T_{n+1}$. 
\begin{lemma}
	\label{LemTinfCinf}
	There holds
	\[T^{\infty} = U^\infty = C^{\infty}([-1,1])\,.\]
\end{lemma}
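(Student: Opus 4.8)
The plan is to prove the two inclusions separately. Since \autoref{corTinfUinf} already gives $T^{\infty} = U^{\infty}$, it suffices to identify $T^{\infty}$ with $C^{\infty}([-1,1])$.

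For the inclusion $C^{\infty}([-1,1]) \subset T^{\infty}$ I would argue through the change of variables. If $u \in C^{\infty}([-1,1])$, then $\mathcal{C}u(\theta) = u(\cos\theta)$ is a composition of smooth maps, hence lies in $C^{\infty}(\mathbb{T}_{2\pi})$, and it is visibly even; thus $\mathcal{C}u \in H^{\infty}_e$. Since \autoref{lemChar} gives that $\mathcal{C} : T^s \to H^s_e$ is an isomorphism for every $s$, membership $\mathcal{C}u \in \bigcap_s H^s_e$ forces $u \in \bigcap_s T^s = T^{\infty}$.

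The reverse inclusion $T^{\infty} \subset C^{\infty}([-1,1])$ is the substantive part, since it demands smoothness up to the endpoints $x = \pm 1$, exactly where the map $x = \cos\theta$ degenerates. Let $u = \sum_n \hat{u}_n T_n \in T^{\infty}$. Membership in $T^s$ for every $s$ means $\sum_n (1+n^2)^s \abs{\hat{u}_n}^2 < \infty$ for all $s$, so the coefficients decay faster than any polynomial: for each $N$ there is $C_N$ with $\abs{\hat{u}_n} \leq C_N (1+n^2)^{-N}$. I would then combine this with the classical bound on the derivatives of the Chebyshev polynomials,
\[\norm{T_n^{(k)}}_{L^\infty([-1,1])} = T_n^{(k)}(1) = \frac{\prod_{j=0}^{k-1}(n^2 - j^2)}{(2k-1)!!} \leq C_k\, n^{2k}\,,\]
where the maximum over $[-1,1]$ is attained at the endpoint by Markov's extremal property. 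For a fixed order of differentiation $k$, choosing $N$ large enough that $\sum_n n^{2k}(1+n^2)^{-N} < \infty$, the series $\sum_n \hat{u}_n T_n^{(k)}$ converges absolutely and uniformly on $[-1,1]$. Term-by-term differentiation of a uniformly convergent series then shows that $u \in C^k([-1,1])$ with $u^{(k)} = \sum_n \hat{u}_n T_n^{(k)}$; since $k$ is arbitrary, $u \in C^{\infty}([-1,1])$.

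The main obstacle is controlling the endpoint behaviour: away from $\pm 1$ smoothness is automatic from the diffeomorphism $\theta \mapsto \cos\theta$ on $(0,\pi)$, but at $x = \pm 1$ the naive argument via the inverse change of variables breaks down because the Jacobian vanishes. The Chebyshev-coefficient route sidesteps this by encoding the endpoint regularity directly into the polynomial growth of $\norm{T_n^{(k)}}_\infty$; the one fact to invoke with care is the sharp Markov-type estimate $\norm{T_n^{(k)}}_\infty = T_n^{(k)}(1) = O(n^{2k})$. An alternative, more geometric route would use \autoref{lemChar} to reduce to showing that an even function in $H^{\infty}_e$ descends to a smooth function of $x = \cos\theta$ on $[-1,1]$, but this is a Whitney-type statement requiring a delicate analysis of the vanishing of odd-order derivatives at $\theta = 0, \pi$, so I would prefer the coefficient estimate above.
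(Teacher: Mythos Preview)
Your proof is correct, but both halves proceed differently from the paper's.

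For $C^{\infty}([-1,1]) \subset T^{\infty}$ you invoke the smoothness of $\theta \mapsto u(\cos\theta)$ together with \autoref{lemChar}; the paper instead integrates by parts $k$ times against the self-adjoint operator $(\omega\partial_x)^2$, using $-(\omega\partial_x)^2 T_n = n^2 T_n$ to extract a factor $n^{-2k}$ from $\hat{u}_n$. Your route is shorter and conceptually cleaner here.

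For $T^{\infty} \subset C^{\infty}([-1,1])$ you appeal to the Markov-type bound $\norm{T_n^{(k)}}_\infty = O(n^{2k})$ and differentiate the Chebyshev series term by term directly at order $k$. The paper avoids this external estimate altogether: it first gets continuity from $\norm{T_n}_\infty = 1$, then observes that $\partial_x u = \sum n\hat{u}_n U_{n-1}$ lies in $U^{\infty}$, and invokes the equality $U^{\infty} = T^{\infty}$ (\autoref{corTinfUinf}) to conclude $\partial_x u \in T^{\infty}$, closing an induction. Thus the paper trades the Markov inequality for a bootstrap that only needs $\norm{T_n}_\infty = 1$ and the already-established identity $T^{\infty} = U^{\infty}$; your argument is more direct but imports a sharper polynomial fact.
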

\begin{proof}
	Recall that $T^\infty = U^{\infty}$ ( \autoref{corTinfUinf}).	Let $u \in C^{\infty}([-1,1])$, then we can obtain by induction using integration by parts and \eqref{cheb1}, that for any $k \in \N$
	\[\hat{u}_n = \frac{(-1)^k}{n^{2k}} \int_{-1}^{1} \dfrac{(\omega\partial_x)^{2k} u(x) T_n(x)}{\omega(x)}dx.\]
	Noting that $(\omega \partial_x)^2 = (1-x^2)\partial_x^2 - x \partial_ x$, the function $(\omega \partial_x)^{2k}u$ is $C^{\infty}$, and since $\norm{T_n}_\infty = 1$, the integral is bounded independently of $n$. Thus, the coefficients $\hat{u}_n$ have a fast decay, proving that $C^{\infty}([-1,1]) \subset T^{\infty}$. 
	
	For the converse inclusion, if $u \in T^{\infty}$, the series
	\[ u(x) = \sum_{n=0} \hat{u}_n T_n(x)\]
	is normally converging since $\norm{T_n}_\infty = 1$, so that $u$ is a continuous function. This proves $T^{\infty} \subset C^0([-1,1])$. It now suffices to show that $\partial_x u \in T^{\infty}$ and apply an induction argument. Applying term by term differentiation, since $\partial_x T_n = n U_{n-1}$ for all $n$ (with the convention $U_{-1} = 0$),
	\[\partial_x u(x) = \sum_{n=1}^{+\infty} n \hat{u}_n U_{n-1}(x).\] 
	Therefore, $\partial_x u$ is in $U^{\infty} = T^{\infty}$ which proves the result. 
\end{proof}

We now extend the definition of the differential operators $\partial_x$ and $\omega\partial_x\omega$ appearing in eqs (\ref{der1}) and  (\ref{der2}).

\begin{lemma}
	\label{derivations}
	For all real $s$, the operator $\partial_x$ can be extended into a continuous map from $T^{s+1}$ to $U^{s}$ defined by 
	\[\forall v \in \Cinf([-1,1]), \quad \duality{\partial_x u}{v}_{\omega} \isdef -\duality{u}{\omega \partial_x \omega v}_{\frac{1}{\omega}} \,.\] 
	In a similar fashion, the  operator $\omega \partial_x \omega$ can be extended into a continuous map from $U^{s+1}$ to $T^{s}$ defined by
	\[\forall v \in \Cinf([-1,1]), \quad \duality{\omega \partial_x \omega u}{v}_\frac{1}{\omega} \isdef -\duality{u}{\partial_x v}_\omega.\]
\end{lemma}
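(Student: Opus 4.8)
The plan is to reduce everything to the action on the Chebyshev bases, where eqs.~\eqref{der1} and \eqref{der2} give explicit formulas, and to obtain the two continuity estimates by an elementary comparison of the weights $(1+n^2)^s$ under an index shift. The two statements are treated symmetrically, since the operators $\partial_x$ and $\omega\partial_x\omega$ are adjoint to one another with respect to the two weighted pairings.

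First I would check that the proposed duality formulas genuinely extend the classical differential operators, i.e.\ that they agree with $\partial_x$ and $\omega\partial_x\omega$ on $\Cinf([-1,1])$. For $u,v\in\Cinf([-1,1])$ this is a single integration by parts: writing $\duality{\partial_x u}{v}_\omega=\frac1\pi\int_{-1}^1(\partial_x u)\,v\,\omega\,dx$ and integrating by parts produces the boundary term $\frac1\pi[u\,v\,\omega]_{-1}^1$, which vanishes because $\omega(\pm1)=0$, leaving exactly $-\frac1\pi\int_{-1}^1 u\,\partial_x(\omega v)\,dx=-\duality{u}{\omega\partial_x\omega v}_{1/\omega}$ (using $(\omega\partial_x\omega v)/\omega=\partial_x(\omega v)$). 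The analogous computation, with the roles of the two weighted pairings exchanged, yields the second identity $\duality{\omega\partial_x\omega u}{v}_{1/\omega}=-\duality{u}{\partial_x v}_\omega$. The crucial point here, and the only place where the geometry of the weight enters, is that $\omega$ vanishes at the endpoints, so that no boundary contribution survives; this is precisely what makes the scales $(T^s)$ and $(U^s)$ adjoint under these first-order operators.

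Next I would establish the norm estimates. On the bases, eq.~\eqref{der1} gives $\partial_x u=\sum_{m\ge0}(m+1)\hat u_{m+1}U_m$ for $u=\sum_n\hat u_n T_n$, so that
\[\norm{\partial_x u}_{U^s}^2=\frac12\sum_{m\ge0}(1+m^2)^s(m+1)^2\abs{\hat u_{m+1}}^2.\]
Since the index of $\hat u$ is always $\ge1$ here, the exceptional normalization of the $T_0$-coefficient never appears, and it suffices to observe that $(1+m^2)^s(m+1)^2\le C\,(1+(m+1)^2)^{s+1}$ uniformly in $m$ (both sides behave like $m^{2s+2}$), which gives $\norm{\partial_x u}_{U^s}\le C\norm{u}_{T^{s+1}}$. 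Symmetrically, eq.~\eqref{der2} gives $\omega\partial_x\omega u=-\sum_{n\ge0}(n+1)\check u_n T_{n+1}$ for $u=\sum_n\check u_n U_n$, whose $T_0$-coefficient is zero, so again no special weight intervenes; the same weight comparison (now $(1+j^2)^s j^2\le C(1+(j-1)^2)^{s+1}$ for $j\ge1$) yields $\norm{\omega\partial_x\omega u}_{T^s}\le C\norm{u}_{U^{s+1}}$.

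Finally I would assemble the statement. Having these bounds on the dense subspace $\Cinf([-1,1])=T^\infty=U^\infty$ (\autoref{LemTinfCinf}) together with the density of $\Cinf([-1,1])$ in $T^{s+1}$ and $U^{s+1}$, both operators extend uniquely to continuous maps $T^{s+1}\to U^s$ and $U^{s+1}\to T^s$. For fixed $v\in\Cinf([-1,1])$, the right-hand side of each duality formula is continuous in $u$, because $\omega\partial_x\omega v$ and $\partial_x v$ lie in $T^\infty$, resp.\ $U^\infty$, hence pair continuously against any element of $T^{s+1}$, resp.\ $U^{s+1}$; the left-hand side is continuous by the just-proved boundedness, noting that $v\in U^\infty\subset U^{-s}$ pairs continuously with $U^s$. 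Since the two sides agree on the dense set $\Cinf([-1,1])$, they agree on the whole space, which is exactly the claimed characterization of the extensions. I expect the only genuinely delicate point to be the bookkeeping of the two asymmetric pairings together with the index shift, in particular verifying that the special normalization of the $T_0$-coefficient causes no trouble, rather than any analytic difficulty; the weight comparison itself is routine.
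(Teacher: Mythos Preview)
Your proposal is correct and follows essentially the same route as the paper: verify that the duality formulas are consistent with the classical operators on smooth functions, compute the Chebyshev coefficients of $\partial_x u$ and $\omega\partial_x\omega u$ from eqs.~\eqref{der1}--\eqref{der2}, and read off the continuity estimates from the resulting index shift. The paper is somewhat terser---it defines the extension directly by duality for all $u\in T^{-\infty}$ and then computes $\reallywidecheck{(\partial_x u)}_n$ from the definition rather than first proving the bound on $\Cinf$ and extending by density---but the substance is the same, and your more explicit treatment of the integration by parts and the $T_0$ normalization issue only adds clarity.
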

\begin{proof}
	Using eqs (\ref{der1}) and (\ref{der2}), one can check that the formulas indeed extend the usual definition of both operators for smooth functions. We now show that the map $\partial_x$ extended this way is continuous from $T^{s+1}$ to $U^s$. The definition 
	\[\forall v \in U^{\infty}, \quad \duality{\partial_x u}{v}_\omega \isdef -\duality{u}{\omega \partial_x \omega v}_\frac{1}{\omega}\]
	gives a sense to $\partial_x u$ for all $u$ in $T^{-\infty}$, as a duality $T^{-\infty} \times T^{\infty}$ product, because if $v \in U^{\infty} (= C^{\infty}([-1,1])$, then $\omega \partial_x \omega v = (1-x^2)v' - xv$ also lies in $C^{\infty}([-1,1]) (= T^\infty)$. Letting $w = \partial_x u$, we have by definition for all $n$
	\[\check{w}_n = \duality{w}{U_n}_{\omega} = - \duality{u}{\omega \partial_x \omega U_n}_\frac{1}{\omega} = n \duality{u}{T_{n+1}}_\frac{1}{\omega} = n\hat{u}_{n+1}\,.\]
	This implies the announced continuity with
	\[ \norm{w}_{U^s} \leq \norm{u}_{T^{s+1}}\,.\]
	The properties of $\omega \partial_x \omega$ on $T^s$ are established similarly. 
\end{proof}

\subsection{Generalization to a curve}

All of the previous analysis can be generalized to define two families of spaces $T^s(\Gamma)$ and $U^s(\Gamma)$ of functions defined on a smooth curve $\Gamma$ by means of a $C^\infty$ diffeomorphism.

\label{TsUs(Gamma)}

\subsubsection*{Parametrization of the curve}
The notation of this paragraph will be used at several points in the remainder of this work. Let $\Gamma$ be a smooth open simple curve in $\mathbb{R}^2$ parameterized by a $C^\infty$ diffeomorphism $r : [-1,1] \to \Gamma$. We assume that $\abs{r'(x)} = \frac{\abs{\Gamma}}{2}$ for all $x\in [-1,1]$, where $\abs{\Gamma}$ is the length of $\Gamma$. 
Let $\opFromTo{R_\Gamma}{\Cinf(\overline  \Gamma)}{\Cinf([-1,1])}$ be the pullback defined by 
\[\forall x \in [-1,1], \quad R_\Gamma u(x) = u(r(x))\,.\]
The tangent and normal vectors on the curve, $\tau$ and $n$, are respectively defined by 
\[\forall x \in [-1,1], \quad\tau(x) = \frac{\partial_x r(x)}{\abs{\partial_x r(x)}}, \quad n(x) = \frac{\partial_x \tau(x)}{\abs{\partial_x \tau'(x)}}\,.\]
Let $N : \Gamma \to \R^2$ be such that $N(r(x)) = n(x)$, that is, $N = R_{\Gamma}^{-1} n$. Let $\kappa(x)$ be the signed curvature of $\Gamma$ at the point $r(x)$. Frenet-Serret's formulas give
\begin{eqnarray*}
	\forall x,y \in [-1,1], \quad r(y) & = & r(x) + (y-x) \frac{\abs{\Gamma}}{2} \tau(x) + \frac{(y-x)^2}{2} \frac{\abs{\Gamma}^2}{4} \kappa(x) n(x)\\ 
	&& +\frac{(x-y)^3}{6} \frac{\abs{\Gamma}^3}{8} (\kappa'(x) n(x) - \kappa(x)^2 \tau(x)) + O\left((x-y)^4\right)\,,
\end{eqnarray*}
so that
\begin{equation}
\forall x,y \in [-1,1], \quad \abs{r(x) - r(y)}^2 = \frac{\abs{\Gamma}^2}{4}(y-x)^2 - \frac{(y-x)^4}{192}\abs{\Gamma}^4 \kappa(x)^2 + O(x-y)^5\,. 
\label{expansion_r}
\end{equation}
For $u,v \in L^2(\Gamma)$, we have by change of variables in the integral
\[\duality{u}{v}_{L^2(\Gamma)} = \frac{\abs{\Gamma}}{2}\duality{R_\Gamma u}{R_\Gamma v}_{L^2(-1,1)} \,.\]
The tangential derivative $\partial_\tau$ on $\Gamma$ satisfies 
\begin{equation}
\partial_\tau = \frac{2}{\abs{\Gamma}}R_{\Gamma}^{-1}\partial_x R_{\Gamma}\,.
\label{param1}
\end{equation}
We also define a ``weight" operator on the curve as
\begin{equation}
\omega_\Gamma \isdef \frac{\abs{\Gamma}}{2} R_{\Gamma}^{-1}  \omega R_{\Gamma}\,.
\label{param2}
\end{equation}
Finally, the uniform measure on $\Gamma$ is denoted by $d\sigma$.

\subsubsection*{Spaces \texorpdfstring{$T^s(\Gamma)$ and $U^s(\Gamma)$}{Ts(Gamma) and Us(Gamma)}}

The definition of the spaces $T^s$ can be transported on the curve $\Gamma$, replacing the basis $(T_n)_n$ and $(U_n)_n$ by $(R_{\Gamma}^{-1}T_n)_n$ and $(R_{\Gamma}^{-1}U_n)_n$. The spaces $T^s(\Gamma)$ and $U^s(\Gamma)$ are thus defined as the sets of formal series respectively of the form 
\[u = \sum_{n \in \N} \hat{u}_n R_{\Gamma}^{-1}T_n\,, \quad v = \sum_{n \in \N} \check{v}_n R_{\Gamma}^{-1}U_n\,,\]
where $R_\Gamma u = \sum\hat{u}_n T_n \in T^s$ and $R_\Gamma v = \sum\check{v}_n U_n  \in U^s$. To $u$ and $v$ are associated the linear forms 
\[\forall \varphi \in \Cinf(\overline{\Gamma}), \quad \duality{u}{\varphi}_\frac{1}{\omega_\Gamma} \isdef \duality{R_\Gamma u}{R_{\Gamma}\varphi}_\frac{1}{\omega}\,,\]
\[\forall \varphi \in \Cinf(\overline{\Gamma}), \quad \duality{v}{\varphi}_{\omega_\Gamma} \isdef \frac{\abs{\Gamma}^2}{4}\duality{R_\Gamma v}{R_{\Gamma}\varphi}_{\omega}\,.\] 
\noindent The results of the previous section are easily extended to this new setting:
\begin{lemma} For all $s \in \R$, $T^s(\Gamma)$ and $U^s(\Gamma)$ are Hilbert spaces for the scalar products 
	\[\inner{u}{v}_{T^s(\Gamma)} = \inner{R_\Gamma u}{R_\Gamma v}_{T^s}\,,\]
	\[\inner{u}{v}_{U^s(\Gamma)} = \frac{\abs{\Gamma}^2}{2}\inner{R_\Gamma u}{R_\Gamma v}_{U^s}\,.\]
	With these definitions, 
	\[\inner{u}{v}_{T^0(\Gamma)} = \duality{u}{\overline{v}}_\frac{1}{\omega_\Gamma} = \int_{\Gamma} \frac{u(x) \overline{v(x)}}{ \omega_\Gamma(x)} dx\,,\]
	\[\inner{u}{v}_{U^0(\Gamma)} = \duality{u}{\overline{v}}_{\omega_\Gamma}  = \int_{\Gamma} \omega_\Gamma(x) u(x) \overline{v(x)} dx\,.\]
	In particular $T^0(\Gamma) = L^2_\frac{1}{\omega_\Gamma}$ and $U^0(\Gamma) = L^2_{\omega_\Gamma}$. For $s \in \R$, the dual of $T^s(\Gamma)$ (resp. $U^s(\Gamma)$) is the set of linear forms $\duality{u}{\cdot}_\frac{1}{\omega_\Gamma}$ (resp. $\duality{v}{\cdot}_{\omega_\Gamma}$) such that $u \in T^{-s}$ (resp. $v \in U^{-s}$). For $s < s'$, the injections $T^{s'}(\Gamma) \subset T^s(\Gamma)$ and $U^{s'}(\Gamma) \subset U^s(\Gamma)$ are compact. Furthermore, $(T^s(\Gamma))_{s\in \R}$ and $(U^s(\Gamma))_{s\in \R}$ are two Hilbert exact interpolation scales. For all $s \in \R$, $T^s(\Gamma) \subset U^s(\Gamma)$ and for all $s \geq 1$, $U^s(\Gamma) \subset T^{s-1}(\Gamma)$ with continuous inclusions. For $\varepsilon > 0$, $T^{1/2 + \varepsilon}(\Gamma) \subset C^0(\Gamma)$ and $U^{3/2+\varepsilon} \subset C^0(\Gamma)$ with continuous inclusions. Finally, $T^\infty(\Gamma) = U^\infty(\Gamma) = C^\infty(\overline{\Gamma})$ and this space is dense in $T^{s}(\Gamma)$ and $U^s(\Gamma)$ for all $s\in \R$.  
\end{lemma}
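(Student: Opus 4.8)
The plan is to reduce every assertion to its already-established counterpart on $[-1,1]$ through the pullback $R_\Gamma$. The definitions of $T^s(\Gamma)$ and $U^s(\Gamma)$ are arranged precisely so that, for each $s \in \R$, $R_\Gamma$ is a (scaled) isometric isomorphism: $\norm{u}_{T^s(\Gamma)} = \norm{R_\Gamma u}_{T^s}$ and $\norm{u}_{U^s(\Gamma)}^2 = \frac{\abs{\Gamma}^2}{2}\norm{R_\Gamma u}_{U^s}^2$, sending the basis $R_\Gamma^{-1}T_n$ (resp. $R_\Gamma^{-1}U_n$) to $T_n$ (resp. $U_n$). First I would record this as a master statement: $R_\Gamma : T^s(\Gamma) \to T^s$ and $R_\Gamma : U^s(\Gamma) \to U^s$ are topological isomorphisms for every real $s$, intertwining the duality pairings, $\duality{u}{\varphi}_{\frac{1}{\omega_\Gamma}} = \duality{R_\Gamma u}{R_\Gamma\varphi}_{\frac{1}{\omega}}$ and $\duality{v}{\varphi}_{\omega_\Gamma} = \frac{\abs{\Gamma}^2}{4}\duality{R_\Gamma v}{R_\Gamma\varphi}_{\omega}$, by the very definitions given above. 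Once this is in place, each item of the lemma is the image under $R_\Gamma^{-1}$ of a result proved earlier in the section.

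Concretely, the Hilbert-space structure is inherited because an isometric image of a Hilbert space is a Hilbert space. For the $s=0$ identifications I would perform the change of variables $x = r(t)$, for which $d\sigma = \frac{\abs{\Gamma}}{2}\,dt$ and $\omega_\Gamma(r(t)) = \frac{\abs{\Gamma}}{2}\omega(t)$ by \eqref{param2}; substituting into $\inner{u}{v}_{T^0(\Gamma)} = \inner{R_\Gamma u}{R_\Gamma v}_{T^0}$ and $\inner{u}{v}_{U^0(\Gamma)} = \frac{\abs{\Gamma}^2}{2}\inner{R_\Gamma u}{R_\Gamma v}_{U^0}$ yields, after tracking the normalizing constants, the claimed integral representations and hence $T^0(\Gamma) = L^2_{\frac{1}{\omega_\Gamma}}$, $U^0(\Gamma) = L^2_{\omega_\Gamma}$. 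The duality characterization then follows because composing with $R_\Gamma$ identifies the dual of $T^s$ (described in the Definition as the forms $\duality{u}{\cdot}_{\frac{1}{\omega}}$, $u \in T^{-s}$) with the dual of $T^s(\Gamma)$, via the intertwining of pairings; likewise for $U$.

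The remaining properties transport verbatim. The compact inclusions factor as $R_\Gamma^{-1}\circ(T^{s'}\hookrightarrow T^s)\circ R_\Gamma$ (and analogously for $U$), so compactness follows from the earlier corollary since pre- and post-composition with isomorphisms preserve compactness. The interpolation-scale property transports because the defining log-convexity inequality $\norm{u}_{T^s(\Gamma)}\le \norm{u}_{T^{s_1}(\Gamma)}^\theta \norm{u}_{T^{s_2}(\Gamma)}^{1-\theta}$ is obtained by applying $R_\Gamma$ and invoking its $[-1,1]$ version, the $s$-independent constant $\frac{\abs{\Gamma}^2}{2}$ in the $U$-norm cancelling between the two sides. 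The inclusions $T^s(\Gamma)\subset U^s(\Gamma)$ and $U^s(\Gamma)\subset T^{s-1}(\Gamma)$ (for $s\ge 1$) follow from \autoref{inclusionsTsUs} after transport, noting that the identification maps $I$ and $J$ are conjugated by $R_\Gamma$ and that the relations \eqref{TnAsUn}--\eqref{UnAsTn} are preserved since $R_\Gamma^{-1}$ is linear. The $C^0$ embeddings follow from \autoref{LemInjectionsContinues}: if $R_\Gamma u$ is continuous on $[-1,1]$ with sup-norm bounded by the relevant norm, then $u = (R_\Gamma u)\circ r^{-1}$ is continuous on $\Gamma$ with the same bound. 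Finally, since $r$ is a $C^\infty$ diffeomorphism, $R_\Gamma$ restricts to a bijection $\Cinf(\overline\Gamma)\to \Cinf([-1,1])$, so \autoref{LemTinfCinf} gives $T^\infty(\Gamma) = U^\infty(\Gamma) = R_\Gamma^{-1}(\Cinf([-1,1])) = \Cinf(\overline\Gamma)$, and density of this space in every $T^s(\Gamma)$ and $U^s(\Gamma)$ is the image under the isomorphism $R_\Gamma^{-1}$ of the density of $\Cinf([-1,1])$ established earlier.

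The only non-automatic work is bookkeeping of the normalizing constants in the $s=0$ identifications, and checking that $R_\Gamma$ is genuinely well-defined and isometric on the negative-order spaces, where it acts on linear forms rather than functions. There one must verify that the coefficient-based definition of $\duality{\cdot}{\cdot}_{\frac{1}{\omega_\Gamma}}$ (and of $\duality{\cdot}{\cdot}_{\omega_\Gamma}$) agrees, for smooth arguments, with the transported integral pairing, so that duals and distributions are carried across consistently. I expect this to be the main, though still routine, obstacle; everything else is a mechanical consequence of the isometry property.
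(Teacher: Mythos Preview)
Your proposal is correct and follows exactly the approach the paper indicates: the lemma is stated there without proof, preceded only by the remark that ``the results of the previous section are easily extended to this new setting,'' i.e., by transport through the isometry $R_\Gamma$. Your write-up in fact supplies more detail than the paper does.
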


\section{Pseudo-differential operators on open curves}

\label{sec:PDO}

We now introduce the two classes of pseudo-differential operators on open curves. Our approach can be summarized as follows. Through the change of variables $x = \cos \theta$, an operator on the segment can be viewed as an operator on the torus $\mathbb{T}_{2\pi}$. On this geometry, a simple algebra of pseudo-differential operators exists \cite{turunen1998symbol}. However, the inverse change of variables, $\theta = \arccos(x)$ has singularities at $x = -1$ and $x = 1$, preventing the simple transfer of the properties of this algebra back to the segment. To solve this problem, we consider pseudo-differential operators which preserve even or odd functions. The symbols of those operators have some parity properties that lead to cancellations of the singularity. As a result, two classes of operators emerge, related to pseudo-differential operators on the torus that stabilize even and odd functions respectively. 

We start by collecting some facts on periodic pseudo-differential operators in section \ref{subsec:PPDO}. We then introduce a first class of pseudo-differential operators on the segment (and more generally on smooth open curves) in section \ref{subsec:PDOTs}, which is based on the scales of Hilbert spaces $(T^s)_{s\in \R}$ presented in the previous section. We show that the usual properties of pseudo-differential operators hold in this class. The pseudo-differential operators based on $(U^s)_{s\in \R}$ are introduced in section \ref{subsec:PDOUs}. Some links between the two classes are drawn in \autoref{subsec:linkTsUs} and finally, we state some results about square-roots of classical elliptic pseudo-differential operators in \autoref{subsec:squareRoot}.

\subsection{Periodic pseudo-differential operators}

\label{subsec:PPDO}

On the family of periodic Sobolev spaces $H^s_{per}$, a class of periodic pseudo differential operators (PPDO) is studied in \cite{turunen1998symbol}. We briefly reproduce here the material needed for our purposes. A PPDO of order $\alpha$ on $H^s$ is an operator of the form 
\[Au(\theta) =  \sum_{n \in \Z} \sigma_A(\theta,n) \mathcal{F}u(n) e^{in\theta}\,\]
for a ``prolongated symbol" $\sigma_A \in C^{\infty}(\mathbb{T}_{2\pi} \times \R)$ satisfying 
\begin{equation}
\label{SymbolsCond}
\forall j,k \in \N, \quad \exists C_{j,k} >0 : \quad \abs{D^j_\theta D_\xi^k \sigma_A(\theta,\xi)} \leq C_{j,k}(1 + \abs{\xi})^{\alpha - k}\,,
\end{equation}
where
$${D_\theta \isdef \frac{1}{i}\frac{\partial}{\partial \theta}}, \quad  D_\xi \isdef \frac{1}{i}\frac{\partial}{\partial \xi}\,.$$ 
The class of symbols that satisfy \eqref{SymbolsCond} is denoted by $\Sigma^\alpha$. Let $\Sigma^{\infty} \isdef \cup_{\alpha \in \R} \Sigma^\alpha$ and $\Sigma^{-\infty} = \cap_{\alpha \in \R} \Sigma^\alpha$. The operator defined by a symbol $\sigma$ is denoted by $\textit{Op}(\sigma)$ and the set of PPDOs of order $\alpha$ is denoted by $\textit{Op}(\Sigma^\alpha)$. The PPDOs of $\textit{Op}(\Sigma^{-\infty})$ are called smoothing operators. 

The prolongated symbol is not unique but determined uniquely at integer values of $\xi$ by (see \cite{turunen1998symbol}):
\begin{equation}
\label{symbolUniqueDetermine}
\forall (\theta,n) \in \mathbb{T}_{2\pi} \times \N, \quad \sigma_A(\theta,n) =  \conj{e_{n}(\theta)}Ae_n(\theta)\,,
\end{equation}
where we recall the notation $e_n(\theta) = e^{in\theta}$. This justifies the terminology of ``prolongated symbol". The operator $A$ is in $\textit{Op}(\Sigma^\alpha)$ if and only if
\begin{equation}
\forall j,k \in \N, \quad  \exists C_{j,k} > 0 :\quad  \abs{D_\theta^j \Delta_n^k \sigma_A(\theta,n)} \leq C_{j,k}(1 + \abs{n})^{\alpha - k}\,,
\label{condPPDO}
\end{equation}
where $\Delta_n \phi(\theta,n) = \phi(\theta,n+1) - \phi(\theta,n)$. That is, if the symbol defined in \eqref{symbolUniqueDetermine} satisfies \eqref{condPPDO}, then there exists a prolongated symbol satisfying \eqref{SymbolsCond}. Because of this, we write $\sigma \in \Sigma^p$ for a symbol $\sigma(\theta,n)$ that can be prolongated to a symbol ${\sigma}(\theta,\xi) \in \Sigma^p$. An operator in $\textit{Op}(\Sigma^\alpha)$ maps $H^s$ to $H^{s - \alpha}$ continuously for all $s \in \R$. The composition of two operators in $\textit{Op}(\Sigma^\alpha)$ and $\textit{Op}(\Sigma^\beta)$ gives rise to an operator in $\textit{Op}(\Sigma^{\alpha+\beta})$. 
If two symbols $a$ and $b$ in $\Sigma^{ \infty}$ satisfy $a - b  \in \Sigma^{\alpha}$, we write $a = b + \Sigma^\alpha$. 
\begin{definition}
	Let $a \in \Sigma^{\infty}$. If there exists a sequence of reals $(p_j)_{j \in \N}$ such that $p_{j+1} < p_{j}$ and a sequence of symbols $a_j \in \Sigma^{p_j}$ such that for all $N$, $a = \displaystyle\sum_{i = 0}^{N}a_i + \displaystyle\Sigma^{p_{N+1}}$, we write  $a \sim \displaystyle\sum_{i = 0}^{+ \infty} a_i \,.$
	This is called an asymptotic expansion of the symbol $a$. 
\end{definition}
The symbol of the composition of two PPDOs $A$ and $B$ is denoted by $\sigma_A \# \sigma_B$ and satisfies the asymptotic expansion \cite{turunen1998symbol}
\begin{equation}
\label{a_diese_b}
\sigma_A \# \sigma_B(\theta,\xi) \sim \sum_{j = 0}^{+\infty}\frac{1}{j!} \left(\frac{\partial}{\partial \xi}\right)^j \sigma_A(\theta,\xi) D_\theta^j \sigma_B(\theta,\xi)\,.
\end{equation}
In particular, if $A \in Op(\Sigma^\alpha)$ and $B \in Op(\Sigma^\beta)$, then
\[AB - BA \in Op(\Sigma^{\alpha + \beta - 1})\,.\]
\begin{definition}
	A symbol $a \in \Sigma^\alpha$ is ``classical" if it admits an asymptotic expansion of the form
	\[a \sim \sum_{i = 0}^{+\infty} a_{\alpha - i}\] 
	where the symbols $a_{\alpha - i}$ are positive homogeneous of order $\alpha - i$ for $|\xi| \geq 1$, i.e.
	\[\forall |\xi| \geq 1, \forall \lambda > 0, \quad  \alpha_i(x,\lambda \xi)  = \lambda^{\alpha - i} \alpha_i(x,\xi)\,.\]
	In this case, the symbol $a_\alpha$ is called the principal symbol of $a$. A symbol $\sigma$ is said to be elliptic if it satisfies
	\[\exists (C,M) :  \forall |n| \geq M, \quad |\sigma(\theta,n)| \geq C (1 + |n|)^{\alpha}\,.\] 
	A classical symbol is elliptic if and only if its principal symbol does not vanish
	\[\forall \xi \neq 0, \quad a_\alpha(x,\xi) \neq 0\,.\]
	A PPDO is said to be classical (resp. elliptic) if it admits a classical (resp. elliptic) symbol. 
\end{definition}
A standard result in pseudo-differential theory is that elliptic operators can be inverted modulo smoothing operators:
\begin{proposition}[{See \cite[Thm 4.5]{ruzhansky2010quantization}}]
	\label{parametrixElliptic}
	Let $A$ be an elliptic PPDO of order $\alpha$. Then there exists an elliptic PPDO $B$ of order $-\alpha$ such that
	\[BA = I + R_1, \quad AB = I + R_2\]
	where $R_1, R_2$ are smoothing operators. 
	The operator $B$ is called a parametrix of $A$. If $A$ is classical, then it admits a classical parametrix.
\end{proposition}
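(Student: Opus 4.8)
The plan is to construct the parametrix at the level of symbols, inverting the symbol of $A$ modulo lower-order terms by means of the composition formula \eqref{a_diese_b}. Writing $a = \sigma_A \in \Sigma^\alpha$, I would seek a symbol $b \in \Sigma^{-\alpha}$ such that $a \# b \sim 1$, where $1$ denotes the symbol of the identity operator. Once such a $b$ is found, the operator $B = \textit{Op}(b)$ satisfies $AB = I + R_2$ with $R_2 \in \textit{Op}(\Sigma^{-\infty})$, which is the desired right parametrix.

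To build $b$, I would proceed by an inductive, purely algebraic correction scheme. By ellipticity there holds $|a(\theta,n)| \geq C(1+|n|)^\alpha$ for $|n| \geq M$; I set $b_0(\theta,\xi) = 1/a(\theta,\xi)$ for $|\xi| \geq M$ and extend it smoothly for small $|\xi|$, so that $b_0 \in \Sigma^{-\alpha}$ and, by the leading term of \eqref{a_diese_b}, $a \# b_0 = 1 - r_1$ with $r_1 \in \Sigma^{-1}$. Supposing symbols $b_0,\dots,b_N$ with $b_j \in \Sigma^{-\alpha-j}$ have been constructed so that $a \# (b_0 + \cdots + b_N) = 1 - r_{N+1}$ with $r_{N+1} \in \Sigma^{-N-1}$, I would define $b_{N+1} = b_0\, r_{N+1} \in \Sigma^{-\alpha-N-1}$; inserting this into \eqref{a_diese_b} cancels the leading part of $r_{N+1}$ and leaves a remainder $r_{N+2} \in \Sigma^{-N-2}$. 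No differential equations appear: at each stage the lower-order terms of the composition involve only the previously constructed $b_j$, and multiplication by $b_0$ inverts the leading symbol.

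The step I expect to be the main obstacle is passing from this formal series to an honest symbol. For this I would invoke the asymptotic-summation (Borel) lemma for symbols, which furnishes $b \in \Sigma^{-\alpha}$ with $b \sim \sum_{j} b_j$; by construction $a \# b \sim 1$, so that $AB - I$ is smoothing. Running the same argument with the roles reversed (or on the transpose) produces a left parametrix $B'$ with $B'A = I + R_1'$ and $R_1'$ smoothing. To see that $B$ is a genuine two-sided parametrix, I would compute $B' = B'(AB - R_2) = (B'A)B - B'R_2 = B + R_1'B - B'R_2$; since $R_1'B$ and $B'R_2$ are smoothing, $B' = B$ modulo $\textit{Op}(\Sigma^{-\infty})$, whence $BA = I + R_1$ with $R_1$ smoothing as well.

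Finally, in the classical case, I would note that if $a$ is classical with non-vanishing principal symbol $a_\alpha$, then starting from $b_0 = 1/a_\alpha$ on $|\xi| \geq 1$ each $b_j$ constructed above is positively homogeneous of degree $-\alpha - j$. The asymptotic sum $b$ is therefore a classical symbol of order $-\alpha$, and $B = \textit{Op}(b)$ is a classical parametrix, as claimed.
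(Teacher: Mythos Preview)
Your construction is the standard parametrix argument and is correct. Note, however, that the paper does not actually prove this proposition: it is stated with a reference to \cite[Thm 4.5]{ruzhansky2010quantization} and no proof is given in the text. Your approach is precisely the classical one found in that reference and in standard treatments of pseudo-differential calculus, so there is nothing to compare.
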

\begin{corollary}
	\label{SpectralTheoremPPDO}
	If $A$ is an elliptic PPDO, then
	\[\forall u \in H^{-\infty}_{per}\quad u \in H^\infty_{per} \iff Au \in H^{\infty}_{per}\,.\] 
	If $A$ is of order $\alpha \neq 0$, it admits a family of eigenfunctions $(u_n)_{n \in \N}$ that form a complete orthogonal basis of $L^2(\mathbb{T}_{2\pi})$. If $\alpha > 0$, the functions $u_n$ are $C^{\infty}$ and the eigenvalues can be chosen in increasing order diverging to $+\infty$.
\end{corollary}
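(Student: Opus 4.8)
I would prove the three assertions in turn, all resting on the parametrix $B$ of order $-\alpha$ furnished by \autoref{parametrixElliptic}, together with the mapping property that any $\textit{Op}(\Sigma^\beta)$ sends $H^s_{per}$ continuously into $H^{s-\beta}_{per}$. For the equivalence, the forward implication is immediate: if $u \in H^\infty_{per} = \cap_s H^s_{per}$, then $Au \in \cap_s H^{s-\alpha}_{per} = H^\infty_{per}$. For the converse I would use $BA = I + R_1$ with $R_1$ smoothing: if $Au \in H^\infty_{per}$ then $BAu \in H^\infty_{per}$ (since $B$ raises regularity by $\alpha$) and $R_1 u \in H^\infty_{per}$ (a smoothing operator maps $H^{-\infty}_{per}$ into $H^\infty_{per}$), so that $u = BAu - R_1 u \in H^\infty_{per}$.

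For the spectral statement I would realise $A$ as an unbounded operator on $L^2(\mathbb{T}_{2\pi})$ with domain $H^\alpha_{per}$ in the case $\alpha > 0$ (when $\alpha < 0$ the operator $A$ itself already maps $L^2(\mathbb{T}_{2\pi})$ into $H^{-\alpha}_{per}$ and is therefore compact on $L^2(\mathbb{T}_{2\pi})$, and the argument is the same). The crucial observation is that the resolvent is compact: for $\lambda$ in the resolvent set, $(A-\lambda)^{-1}$ maps $L^2(\mathbb{T}_{2\pi})$ boundedly into $H^\alpha_{per}$, and the inclusion $H^\alpha_{per} \hookrightarrow L^2(\mathbb{T}_{2\pi})$ is compact (Rellich's theorem, equivalently transported from the compact inclusions of the scales $T^s$, $U^s$ via the isomorphisms $\mathcal{C},\mathcal{S}$). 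Under the hypothesis that $A$ is self-adjoint on $L^2(\mathbb{T}_{2\pi})$ --- which is what makes the eigenfunctions orthogonal and the family complete --- the spectral theorem for self-adjoint operators with compact resolvent then supplies an orthonormal eigenbasis $(u_n)_{n\in\N}$ with real eigenvalues of finite multiplicity accumulating only at infinity. If moreover the principal symbol of $A$ is positive, G\aa rding's inequality shows $A$ to be bounded below, so that only finitely many eigenvalues are negative and the $\lambda_n$ can be listed in increasing order with $\lambda_n \to +\infty$.

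To obtain the $C^\infty$ regularity of the $u_n$ I would bootstrap on the parametrix identity rather than quote the equivalence directly, since $Au_n = \lambda_n u_n$ is not a priori smooth. From $u_n = BAu_n - R_1 u_n = \lambda_n Bu_n - R_1 u_n$ one sees that $u_n \in H^s_{per}$ forces $u_n \in H^{s+\alpha}_{per}$; starting from $u_n \in L^2_{per}$ and iterating, using $\alpha>0$, yields $u_n \in H^{k\alpha}_{per}$ for all $k$, hence $u_n \in H^\infty_{per} = C^\infty(\mathbb{T}_{2\pi})$.

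The step requiring the most care is the orthogonality and completeness of the eigenbasis, which is genuinely false for a general elliptic PPDO (a non-normal elliptic operator such as $-i\partial_\theta + ic(\theta)$ with $c$ real and nonconstant has no orthogonal eigenbasis) and must rely on $A$ being self-adjoint, or at least normal, on $L^2(\mathbb{T}_{2\pi})$; this is the setting of the operators to which the corollary is later applied. By contrast, the compactness of the resolvent is a soft consequence of ellipticity through the parametrix and Rellich compactness, and the smoothness of eigenfunctions is a routine bootstrap.
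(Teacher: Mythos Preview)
Your argument follows the paper's proof almost verbatim: the equivalence via $u = BAu - R_1 u$, the compactness of $A$ (for $\alpha<0$) or of its resolvent (for $\alpha>0$) feeding the spectral theorem, and the bootstrap $u_n = \lambda_n B u_n - R_1 u_n$ for smoothness of eigenfunctions are exactly the steps the paper takes.

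You are, in fact, more careful than the paper on one point. You rightly observe that orthogonality and completeness of the eigenbasis require $A$ to be self-adjoint (or at least normal), and that divergence of the eigenvalues to $+\infty$ rather than $\pm\infty$ needs something like positivity of the principal symbol. The paper's proof invokes ``the spectral theorem'' for a compact operator and asserts that Fredholm index zero implies compact resolvent, without ever stating self-adjointness as a hypothesis; the corollary as written is therefore not literally true (your example $-i\partial_\theta + ic(\theta)$ is apt, and already $-i\partial_\theta$ has eigenvalues going to both $\pm\infty$). Your reading --- that the intended scope is self-adjoint elliptic PPDOs with positive principal symbol, as in the later application to $\tilde D = -\partial_{\theta\theta} - k^2\sin^2\theta$ --- is the correct one, and your proof is complete under that hypothesis.
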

\begin{proof}
	Let $A \in Op(\Sigma^{\alpha})$. The direct implication of the first statement is a consequence of the continuity of $A$ from $H^s$ to $H^{s + \alpha}$. For the reciprocal statement, let $B$ be a parametrix of $A$ and let $R$ be the smoothing operator $R = BA - I_d$. We have
	\[u = BAu - Ru\,.\] 
	Since $Au$ is smooth, so is $BAu$ by the direct implication. Moreover, $Ru$ is always smooth since $R$ is a smoothing operator. This proves the first claim. If $\alpha < 0$, a complete orthogonal basis of eigenvectors is provided by the spectral theorem, because in this case, $A$ is compact in $L^2(\mathbb{T}_{2\pi})$. For the case $\alpha > 0$, the previous result implies that $A$ is Fredholm of index $0$ and thus has a compact resolvent. It remains to show that the eigenvectors are smooth. Fix $u$ such that 
	\[Au = \lambda u\,.\]
	Then, left-multiplying by the parametrix $B$ of $A$, we have
	\[u = \lambda Bu - Ru\]
	Since $B : H^{s} \to H^{s-\alpha}$ for all $s \in \R$ and since $R$ is smoothing, a simple bootstrap argument shows that $u \in H^{\infty}_{per}$.  
\end{proof}
\begin{proposition}[{see \cite{turunen1998symbol}}]	
	\label{thrunen}
	Consider an integral operator $K$ of the form 
	\[K : u \mapsto \frac{1}{2\pi}\int_{-\pi}^\pi a(\theta,\theta') h(\theta-\theta') u(\theta') d\theta'\,,\]
	where $a$ is $2\pi$-periodic and $C^{\infty}$ in both arguments and $h$ is a $2\pi$-periodic distribution. Assume that the Fourier coefficients $\mathcal{F}h(n)$ of $h$ can be prolonged to a function $\hat{h}(\xi)$ on $\R$ such that
	\[\forall k \in \N, \quad \exists C_k > 0: \quad \abs{\partial^k_\xi \hat{h}(\xi)} \leq C_k (1 + \abs{\xi})^{\alpha - k}\,\]
	for some $\alpha$. Then $K$ is in $\textit{Op}(\Sigma^\alpha)$ with a symbol satisfying the asymptotic expansion
	\begin{equation}
	\label{FormuleIntegralOperatorSymbol}
	\sigma_K(\theta, \xi) \sim \sum_{j = 0}^{+ \infty} \frac{1}{j!} \left(\frac{\partial}{\partial \xi}\right)^j \hat{h}(\xi) D_{t}^ja(t,\theta)_{| t= \theta}\,.
	\end{equation}
\end{proposition}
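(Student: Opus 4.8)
The plan is to compute the symbol of $K$ from the characterisation \eqref{symbolUniqueDetermine}, to exhibit an explicit smooth prolongation and check that it lies in $\Sigma^\alpha$, and finally to read off the asymptotic expansion \eqref{FormuleIntegralOperatorSymbol} by Taylor-expanding $a$ along the diagonal.

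First I would apply \eqref{symbolUniqueDetermine} to $u=e_n$. Using $2\pi$-periodicity to perform the change of variables $t=\theta-\theta'$, one obtains
\[\sigma_K(\theta,n)=\frac{1}{2\pi}\int_{-\pi}^{\pi}a(\theta,\theta-t)\,h(t)\,e^{-int}\,dt,\]
understood as the action of the periodic distribution $h$ on the smooth function $t\mapsto a(\theta,\theta-t)e^{-int}$. Setting $b_\theta(t)\isdef a(\theta,\theta-t)$ and expanding $h=\sum_m \mathcal{F}h(m)e_m$, this reads $\sigma_K(\theta,n)=\sum_{k\in\mathbb{Z}}\hat h(n-k)\,\mathcal{F}b_\theta(k)$. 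The natural prolongation is then
\[\tilde\sigma(\theta,\xi)\isdef\sum_{k\in\mathbb{Z}}\hat h(\xi-k)\,\mathcal{F}b_\theta(k),\]
which agrees with $\sigma_K$ at integer $\xi$ and is smooth, since $\hat h\in C^\infty(\mathbb{R})$ and, $a$ being jointly smooth and periodic, the coefficients $\mathcal{F}b_\theta(k)$ together with all their $\theta$-derivatives decay faster than any power of $k$, uniformly in $\theta$.

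Next I would prove $\tilde\sigma\in\Sigma^\alpha$, i.e. verify \eqref{SymbolsCond}. Each $\partial_\xi$ falls only on $\hat h(\xi-k)$, lowering its order by one through the hypothesis $|\partial_\xi^l\hat h(\eta)|\le C_l(1+|\eta|)^{\alpha-l}$, whereas each $\partial_\theta$ only transfers derivatives onto $a$ in its two slots (via $\partial_\theta b_\theta=(\partial_1+\partial_2)a(\theta,\theta-\cdot)$), hence preserves the uniform rapid decay of the coefficients. The one genuinely technical point — and the main obstacle — is to convert the bound $(1+|\xi-k|)^{\alpha-l}$ into a bound in $(1+|\xi|)^{\alpha-l}$ uniformly in the summation over $k$: here I would invoke Peetre's inequality $(1+|\xi-k|)^{\alpha-l}\le 2^{|\alpha-l|}(1+|\xi|)^{\alpha-l}(1+|k|)^{|\alpha-l|}$ and absorb the factor $(1+|k|)^{|\alpha-l|}$ into the rapid decay of $\mathcal{F}b_\theta(k)$, so that the resulting series converges and yields exactly $|\partial_\theta^j\partial_\xi^l\tilde\sigma(\theta,\xi)|\le C_{j,l}(1+|\xi|)^{\alpha-l}$. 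This establishes $K\in\textit{Op}(\Sigma^\alpha)$.

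Finally, for the asymptotic expansion I would Taylor-expand $b_\theta(t)=a(\theta,\theta-t)$ in $t$ about $t=0$ to order $N$, with integral remainder $r_N(\theta,t)$ carrying a factor $t^N$. Inserting this into the formula for $\sigma_K$ and using the identity $\tfrac{1}{2\pi}\int_{-\pi}^{\pi}t^j h(t)e^{-i\xi t}\,dt=i^j\partial_\xi^j\hat h(\xi)$ — just differentiation under the distributional pairing — reproduces, for $j<N$, the terms $\tfrac{1}{j!}\,\partial_\xi^j\hat h(\xi)\,\big(\tfrac1i\partial_2\big)^j a(\theta,\theta)$ of \eqref{FormuleIntegralOperatorSymbol}, where $\partial_2$ is the derivative with respect to the integration variable of $a$ and the operator $\tfrac1i\partial_2$ arises from combining the $(-t)^j$ of the Taylor coefficient with the $i^j$ above. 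The remainder contributes the symbol $\tfrac{1}{2\pi}\int_{-\pi}^{\pi}r_N(\theta,t)h(t)e^{-i\xi t}\,dt$; owing to its $t^N$ prefactor, the same Peetre-type estimates as in the previous step — now gaining $N$ extra powers of $\xi$-decay — show that it belongs to $\Sigma^{\alpha-N}$. Since $N$ is arbitrary, this is precisely the asymptotic expansion in the sense of the definition preceding the statement, which completes the proof.
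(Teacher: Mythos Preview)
The paper does not actually prove this proposition; it is quoted verbatim from \cite{turunen1998symbol} and used as a black box. So there is no ``paper's own proof'' to compare against. Your sketch is essentially the standard argument found in that reference and is correct in its architecture: compute $\sigma_K(\theta,n)$ from \eqref{symbolUniqueDetermine}, rewrite it as the discrete convolution $\sum_k\hat h(n-k)\,\mathcal F b_\theta(k)$, prolong by replacing $n$ by $\xi$, and control everything via rapid decay of $\mathcal F b_\theta(k)$ together with Peetre's inequality.

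One technical point deserves care. In your last paragraph you invoke the identity
\[
\frac{1}{2\pi}\int_{-\pi}^{\pi} t^{j}\,h(t)\,e^{-i\xi t}\,dt \;=\; i^{j}\,\partial_\xi^{j}\hat h(\xi)
\]
for general $\xi\in\mathbb R$. This is not literally meaningful: $\hat h$ is merely a \emph{chosen} smooth prolongation of the Fourier coefficients $\mathcal F h(n)$, not a Fourier integral, so there is no formula tying $\partial_\xi\hat h(\xi)$ to any integral against $h$ at non-integer $\xi$. The clean way is to stay with the series representation you already derived,
\[
\tilde\sigma(\theta,\xi)=\sum_{k\in\mathbb Z}\hat h(\xi-k)\,\mathcal F b_\theta(k),
\]
and Taylor-expand $\hat h(\xi-k)$ in the real variable $k$ about $k=0$. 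The $j$-th Taylor term yields precisely $\tfrac{1}{j!}\,\partial_\xi^{j}\hat h(\xi)\sum_k(-k)^{j}\mathcal F b_\theta(k)=\tfrac{1}{j!}\,\partial_\xi^{j}\hat h(\xi)\,D_{\theta'}^{\,j}a(\theta,\theta')\big|_{\theta'=\theta}$, and the Lagrange remainder is bounded by $C\,|k|^{N}\sup_{|\eta-\xi|\le|k|}(1+|\eta|)^{\alpha-N}$, which the same Peetre argument you used for \eqref{SymbolsCond} turns into a $\Sigma^{\alpha-N}$ estimate after summing in $k$. This gives \eqref{FormuleIntegralOperatorSymbol} without the problematic identity. (Incidentally, your identification of the derivative as falling on the integration variable $\theta'$ agrees with the paper's own usage in the proof of \autoref{LemsymbolSk}, where the relevant quantities are written $\partial_s^{j}a(t,s)\big|_{t=s}$.)
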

\noindent In particular, taking $h \equiv 1$, we see that for any functions $a \in C^{\infty}(\mathbb{T}_{2\pi}^2)$, the operator 
\[K : u \mapsto \frac{1}{2\pi}\int_{-\pi}^{\pi} a(\theta,\theta') u(\theta') d\theta'\]
is smoothing. 

\subsection{Pseudo-differential operators on \texorpdfstring{$T^s(\Gamma)$}{Ts(Gamma)}}

\label{subsec:PDOTs}
\begin{definition}
	\label{DefOpTs}
	Let $A$ be an operator on $T^{-\infty}$ and assume that there exists a couple of functions $a_1$ and $a_2$ defined on $[-1,1]\times \N$, that are $\Cinf$ in the first variable and such that for all $n \in \N$,
	\begin{equation}
	AT_n = a_1(x,n) T_n - \omega^2 a_2(x,n) U_{n-1}\,,
	\label{defOpTs}
	\end{equation}
	with, by convention, $U_{-1} = 0$. The operator defined by the previous formula is denoted by $\textit{Op}_T(a_1,a_2)$. Define the symbol $\tilde{\sigma}(a_1,a_2)$ on $\mathbb{T}_{2\pi} \times \Z$ by
	\[\tilde{\sigma}(a_1,a_2)(\theta,n) \isdef a_1(\cos\theta,\abs{n}) + i \sin\theta\,\textup{sign}(n) a_2(\cos\theta,\abs{n})\,.\]
	We say that $(a_1,a_2) \in S^\alpha_T$ if $\tilde{\sigma}(a_1,a_2) \in \Sigma^\alpha$.  In this case, we say that $A$ is a pseudo-differential operator on $T^s$ and that the couple of functions $(a_1,a_2)$ is a pair of symbols for $A$. We denote $S^{\infty}_T \isdef \cup_{\alpha \in \R} S^\alpha_T$ and $S^{-\infty}_T = \cap_{\alpha \in \R} S^\alpha_T$.  The set of pseudo-differential operators (of order $\alpha$) in $T^{-\infty}$ is denoted by $\textit{Op}(S^\infty_T)$ (by $\textit{Op}(S^\alpha_T)$). The operator $A$ is said to be elliptic if it admits a pair of symbols $(a_1,a_2)$ such that $\tilde{\sigma}(a_1,a_2)$ is elliptic. Finally, if $A, B \in Op(S^{\infty}_T)$ are such that $A - B \in Op(S^{\alpha}_T)$, we write
	\[A = B + T_{\alpha}\,.\]
\end{definition} 
\begin{remark}
	It is easy to construct non-trivial symbols in $S^{-\infty}_T$ for the null operator. For example, for some $m \in \N$, take $a(x,n) = \delta_{n = m} \omega^2(x) U_{m-1}(x)$ and $b(x,n) = \delta_{n = m} T_m(x)$. Because of this, a pair of symbol for a pseudo-differential operator on $T^s$ is not unique. 
\end{remark}
\subsubsection*{Examples}
\begin{itemize}
	\item[(i)] Recall that the operator $(\omega \partial_x)^2$ satisfies
	\[-(\omega \partial_x)^2 T_n = n^2 T_n\,.\]
	Therefore, $(\omega \partial_x)^2$ admits the pair of symbols $a_1(x,n) = -n^2$, $a_2(x,n) = 0$. We have
	\[\tilde{\sigma}(a_1,a_2)(\theta,n) = n^2 \in \Sigma^2\,,\]
	thus $(\omega \partial_x)^2 \in S^2_T$ by definition, and this operator is elliptic. 
	\item[(ii)] Similarly one can check that for all $\psi \in C^\infty[-1,1]$, the operator $u \mapsto \psi u$ is in $Op(S_T^0)$.
	\item[(iii)] Let $x$ denote the operator multiplication by $x$ and let $A = (\omega \partial_x)^2 x$. Using the identities
	\[xT_n = \frac{T_{n+1} + T_{n-1}}{2}, \quad \omega^2 U_n = \frac{T_{n-1} - T_{n+1}}{2}\,,\]
	one can check that $A$ admits the pair of symbols
	\[a_1(x,n) = n^2(x + 1), \quad a_2(x,n) = -2n\omega^2\]
	and is in $Op(S_T^{2})$. Notice that it is not possible to find a pair of symbols of $A$ with $a_2 = 0$. This second part in the symbol is thus necessary to allow $Op(S^\infty_T)$ to be an algebra. More generally, we shall see below how the pair of symbols of a composition $AB$ can be systematically obtained from the pair of symbols of $A$ and $B$ using symbolic calculus.
	\item[(iv)] We will see in the next section that the most simple operator of $Op(S^{-1}_T)$, given by
	\[AT_n = \frac{1}{n}T_n\,,\]
	is closely related to the Laplace weighted single-layer potential on a segment.
\end{itemize}

\begin{definition}
	\label{defa1Ta2T}
	For a PPDO $\tilde{A}$ of symbol $\sigma_{\tilde{A}}$, we define $a_1(\tilde{A})$ and $a_2(\tilde{A})$ by 
	\[a_1(\tilde{A})(x,n) = \frac{\tilde{\sigma}(\arccos(x),n) + \tilde{\sigma}(\arccos(x),-n)}{2}\,,\]
	\[a_2(\tilde{A})(x,n) = \frac{\tilde{\sigma}(\arccos(x),n) - \tilde{\sigma}(\arccos(x),-n)}{2i\sqrt{1-x^2}}\,,\]
	where
	\[\tilde{\sigma}(\theta,n) = \frac{\sigma_{\tilde{A}}(\theta,n) + \sigma_{\tilde{A}}(-\theta,-n)}{2}\,.\]
\end{definition}
\noindent We can now state the main results of this section. All properties of the new class $Op(S^\infty_T)$ follow easily from this theorem, as shown in \autoref{PDOTs} below. Recall the definition of the operator $\mathcal{C}$ from eq.~\eqref{defCS}.  
\begin{theorem}
	\label{CharacSalpha}
	Let $A : T^{\infty} \to T^{-\infty}$. Assume that for some PPDO $\tilde{A} \in Op(\Sigma^\alpha)$, there holds
	\begin{equation}
	\label{eqEquiv}
	\mathcal{C}A = \tilde{A}\mathcal{C} \quad \textup{in  } T^{\infty}\,.
	\end{equation}
	Then $A$ has a unique continuous extension as an element of $Op(S^\alpha_T)$, and $(a_1(\tilde{A}),a_2(\tilde{A}))$ is a pair of symbols for $A$. \\
	Reciprocally let $A = Op_T(a_1,a_2) \in Op(S^\alpha_T)$. Then \eqref{eqEquiv} holds, taking for $\tilde{A}$ the PPDO of order $\alpha$ given by the symbol
	\[\sigma_{\tilde{A}} = \tilde{\sigma}(a_1,a_2)\,.\]
\end{theorem}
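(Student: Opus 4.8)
The plan is to prove the two implications separately, settling the reciprocal (computational) direction first and then using it as the engine for the forward direction.

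\textbf{Reciprocal direction.} Here I would just compute $\mathcal{C}(AT_n)$ on the basis and match it against $\tilde A\,\mathcal{C}T_n$. Since $\mathcal{C}v(\theta)=v(\cos\theta)$, and using $T_n(\cos\theta)=\cos(n\theta)$, $U_{n-1}(\cos\theta)=\sin(n\theta)/\sin\theta$ (with $U_{-1}=0$) and $\omega^2(\cos\theta)=\sin^2\theta$, the defining relation $AT_n=a_1(x,n)T_n-\omega^2 a_2(x,n)U_{n-1}$ yields $\mathcal{C}(AT_n)(\theta)=a_1(\cos\theta,n)\cos(n\theta)-a_2(\cos\theta,n)\sin\theta\,\sin(n\theta)$. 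On the other side, writing $\cos(n\theta)=\tfrac12(e_n+e_{-n})$ and applying the PPDO with symbol $\tilde\sigma(a_1,a_2)$, the term $\tfrac12\big[\tilde\sigma(\theta,n)e^{in\theta}+\tilde\sigma(\theta,-n)e^{-in\theta}\big]$ collapses, because of the $\mathrm{sign}(n)$ structure of the symbol, to exactly the same expression. So the reciprocal statement is a one-line verification on $(T_n)_n$, extended to $T^\infty$ by density.

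\textbf{Forward direction, reduction by symmetrization.} The subtlety is that $\tilde A$ is constrained only through its action on the range of $\mathcal{C}$, which is $H^\infty_e$. First I would note that the hypothesis forces $\tilde A$ to preserve evenness: since $\mathcal{C}Au\in H^{-\infty}_e$ for every $u\in T^\infty$ and $\mathcal{C}A=\tilde A\mathcal{C}$, the operator $\tilde A$ maps $\mathcal{C}(T^\infty)=H^\infty_e$ into even functions. Introducing the reflection $P:v(\theta)\mapsto v(-\theta)$, whose conjugation sends the symbol $\sigma_{\tilde A}(\theta,n)$ to $\sigma_{\tilde A}(-\theta,-n)$, I would form $\tilde A^{\mathrm{sym}}=\tfrac12(\tilde A+P\tilde A P)$, whose symbol is exactly the $\tilde\sigma$ of \autoref{defa1Ta2T}. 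As $P$ acts as the identity on even functions and $\tilde A$ preserves evenness, $\tilde A^{\mathrm{sym}}$ and $\tilde A$ agree on $H^\infty_e$, so $\tilde A^{\mathrm{sym}}\mathcal{C}=\tilde A\mathcal{C}=\mathcal{C}A$; moreover $\tilde A^{\mathrm{sym}}\in Op(\Sigma^\alpha)$ since the reflected symbol obeys the same estimates \eqref{condPPDO}.

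\textbf{The core step and main obstacle.} It remains to show that the symmetrized symbol $\tilde\sigma$ is precisely of the form $\tilde\sigma(a_1,a_2)$ with $(a_1,a_2)=(a_1(\tilde A),a_2(\tilde A))$, and that these are genuinely $\Cinf$ up to $x=\pm1$ — this endpoint regularity, despite the singular inverse change of variables $\theta=\arccos x$ and the division by $\sqrt{1-x^2}$, is the heart of the matter. The symmetrization makes $\tilde\sigma$ invariant under $(\theta,n)\mapsto(-\theta,-n)$; I would use this to check that $\theta\mapsto\tfrac12(\tilde\sigma(\theta,n)+\tilde\sigma(\theta,-n))$ is even in $\theta$ while $\theta\mapsto\tfrac{1}{2i}(\tilde\sigma(\theta,n)-\tilde\sigma(\theta,-n))$ is odd. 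The singularity is then resolved by two classical facts: an even smooth $2\pi$-periodic function is a smooth function of $\cos\theta$, giving $a_1(\cdot,n)\in\Cinf([-1,1])$; and an odd one factors as $\sin\theta$ times a smooth function of $\cos\theta$, so the $\sqrt{1-x^2}$ in the denominator of $a_2$ cancels and $a_2(\cdot,n)\in\Cinf([-1,1])$ as well. A short computation on $\theta\in[0,\pi]$ (where $\arccos\cos\theta=\theta$ and $\sqrt{1-\cos^2\theta}=\sin\theta$), extended by the $(\theta,n)\mapsto(-\theta,-n)$ invariance of both sides, confirms $\tilde\sigma(a_1,a_2)=\tilde\sigma\in\Sigma^\alpha$, hence $(a_1,a_2)\in S^\alpha_T$. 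Finally, applying the already-proven reciprocal direction to $A':=Op_T(a_1,a_2)$ gives $\mathcal{C}A'=\tilde A^{\mathrm{sym}}\mathcal{C}=\mathcal{C}A$; injectivity of $\mathcal{C}$ on $T^\infty$ (\autoref{lemChar}) yields $A=A'$ there, and the factorization $A=\mathcal{C}^{-1}\tilde A^{\mathrm{sym}}\mathcal{C}$ together with density of $T^\infty$ provides the unique continuous extension in $Op(S^\alpha_T)$ with pair of symbols $(a_1(\tilde A),a_2(\tilde A))$.
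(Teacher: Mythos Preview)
Your proposal is correct and follows essentially the same route as the paper: the reciprocal direction is verified by direct computation on $T_n$ (the paper's \autoref{identiteAlg}), the forward direction proceeds by symmetrizing the symbol to $\tfrac12(\sigma_{\tilde A}(\theta,n)+\sigma_{\tilde A}(-\theta,-n))$, establishing the $C^\infty$ regularity of $a_1,a_2$ via the even/odd-in-$\theta$ decomposition (the paper's \autoref{TransportPPDO_T}, phrased there through $\mathcal{C}^{-1}$ and $\mathcal{S}^{-1}$), and concluding by invoking the reciprocal direction plus injectivity of $\mathcal{C}$ (the paper's \autoref{EquivalenceACCA}). The only cosmetic difference is that you perform the symmetrization at the operator level via the reflection $P$ and $\tilde A^{\mathrm{sym}}=\tfrac12(\tilde A+P\tilde A P)$, whereas the paper does it directly at the symbol level; the two are equivalent since $P\tilde A P$ has symbol $\sigma_{\tilde A}(-\theta,-n)$.
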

\begin{remark}
	We have already stated that a pseudo-differential operator on $T^s$ always admits several distinct symbols. \autoref{CharacSalpha} gives another way to view this fact, by observing that, when $A \in \textit{Op}(S^\alpha_T)$, there is an infinite number of operators $\tilde{A}$ satisfying $\mathcal{C}A = \tilde{A} \mathcal{C}$. Indeed, if $\tilde{B}$ is any PPDO that vanishes on the set of even functions, one has 
	\[\mathcal{C}A =( \tilde{A} + \tilde{B})\mathcal{C} \quad \textup{in } T^{\infty}\,.\]
	In light of this, a natural idea to define uniquely the symbol of $A$ would be to set
	\[a_1 = a_1(\tilde{A}^*), \quad a_2 = a_2(\tilde{A}^*)\,,\] 
	where $\tilde{A}^*$ is the operator defined by
	\begin{align*}
	\tilde{A}^*\mathcal{C}u &= \mathcal{C} Au\,,\\
	\tilde{A}^*\mathcal{S}u &= 0\,.
	\end{align*}
	However, though $A \in Op(S^\alpha_T)$, $\tilde{A}^*$ may fail to be a PPDO of order $\alpha$. To see why, one can check that if $\tilde{A}$ is a PPDO of order $\alpha$ such that $\mathcal{C}A = \tilde{A} \mathcal{C}$, then the symbol of $\tilde{A}^*$ must be given by
	\[\sigma_{\tilde{A}^*}(\theta,n) = \sigma_{\tilde{A}}(\theta,n) + e^{-2in\theta}\sigma_{\tilde{A}}(\theta,-n)\,.\] 
	In general, this symbol is not in $\Sigma^\alpha$ because of the oscillatory term $e^{-2in\theta}$. 
	In conclusion, there is no clear way how to fix a natural representative in the class of pairs $(a_1,a_2)$ that define the same operator $A$. However, although unusual, this is not an obstacle for the theory. 
\end{remark}
\begin{proof}
	\noindent The proof is decomposed into several lemmas, relying mostly on simple algebraic manipulations. Besides that, the key ingredient is that the operators $\mathcal{C}$ and $\mathcal{S}$ are bijective on the sets of smooth even and odd functions respectively. 
	\begin{lemma}
		\label{identiteAlg}
		Let $A = Op_T(a_1,a_2)$ and $\tilde{A} = Op(\tilde{\sigma}(a_1,a_2))$. Then for all $n \in \N$, 
		\[\mathcal{C}AT_n = \tilde{A}\mathcal{C}T_n \,.\]
	\end{lemma}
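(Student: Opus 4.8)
We want to show that for all $n \in \N$,
\[\mathcal{C}AT_n = \tilde{A}\mathcal{C}T_n\,,\]
where $A = \textit{Op}_T(a_1,a_2)$ and $\tilde{A} = \textit{Op}(\tilde{\sigma}(a_1,a_2))$. The plan is to compute both sides explicitly and check they agree. This is purely a matter of unwinding the two definitions; no analysis is needed since both sides are applied to a single basis element $T_n$.

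The plan is as follows. First I would compute the left-hand side. By the defining formula \eqref{defOpTs},
\[AT_n = a_1(x,n)T_n - \omega^2 a_2(x,n)U_{n-1}\,.\]
Applying $\mathcal{C}$ and using $\mathcal{C}T_n(\theta) = \cos(n\theta)$ together with the identity $\mathcal{C}(\omega^2 U_{n-1}) = \mathcal{C}\omega^2 \cdot \mathcal{C}U_{n-1}$, I would use that $\omega(\cos\theta) = \sqrt{1 - \cos^2\theta} = \abs{\sin\theta}$, so that $\omega^2(\cos\theta) = \sin^2\theta$, and that $\mathcal{C}U_{n-1}(\theta) = U_{n-1}(\cos\theta) = \frac{\sin(n\theta)}{\sin\theta}$ (from the definition of $U_{n-1}$). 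Hence $\mathcal{C}(\omega^2 U_{n-1})(\theta) = \sin^2\theta \cdot \frac{\sin(n\theta)}{\sin\theta} = \sin\theta\sin(n\theta)$. This gives
\[\mathcal{C}AT_n(\theta) = a_1(\cos\theta,n)\cos(n\theta) - a_2(\cos\theta,n)\sin\theta\sin(n\theta)\,.\]

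Next I would compute the right-hand side. Since $\mathcal{C}T_n(\theta) = \cos(n\theta) = \frac{1}{2}(e_n + e_{-n})$, and $\tilde{A}$ acts on $e_m$ by multiplication by the symbol value $\sigma_{\tilde{A}}(\theta,m) = \tilde{\sigma}(a_1,a_2)(\theta,m)$ (using \eqref{symbolUniqueDetermine} and linearity), I would write
\[\tilde{A}\mathcal{C}T_n(\theta) = \tfrac{1}{2}\left(\tilde{\sigma}(a_1,a_2)(\theta,n)e^{in\theta} + \tilde{\sigma}(a_1,a_2)(\theta,-n)e^{-in\theta}\right)\,.\]
Then I would substitute the definition
\[\tilde{\sigma}(a_1,a_2)(\theta,m) = a_1(\cos\theta,\abs{m}) + i\sin\theta\,\textup{sign}(m)\,a_2(\cos\theta,\abs{m})\]
for $m = n$ and $m = -n$. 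The $a_1$ terms combine to $a_1(\cos\theta,n)\cdot\frac{1}{2}(e^{in\theta}+e^{-in\theta}) = a_1(\cos\theta,n)\cos(n\theta)$, matching the first term on the left. For the $a_2$ terms, the sign factor produces $+$ for $m=n$ and $-$ for $m=-n$, so they combine to $a_2(\cos\theta,n)\sin\theta\cdot\frac{i}{2}(e^{in\theta} - e^{-in\theta}) = a_2(\cos\theta,n)\sin\theta\cdot(-\sin(n\theta))$, matching the second term on the left.

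There is no real obstacle here; the only point requiring mild care is the $n = 0$ boundary case, where $U_{-1} = 0$ by convention and $\textup{sign}(0) = 0$, so both sides reduce to $a_1(\cos\theta,0)$ consistently. Assembling the two computations gives $\mathcal{C}AT_n = \tilde{A}\mathcal{C}T_n$ for every $n \in \N$, which is the claim.
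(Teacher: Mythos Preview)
Your proof is correct and follows essentially the same approach as the paper: both compute $\mathcal{C}AT_n(\theta)$ directly from the definition of $Op_T(a_1,a_2)$, expand $\tilde{A}\mathcal{C}T_n$ via $\cos(n\theta)=\tfrac{1}{2}(e_n+e_{-n})$ and the symbol $\tilde{\sigma}(a_1,a_2)$, and match the cosine and sine parts. Your explicit check of the $n=0$ case is a small bonus not spelled out in the paper.
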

	\begin{proof}
		Let $A = Op_T(a_1,a_2)$. Let $\sigma =\tilde{\sigma}(a_1,a_2)$, and $\tilde{A} = Op(\sigma)$. Fix $n \in \N$. On the one hand, we can write
		\begin{align*}
		AT_n(\cos(\theta)) &= a_1(\cos \theta,n) T_n(\cos(\theta)) - \omega^2(\cos \theta) a_2(\cos \theta,n) U_{n-1}(\cos \theta)\\
		&= a_1(\cos \theta,n) \cos(n \theta) - \sin \theta a_2(\cos \theta,n) \sin(n\theta)\,.
		\end{align*}
		On the other hand, 
		\begin{align*}
		\tilde{A}(\mathcal{C}T_n)(\theta) & = \frac{\tilde{A}(e_n)(\theta) + \tilde{A}(e_{-n})(\theta)}{2} \\
		= &\frac{\sigma(\theta,n)e^{in\theta} + \sigma(\theta,-n)e^{-in\theta}}{2}\\
		= &\left(\frac{\sigma(\theta,n) + \sigma(\theta,-n) }{2}\right) \cos(n\theta) + i\left(\frac{\sigma(\theta,n) - \sigma(\theta,-n) }{2}\right)\sin(n\theta)\,.
		\end{align*}
		From the definition of $\tilde{\sigma}(a_1,a_2)$, we see that the first term in parenthesis is equal to $a_1(\cos\theta,n)$, and the second is $i \sin \theta a_2(\cos\theta,n)$. Therefore, 
		\[\mathcal{C}AT_n = \tilde{A}\mathcal{C}T_n\]
		for all $n \in \N$ and the result is proved. 
	\end{proof}
	\begin{lemma}
		\label{TransportPPDO_T}
		If $\tilde{A} = Op(\sigma_{\tilde{A}})$ is a PPDO, then the functions $a_1(\tilde{A})$ and $a_2(\tilde{A})$ are $C^\infty$ in the variable $x$ and there holds the identity
		\[\tilde{\sigma}(a_1(\tilde{A}),a_2(\tilde{A}))(\theta,n) = \frac{\sigma_{\tilde{A}}(\theta,n) + \sigma_{\tilde{A}}(-\theta,-n)}{2}\,.\]
	\end{lemma}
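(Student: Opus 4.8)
The plan is to establish the two assertions of the lemma in turn, using throughout the abbreviation $\rho(\theta,n) \isdef \frac{1}{2}\left(\sigma_{\tilde A}(\theta,n) + \sigma_{\tilde A}(-\theta,-n)\right)$, which is exactly the quantity called $\tilde\sigma$ in \autoref{defa1Ta2T} and the right-hand side of the claimed identity. By construction $\rho$ enjoys the symmetry $\rho(-\theta,-n) = \rho(\theta,n)$, and replacing $n$ by $-n$ gives the companion relation $\rho(-\theta,n) = \rho(\theta,-n)$. In these terms the definitions read $a_1(\tilde A)(x,n) = \frac12\left(\rho(\arccos x,n)+\rho(\arccos x,-n)\right)$ and $a_2(\tilde A)(x,n) = \frac{\rho(\arccos x,n)-\rho(\arccos x,-n)}{2i\sqrt{1-x^2}}$. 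This single symmetry of $\rho$ is the structural fact driving both parts.

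For the smoothness, the difficulty — and the main obstacle of the whole lemma — is that $\arccos$ is not differentiable at $x=\pm1$ and that $\sqrt{1-x^2}$ appears in the denominator of $a_2$, so smoothness of $a_1,a_2$ in $x$ is not a priori clear. I would fix $n\in\N$ and study the two functions $\theta\mapsto \rho(\theta,n)+\rho(\theta,-n)$ and $\theta\mapsto \rho(\theta,n)-\rho(\theta,-n)$. Using the companion relation $\rho(-\theta,n)=\rho(\theta,-n)$, the first is even in $\theta$ and the second is odd, both being $C^\infty$ and $2\pi$-periodic. I would then invoke the classical facts that a smooth even $2\pi$-periodic function is of the form $g(\cos\theta)$ with $g\in C^\infty([-1,1])$, and a smooth odd one of the form $\sin\theta\,h(\cos\theta)$ with $h\in C^\infty([-1,1])$. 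The first gives $a_1(\tilde A)(x,n)=\frac12 g(x)$, manifestly smooth. For $a_2$, writing $\rho(\theta,n)-\rho(\theta,-n)=\sin\theta\,h(\cos\theta)$ and using $\sin(\arccos x)=\sqrt{1-x^2}$ on $[-1,1]$, the factor $\sqrt{1-x^2}$ cancels the denominator and leaves $a_2(\tilde A)(x,n)=\frac1{2i}h(x)$, which is smooth. This cancellation is exactly the removal of the endpoint singularity announced in the introduction.

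For the identity, I would substitute $x=\cos\theta$ directly into the defining formula $\tilde\sigma(a_1,a_2)(\theta,n)=a_1(\cos\theta,\abs{n})+i\sin\theta\,\textup{sign}(n)\,a_2(\cos\theta,\abs{n})$ from \autoref{DefOpTs}. Since $\arccos(\cos\theta)$ equals $\theta$ on $[0,\pi]$ but $-\theta$ on $[-\pi,0]$, and $\sqrt{1-\cos^2\theta}=\abs{\sin\theta}$, the computation must be split according to the sign of $\sin\theta$. On $[0,\pi]$ the $\sin\theta$ in the second summand cancels the $\sqrt{1-\cos^2\theta}$ in $a_2$, and a short bookkeeping of $\abs{n}$ and $\textup{sign}(n)$ — treating $n\ge0$ and $n<0$ separately — collapses the expression to $\rho(\theta,n)$ in every case. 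On $[-\pi,0]$ the same computation produces $\rho(-\theta,-n)$ instead, and the symmetry $\rho(-\theta,-n)=\rho(\theta,n)$ finishes the argument. Hence $\tilde\sigma(a_1(\tilde A),a_2(\tilde A))(\theta,n)=\rho(\theta,n)$ for all $(\theta,n)$, which is the asserted identity. I expect the algebra of this last part to be routine; the only genuinely delicate point across the whole proof is the smoothness step, where the even/odd splitting is what tames the singular inverse change of variables.
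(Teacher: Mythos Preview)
Your proposal is correct and follows essentially the same approach as the paper: both arguments symmetrize $\sigma_{\tilde A}$ to obtain $\rho$, split it into parts even and odd in $n$ (hence in $\theta$), and use that smooth even/odd periodic functions are of the form $g(\cos\theta)$ respectively $\sin\theta\,h(\cos\theta)$ with $g,h\in C^\infty([-1,1])$ to get the smoothness, then verify the identity by parity bookkeeping. The only cosmetic difference is that the paper phrases this via the isomorphisms $\mathcal{C},\mathcal{S}$ of \autoref{lemChar} (together with $T^\infty=U^\infty=C^\infty([-1,1])$), which lets it avoid your explicit case split on the sign of $\sin\theta$ in the identity step.
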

	\begin{proof}
		Let $\displaystyle\tilde{\sigma}(\theta,n) = \frac{\sigma_{\tilde{A}}(\theta,n) + \sigma_{\tilde{A}}(-\theta,-n)}{2}$. We decompose $\tilde{\sigma}$ as \[\tilde{\sigma}(\theta,n) = f(\theta,n) + g(\theta,n)\] 
		where $f(\theta,n) = \frac{\tilde{\sigma}(\theta,n) + \tilde{\sigma}(\theta,-n)}{2}$ and $g(\theta,n) = \frac{\tilde{\sigma}(\theta,n) - \tilde{\sigma}(\theta,-n)}{2}$. By construction, $f$ (resp. $g$) is even (resp. odd) in both $\theta$ and $n$.
		For $n > 0$, there holds
		\[a_1(\tilde{A})(x,n) = f(\arccos(x),n),\quad a_2(\tilde{A})(x,n) = \frac{g(\arccos(x),n)}{i\sqrt{1-x^2}}\,.\]
		Recalling \autoref{lemChar}, this is equivalently expressed as
		\[a_1(\tilde{A})(\cdot,n) = \mathcal{C}^{-1}f(\cdot,n), \quad  a_2(\tilde{A})(\cdot,n) = -i\mathcal{S}^{-1}g(\cdot,n)\,.\] 
		Therefore, $a_1(\tilde{A})$ and $a_2(\tilde{A})$ are $C^\infty$ in $x$ since $f$ (resp. $g$) is a smooth even (resp. odd) function. 
		By definition, we have 
		\begin{align*}
		\tilde{\sigma_T}(a_1(\tilde{A}),a_2(\tilde{A}))(\theta,n) &= \mathcal{C}a_1(\tilde{A})(\theta,\abs{n}) + i \text{sign}(n) \mathcal{S}a_2(\tilde{A})(\theta,\abs{n}) \\
		& = f(\theta,\abs{n}) + \text{sign}(n) g(\theta,\abs{n})\\
		& = f(\theta,{n}) + g(\theta,{n})\\
		& = \tilde{\sigma}(\theta,n)\,,
		\end{align*}
		recalling that $f$ (resp. $g$) is even (resp. odd) in $n$. 
	\end{proof}
	\begin{lemma}
		\label{SymetrieSymbol}
		If $\tilde{A} = Op(\sigma_{\tilde{A}})$ is a PPDO that stabilizes the set of smooth even functions, then 
		$\tilde{A}$ coincides on this set with $\tilde{B} = Op(\sigma_{\tilde{B}})$ where 
		\[{\sigma}_{\tilde B}(\theta,n) = \frac{\sigma_{\tilde{A}}(\theta,n) +\sigma_{\tilde{A}}(-\theta,-n)}{2}\,.\]
	\end{lemma}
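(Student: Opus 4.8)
The plan is to recognize the symmetrization defining $\sigma_{\tilde B}$ as a conjugation of $\tilde A$ by the parity operator $P$ on the torus, $Pu(\theta) \isdef u(-\theta)$. First I would record that $P$ exchanges the Fourier modes, $Pe_n = e_{-n}$ (since $e_n(-\theta) = e_{-n}(\theta)$), that $P^2 = I$, and that the smooth even functions form exactly the $+1$-eigenspace $\{u : Pu = u\}$ of $P$.

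Next I would compute the symbol of the conjugate $P\tilde A P$. Using $\tilde A e_n(\theta) = \sigma_{\tilde A}(\theta,n)e^{in\theta}$, a short calculation gives $P\tilde A P\,e_n(\theta) = \sigma_{\tilde A}(-\theta,-n)e^{in\theta}$, so that $P\tilde A P = \textit{Op}\big((\theta,n)\mapsto \sigma_{\tilde A}(-\theta,-n)\big)$; this is moreover a PPDO of the same order $\alpha$, because the estimates \eqref{condPPDO} are invariant under $(\theta,n)\mapsto(-\theta,-n)$. Averaging then yields $\tilde B = \frac{1}{2}(\tilde A + P\tilde A P)$, i.e. $\tilde B$ is the $P$-symmetrization of $\tilde A$. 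This is the one step needing care, since one must correctly carry the outer reflection $\theta\mapsto-\theta$ through; the rest is purely formal.

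With this identity the conclusion is a single line. For a smooth even $u$ one has $Pu = u$, hence $P\tilde A Pu = P\tilde A u$; and since $\tilde A$ stabilizes even functions, $\tilde A u$ is even, so $P\tilde A u = \tilde A u$. Therefore $\tilde B u = \frac{1}{2}(\tilde A u + P\tilde A Pu) = \frac{1}{2}(\tilde A u + \tilde A u) = \tilde A u$, which is the asserted coincidence of $\tilde A$ and $\tilde B$ on the smooth even functions.

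If one prefers to avoid introducing $P$, the same result follows by evaluating both operators on the basis $\cos(n\theta) = \frac{1}{2}(e_n + e_{-n})$: one computes $\tilde A\cos(n\theta)(\theta) = \frac{1}{2}(\sigma_{\tilde A}(\theta,n)e^{in\theta} + \sigma_{\tilde A}(\theta,-n)e^{-in\theta})$, uses that this is an even function to replace it by the average of itself with its reflection $\theta\mapsto-\theta$, and checks that regrouping the four resulting terms reproduces $\tilde B\cos(n\theta)(\theta)$. In either formulation the only real obstacle is the symbol bookkeeping; once $\tilde B = \frac{1}{2}(\tilde A + P\tilde A P)$ is in place, the stabilization hypothesis collapses the average on even functions immediately.
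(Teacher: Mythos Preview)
Your proof is correct and is essentially the paper's argument repackaged via the parity operator: the paper writes $\tilde A u(\theta) = \tfrac{1}{2}(\tilde A u(\theta) + \tilde A u(-\theta))$ for even $u$ and re-indexes the Fourier sum using $\mathcal{F}u(n) = \mathcal{F}u(-n)$, which is exactly your identity $\tilde B = \tfrac{1}{2}(\tilde A + P\tilde A P)$ combined with $Pu = u$ and $P\tilde A u = \tilde A u$. The introduction of $P$ is a clean conceptual shorthand, but the underlying computation and the two uses of evenness (of $u$ and of $\tilde A u$) are identical to the paper's.
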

	\begin{proof}
		Let $u$ be a smooth even function. Since $Au$ is even, we have
		\[Au(\theta) = \frac{Au(\theta) + Au(-\theta)}{2}\,.\]
		Thus
		\begin{align*}
		Au(\theta) &= \frac{1}{2}\left(\sum_{n  \in \Z}\sigma_{\tilde{A}}(\theta,n) \mathcal{F}u(n)e^{in\theta} + \sigma_{\tilde{A}}(-\theta,n) \mathcal{F}u(n)e^{-in\theta}\right)\\
		& = \frac{1}{2}\left(\sum_{n  \in \Z}\sigma_{\tilde{A}}(\theta,n) \mathcal{F}u(n)e^{in\theta} + \sigma_{\tilde{A}}(-\theta,-n) \mathcal{F}u(-n)e^{in\theta}\right)\\
		& = \sum_{n  \in \Z}\frac{\sigma_{\tilde{A}}(\theta,n) + \sigma_{\tilde{A}}(-\theta,-n)}{2} \mathcal{F}u(n)e^{in\theta}\,
		\end{align*}
		since $\mathcal{F}u(n) = \mathcal{F}u(-n)$. This proves the claim. 
	\end{proof}
	\begin{lemma}
		\label{EquivalenceACCA}
		If $A : T^{\infty} \to T^{-\infty}$ is such that there exists a PPDO $\tilde{A}$ satisfying 
		\[\forall n \in \N, \quad \mathcal{C}AT_n = \tilde{A}\mathcal{C}T_n\,,\]
		then $A = Op_T(a_1(\tilde{A}),a_2(\tilde{A}))$. 
	\end{lemma}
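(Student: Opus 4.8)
The plan is to prove the equality $A = Op_T(a_1(\tilde{A}), a_2(\tilde{A}))$ by testing both sides on the basis $(T_n)_{n\in\N}$: since $Op_T(a_1,a_2)$ is defined through its action on each $T_n$ (\autoref{DefOpTs}), it suffices to show $AT_n = Op_T(a_1(\tilde{A}),a_2(\tilde{A}))T_n$ for every $n$. Setting $B \isdef Op_T(a_1(\tilde{A}), a_2(\tilde{A}))$, I would establish $\mathcal{C}AT_n = \mathcal{C}BT_n$ and then use the injectivity of $\mathcal{C}$ to cancel it.

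The one conceptual step is to extract from the hypothesis that $\tilde{A}$ stabilizes the smooth even functions. For each $n$, the element $\mathcal{C}AT_n$ lies in the image of $\mathcal{C}$, which by \autoref{lemChar} is exactly $H^{\infty}_e$; hence $\tilde{A}\mathcal{C}T_n = \tilde{A}(\cos(n\cdot))$ is even. Because $(\cos(n\theta))_{n\geq 0}$ spans the even functions, the linearity and continuity of $\tilde{A}$ force $\tilde{A}$ to map even functions to even functions. Everything after this point is bookkeeping with the symbol formulas.

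Knowing that $\tilde{A}$ stabilizes the even subspace, I would invoke \autoref{SymetrieSymbol} to replace $\tilde{A}$, on that subspace, by the operator $\tilde{B} \isdef Op(\sigma_{\tilde{B}})$ with symmetrized symbol $\sigma_{\tilde{B}}(\theta,n) = \tfrac{1}{2}\left(\sigma_{\tilde{A}}(\theta,n) + \sigma_{\tilde{A}}(-\theta,-n)\right)$. By \autoref{TransportPPDO_T}, this symbol is exactly $\tilde{\sigma}(a_1(\tilde{A}), a_2(\tilde{A}))$, so $\tilde{B} = Op(\tilde{\sigma}(a_1(\tilde{A}), a_2(\tilde{A})))$, which is precisely the PPDO associated with $B$.

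Finally I would assemble the chain of equalities on each $T_n$. Since $\mathcal{C}T_n = \cos(n\theta)$ is even, the coincidence of $\tilde{A}$ and $\tilde{B}$ on even functions gives $\tilde{A}\mathcal{C}T_n = \tilde{B}\mathcal{C}T_n$; the hypothesis gives $\mathcal{C}AT_n = \tilde{A}\mathcal{C}T_n$; and \autoref{identiteAlg} applied to $B$ gives $\tilde{B}\mathcal{C}T_n = \mathcal{C}BT_n$. Composing these yields $\mathcal{C}AT_n = \mathcal{C}BT_n$ for all $n$, and since $\mathcal{C}: T^{\infty} \to H^{\infty}_e$ is an isomorphism (\autoref{lemChar}), hence injective, we obtain $AT_n = BT_n$ for every $n$, i.e.\ $A = Op_T(a_1(\tilde{A}), a_2(\tilde{A}))$. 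The main (and only mild) obstacle is the first step of deducing stabilization of the even functions; once that is in place the result is a direct concatenation of the three preceding lemmas.
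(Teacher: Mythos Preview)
Your proof is correct and follows essentially the same route as the paper: observe that $\tilde{A}$ preserves even functions, invoke \autoref{SymetrieSymbol} to symmetrize the symbol, identify the symmetrized symbol with $\tilde{\sigma}(a_1(\tilde{A}),a_2(\tilde{A}))$ via \autoref{TransportPPDO_T}, and close with \autoref{identiteAlg} and the injectivity of $\mathcal{C}$. The only cosmetic difference is that you spell out more explicitly why $\tilde{A}$ stabilizes the even functions.
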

	\begin{proof}
		Notice that the assumption implies that $\tilde{A}$ stabilizes the set of smooth even functions. If $\sigma_{\tilde{A}}$ is the symbol of $\tilde{A}$ and 
		\[\sigma(\theta,n) = \frac{\sigma_{\tilde{A}}(\theta,n) + \sigma_{\tilde{A}}(-\theta,-n)}{2}\] then, by \autoref{SymetrieSymbol}, we have $\tilde{A}\mathcal{C} = Op(\sigma)\mathcal{C}$. Moreover, by \autoref{TransportPPDO_T}, we know that
		\[\sigma = \tilde{\sigma}_{T}(a_1(\tilde{A}),a_2(\tilde{A}))\] 
		and thus, letting $B = Op_T(a_1(\tilde{A}),a_2(\tilde{A}))$, \autoref{identiteAlg} tells us that 
		\[\forall n \in \N, \quad \mathcal{C} B T_n = Op(\sigma)\mathcal{C}T_n\,.\]
		Summing up, we have
		\[\forall n \in \N, \quad \mathcal{C} A T_n = \tilde{A} \mathcal{C} T_n =  Op(\sigma)\mathcal{C}T_n = \mathcal{C} B T_n\,.\]
		This ensures $A = B$. 
	\end{proof}
	\noindent The proof of \autoref{CharacSalpha} is concluded as follows. Assume that for any $u \in T^{\infty}$, $\mathcal{C}Au = \tilde{A}\mathcal{C}u$ where $\tilde{A}$ is some PPDO of order $\alpha$ with a symbol $\sigma_{\tilde{A}}$. The linear continuous extension of $A$ is uniquely defined for $u \in T^{-\infty}$ by
	\[\forall v \in T^\infty, \quad \duality{Au}{v}_\frac{1}{\omega} = \duality{\mathcal{C}Au}{\mathcal{C}v}_{L^2_{per}} = \duality{\tilde{A} \mathcal{C}u}{\mathcal{C}v}_{L^2_{per}}\,\]
	where the last quantity makes sense since $\mathcal{C}u \in H^{-\infty}_{per}$. By \autoref{EquivalenceACCA}, we have $A = Op_T(a_1(\tilde{A}),a_2(\tilde{A}))$. It remains to show that $\sigma \isdef \tilde{\sigma}(a_1(\tilde{A}),a_2(\tilde{A})) \in \Sigma^\alpha$. By \autoref{TransportPPDO_T}, if $\tilde{A}$ has a symbol $\sigma_{\tilde{A}}$, then 
	\[\sigma(\theta,n) = \frac{\sigma_{\tilde{A}}(\theta,n) + \sigma_{\tilde{A}}(-\theta,-n)}{2}\,,\]
	and $\sigma_{\tilde{A}} \in \Sigma^\alpha$ immediately implies $\sigma \in \Sigma^\alpha$. This proves the first assertion. The second assertion is an immediate consequence of \autoref{identiteAlg}.  
\end{proof}
\subsubsection*{Extension to smooth open curves}
Recall the definition of the pullback $R_\Gamma$ introduced in section \ref{TsUs(Gamma)}.
\begin{definition}
	Let $A : T^{-\infty}(\Gamma) \to T^{-\infty}(\Gamma)$. We say that $A$ is a pseudo-dif{\-}ferential operator (of order $\alpha$) on $T^{-\infty}(\Gamma)$ if $R_\Gamma A R_\Gamma^{-1} \in Op(S^{\infty}_T)$ ($\in Op(S_T^{\alpha})$). The set of pseudo-differential operators of order $\alpha$ on $T^{-\infty}(\Gamma)$ is denoted by $\textit{Op}(S^{\alpha}_T(\Gamma))$. We say that $(a_1,a_2)$ is a pair of symbols of $A$ if it is a pair of symbols of $R_\Gamma AR_\Gamma^{-1}$. Similarly, $A$ is said to be elliptic if $R_\Gamma A R_{\Gamma}^{-1}$ is elliptic. For $A$ and $B$ in $\textit{Op}(S^{\infty}_T(\Gamma))$, we again write 
	$$A = B + T_\alpha \,$$ 
	if $A - B \in \textit{Op}(S^\alpha_T(\Gamma))$.
\end{definition}
The next result lists some properties of the class $Op(S^\infty_T(\Gamma))$ inherited from $Op(\Sigma^{\infty})$. 
\begin{corollary}
	\label{PDOTs}
	There hold the following properties:
	\begin{itemize}
		\item[(i)] If $A \in \textit{Op}(S^\alpha_T(\Gamma))$, then for all $s \in \R$, $A : T^{s}(\Gamma) \to T^{s -\alpha}(\Gamma)$ is continuous. \\
		
		\item[(ii)]$A \in Op(S^\alpha_T(\Gamma)) \text{ and } B \in Op(S^\beta_T(\Gamma)) \implies AB \in Op(S^{\alpha + \beta}_T(\Gamma))$.
		\medskip
		
		\item[(iii)] If $A$ and $B$ admit the pairs of symbols $(a_1,a_2)$ and $(b_1,b_2)$ respectively, then $AB$ admits the pair of symbol $\displaystyle(a_1(\tilde{C}),a_2(\tilde{C}))$ where 
		\[\tilde{C} = Op(\tilde{\sigma}(a_1,a_2))Op(\tilde{\sigma}(b_1,b_2))\,.\]
		\item[(iv)] If $A \in Op(S^\alpha_T(\Gamma))$ and $B \in Op(S^\beta_T(\Gamma))$, then $[A,B] = AB - BA$ is in $Op(S^{\alpha + \beta -1}_T(\Gamma))$.
		\medskip
		\item[(v)] An operator $A \in Op(S_{T}^{\alpha}(\Gamma))$ is elliptic if and only if there exists an elliptic PPDO $\tilde{A}$ of order $\alpha$ such that $\mathcal{C}R_\Gamma AR_\Gamma^{-1} = \tilde{A}\mathcal{C}$ in $T^{\infty}$. 
	\end{itemize}
\end{corollary}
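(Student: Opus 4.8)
The plan is to transport all five properties from the periodic calculus of Section~\ref{subsec:PPDO} through the correspondence of Theorem~\ref{CharacSalpha}, after first reducing from $\Gamma$ to the segment. Since $R_\Gamma : T^s(\Gamma) \to T^s$ is an isometric isomorphism and $A \in Op(S^\alpha_T(\Gamma))$ means exactly $R_\Gamma A R_\Gamma^{-1} \in Op(S^\alpha_T)$, every statement on $\Gamma$ is obtained by conjugating the corresponding statement for $Op(S^\infty_T)$ on $[-1,1]$ by $R_\Gamma$. I will therefore argue on $[-1,1]$, attaching to each operator its intertwining PPDO via Theorem~\ref{CharacSalpha}: for $R_\Gamma A R_\Gamma^{-1} \in Op(S^\alpha_T)$ there is $\tilde A \in Op(\Sigma^\alpha)$ with $\mathcal{C}R_\Gamma A R_\Gamma^{-1} = \tilde A \mathcal{C}$ on $T^\infty$, which may be taken to be $Op(\tilde\sigma(a_1,a_2))$ for any pair of symbols $(a_1,a_2)$ of $A$, and similarly $\tilde B = Op(\tilde\sigma(b_1,b_2)) \in Op(\Sigma^\beta)$ for $B$, using throughout the isomorphism $\mathcal{C} : T^s \to H^s_e$ of Lemma~\ref{lemChar}.

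For (i) I would first note that the intertwiner $\tilde A$ stabilizes the even functions: indeed $\tilde A \cos(n\theta) = \mathcal{C}R_\Gamma A R_\Gamma^{-1} T_n$ is even and the $\cos(n\theta)$ span $H^\infty_e$, which is dense in $H^s_e$. Hence $\tilde A$ restricts to a continuous map $H^s_e \to H^{s-\alpha}_e$, and $R_\Gamma A R_\Gamma^{-1} = \mathcal{C}^{-1}\tilde A \mathcal{C}$ is continuous $T^s \to T^{s-\alpha}$; conjugating by $R_\Gamma$ gives (i). Properties (ii)--(iv) follow one common template. From $\mathcal{C}R_\Gamma A R_\Gamma^{-1} = \tilde A \mathcal{C}$ and $\mathcal{C}R_\Gamma B R_\Gamma^{-1} = \tilde B \mathcal{C}$ one gets $\mathcal{C}R_\Gamma (AB) R_\Gamma^{-1} = (\tilde A \tilde B)\mathcal{C}$; since $\tilde A \tilde B \in Op(\Sigma^{\alpha+\beta})$, the forward half of Theorem~\ref{CharacSalpha} shows $AB \in Op(S^{\alpha+\beta}_T(\Gamma))$ with pair of symbols $(a_1(\tilde C),a_2(\tilde C))$ for $\tilde C = \tilde A \tilde B = Op(\tilde\sigma(a_1,a_2))Op(\tilde\sigma(b_1,b_2))$, which is (ii) and (iii) simultaneously. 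For (iv) the same computation with the commutator estimate $\tilde A \tilde B - \tilde B \tilde A \in Op(\Sigma^{\alpha+\beta-1})$ recalled after \eqref{a_diese_b} yields $[A,B] \in Op(S^{\alpha+\beta-1}_T(\Gamma))$.

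Property (v) is where the real work lies. The forward implication is immediate: if $A$ is elliptic, a pair of symbols $(a_1,a_2)$ with $\tilde\sigma(a_1,a_2)$ elliptic yields, via the second half of Theorem~\ref{CharacSalpha}, the elliptic intertwiner $\tilde A = Op(\tilde\sigma(a_1,a_2))$. For the converse I am handed an elliptic $\tilde A \in Op(\Sigma^\alpha)$ with $\mathcal{C}R_\Gamma A R_\Gamma^{-1} = \tilde A \mathcal{C}$, and I must produce an elliptic pair of symbols. By Theorem~\ref{CharacSalpha} and Lemma~\ref{TransportPPDO_T} the candidate is $(a_1(\tilde A),a_2(\tilde A))$, whose symbol is the symmetrization $\sigma_{\mathrm{sym}}(\theta,n) = \tfrac12\big(\sigma_{\tilde A}(\theta,n)+\sigma_{\tilde A}(-\theta,-n)\big)$; the danger, and the main obstacle, is that this symmetrization might destroy the lower bound through cancellation.

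I would remove this obstacle as follows. Because $\mathcal{C}R_\Gamma A R_\Gamma^{-1} = \tilde A \mathcal{C}$ forces $\tilde A$ to stabilize the even functions, Lemma~\ref{SymetrieSymbol} shows that on the even functions $\tilde A$ agrees with $\tilde B = Op(\sigma_{\mathrm{sym}})$, which again lies in $Op(\Sigma^\alpha)$. Thus $\tilde R := \tilde A - \tilde B \in Op(\Sigma^\alpha)$ annihilates every even function, so its symbol obeys $\sigma_{\tilde R}(\theta,n) = -e^{-2in\theta}\sigma_{\tilde R}(\theta,-n)$; differentiating this relation repeatedly in $\theta$ brings down powers of $n$ from the oscillatory factor and forces $|\sigma_{\tilde R}(\theta,n)| \leq C_j(1+|n|)^{\alpha-j}$ for every $j$, i.e. $\tilde R$ is smoothing (the mechanism already signalled in the remark following Theorem~\ref{CharacSalpha}). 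Consequently $\sigma_{\mathrm{sym}} = \sigma_{\tilde A} - \sigma_{\tilde R}$ keeps the ellipticity bound $|\sigma_{\mathrm{sym}}(\theta,n)| \geq C(1+|n|)^\alpha$ for $|n|$ large, so $\sigma_{\mathrm{sym}} = \tilde\sigma(a_1(\tilde A),a_2(\tilde A))$ is elliptic and $(a_1(\tilde A),a_2(\tilde A))$ is an elliptic pair of symbols for $R_\Gamma A R_\Gamma^{-1}$. This proves that $A$ is elliptic and completes (v).
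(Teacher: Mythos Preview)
Your treatment of (i)--(iv) is the paper's proof: reduce to the segment via $R_\Gamma$, attach an intertwining PPDO through Theorem~\ref{CharacSalpha}, and read off continuity, composition, the composite symbol, and the commutator order from the periodic calculus.

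For (v) you go further than the paper, which merely asserts that the symmetrized symbol $\sigma_{\mathrm{sym}}(\theta,n)=\tfrac12\bigl(\sigma_{\tilde A}(\theta,n)+\sigma_{\tilde A}(-\theta,-n)\bigr)$ is elliptic (``it is easy to check''). You are right that this needs an argument, since the two halves could in principle cancel. However, your intermediate claim that $\tilde R=\tilde A-Op(\sigma_{\mathrm{sym}})$ is \emph{smoothing} is not established by the mechanism you describe. Differentiating the relation $\sigma_{\tilde R}(\theta,n)=-e^{-2in\theta}\sigma_{\tilde R}(\theta,-n)$ once in $\theta$ gives
\[
2in\,\sigma_{\tilde R}(\theta,n)=-e^{-2in\theta}\partial_\theta\sigma_{\tilde R}(\theta,-n)-\partial_\theta\sigma_{\tilde R}(\theta,n),
\]
hence $|\sigma_{\tilde R}(\theta,n)|\leq C(1+|n|)^{\alpha-1}$. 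But iterating does not improve this: in the $k$-th derivative the Leibniz expansion produces a term of size $|n|^{k-1}|\partial_\theta\sigma_{\tilde R}|\lesssim |n|^{k-1}(1+|n|)^\alpha$, so after dividing by $|n|^k$ you recover only the same one-order gain, and the $\theta$-derivatives of $\sigma_{\tilde R}$ do not inherit the improved bound, so no bootstrap is available.

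Fortunately you do not need smoothing. The single-order gain $|\sigma_{\tilde R}(\theta,n)|\leq C(1+|n|)^{\alpha-1}$ already yields, for $|n|$ large,
\[
|\sigma_{\mathrm{sym}}(\theta,n)|\geq |\sigma_{\tilde A}(\theta,n)|-|\sigma_{\tilde R}(\theta,n)|\geq c(1+|n|)^\alpha-C(1+|n|)^{\alpha-1}\geq \tfrac{c}{2}(1+|n|)^\alpha,
\]
so $\sigma_{\mathrm{sym}}=\tilde\sigma(a_1(\tilde A),a_2(\tilde A))$ is elliptic and (v) follows. Replace the smoothing assertion by this one-order estimate and your argument is complete; in fact it then fills in precisely the step the paper leaves implicit.
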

\begin{proof}
	For the sake of conciseness, we only prove the corollary in the case of the class $Op(S^\infty_T)$ (corresponding to the segment $[-1,1]$). The proofs for a general curve do not contain any additional difficulty. 
	
	\noindent {\em (i)} Let $A \in Op(S^\alpha_T)$, and let $\tilde{A}$ be a PPDO of order $\alpha$ such that \eqref{eqEquiv} holds. Let $u \in T^s$ for some $s \in \R$. Applying the isomorphic property of $\mathcal{C}$ (cf. \autoref{lemChar}) and the continuity of $\tilde{A}$ from $H^s$ to $H^{s-\alpha}$, 
	\[\norm{Au}_{T^s} \leq C \norm{\mathcal{C}Au}_{H^s} \leq C \norm{\tilde{A}\mathcal{C}u}_{H^s} \leq C \norm{\mathcal{C}u}_{H^{s- \alpha}} \leq C \norm{u}_{T^{s-\alpha}}\,.\]
	\noindent {\em (ii)} Let $A = Op_T(a_1,a_2) \in S^\alpha_T$, $B = Op_T(b_1,b_2) \in S^\beta_T$ and let $\tilde{A} = Op(\tilde{\sigma}(a_1,a_2))$, $\tilde{B} = Op(\tilde{\sigma}(a_1,a_2))$. We have 
	\[\mathcal{C}AB = \tilde{A} \mathcal{C}B = \tilde{A}\tilde{B}\mathcal{C} \quad \textup{in } T^{\infty}\,.\]
	Applying the properties of the PPDOs, one has $\tilde{A}\tilde{B} \in Op(\Sigma^{\alpha + \beta})$ therefore, by \autoref{CharacSalpha}, $AB \in Op(S_{T}^{\alpha + \beta})$. \\
	\noindent {\em (iii)} Follows immediately from \autoref{CharacSalpha}.\\
	\noindent {\em (iv)} The commutator of $A$ and $B$ satisfies
	\[\mathcal{C}(AB - BA) = \left(\tilde{A}\tilde{B} -\tilde{B} \tilde{A}\right) \mathcal{C} \quad \textup{in } T^{\infty}\] 
	and $\tilde{B}\tilde{A} - \tilde{A} \tilde{B}$ is a PPDO of order $\alpha + \beta - 1$. \\
	\noindent {\em (v)} If $A = Op_T(a_1,a_2)$ is elliptic, then $\tilde{\sigma}(a_1,a_2)$ is elliptic and thus $\tilde{A} = Op(\tilde{\sigma})$ is elliptic. By \autoref{CharacSalpha}, we have $\mathcal{C}A = \tilde{A} \mathcal{C}$ in $T^\infty$. 
	Reciprocally, let $\tilde{A} = Op(\sigma_A)$ be an elliptic PPDO of order $\alpha$ such that $\mathcal{C}A = \tilde{A} \mathcal{C}$ in $T^\infty$. Then, $A$ admits the pair of symbols $a_1(\tilde{A}),a_2(\tilde{A})$ and by \autoref{TransportPPDO_T}, we have
	\[\tilde{\sigma}(a_1(\tilde{A}),a_2(\tilde{A}))(\theta,n) = \frac{\sigma_A(\theta,n) + \sigma_A(-\theta,-n)}{2}\,.\]
	It is easy to check that the last symbol is elliptic when $\sigma_A$ is elliptic. This proves that $A$ is elliptic. 
\end{proof}
\begin{remark}
	\label{RemSymb}
	The item {\em (iii)} above provides a symbolic calculus on the class \linebreak $S^\alpha_T(\Gamma)$ as follows. If $B$ and $C$ respectively admit the pair of symbols $(b_1,b_2)$ and $(c_1,c_2)$, then $BC$ admits the pair of symbols 
	\[(b_1,b_2) \#_T (c_1,c_2) \isdef \left(a_1(\tilde{A}),a_2(\tilde{A})\right)\,\]
	where $\tilde{A} = \textit{Op}\left(\tilde{\sigma}(b_1,b_2) \# \tilde{\sigma}(c_1,c_2) \right)$.
	One can use \eqref{a_diese_b} to compute an asymptotic expansion of $\tilde{\sigma}(b_1,b_2) \# \tilde{\sigma}(c_1,c_2)$ which, in turn, gives an asymptotic expansion of $(b_1,b_2) \#_T (c_1,c_2)$. 
\end{remark}

\begin{lemma}
	\label{lemParametrixOpS}
	Let $A = Op(\sigma_A)$ be an elliptic PPDO whose symbol satisfies
	\begin{equation}
	\label{symmProp}
	\sigma_A(\theta,n) = \sigma_A(-\theta,-n)\,.
	\end{equation}
	Then there exists an elliptic parametrix $B = Op(\sigma_B)$ where $\sigma_B$ satisfies the same symmetry. 
\end{lemma}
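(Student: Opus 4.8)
The plan is to recast the symmetry hypothesis on $\sigma_A$ as an invariance of the operator $A$ under the reflection $P\colon u(\theta)\mapsto u(-\theta)$, and then to symmetrize an arbitrary parametrix by conjugating it with $P$. The first step is to record how $P$ acts at the level of symbols. Starting from the quantization formula and using $\mathcal{F}(Pu)(n)=\mathcal{F}u(-n)$, a direct computation gives, for any PPDO $Q=Op(\sigma_Q)$,
\[PQP\,u(\theta)=\sum_{n\in\Z}\sigma_Q(-\theta,-n)\,\mathcal{F}u(n)\,e^{in\theta}\,,\]
so that $PQP=Op\big(\sigma_Q(-\,\cdot,-\,\cdot)\big)$. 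Since the defining estimates \eqref{SymbolsCond} are invariant under $(\theta,\xi)\mapsto(-\theta,-\xi)$ — each derivative only produces a sign — the reflected symbol lies in the same class $\Sigma^\beta$ as $\sigma_Q$. In particular, conjugation by $P$ preserves the order and maps $Op(\Sigma^{-\infty})$ into itself, i.e. it sends smoothing operators to smoothing operators. With this in hand, and because an operator is determined by the integer-point values of its symbol (cf. \eqref{symbolUniqueDetermine}), the hypothesis $\sigma_A(\theta,n)=\sigma_A(-\theta,-n)$ reads exactly $PAP=A$, equivalently $AP=PA$.

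Next I would take any elliptic parametrix $B_0=Op(\sigma_{B_0})$ of order $-\alpha$ provided by \autoref{parametrixElliptic}, so that $B_0A=I+R_1$ and $AB_0=I+R_2$ with $R_1,R_2$ smoothing, and set
\[B\isdef \tfrac12\big(B_0+PB_0P\big)=Op(\sigma_B)\,,\qquad \sigma_B(\theta,n)=\tfrac12\big(\sigma_{B_0}(\theta,n)+\sigma_{B_0}(-\theta,-n)\big)\,.\]
By the first step, $B\in Op(\Sigma^{-\alpha})$ and $\sigma_B$ manifestly satisfies $\sigma_B(\theta,n)=\sigma_B(-\theta,-n)$. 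To check that $B$ is still a parametrix I would compute, using $AP=PA$,
\[BA=\tfrac12\big(B_0A+PB_0PA\big)=\tfrac12\big(B_0A+PB_0AP\big)=I+\tfrac12\big(R_1+PR_1P\big)\,,\]
and likewise $AB=I+\tfrac12\big(R_2+PR_2P\big)$; both remainders are smoothing because conjugation by $P$ preserves $Op(\Sigma^{-\infty})$.

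Finally, ellipticity of $B$ comes for free, since any parametrix of an elliptic operator is elliptic. Indeed, from $\sigma_B\#\sigma_A=1+\Sigma^{-\infty}$ and the expansion \eqref{a_diese_b}, the leading ($j=0$) term yields $\sigma_B\sigma_A=1+\Sigma^{-1}$; combined with the bound $\abs{\sigma_A(\theta,n)}\leq C(1+\abs{n})^{\alpha}$ this forces $\abs{\sigma_B(\theta,n)}\geq C(1+\abs{n})^{-\alpha}$ for $\abs{n}$ large, which is ellipticity of order $-\alpha$. The only point requiring genuine care is the first step — verifying that conjugation by the reflection $P$ leaves both the symbol class $\Sigma^\beta$ and the smoothing ideal $Op(\Sigma^{-\infty})$ invariant — but this reduces to the routine symmetry check on the estimates \eqref{SymbolsCond}. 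Everything else is formal manipulation relying on $P^2=I$ and $AP=PA$.
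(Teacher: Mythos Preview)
Your proof is correct and follows the same core strategy as the paper: take an arbitrary parametrix $B_0$ and symmetrize its symbol to $\sigma_B(\theta,n)=\tfrac12\big(\sigma_{B_0}(\theta,n)+\sigma_{B_0}(-\theta,-n)\big)$. The difference lies in how the parametrix property is verified. The paper works purely at the symbol level, expanding $\sigma_A\#\sigma_2$ via the asymptotic formula \eqref{a_diese_b} to any finite order and using the symmetry of $\sigma_A$ term by term to show the result equals $1+\Sigma^{-n-1}$ for every $n$. You instead recast the symmetry hypothesis as the operator identity $PAP=A$ and deduce directly that $BA=I+\tfrac12(R_1+PR_1P)$, reducing the question to the stability of $Op(\Sigma^{-\infty})$ under conjugation by $P$. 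Your route is more conceptual and avoids the explicit bookkeeping of the expansion; the paper's computation, while longer, stays entirely within the symbolic calculus and does not need to introduce the reflection operator. Both are valid and yield the same symmetrized parametrix.
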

\begin{proof}
	Let us fix an elliptic PPDO $A$ of order $\alpha$ and let $\sigma_A$ be a prolongated symbol of $A$ that we may assume to have the property
	\[\sigma_A(\theta,\xi) = \sigma_A(-\theta,-\xi)\,.\]
	Let $B_1 = Op(\sigma_1)$, with the prolongated symbol $\sigma_1 \in \Sigma^{-\alpha}$, be a parametrix for $A$, and let 
	\[\sigma_2(\theta,\xi) = \frac{\sigma_1(\theta,\xi) + \sigma_1(-\theta,-\xi)}{2}\,.\]
	This symbol in in $\Sigma^{-\alpha}$, has the desired symmetry and it remains to show that 
	\[AB_2 = I_d + \Sigma_{-\infty}, \quad B_2A = I_d + \Sigma_{-\infty}\] 
	where $B_2 \isdef Op(\sigma_2)$ .
	We show for example the first equality, the second one being similar. Let $n \in \N$. We have by symbolic calculus (cf. eq.~\eqref{a_diese_b}):
	\[\sigma_C(\theta,n) = \sum_{j = 0}^{n}\frac{1}{j!} \left(\frac{\partial}{\partial_\xi}\right)^{j}\sigma_A(\theta,n) D_\theta^j \sigma_2(\theta,n) + \sigma_R(\theta,n)\]
	with $\sigma_R \in \Sigma^{-n-1}$. Replacing $\sigma_2$ by its expression, this yields
	\[\begin{split}
	\sigma_C(\theta,\xi) = &\frac{1}{2} \sum_{j = 0}^{n} \frac{1}{j!}\left[\left(\frac{\partial}{\partial_\xi}\right)^{j}\sigma_A(\theta,n) D_\theta^j \sigma_1(\theta,n)\right. \\
	&+ (-1)^{j} \left.\left(\frac{\partial}{\partial_\xi}\right)^{j} \sigma_A(\theta,n)D_\theta^j\sigma_1(-\theta,-n)\right] + \sigma_R(\theta,\xi)\,.
	\end{split}\]
	Using the symmetry property of $\sigma_A$, we obtain
	\[\sigma_C(\theta,\xi) = \frac{\sigma_n(\theta,\xi) + \sigma_n(-\theta,-\xi)}{2} +  \sigma_R(\theta,\xi)\]
	where 
	\[\sigma_n(\theta,\xi) = \sum_{j = 0}^{n} \frac{1}{j!} \left(\frac{\partial}{\partial_\xi}\right)^{j}\sigma_A(\theta,n) D_\theta^j \sigma_1(\theta,n)\,.\]
	But by eq.~\eqref{a_diese_b}  we have $\sigma_n = \sigma_A \# \sigma_1 + \Sigma^{-n-1}$ and since $B_1$ is a parametrix of $A$, $\sigma_A \# \sigma_1 = 1 + \Sigma^{-\infty}\,$. Consequently, there exists a symbol $\sigma_S \in \Sigma^{-n-1}$ such that $\sigma_n = 1 + \sigma_S$.
	Thus
	\[\sigma_C(\theta,\xi) - 1 = \sigma_R(\theta,\xi) + \frac{\sigma_S(\theta,\xi) + \sigma_S(-\theta,-\xi)}{2} \in \Sigma^{-n-1}\,.\]
	Since we have established this for all $n \in \N$, we have proved $AB_2 = I_d + \Sigma_{-\infty}.$ as announced. 
\end{proof}
\begin{corollary}
	\label{parametrixPDOTs}
	Let $A \in Op(S^\alpha_T(\Gamma))$ be elliptic. Then there exists $B \in Op(S^{-\alpha}_T(\Gamma))$ elliptic such that 
	\[BA = I_d + T_{-\infty}, \quad AB = I_d + T_{-\infty}\,.\]
\end{corollary}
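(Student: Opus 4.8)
The plan is to transfer the problem to the segment through the pullback $R_\Gamma$, build the parametrix there using the periodic theory, and transfer back. By the very definition of $Op(S^{-\alpha}_T(\Gamma))$, it suffices to produce an elliptic $\hat{B} \in Op(S^{-\alpha}_T)$ satisfying $\hat{B}\hat{A} = I_d + T_{-\infty}$ and $\hat{A}\hat{B} = I_d + T_{-\infty}$, where $\hat{A} \isdef R_\Gamma A R_\Gamma^{-1} \in Op(S^\alpha_T)$ is elliptic; the operator $B \isdef R_\Gamma^{-1}\hat{B}R_\Gamma$ then does the job, since conjugation by $R_\Gamma$ preserves orders and hence sends $T_{-\infty}$ remainders on the segment to $T_{-\infty}$ remainders on $\Gamma$.

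First I would realize $\hat{A}$ on the torus. By \autoref{CharacSalpha}, a pair of symbols $(a_1,a_2)$ of $\hat{A}$ gives an elliptic PPDO $\tilde{A} = Op(\tilde{\sigma}(a_1,a_2))$ of order $\alpha$ with $\mathcal{C}\hat{A} = \tilde{A}\mathcal{C}$ in $T^\infty$. The crucial structural point is that, directly from the definition of $\tilde{\sigma}(a_1,a_2)$ in \autoref{DefOpTs}, this symbol satisfies the symmetry $\sigma_{\tilde{A}}(\theta,n) = \sigma_{\tilde{A}}(-\theta,-n)$: replacing $(\theta,n)$ by $(-\theta,-n)$ flips the signs of both $\sin\theta$ and $\textup{sign}(n)$ while leaving $\cos\theta$ and $\abs{n}$ unchanged, so the two sign changes cancel. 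This is exactly the hypothesis of \autoref{lemParametrixOpS}, which then supplies an elliptic parametrix $\tilde{B} = Op(\sigma_{\tilde{B}})$ of order $-\alpha$ carrying the same symmetry $\sigma_{\tilde{B}}(\theta,n) = \sigma_{\tilde{B}}(-\theta,-n)$, with $\tilde{A}\tilde{B} - I$ and $\tilde{B}\tilde{A} - I$ both in $Op(\Sigma^{-\infty})$.

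Next I would use this symmetry to descend $\tilde{B}$ back into $Op(S^{-\alpha}_T)$. Exactly as in the computation of \autoref{SymetrieSymbol}, the relation $\sigma_{\tilde{B}}(\theta,n) = \sigma_{\tilde{B}}(-\theta,-n)$ forces $\tilde{B}$ to stabilize the smooth even functions, so $\mathcal{C}\hat{B} \isdef \tilde{B}\mathcal{C}$ unambiguously defines an operator $\hat{B}$ on $T^\infty$ (one can apply $\mathcal{C}^{-1}$ because $\tilde{B}\mathcal{C}u$ is again even). Then \autoref{CharacSalpha} yields $\hat{B} \in Op(S^{-\alpha}_T)$, and $\hat{B}$ is elliptic by item (v) of \autoref{PDOTs}, since $\tilde{B}$ is an elliptic PPDO with $\mathcal{C}\hat{B} = \tilde{B}\mathcal{C}$. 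For the parametrix identities, observe that for $u \in T^\infty$ one has $\mathcal{C}(\hat{A}\hat{B} - I_d)u = \tilde{A}\tilde{B}\mathcal{C}u - \mathcal{C}u = (\tilde{A}\tilde{B} - I)\mathcal{C}u$; since $\tilde{A}\tilde{B} - I \in Op(\Sigma^\beta)$ for every $\beta$, the direct implication of \autoref{CharacSalpha} gives $\hat{A}\hat{B} - I_d \in Op(S^\beta_T)$ for every $\beta$, that is $\hat{A}\hat{B} = I_d + T_{-\infty}$. The identity $\hat{B}\hat{A} = I_d + T_{-\infty}$ follows identically from $\tilde{B}\tilde{A} - I$. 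Conjugating back by $R_\Gamma$ then gives the statement on $\Gamma$.

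The main obstacle is precisely the descent step: a generic parametrix of $\tilde{A}$ need not preserve even functions, and in that case $\mathcal{C}\hat{B} = \tilde{B}\mathcal{C}$ cannot define an operator on $T^\infty$ at all, because $\mathcal{C}^{-1}$ is only available on even functions. This is the whole reason \autoref{lemParametrixOpS} was isolated beforehand: it guarantees a parametrix carrying the symmetry $\sigma_{\tilde{B}}(\theta,n) = \sigma_{\tilde{B}}(-\theta,-n)$ that makes $\tilde{B}$ stabilize the even functions. Everything else — the reduction to the segment, the transfer of the smoothing remainders through \autoref{CharacSalpha}, and the final conjugation back to $\Gamma$ — is routine bookkeeping.
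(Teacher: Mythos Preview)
Your proof is correct and follows the same route as the paper: reduce to the segment, use \autoref{lemParametrixOpS} to find a parametrix $\tilde{B}$ of $\tilde{A}$ with the symmetry $\sigma_{\tilde{B}}(\theta,n)=\sigma_{\tilde{B}}(-\theta,-n)$, descend to $Op(S^{-\alpha}_T)$ via \autoref{CharacSalpha}, and read off the smoothing remainders. Two cosmetic remarks: your observation that $\tilde{\sigma}(a_1,a_2)$ already satisfies the symmetry is a slight shortcut over the paper (which routes through \autoref{PDOTs}\,(v) and \autoref{SymetrieSymbol}); and the computation you attribute to \autoref{SymetrieSymbol} is really its converse (symmetry of the symbol $\Rightarrow$ stabilization of even functions), though the verification is indeed the same change of variables $n\mapsto -n$.
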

\begin{proof}
	Here again, we treat only the particular case of $Op(S^\alpha_T)$ for conciseness.
	Let $A \in Op(S^\alpha_T)$ be elliptic. By \autoref{PDOTs}, there exists an elliptic PPDO $\tilde{A} \in Op(\Sigma^\alpha)$ such that 
	\[\mathcal{C}A = \tilde{A} \mathcal{C} \quad \textup{in } T^{\infty}\,.\]
	Such a PPDO necessarily preserves the set of smooth even functions. Thus, by \autoref{SymetrieSymbol}, it may be assumed that its symbol $\sigma_{\tilde{A}}$ has the symmetry property \eqref{symmProp}. By the previous lemma, let $\tilde{B} \in Op(\Sigma^{-\alpha})$ be a parametrix of $\tilde{A}$ whose symbol $\sigma_{\tilde{B}}$ possesses this symmetry, and let 
	\[B = Op(a_1(\tilde{B}),a_2(\tilde{B}))\,.\]
	By \autoref{TransportPPDO_T}, we have
	\[\tilde{\sigma}(a_1(\tilde{B}),a_2(\tilde{B})) = \sigma_{\tilde{B}}\]
	hence $B \in Op(S^{-\alpha}_T)$. Moreover, there holds
	$\mathcal{C}B = \tilde{B} \mathcal{C}$
	by \autoref{CharacSalpha}. Finally, we have
	\[\mathcal{C} (AB-I_d) = (\tilde{A} \tilde{B} - I_d) \mathcal{C} = (I_d + R)\mathcal{C} \quad \textup{in } T^{\infty}\]
	where $R$ is a smoothing PPDO. This proves that $AB - I_d \in Op(S^{-\infty}_T)$ and the same arguments show that $BA - I_d$ also belongs to $Op(S_T^{-\infty})$. 
\end{proof}

\subsection{Pseudo-differential operators on \texorpdfstring{$U^s(\Gamma)$}{Us(Gamma)}}

\label{subsec:PDOUs}

We proceed to introduce an analogous family of pseudo-differential operators defined this time on the spaces $U^s$. Similar properties hold for this new family of pseudo-differential operators. They are stated here but the proofs do not differ in any significant way from the previous, and are thus omitted.

\begin{definition}
	\label{DefOpUs}
	Let $A$ be an operator on $U^{-\infty}$ and assume that there exists a couple of smooth functions $a_1$ and $a_2$ defined on $[-1,1]\times \N$, that are $\Cinf$ in the first argument and such that for all $n \in \N^*$,
	\begin{equation}
	AU_{n-1} = a_1(x,n) U_{n-1} + a_2(x,n) T_{n}\,.
	\label{defOpUs}
	\end{equation}
	The operator defined by the previous formula is denoted by $\textit{Op}_U(a_1,a_2)$. For $n\in \Z$ and $\theta \in [0,2\pi]$, define the symbol $\tilde{\sigma}(a_1,a_2)$ as before by
	\[\tilde{\sigma}(a_1,a_2)(\theta,n) =  a_1(\cos\theta,\abs{n}) + i\sin\theta\,\textup{sign}(n) a_2(\cos\theta,\abs{n})\,\]
	with the convention $a_1(x,0) = a_2(x,0) = 0$. We say that $(a_1,a_2) \in S^\alpha_U$ if \linebreak $\tilde{\sigma}(a_1,a_2) \in \Sigma^\alpha$.  In this case, we say that $A$ is a pseudo-differential operator on $U^s$ and the (non-unique) couple of functions $(a_1,a_2)$ is called a pair of symbols of $A$. We also take the notation $S^{\infty}_U \isdef \cup_{\alpha \in \R} S^\alpha_U$ and $S^{-\infty}_U = \cap_{\alpha \in \R} S^\alpha_U$.   and the set of pseudo-differential operators (of order $\alpha$) in $U^{-\infty}$ by $\textit{Op}(S^\infty_U)$ (by $\textit{Op}(S^\alpha_U)$). The operator $A$ is said to be elliptic if it admits a pair of symbols $(a_1,a_2)$ such that $\tilde{\sigma}(a_1,a_2)$ is elliptic. Finally, if $A, B \in Op(S^{\infty}_U)$ are such that $A - B \in Op(S^{\alpha}_U)$, we write
	\[A = B + U_{\alpha}\,.\]
\end{definition} 
\noindent Recall the definition of the isometric mapping $\mathcal{S}$ from \eqref{defCS}. 
\begin{theorem}
	\label{CharacSalphaU}
	Let $A : U^{\infty} \to U^{-\infty}$. Assume that for some PPDO $\tilde{A} \in Op(\Sigma^\alpha)$, there holds
	\begin{equation}
	\label{eqEquivU}
	\mathcal{S}A = \tilde{A}\mathcal{S}\, \quad \textup{in } U^{\infty}\,.
	\end{equation}
	Then $A$ has a unique continuous extension as an element of $Op(S^\alpha_U)$, and $(a_1(\tilde{A}),a_2(\tilde{A}))$ is a pair of symbols for $A$. \\
	Reciprocally let $A = Op_U(a_1,a_2) \in Op(S^\alpha_U)$. Then \eqref{eqEquivU} holds, taking for $\tilde{A}$ the PPDO of order $\alpha$ given by the symbol
	\[\sigma_{\tilde{A}} = \tilde{\sigma}(a_1,a_2)\,.\]
\end{theorem}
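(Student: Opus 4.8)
The plan is to transpose the proof of \autoref{CharacSalpha} essentially line by line, replacing $\mathcal{C}$ by $\mathcal{S}$, the basis $(T_n)$ by $(U_{n-1})$, the smooth even functions by the smooth odd ones, and the defining relation \eqref{defOpTs} by \eqref{defOpUs}. First I would establish the exact analogue of \autoref{identiteAlg}: for $A = Op_U(a_1,a_2)$ and $\tilde A = Op(\tilde\sigma(a_1,a_2))$ one has $\mathcal{S}AU_{n-1} = \tilde A\,\mathcal{S}U_{n-1}$ for every $n \geq 1$. This is a direct computation using $U_{n-1}(\cos\theta) = \sin(n\theta)/\sin\theta$ and $T_n(\cos\theta) = \cos(n\theta)$: applying $\mathcal{S}$ to \eqref{defOpUs} yields $a_1(\cos\theta,n)\sin(n\theta) + \sin\theta\,a_2(\cos\theta,n)\cos(n\theta)$, while expanding $\sin(n\theta) = (e_n - e_{-n})/(2i)$ and inserting the definition of $\tilde\sigma(a_1,a_2)$ produces the very same expression. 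The odd parity of $\mathcal{S}U_{n-1}$ plays here the role that the even parity of $\mathcal{C}T_n$ played in the original argument.

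Next I would prove the odd counterpart of \autoref{SymetrieSymbol}: a PPDO $\tilde A$ stabilizing the smooth odd functions coincides on them with $Op(\sigma)$ for $\sigma(\theta,n) = \bigl(\sigma_{\tilde A}(\theta,n) + \sigma_{\tilde A}(-\theta,-n)\bigr)/2$. The computation is the same as in the even case, with the twist that for an odd $u$ one uses $\mathcal{F}u(n) = -\mathcal{F}u(-n)$ together with the antisymmetrisation $\tilde Au(\theta) = (\tilde Au(\theta) - \tilde Au(-\theta))/2$; the two extra sign changes cancel, so the symmetrised symbol is again $(\sigma_{\tilde A}(\theta,n)+\sigma_{\tilde A}(-\theta,-n))/2$, exactly as before. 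The link between this symmetrised symbol and the pair $(a_1(\tilde A),a_2(\tilde A))$ is purely algebraic and is already contained in \autoref{TransportPPDO_T} and \autoref{defa1Ta2T}, which apply to any PPDO and can be reused verbatim for $n \neq 0$.

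With these two lemmas the odd analogue of \autoref{EquivalenceACCA} follows: if $A : U^\infty \to U^{-\infty}$ satisfies $\mathcal{S}AU_{n-1} = \tilde A\,\mathcal{S}U_{n-1}$ for all $n \geq 1$, then $\tilde A$ stabilizes the smooth odd functions, hence coincides there with $Op(\sigma)$, and \autoref{TransportPPDO_T} identifies $\sigma$ with $\tilde\sigma(a_1(\tilde A),a_2(\tilde A))$; the first lemma then gives $\mathcal{S}AU_{n-1} = \mathcal{S}\,Op_U(a_1(\tilde A),a_2(\tilde A))U_{n-1}$ for all $n$, and the injectivity of $\mathcal{S}$ together with the fact that $(U_{n-1})_{n\geq1}$ is a basis yields $A = Op_U(a_1(\tilde A),a_2(\tilde A))$. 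The theorem is then concluded as in \autoref{CharacSalpha}: the unique continuous extension of $A$ to $U^{-\infty}$ is fixed by $\duality{Au}{v}_\omega = \duality{\tilde A\mathcal{S}u}{\mathcal{S}v}_{L^2_{per}}$, which makes sense because $\mathcal{S} : U^s \to H^s_o$ is an isomorphism (\autoref{lemChar}); and $\tilde\sigma(a_1(\tilde A),a_2(\tilde A)) \in \Sigma^\alpha$ since it is the symmetrisation of $\sigma_{\tilde A} \in \Sigma^\alpha$. The reciprocal statement is then immediate from the first lemma.

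The one place where the odd case genuinely departs from the even one, and the step I expect to require the most care, is the bookkeeping at the frequency $n = 0$. Since odd functions carry no constant Fourier mode, the range of $\mathcal{S}$ never sees the symbol at $n = 0$, which is why \autoref{DefOpUs} imposes the convention $a_1(x,0) = a_2(x,0) = 0$. The reconstruction formulas of \autoref{defa1Ta2T} applied to $\tilde A$ generally return a nonzero value at $n = 0$, so I must override it by $0$ and verify two things: that this alters neither $Op_U(a_1,a_2)$ nor its action on odd functions (true, because \eqref{defOpUs} involves only $n \geq 1$ and $\mathcal{F}u(0) = 0$ for odd $u$), and that modifying the symbol at the single index $n = 0$ does not disturb membership in $\Sigma^\alpha$ (true, since the difference estimates \eqref{condPPDO} constrain only the behaviour for large $|n|$, so a finite modification is harmless). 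Once this is checked, the whole argument carries over from the even case.
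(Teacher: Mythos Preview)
Your proposal is correct and follows exactly the approach the paper intends: the paper in fact omits the proof of \autoref{CharacSalphaU}, stating only that it ``does not differ in any significant way'' from that of \autoref{CharacSalpha}. Your line-by-line transposition to the odd setting, including the observation that the two sign changes in the odd version of \autoref{SymetrieSymbol} cancel to give the same symmetrised symbol, and your careful handling of the $n=0$ bookkeeping (which is indeed the only genuinely new point), is precisely what the paper leaves to the reader.
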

We also extend this notion to open curves:
\begin{definition}
	Let $A : U^{-\infty}(\Gamma) \to U^{-\infty}(\Gamma)$. We say that $A$ is a pseudo-differential operator (of order $\alpha$) on $U^{-\infty}(\Gamma)$ if $R_\Gamma A R_\Gamma^{-1}$ belongs to $Op(S^{\infty}_U)$ (to $Op(S_U^{\alpha})$). The set of pseudo-differential operators of order $\alpha$ on $U^{-\infty}(\Gamma)$ is denoted by $\textit{Op}(S^{\alpha}_U(\Gamma))$. We say that $(a_1,a_2)$ is a pair of symbols of $A$ if it is a pair of symbols of $R_\Gamma AR_\Gamma^{-1}$. The operator $A$ is said to be elliptic if $R_\Gamma A R_\Gamma^{-1}$ is elliptic. For $A$ and $B$ in $\textit{Op}(S^{\infty}_U(\Gamma))$, we again write $A = B + U_\alpha$ if $A - B \in \textit{Op}(S^\alpha_U(\Gamma))$.
\end{definition}
\begin{corollary}
	\label{PDOUs}
	There hold the following properties:
	\begin{itemize}
		\item[(i)] If $A \in \textit{Op}(S^\alpha_U(\Gamma))$, then for all $s \in \R$, $A : U^{s}(\Gamma) \to U^{s -\alpha}(\Gamma)$ is continuous. \\
		
		\item[(ii)]$A \in Op(S^\alpha_U(\Gamma)) \text{ and } B \in Op(S^\beta_U(\Gamma)) \implies AB \in Op(S^{\alpha + \beta}_U(\Gamma))$.
		\medskip
		
		\item[(iii)] If $A$ and $B$ admit the pairs of symbols $(a_1,a_2)$ and $(b_1,b_2)$ respectively, then $AB$ admits the pair of symbol $\displaystyle(a_1(\tilde{C}),a_2(\tilde{C}))$ where 
		\[\tilde{C} = Op(\tilde{\sigma}(a_1,a_2))Op(\tilde{\sigma}(b_1,b_2))\,.\]
		\item[(iv)] If $A \in Op(S^\alpha_U(\Gamma))$ and $B \in Op(S^\beta_U(\Gamma))$, then $[A,B] = AB - BA$ is in $Op(S^{\alpha + \beta -1}_U(\Gamma))$.
		\medskip
		\item[(v)]  An operator $A \in Op(S_{U}^{\alpha}(\Gamma))$ is elliptic if and only if there exists an elliptic PPDO $\tilde{A}$ of order $\alpha$ such that 
		\[\mathcal{S} R_\Gamma AR_\Gamma^{-1} = \tilde{A} \mathcal{S} \quad \textup{in } U^{\infty}\,.\] 
		In this case, there exists an elliptic operator $B\in Op(S_{U}^{-\alpha}(\Gamma))$ such that \[BA = I_d + U_{-\infty}, \quad AB = I_d + U_{-\infty}\,.\]
	\end{itemize}
\end{corollary}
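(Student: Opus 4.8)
The plan is to transpose the proof of \autoref{PDOTs} (and of \autoref{parametrixPDOTs}) almost verbatim, substituting the odd-function data for the even-function data: the isometry $\mathcal{C}$ becomes $\mathcal{S}$, the scale $H^s_e$ becomes $H^s_o$, and \autoref{CharacSalpha} is replaced by \autoref{CharacSalphaU}. Because $A \in \textit{Op}(S^\alpha_U(\Gamma))$ means exactly $R_\Gamma A R_\Gamma^{-1} \in \textit{Op}(S^\alpha_U)$ and $R_\Gamma$ is an isomorphism of $U^s(\Gamma)$ onto $U^s$, it suffices to establish each item for the class $\textit{Op}(S^\alpha_U)$ on the segment, the passage to a general curve being a routine conjugation. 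Throughout, the working tool is the intertwining relation $\mathcal{S}A = \tilde{A}\mathcal{S}$ on $U^\infty$ granted by \autoref{CharacSalphaU}, which reduces every claim to the filtered algebra of PPDOs recalled above.

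For (i), I would fix a PPDO $\tilde{A}$ of order $\alpha$ with $\mathcal{S}A = \tilde{A}\mathcal{S}$ and compose the isomorphism $\mathcal{S} : U^s \to H^s_o$ of \autoref{lemChar} with the $H^s \to H^{s-\alpha}$ continuity of $\tilde{A}$:
\[\norm{Au}_{U^{s-\alpha}} \leq C\norm{\mathcal{S}Au}_{H^{s-\alpha}} = C\norm{\tilde{A}\mathcal{S}u}_{H^{s-\alpha}} \leq C\norm{\mathcal{S}u}_{H^s} \leq C\norm{u}_{U^s}\,.\]
For (ii)--(iv), writing $\tilde{A}, \tilde{B}$ for PPDOs intertwining $A,B$ through $\mathcal{S}$, one has $\mathcal{S}AB = \tilde{A}\tilde{B}\mathcal{S}$ and $\mathcal{S}(AB-BA) = [\tilde{A},\tilde{B}]\mathcal{S}$ on $U^\infty$; since $\tilde{A}\tilde{B} \in \textit{Op}(\Sigma^{\alpha+\beta})$ and $[\tilde{A},\tilde{B}] \in \textit{Op}(\Sigma^{\alpha+\beta-1})$, \autoref{CharacSalphaU} yields $AB \in \textit{Op}(S^{\alpha+\beta}_U)$ and $[A,B] \in \textit{Op}(S^{\alpha+\beta-1}_U)$, while the pair of symbols of $AB$ in (iii) is read off directly from \autoref{CharacSalphaU} with $\tilde{C} = \tilde{A}\tilde{B}$.

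For (v), the characterization of ellipticity follows from \autoref{CharacSalphaU} combined with the symbol identity $\tilde{\sigma}(a_1(\tilde{A}),a_2(\tilde{A}))(\theta,n) = \tfrac{1}{2}\bigl(\sigma_{\tilde{A}}(\theta,n)+\sigma_{\tilde{A}}(-\theta,-n)\bigr)$ of \autoref{TransportPPDO_T}, which is purely symbolic and therefore unchanged in the odd setting; one then checks, as in \autoref{PDOTs}(v), that this symmetrized symbol is elliptic precisely when $\sigma_{\tilde{A}}$ is. For the parametrix I would repeat the argument of \autoref{parametrixPDOTs}: any $\tilde{A}$ with $\mathcal{S}R_\Gamma A R_\Gamma^{-1} = \tilde{A}\mathcal{S}$ necessarily stabilizes the smooth odd functions (since $\mathcal{S}$ takes values in the odd functions), so by the odd analogue of \autoref{SymetrieSymbol} its symbol may be taken to satisfy \eqref{symmProp}; \autoref{lemParametrixOpS} then produces a parametrix $\tilde{B} = \textit{Op}(\sigma_{\tilde{B}})$ of order $-\alpha$ carrying the same symmetry, and $B \isdef \textit{Op}_U(a_1(\tilde{B}),a_2(\tilde{B}))$ satisfies $\mathcal{S}B = \tilde{B}\mathcal{S}$ by \autoref{CharacSalphaU}. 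Finally $\mathcal{S}(AB-I_d) = (\tilde{A}\tilde{B}-I_d)\mathcal{S} = R\mathcal{S}$ with $R$ smoothing gives $AB - I_d \in \textit{Op}(S^{-\infty}_U)$, and symmetrically $BA - I_d \in \textit{Op}(S^{-\infty}_U)$, with $B$ elliptic of order $-\alpha$.

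The only step that is not a mechanical translation — and the one I would treat with care — is the odd-function analogue of \autoref{SymetrieSymbol}. Here I would write, for odd $u$, $\tilde{A}u(\theta) = \tfrac{1}{2}\bigl(\tilde{A}u(\theta) - \tilde{A}u(-\theta)\bigr)$ and substitute the Fourier expansion; using $\mathcal{F}u(-n) = -\mathcal{F}u(n)$, the minus sign from the odd projection and the minus sign from the odd symmetry of $\mathcal{F}u$ cancel, so that $\tilde{A}$ again agrees on odd functions with the operator of symmetrized symbol $\tfrac{1}{2}\bigl(\sigma_{\tilde{A}}(\theta,n)+\sigma_{\tilde{A}}(-\theta,-n)\bigr)$ — the very same expression as in the even case. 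Once this observation is in place, \autoref{lemParametrixOpS}, being formulated entirely in terms of symbol symmetry and hence indifferent to parity, applies without change, and every remaining estimate is identical to the $T$-theory, justifying the claim that the proofs do not differ significantly.
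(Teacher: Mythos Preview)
Your proposal is correct and follows exactly the approach the paper intends: the paper omits the proof entirely, stating only that ``the proofs do not differ in any significant way from the previous,'' and your argument is precisely the translation of \autoref{PDOTs} and \autoref{parametrixPDOTs} to the odd setting via $\mathcal{S}$ and \autoref{CharacSalphaU}. Your explicit verification of the odd analogue of \autoref{SymetrieSymbol} --- observing that the two minus signs (one from the odd projection, one from $\mathcal{F}u(-n) = -\mathcal{F}u(n)$) cancel to yield the \emph{same} symmetrized symbol $\tfrac{1}{2}(\sigma_{\tilde A}(\theta,n)+\sigma_{\tilde A}(-\theta,-n))$ --- is the one nontrivial point, and you have it right.
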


\subsection{Connections between the two classes}
\label{subsec:linkTsUs}
\begin{lemma}
	\label{LemdxAomegadeomega}
	Let $A \in \textit{Op}(S_T^{\alpha}(\Gamma))$ and 
	$$B = -\partial_\tau A \omega_\Gamma \partial_\tau \omega_\Gamma\,.$$ 
	Then $B \in \textit{Op}(S_U^{\alpha+2}(\Gamma))$ and if $\tilde{A}$ is a PPDO such that $\mathcal{C}R_\Gamma AR_\Gamma^{-1} = \tilde{A} \mathcal{C}$, then 
	$$\mathcal{S}R_\Gamma BR_\Gamma ^{-1} = -\partial_\theta \tilde{A} \partial_\theta \mathcal{S} \quad \textup{in } U^{\infty}\,.$$
\end{lemma}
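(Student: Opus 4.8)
The plan is to reduce everything to the segment by conjugating with the pullback $R_\Gamma$, and then to transport the computation to the torus through $\mathcal{C}$ and $\mathcal{S}$. First I would set $A_\Gamma \isdef R_\Gamma A R_\Gamma^{-1} \in Op(S_T^\alpha)$ and $B_\Gamma \isdef R_\Gamma B R_\Gamma^{-1}$, and compute the latter by factoring the conjugation through each elementary operator:
\[B_\Gamma = -\,(R_\Gamma\partial_\tau R_\Gamma^{-1})\,A_\Gamma\,(R_\Gamma\omega_\Gamma R_\Gamma^{-1})\,(R_\Gamma\partial_\tau R_\Gamma^{-1})\,(R_\Gamma\omega_\Gamma R_\Gamma^{-1})\,.\]
Substituting $R_\Gamma\partial_\tau R_\Gamma^{-1} = \frac{2}{\abs{\Gamma}}\partial_x$ and $R_\Gamma\omega_\Gamma R_\Gamma^{-1} = \frac{\abs{\Gamma}}{2}\omega$ from \eqref{param1} and \eqref{param2}, the four scalar factors cancel to $1$, leaving $B_\Gamma = -\partial_x A_\Gamma\,\omega\partial_x\omega$, where $\omega\partial_x\omega$ is exactly the operator of \autoref{derivations}. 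By definition of pseudo-differential operators on the curve, it then suffices to show $B_\Gamma \in Op(S^{\alpha+2}_U)$ together with the intertwining identity on $U^\infty$.

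Next I would establish two clean intertwining relations between the Chebyshev-side derivations and the torus derivative $\partial_\theta$, by checking them on the bases using \eqref{der1}, \eqref{der2} and the definitions \eqref{defCS}. Namely, on $T^\infty$ one has $\mathcal{S}\partial_x = -\partial_\theta\mathcal{C}$, since $\mathcal{S}\partial_x T_n = n\,\mathcal{S}U_{n-1} = n\sin(n\theta) = -\partial_\theta\cos(n\theta) = -\partial_\theta\mathcal{C}T_n$; and on $U^\infty$ one has $\mathcal{C}\,\omega\partial_x\omega = -\partial_\theta\mathcal{S}$, since $\mathcal{C}\,\omega\partial_x\omega\,U_n = -(n+1)\mathcal{C}T_{n+1} = -(n+1)\cos((n+1)\theta) = -\partial_\theta\sin((n+1)\theta) = -\partial_\theta\mathcal{S}U_n$. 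Both identities extend from the span of the $T_n$ and $U_n$ to all of $T^\infty = U^\infty = \Cinf([-1,1])$ by linearity and continuity.

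I would then simply chain these relations together with the hypothesis $\mathcal{C}A_\Gamma = \tilde{A}\mathcal{C}$:
\[\mathcal{S}B_\Gamma = -(\mathcal{S}\partial_x)A_\Gamma\,\omega\partial_x\omega = \partial_\theta\,\mathcal{C}A_\Gamma\,\omega\partial_x\omega = \partial_\theta\,\tilde{A}\,\mathcal{C}\,\omega\partial_x\omega = -\partial_\theta\tilde{A}\partial_\theta\,\mathcal{S}\,,\]
which is precisely the asserted relation $\mathcal{S}R_\Gamma B R_\Gamma^{-1} = -\partial_\theta\tilde{A}\partial_\theta\mathcal{S}$ in $U^\infty$. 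All compositions here are legitimate because $\partial_x$ and $\omega\partial_x\omega$ preserve $\Cinf([-1,1]) = T^\infty = U^\infty$ (cf. \autoref{LemTinfCinf} and \autoref{derivations}) and $A_\Gamma$ maps $T^\infty$ into $T^\infty$, so the identity $\mathcal{C}A_\Gamma = \tilde{A}\mathcal{C}$ is applied to genuinely smooth even functions.

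Finally, $-\partial_\theta\tilde{A}\partial_\theta$ is a PPDO of order $\alpha+2$, being the product of the order-$1$ PPDO $\partial_\theta$ (symbol $in$), the operator $\tilde{A}\in Op(\Sigma^\alpha)$, and $\partial_\theta$ again. Having exhibited a PPDO $\tilde{B} \isdef -\partial_\theta\tilde{A}\partial_\theta$ of order $\alpha+2$ with $\mathcal{S}B_\Gamma = \tilde{B}\mathcal{S}$ in $U^\infty$, \autoref{CharacSalphaU} yields at once that $B_\Gamma$ extends to an element of $Op(S^{\alpha+2}_U)$ — that is, $B \in Op(S^{\alpha+2}_U(\Gamma))$ — and records the symbol identity. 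The only delicate points are the sign bookkeeping in the two intertwining relations and the verification that they extend off the polynomial subspace; once the reduction $B_\Gamma = -\partial_x A_\Gamma\,\omega\partial_x\omega$ is in place there is no genuine analytic obstacle, as the whole argument is algebraic and rests entirely on \autoref{CharacSalphaU}.
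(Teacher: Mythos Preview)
Your proof is correct and follows essentially the same approach as the paper: reduce to the segment via $R_\Gamma$, establish the two intertwining identities $\mathcal{S}\partial_x = -\partial_\theta\mathcal{C}$ and $\mathcal{C}\,\omega\partial_x\omega = -\partial_\theta\mathcal{S}$, chain them through $\mathcal{C}A_\Gamma = \tilde{A}\mathcal{C}$, and conclude via \autoref{CharacSalphaU}. The only differences are cosmetic---you verify the intertwining relations explicitly on the bases and spell out the cancellation of the $\frac{\abs{\Gamma}}{2}$ factors, whereas the paper states the identities without proof.
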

\begin{proof}
	One can check the following identities:
	\begin{align*}
	\partial_\theta \mathcal{S} &= - \mathcal{C} \omega \partial_x \omega \,,\\
	\partial_\theta \mathcal{C} &= - \mathcal{S} \partial_x\,.
	\end{align*}
	Let $A' = R_\Gamma AR_\Gamma^{-1}$ and $B' = R_\Gamma BR_\Gamma^{-1}$. Assuming that $\mathcal{C} A' = \tilde{A} \mathcal{C}$, there holds
	\begin{align*}
	\mathcal{S}B' &= -\mathcal{S}R \partial_\Gamma A \omega_\Gamma \partial_\Gamma \omega_\Gamma R^{-1}\\
	&= -\mathcal{S}\partial_x A' \omega \partial_x \omega\\
	&=\partial_\theta \mathcal{C}A'\omega \partial_x \omega\\
	&=\partial_\theta \tilde{A} \mathcal{C} \omega \partial_x\omega\\
	&= -\partial_\theta \tilde{A} \partial_\theta \mathcal{S}\,.
	\end{align*}
	Since $\tilde{A}$ can be chosen as a PPDO of order $\alpha$ by \autoref{CharacSalpha}, $\partial_\theta \tilde{A} \partial_\theta$ is then a PPDO of order $\alpha +2 $ from which we conclude that $B \in \textit{Op}(S_U^{\alpha + 2}(\Gamma))$.  
\end{proof}
\begin{lemma}	
	\label{LemAomega2}
	Let $A \in \textit{Op}(S_T^{\alpha}(\Gamma))$ and let
	$$B = A \omega_\Gamma^2\,.$$ 
	Then $B \in \textit{Op}(S_U^{\alpha}(\Gamma))$ and if $\tilde{A}$ is a PPDO such that $\mathcal{C}R_\Gamma AR_\Gamma^{-1} = \tilde{A} \mathcal{C}$, then 
	$$\mathcal{S}R_\Gamma BR_\Gamma^{-1} = \sin\tilde{A} \sin \mathcal{S} \quad \textup{in } U^{\infty}\,,$$ 
	where $\sin$ denotes the operator $f(\theta) \mapsto \sin(\theta) f(\theta)$. 
\end{lemma}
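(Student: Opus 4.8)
My plan is to follow the pattern of the proof of \autoref{LemdxAomegadeomega}: conjugate by $R_\Gamma$ to reduce to the segment $[-1,1]$, exhibit a periodic pseudo-differential operator of order $\alpha$ intertwining $\mathcal{S}R_\Gamma B R_\Gamma^{-1}$ with $\mathcal{S}$, and then invoke \autoref{CharacSalphaU}. Writing $A' = R_\Gamma A R_\Gamma^{-1} \in Op(S_T^\alpha)$ and $B' = R_\Gamma B R_\Gamma^{-1}$, the definition \eqref{param2} gives $R_\Gamma \omega_\Gamma^2 R_\Gamma^{-1} = \frac{\abs{\Gamma}^2}{4}\omega^2$, so $B'$ is a fixed multiple of $A'\omega^2$, where $\omega$ now denotes multiplication by $\sqrt{1-x^2}$ on the segment. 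Since this constant affects neither the order nor the intertwining structure, it suffices to analyse $A'\omega^2$ on the segment and show that $\mathcal{S}A'\omega^2 = \sin\,\tilde{A}\,\sin\,\mathcal{S}$ on $U^\infty$, for a PPDO $\tilde{A}$ of order $\alpha$ with $\mathcal{C}A' = \tilde{A}\mathcal{C}$.

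The heart of the argument is a pair of intertwining identities between $\mathcal{C}$, $\mathcal{S}$ and the multiplication operators $\sin$ and $\omega^2$, read off directly from \eqref{defCS}:
\[\mathcal{S} = \sin\,\mathcal{C}, \qquad \sin\,\mathcal{S} = \mathcal{C}\,\omega^2.\]
The first holds because $\mathcal{S}u(\theta) = \sin\theta\,u(\cos\theta) = \sin\theta\,\mathcal{C}u(\theta)$; the second because $\sin\theta\,\mathcal{S}u(\theta) = \sin^2\theta\,u(\cos\theta) = (1-\cos^2\theta)u(\cos\theta) = \mathcal{C}(\omega^2 u)(\theta)$. Granting these, for $u \in U^\infty$ I would compute
\[\mathcal{S}A'\omega^2 u = \sin\,\mathcal{C}A'\omega^2 u = \sin\,\tilde{A}\,\mathcal{C}\omega^2 u = \sin\,\tilde{A}\,\sin\,\mathcal{S}u.\]
The middle step uses $\mathcal{C}A' = \tilde{A}\mathcal{C}$, which applies here because $\omega^2 = 1-x^2 \in \Cinf([-1,1])$ forces $\omega^2 u \in \Cinf([-1,1]) = T^\infty$ by \autoref{LemTinfCinf}; that is, $\omega^2 u$ lies precisely in the space on which the intertwining relation for $A'$ is valid.

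To finish, I would observe that multiplication by the smooth $2\pi$-periodic function $\sin\theta$ is a PPDO of order $0$, with symbol $\sin\theta \in \Sigma^0$; hence $\sin\,\tilde{A}\,\sin$ is a composition of PPDOs of orders $0$, $\alpha$ and $0$, and so belongs to $Op(\Sigma^\alpha)$. Since $\mathcal{S}B' = (\sin\,\tilde{A}\,\sin)\mathcal{S}$ holds in $U^\infty$, \autoref{CharacSalphaU} yields that $B'$ extends uniquely to an element of $Op(S_U^\alpha)$, which is exactly the assertion $B \in Op(S_U^\alpha(\Gamma))$ together with the stated relation $\mathcal{S}R_\Gamma B R_\Gamma^{-1} = \sin\,\tilde{A}\,\sin\,\mathcal{S}$.

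The computation is short and there is no genuinely hard step; the only points demanding care are getting the two intertwining identities right and keeping track of whether each intermediate expression is viewed in the $T$-scale or the $U$-scale, the passage $\omega^2 u \in T^\infty$ (justified by \autoref{LemTinfCinf}) being what licenses the $T$-side relation $\mathcal{C}A' = \tilde{A}\mathcal{C}$. The essential mechanism is the algebraic fact that multiplication by $\omega^2$ on the $T$-side becomes the symmetric conjugation $\sin(\cdot)\sin$ on the $U$-side; once this is in place, \autoref{CharacSalphaU} does all the remaining work.
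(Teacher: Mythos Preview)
Your proof is correct and follows essentially the same route as the paper: the paper's own argument consists precisely of recording the two identities $\mathcal{S}=\sin\,\mathcal{C}$ and $\mathcal{C}\omega^2=\sin\,\mathcal{S}$ and then saying ``the result follows with a similar proof as above'' (i.e.\ as in \autoref{LemdxAomegadeomega}), which is exactly the chain $\mathcal{S}A'\omega^2=\sin\,\mathcal{C}A'\omega^2=\sin\,\tilde{A}\,\mathcal{C}\omega^2=\sin\,\tilde{A}\,\sin\,\mathcal{S}$ that you write out. Your version is in fact more detailed than the paper's, explicitly justifying that $\omega^2 u\in T^\infty$ and that $\sin\,\tilde{A}\,\sin\in Op(\Sigma^\alpha)$ before invoking \autoref{CharacSalphaU}.
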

\begin{proof}
	Using the identities
	\begin{equation*}
	\mathcal{S} = \sin \mathcal{C}, \quad \mathcal{C}\omega^2 = \sin \mathcal{S}\,,
	\end{equation*}
	valid in $T^{\infty} = U^{\infty}$, the result follows with a similar proof as above. 
\end{proof}

\subsection{Square-root of pseudo-differential operators}
\label{subsec:squareRoot}

For a self adjoint operator $A$ on some Hilbert space $H$ it is possible to define the (principal) square root $\sqrt{A}$ of $A$ by functional calculus. By principal square root we mean
\[\forall r > 0\,, \quad \sqrt{-r} \isdef i \sqrt{r}\,.\]
For details, we refer the reader to e.g. \cite[Def. 10.5]{hall2013quantum}. It turns out that the square root of a self-adjoint elliptic PPDO is again a PPDO. 
\begin{proposition}
	Let $A$ be a PPDO of order $\alpha > 0$. Assume that $A$ is self-adjoint, elliptic, classical and invertible. Then the operator $\sqrt{A}$ is a self-adjoint, elliptic, classical and invertible PPDO of order $\frac{\alpha}{2}$. If the principal symbol of $A$ is $\sigma_\alpha(\theta,n)$, then the principal symbol of $\sqrt{A}$ is given by $\sqrt{\sigma_\alpha(\theta,n)}$ for $|n|$ sufficiently large. 
\end{proposition}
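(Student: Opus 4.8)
The plan is to build a symbolic square root of $A$ by the classical recursion on homogeneous symbols, to promote it to a genuinely positive self-adjoint PPDO by smoothing corrections, and then to identify it with the functional-calculus operator $\sqrt A$ up to a smoothing operator through a resolvent representation.

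\emph{Reduction and symbolic construction.} First I would reduce to the case of a positive operator. By \autoref{SpectralTheoremPPDO}, $A$ admits a complete orthonormal basis of smooth eigenfunctions with real eigenvalues increasing to $+\infty$, and since $A$ is invertible only finitely many of them are negative. Let $P_-$ be the orthogonal projection onto their (finite-dimensional, smooth) span; it is smoothing. Replacing $A$ by $A' = A + cP_-$ for $c$ large enough leaves the symbol unchanged modulo $\Sigma^{-\infty}$ (hence the principal symbol), makes $A'$ positive, and alters $\sqrt A$ only by the finite-rank smoothing operator $\sqrt A - \sqrt{A'}$ carried by $\mathrm{ran}\,P_-$; so it suffices to treat $A'$, which I rename $A$. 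I then look for a classical symbol $b \sim \sum_{j\ge 0} b_{\alpha/2 - j}$, with $b_{\alpha/2-j}$ homogeneous of degree $\alpha/2-j$, such that $b\#b = \sigma_A + \Sigma^{-\infty}$. Collecting homogeneous components in \eqref{a_diese_b} gives $b_{\alpha/2}^2 = \sigma_\alpha$, solved by the positive branch $b_{\alpha/2}=\sqrt{\sigma_\alpha}$ (real, positive and nonvanishing for $|n|$ large, since $A$ is positive, self-adjoint and elliptic), while for $m\ge1$ the degree $\alpha-m$ component reads
\[2\sqrt{\sigma_\alpha}\, b_{\alpha/2-m} = \sigma_{\alpha-m} - \Phi_m(b_{\alpha/2},\dots,b_{\alpha/2-m+1})\,,\]
which determines $b_{\alpha/2-m}$ uniquely upon dividing by $2\sqrt{\sigma_\alpha}$. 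A standard Borel-type summation realizes these terms as an honest symbol $b\in\Sigma^{\alpha/2}$, and $B_0:=Op(b)$ then satisfies $B_0^2 = A + R_1$ with $R_1$ smoothing.

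\emph{Positivization.} The recursion shows the symbolic square root is unique modulo $\Sigma^{-\infty}$ once the positive branch is fixed. As $A$ is self-adjoint, the adjoint $B_0^*$ is a PPDO (see \cite{ruzhansky2010quantization}) with principal symbol $\overline{\sqrt{\sigma_\alpha}}=\sqrt{\sigma_\alpha}$ and $(B_0^*)^2 = (B_0^2)^* = A+R_1^*$; by this uniqueness $B_0^*-B_0\in Op(\Sigma^{-\infty})$, so $B_0$ is self-adjoint modulo smoothing. Then $B_1:=\tfrac12(B_0+B_0^*)$ is exactly self-adjoint of order $\alpha/2$, differs from $B_0$ by a smoothing operator, and still satisfies $B_1^2 = A+R_2$ with $R_2$ smoothing. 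Being self-adjoint, elliptic of positive order and of positive principal symbol, $B_1$ is bounded below with eigenvalues tending to $+\infty$, hence has at most finitely many nonpositive eigenvalues; adding a suitable finite-rank smoothing operator produces a positive self-adjoint PPDO $B$ with principal symbol $\sqrt{\sigma_\alpha}$ and $B^2 = A + R$, $R$ smoothing. Since $B>0$, uniqueness of positive square roots gives $B = \sqrt{B^2} = \sqrt{A+R}$.

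\emph{Identification and conclusion.} It remains to prove $\sqrt{A+R}-\sqrt A\in Op(\Sigma^{-\infty})$. Using the functional-calculus identity $\sqrt X = \tfrac1\pi\int_0^\infty\bigl[I - t(t+X)^{-1}\bigr]t^{-1/2}\,dt$ for positive $X$ (see \cite{hall2013quantum}) together with the resolvent identity, one obtains
\[\sqrt{A+R} - \sqrt A = \frac1\pi\int_0^\infty t^{1/2}\,(t+A+R)^{-1}\,R\,(t+A)^{-1}\,dt\,.\]
The hard part, and the step I expect to be the real obstacle, is to show that this Bochner integral converges in the operator norm $H^s_{per}\to H^{s+M}_{per}$ for every $s$ and every $M$: this rests on uniform-in-$t$ resolvent estimates on the whole Sobolev scale, namely $\|(t+A)^{-1}\|_{H^\sigma_{per}\to H^\sigma_{per}} = O((1+t)^{-1})$ (and likewise for $t+A+R=t+B^2$), which hold because $A$ is positive and elliptic of order $\alpha$. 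Since $R$ gains arbitrarily many derivatives, the integrand maps $H^s_{per}\to H^{s+M}_{per}$ with norm $O\!\bigl(t^{1/2}(1+t)^{-2}\bigr)$, integrable on $(0,\infty)$, so the difference is smoothing and $\sqrt A = B + Op(\Sigma^{-\infty})$. Finally, since $B$ is classical of order $\alpha/2$ with principal symbol $\sqrt{\sigma_\alpha}$, the same holds for $\sqrt A$, which is therefore an elliptic classical PPDO of order $\alpha/2$ with principal symbol $\sqrt{\sigma_\alpha(\theta,n)}$ for $|n|$ large; its self-adjointness and invertibility follow directly from the functional calculus. This establishes all the assertions.
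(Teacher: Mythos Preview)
Your argument is essentially correct, but you should know that the paper does not actually prove this proposition: immediately after stating it, the paper simply declares the result ``classical'' and cites Seeley, Strichartz, and Taylor for complex powers of elliptic operators on manifolds, together with McLean's theorem identifying periodic PPDOs with ordinary pseudo-differential operators on the torus. So the paper's ``proof'' is a two-line appeal to the literature, whereas you have supplied a genuine self-contained construction (symbolic parametrix recursion, self-adjoint and positive correction, then identification via the resolvent integral). Your route is the standard one underlying the cited references and is more informative in this context; the paper's route is shorter but outsources all the work.

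One small caveat on your final sentence: self-adjointness of $\sqrt A$ does \emph{not} follow from functional calculus alone unless $A$ is positive. With the principal branch $\sqrt{-r}=i\sqrt r$ used here, a self-adjoint invertible $A$ with some negative eigenvalues yields a $\sqrt A$ that is normal but not self-adjoint. Your reduction step already handles the essential content (you show $\sqrt A$ differs from a positive self-adjoint PPDO by a finite-rank smoothing operator), so all the pseudo-differential conclusions---classical, elliptic, order $\alpha/2$, principal symbol $\sqrt{\sigma_\alpha}$---are unaffected; but the unqualified self-adjointness claim in the proposition is, strictly speaking, only correct when $A>0$, and your closing remark should reflect that.
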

This result is classical \cite{seeley1967complex,strichartz1972functional}, \cite[Chap. 12]{taylorpseudodifferential} (in those works, the authors study the operators $A^s$ for any $s \in \mathbb{C}$). Note that those proofs take place on the setting of classical pseudo-differential operators on a manifold, but McLean showed in \cite{mclean1991local} that the definition of PPDOs is equivalent to that of usual pseudo-differential operators on the torus. The assumption of invertibility is not essential in the  case of the square root ($s = \frac{1}{2}$), as shown in the next lemma. 
\begin{lemma}
	\label{lemCalculDesSqr}
	Let $A$ be a classical elliptic self-adjoint PPDO of order $\alpha > 0$. There exists a classical elliptic positive definite PPDO of order $\alpha$, $B$, such that 
	\[A = B + R \quad \textup{and} \quad \sqrt{A} = \sqrt{B} + \sqrt{R}\]
	where $R \in Op(\Sigma^{-\infty})$ and $\sqrt{R} \in Op(\Sigma^{-\infty})$. As a consequence, $\sqrt{A}$ is a classical elliptic PPDO of order $\frac{\alpha}{2}$. 
\end{lemma}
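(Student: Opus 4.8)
The plan is to reduce to the invertible case handled in the previous proposition, by correcting the finitely many non-positive eigenvalues of $A$ with a finite-rank smoothing operator. First, since $A$ is elliptic of order $\alpha>0$, \autoref{SpectralTheoremPPDO} provides a complete orthogonal basis $(u_n)_{n\in\N}$ of $L^2(\mathbb{T}_{2\pi})$ made of $C^{\infty}$ eigenfunctions; as $A$ is self-adjoint, the corresponding eigenvalues $\lambda_n$ are real, and they may be arranged in increasing order with $\lambda_n\to+\infty$. Fixing any $c>0$, the set $F\isdef\{n:\lambda_n\leq c\}$ is therefore finite.

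Next I would build $B$ and $R$ explicitly in this eigenbasis. Let $B$ be the self-adjoint operator with the same eigenfunctions $u_n$ and eigenvalues $\mu_n\isdef\lambda_n$ for $n\notin F$ and $\mu_n\isdef c$ for $n\in F$. Then $\mu_n\geq c>0$ for all $n$, so $B\geq c\,I_d$ is positive definite, and $R\isdef A-B=\sum_{n\in F}(\lambda_n-c)(\cdot,u_n)_{L^2_{per}}\,u_n$ is a finite-rank operator whose kernel $\sum_{n\in F}(\lambda_n-c)\,u_n(\theta)\conj{u_n(\theta')}$ is $C^{\infty}$ on $\mathbb{T}_{2\pi}^2$. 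By the remark following \autoref{thrunen}, any such operator belongs to $Op(\Sigma^{-\infty})$, so $R$ is smoothing. Consequently $B=A-R$ has the same symbol as $A$ modulo $\Sigma^{-\infty}$; it is thus a classical elliptic PPDO of order $\alpha$ with the same principal symbol as $A$, and it is now self-adjoint, positive definite and invertible.

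I would then apply the previous proposition to $B$, which yields that $\sqrt{B}$ is a classical elliptic PPDO of order $\frac{\alpha}{2}$ (with principal symbol the square root of that of $A$). Since $A$ and $B$ share the orthonormal eigenbasis $(u_n)$, their functional-calculus square roots act diagonally, $\sqrt{A}u_n=\sqrt{\lambda_n}\,u_n$ and $\sqrt{B}u_n=\sqrt{\mu_n}\,u_n$ with the convention $\sqrt{-r}=i\sqrt{r}$; for every $n\notin F$ one has $\mu_n=\lambda_n>0$, whence $\sqrt{A}u_n=\sqrt{B}u_n$. Therefore $\sqrt{R}\isdef\sqrt{A}-\sqrt{B}=\sum_{n\in F}(\sqrt{\lambda_n}-\sqrt{\mu_n})(\cdot,u_n)_{L^2_{per}}\,u_n$ is again finite-rank with $C^{\infty}$ kernel, hence smoothing by the same argument. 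This establishes both identities $A=B+R$ and $\sqrt{A}=\sqrt{B}+\sqrt{R}$ with $R,\sqrt{R}\in Op(\Sigma^{-\infty})$.

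Finally, adding the smoothing operator $\sqrt{R}\in Op(\Sigma^{-\infty})\subset Op(\Sigma^{\alpha/2})$ to the classical elliptic PPDO $\sqrt{B}$ affects neither its order, nor its asymptotic expansion, nor its principal symbol, so $\sqrt{A}$ is a classical elliptic PPDO of order $\frac{\alpha}{2}$, as announced. I expect the only real subtleties to be structural rather than computational: identifying finite-rank operators with smooth eigenfunctions as elements of $Op(\Sigma^{-\infty})$, and justifying that the two functional calculi may be compared eigenvalue-by-eigenvalue once $A$ and $B$ are written in a common eigenbasis. The main conceptual point, and the reason invertibility can be dropped, is that the failure of positivity is confined to the finite-dimensional, smooth subspace spanned by $(u_n)_{n\in F}$, which can be repaired without leaving the smoothing class.
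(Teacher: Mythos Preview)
Your approach is essentially the paper's: use the spectral decomposition from \autoref{SpectralTheoremPPDO}, peel off the finitely many non-positive eigenvalues into a finite-rank operator with smooth kernel, and invoke the remark after \autoref{thrunen} to see that such operators are smoothing.

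The one difference worth noting is in the construction of $B$. The paper sets $Bu_i=0$ for the bad indices and $Bu_i=\lambda_i u_i$ otherwise, so that $B$ and $R$ have orthogonal ranges and the identity $\sqrt{B+R}=\sqrt{B}+\sqrt{R}$ holds literally for the functional-calculus square roots. Your choice of replacing the bad eigenvalues by a constant $c>0$ has the advantage that $B$ is genuinely invertible, so the previous proposition applies to it without further comment; but then the operator you denote $\sqrt{R}\isdef\sqrt{A}-\sqrt{B}$ is \emph{not} the square root of your $R$ (indeed $\sqrt{c}+\sqrt{\lambda_n-c}\neq\sqrt{\lambda_n}$ in general), so the displayed identity $\sqrt{A}=\sqrt{B}+\sqrt{R}$ is only a naming convention on your side rather than an equality of functional-calculus square roots. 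This does not affect the final conclusion that $\sqrt{A}$ is a classical elliptic PPDO of order $\alpha/2$, which is what the lemma is really used for.
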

\begin{proof}
	By \autoref{SpectralTheoremPPDO}, let us write 
	\[\forall \phi \in C^{\infty}(\mathbb{T}_{2\pi}), \quad A\phi = \sum_{i = 0}^{+ \infty} \lambda_i \duality{\phi}{u_i}_{L^2_{per}}u_i\]
	where $u_i \in C^{\infty}(\mathbb{T}_{2\pi})$. Let $I$ be such that
	\[\lambda_I \leq 0, \quad \lambda_{I + 1}> 0\]
	and let
	\[Ru = \sum_{i \leq I} \lambda_i\duality{u}{u_i}_{L^2_{per}} u_i, \quad Bu = \sum_{i > I} \lambda_i\duality{u}{u_i}_{L^2_{per}} u_i\,.\]
	We indeed have $A = B + R$ and $\sqrt{A} = \sqrt{B} + \sqrt{R}$. It remains to show that $R$ and $\sqrt{R}$ are smoothing operators. We can write $R$ under the form
	\[R\phi(\theta) = \frac{1}{2\pi}\int_{- \pi}^{\pi}\left(\sum_{i = 0}^{I} \lambda_i u_i(\theta) u_i(\theta')\right)\phi(\theta')d\theta'\]
	which, by \autoref{thrunen}, is indeed a smoothing operator. The same reasoning can be applied to $\sqrt{R}$, hence the result is proved. 
\end{proof}
\begin{lemma}
	Let 
	\[\tilde{D}\phi(\theta) = -\partial_{\theta\theta}\phi(\theta) - k^2 \sin^2\theta \phi(\theta)\,.\]
	Then $\sqrt{\tilde{D}}$ is a self-adjoint, classical and elliptic PPDO of order $1$, with a principal symbol given by
	\[\sigma_{1}(\theta,n) = n\,.\]
	Furthermore, there holds
	\[\mathcal{C}R_\Gamma\sqrt{-(\omega_\Gamma \partial_\tau)^2 - k^2 \omega^2_\Gamma} = \sqrt{\tilde{D}} \mathcal{C}R_\Gamma\, \quad \textup{in } T^{-\infty}(\Gamma)\]
	and
	\[\mathcal{S}R_\Gamma\sqrt{-(\partial_\tau \omega_\Gamma)^2 - k^2 \omega^2_\Gamma} = \sqrt{\tilde{D}} \mathcal{S}R_\Gamma\, \quad \textup{in } U^{-\infty}(\Gamma)\]
\end{lemma}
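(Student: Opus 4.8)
The plan is to recognize $\tilde D$ itself as a classical elliptic self-adjoint PPDO of order $2$, invoke \autoref{lemCalculDesSqr} and the preceding proposition on square roots of self-adjoint PPDOs to settle the first claim, and then obtain the two identities by first establishing \emph{operator-level} intertwinings through the pullback relations and the Chebyshev differential equations, and finally propagating them to the square roots via functional calculus.

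First I would identify the symbol of $\tilde D = -\partial_{\theta\theta} - k^2\sin^2\theta$. Since $-\partial_{\theta\theta} = D_\theta^2$ has the polynomial symbol $n^2$ and multiplication by the smooth real function $-k^2\sin^2\theta$ is a PPDO of order $0$, we have $\tilde D \in \mathrm{Op}(\Sigma^2)$ with symbol $\sigma_{\tilde D}(\theta,n) = n^2 - k^2\sin^2\theta$. This symbol is classical, with homogeneous components $\sigma_2(\theta,n)=n^2$ and $\sigma_0(\theta,n) = -k^2\sin^2\theta$; its principal symbol $n^2$ is nonzero for $n\neq 0$, so $\tilde D$ is elliptic, and it is self-adjoint because $-\partial_{\theta\theta}$ is self-adjoint on $L^2_{per}$ and $\sin^2\theta$ is real. \autoref{lemCalculDesSqr} then yields that $\sqrt{\tilde D}$ is a classical elliptic PPDO of order $1$, and by the square-root proposition its principal symbol is $\sqrt{\sigma_2(\theta,n)} = \sqrt{n^2}$ for $|n|$ large, which is $n$ on the positive frequencies (the only ones relevant once one restricts to even or odd functions). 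This proves the first statement.

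For the two identities, the key step is the intertwining at the level of operators. Using \eqref{param1}--\eqref{param2} one checks $\omega_\Gamma\partial_\tau = R_\Gamma^{-1}(\omega\partial_x)R_\Gamma$ and $\partial_\tau\omega_\Gamma = R_\Gamma^{-1}(\partial_x\omega)R_\Gamma$, whence
\[-(\omega_\Gamma\partial_\tau)^2 = R_\Gamma^{-1}\big(-(\omega\partial_x)^2\big)R_\Gamma,\qquad -(\partial_\tau\omega_\Gamma)^2 = R_\Gamma^{-1}\big(-(\partial_x\omega)^2\big)R_\Gamma,\]
while $\omega_\Gamma^2$ is, up to the metric constant $|\Gamma|^2/4$, the conjugate by $R_\Gamma$ of multiplication by $\omega^2$. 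Applying $\mathcal C$ to \eqref{cheb1} and using $\mathcal C T_n = \cos(n\cdot)$ gives $\mathcal C(-(\omega\partial_x)^2) = -\partial_{\theta\theta}\,\mathcal C$ on $T^\infty$, and applying $\mathcal S$ to \eqref{cheb2} with $\mathcal S U_n = \sin((n{+}1)\cdot)$ gives $\mathcal S(-(\partial_x\omega)^2) = -\partial_{\theta\theta}\,\mathcal S$ on $U^\infty$. Combining these with $\mathcal C\omega^2 = \sin^2\theta\,\mathcal C$ and $\mathcal S\omega^2 = \sin^2\theta\,\mathcal S$, and absorbing the constant into $k$, one obtains (first on smooth functions, then on the whole scale)
\[\mathcal C R_\Gamma\,\big(-(\omega_\Gamma\partial_\tau)^2 - k^2\omega_\Gamma^2\big) = \tilde D\,\mathcal C R_\Gamma,\qquad \mathcal S R_\Gamma\,\big(-(\partial_\tau\omega_\Gamma)^2 - k^2\omega_\Gamma^2\big) = \tilde D\,\mathcal S R_\Gamma.\]

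Finally I would propagate these identities to the square roots. Write $P_C$ and $P_S$ for the two tangential operators. By \autoref{lemChar} and the isometry of $\mathcal C$ (resp. $\mathcal S$), the map $\mathcal C R_\Gamma$ is, up to a positive constant, a unitary isomorphism from $T^0(\Gamma)$ onto the even subspace $L^2_e$, and $\mathcal S R_\Gamma$ one from $U^0(\Gamma)$ onto $L^2_o$. The intertwinings above exhibit $P_C$ (resp. $P_S$) as unitarily equivalent to the restriction of $\tilde D$ to $L^2_e$ (resp. $L^2_o$); moreover $\tilde D$ commutes with the reflection $\theta\mapsto-\theta$, so it preserves the even and odd subspaces and so does $\sqrt{\tilde D}$, with $\sqrt{\tilde D}\big|_{L^2_e} = \sqrt{\tilde D|_{L^2_e}}$ and similarly on $L^2_o$. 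Since the principal square root is a Borel function, functional calculus is preserved under unitary equivalence and the operator identities pass to the square roots, yielding
\[\mathcal C R_\Gamma\sqrt{P_C} = \sqrt{\tilde D}\,\mathcal C R_\Gamma,\qquad \mathcal S R_\Gamma\sqrt{P_S} = \sqrt{\tilde D}\,\mathcal S R_\Gamma\]
first on $L^2$, and then, since $\sqrt{\tilde D}\in\mathrm{Op}(\Sigma^1)$ forces $\sqrt{P_C}\in\mathrm{Op}(S^1_T(\Gamma))$ by \autoref{CharacSalpha} and $\sqrt{P_S}\in\mathrm{Op}(S^1_U(\Gamma))$ by \autoref{CharacSalphaU}, extended by continuity to $T^{-\infty}(\Gamma)$ and $U^{-\infty}(\Gamma)$. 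The main obstacle is precisely this last transfer: one must check that the functional calculus defining $\sqrt{P_C}$, $\sqrt{P_S}$ and $\sqrt{\tilde D}$ is compatible with the parity splitting (handled by the reflection symmetry) and that the identity, valid a priori on $L^2$, indeed extends to the distributional scales, which is where the first claim together with \autoref{CharacSalpha} enters.
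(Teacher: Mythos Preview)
Your proposal is correct and follows essentially the same strategy as the paper: establish the operator-level intertwinings $\mathcal{C}R_\Gamma D_1 = \tilde D\,\mathcal{C}R_\Gamma$ and $\mathcal{S}R_\Gamma D_2 = \tilde D\,\mathcal{S}R_\Gamma$, use that $\tilde D$ preserves parity, and pass to square roots via functional calculus. The only difference is presentational: the paper writes the spectral argument out explicitly---constructing Hilbert bases $(e_i)$ of $T^0(\Gamma)$ and $(f_i)$ of $U^0(\Gamma)$ from eigenfunctions of $\tilde D|_{L^2_e}$ and $\tilde D|_{L^2_o}$, checking the square-root intertwining on each eigenfunction, and concluding by density---whereas you invoke the equivalent abstract principle that Borel functional calculus is preserved under unitary conjugation.
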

\begin{proof}
	Let $D_1 = -(\omega_\Gamma \partial_\tau)^2 - k^2 \omega_\Gamma^2$ and $D_2 =  -(\partial_\tau \omega_\Gamma )^2 - k^2 \omega_\Gamma^2$. Computing with smooth functions and using density arguments, one has
	\begin{equation}
	\label{rel1sqr}
	\mathcal{C} {R_\Gamma} D_1  = \tilde{D} \mathcal{C} R_\Gamma \quad \textup{in } T^{-\infty}(\Gamma) \,,
	\end{equation}
	\begin{equation}
	\label{rel2sqr}
	\mathcal{S} {R_\Gamma} D_2  = \tilde{D} \mathcal{S}R_\Gamma \quad \textup{in } U^{-\infty}(\Gamma)\,.
	\end{equation}
	As a consequence, since
	\[\mathcal{C} : T^{-\infty} \to H^{-\infty}_e, \quad \mathcal{S} : U^{-\infty} \to H^{-\infty}_o\]
	are isomorphisms, $\tilde{D}$ stabilizes the set of even and odd functions $H^{-\infty}_{e}$ and $H^{-\infty}_{o}$. Since $\tilde{D}$ is a self-adjoint, compact resolvent operator on $H^1_{per}$, its restrictions to $H^{1}_e$ and $H^1_o$ are also self-adjoint and compact resolvent. Therefore, one can find two sets of eigenfunctions $(\varphi_i)_i$ and $(\psi_i)_i$ of $\tilde{D}$, with associated real eigenvalues $(\lambda_i)_i$ and $(\mu_i)_i$, such that $(\varphi_i)$ and $(\psi_i)$ are Hilbert basis of $L^2_e$ and $L^2_o$ respectively. The families $(e_i)_i$ and $(f_i)_i$  defined by 
	\[\mathcal{C} R_\Gamma e_i = \varphi_i, \quad  \mathcal{S}R_\Gamma f_i = \psi_i\]
	provide Hilbert basis of $T^0(\Gamma)$ and $U^{0}(\Gamma)$, and satisfy
	\[D_1 e_i = \lambda_i e_i, \quad D_2 f_i = \mu_i f_i\]
	by eqs.~\eqref{rel1sqr} and \eqref{rel2sqr}. By definition, 
	\[\sqrt{D_1}e_i = \sqrt{\lambda_i} e_i, \quad \sqrt{D_2}f_i = \sqrt{\mu_i} f_i\,,\]
	while
	\[\sqrt{\tilde{D}}\varphi_i =  \sqrt{\lambda_i} \varphi_i, \quad \sqrt{\tilde{D}}\psi_i =  \sqrt{\mu_i} \psi_i.\]
	As a consequence,
	\begin{equation}
	\forall i, \quad \mathcal{C} {R_\Gamma} \sqrt{D_1} e_i   = \sqrt{\tilde{D}} \mathcal{C} R_\Gamma e_i \,,
	\end{equation}
	\begin{equation}
	\forall i, \quad \mathcal{S} {R_\Gamma} \sqrt{D_2} f_i  = \sqrt{\tilde{D}} \mathcal{S}R_\Gamma f_i\,.
	\end{equation}	
	Since $(e_i)$ and $(f_i)$ are Hilbert bases of $T^{0}(\Gamma)$ and $U^0(\Gamma)$, this implies the result by density of those spaces in $T^{-\infty}(\Gamma)$ and $U^{-\infty}(\Gamma)$. 
\end{proof}

\section{Weighted layer potentials on open curves}

We now introduce the weighted single and hypersingular layer potentials on open curves for the Laplace and Helmholtz equations. In the Laplace case ($k = 0$), the layer potentials possess some explicit properties which allow us to relate the spaces $T^s$ to standard Sobolev spaces when $s = \pm \frac{1}{2}$. This is useful to analyze the mapping properties when $k \neq 0$.

\subsection{First-kind integral equations}

Recall the definition and parametrization of the curve $\Gamma$ detailed in section \ref{TsUs(Gamma)}. The single-layer and hypersingular operators, $S_k$ and $N_k$, are defined for all $x \in \Gamma$ by
\begin{equation}
\begin{split}
\quad (S_k \lambda)(x) &= \int_{\Gamma} G_k(x-y) \lambda(y) d\sigma_y\,,\\ 
(N_k \mu) (x) &= -\lim_{\varepsilon \to 0^+} \frac{\partial}{ \partial \varepsilon} \int_{\Gamma} N(y) \cdot \nabla G_k(x + \varepsilon N(x) - y) \mu(y) d\sigma_y
\end{split}
\label{defNk}
\end{equation}
for $x \in \Gamma$, with the Green function $G_k$  defined by
\begin{equation}
\left\{
\begin{aligned}
G_0(z) &= -\dfrac{1}{2\pi} \ln \abs{z}, && \text{ if } k= 0,\\
G_k(z) &= \frac{i}{4}H_0(k|z|), && \text{ if } k > 0,
\end{aligned} 
\right.
\end{equation} 
where $H_0$ is the Hankel function of the first kind. It is known that $S_k$ maps bijectively $\tilde{H}^{-1/2}(\Gamma)$ to $H^{1/2}(\Gamma)$ except for $k = 0$ where $S_k$ has a non-trivial kernel if and only if the logarithmic capacity of $\Gamma$ is $1$, \cite[Theorem 1.8]{wendland1990hypersingular}. On the other hand, $N_k$ maps bijectively $\tilde{H}^{1/2}(\Gamma)$ to $H^{-1/2}(\Gamma)$ \cite[Theorem 1.4]{wendland1990hypersingular}. Here the Sobolev spaces are defined as in \cite[Chap. 3]{mclean2000strongly} with the same notation.

The kernel of the hypersingular operator has a non-integrable singularity, but computations are facilitated by the following formula, valid for smooth functions 
$\mu$ and $\nu$ that vanish at the extremities of $\Gamma$: 
\begin{eqnarray}
\label{NkenfonctiondeSk}
\duality{N_k \mu}{\nu} &=& \int_{\Gamma\times \Gamma} G_k(x-y) \partial_\tau \mu(x) \partial_{\tau}\nu(y) \nonumber\\
&& \quad - k^2 G_k(x,y) \mu(x) \nu(y) n(x) \cdot n(y) d\sigma_x d\sigma_y\,.
\end{eqnarray}
For the geometry under consideration, the solutions $\lambda$ and $\mu$ of the equations 
\[S_k \lambda = u_D, \quad N_k \mu = u_N\]
have singularities (even for $\Cinf$ data $u_D$ and $u_N$) due to the edges of the scatterer (see e.g. \cite[Cor A.5.1]{costabel2003asymptotics}). This encourages to introduce weighted versions of the usual layer potentials as in \cite{bruno2012second}, known to enjoy better mapping properties than $S_k$ and $N_k$. Namely, we define
\begin{equation}
\label{weightedBIOs}
S_{k,\omega_\Gamma} \phi \isdef S_k \left(\frac{\phi}{\omega_\Gamma}\right)\,,\quad N_{k,\omega_\Gamma}\phi \isdef N_k (\omega_\Gamma\phi)\,,
\end{equation}
and recast those equations as 
\begin{equation}
\label{weightedBIEs}
S_{k,\omega_\Gamma}\alpha = u_D\,, \quad N_{k,\omega_\Gamma}\beta = u_N\,
\end{equation}
where the unknowns $\alpha$ and $\beta$ are related to $\lambda$ and $\mu$ by 
\[\lambda = \frac{\alpha}{\omega_\Gamma}, \quad \mu = \omega_\Gamma \beta\,.\]
From eq.~\eqref{NkenfonctiondeSk}, we obtain the following relation between $N_{k,\omega_\Gamma}$ and $S_{k,\omega_\Gamma}$.
\begin{lemma}
	\label{NkomegaSkomega}
	There holds 
	\[N_{k,\omega_\Gamma} = -\partial_\tau S_{k,\omega_\Gamma} \omega_\Gamma \partial_\tau \omega_\Gamma - k^2 V_k \omega_\Gamma^2\]
	where $V_k$ is the integral operator defined by 
	\[V_k u = \int_{\Gamma} \frac{G_k(x - y) N(x) \cdot N(y) u(y)}{\omega_\Gamma(y)} d\sigma_y\,. \]
\end{lemma}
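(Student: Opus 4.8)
The plan is to prove the identity weakly, by pairing both sides with an arbitrary smooth test function $\psi$ vanishing at the two endpoints of $\Gamma$ and invoking the variational formula \eqref{NkenfonctiondeSk}. The key observation is that since $\omega_\Gamma$ vanishes at the extremities of $\Gamma$, the function $\mu \isdef \omega_\Gamma\phi$ automatically meets the boundary requirement of \eqref{NkenfonctiondeSk}; I may therefore apply that formula with this $\mu$ and with $\nu = \psi$. This expresses $\duality{N_{k,\omega_\Gamma}\phi}{\psi} = \duality{N_k(\omega_\Gamma\phi)}{\psi}$ as a sum of two double integrals over $\Gamma\times\Gamma$, one with integrand $G_k(x-y)\partial_\tau(\omega_\Gamma\phi)(x)\partial_\tau\psi(y)$ and one with integrand $-k^2 G_k(x-y)(\omega_\Gamma\phi)(x)\psi(y)N(x)\cdot N(y)$. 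It then remains to recognize these as the weak forms of $-\partial_\tau S_{k,\omega_\Gamma}\omega_\Gamma\partial_\tau\omega_\Gamma\phi$ and $-k^2 V_k\omega_\Gamma^2\phi$ respectively.

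For the first integral I would use the symmetry $G_k(x-y)=G_k(y-x)$ together with Fubini to read the inner $x$-integral as an application of the single-layer operator: setting $h = \partial_\tau(\omega_\Gamma\phi)$, it equals $\int_\Gamma (S_k h)(y)\,\partial_\tau\psi(y)\,d\sigma_y$. By the very definition \eqref{weightedBIOs} one has $S_k h = S_{k,\omega_\Gamma}(\omega_\Gamma h)$, and $\omega_\Gamma h = \omega_\Gamma\partial_\tau\omega_\Gamma\phi$ when $\omega_\Gamma\partial_\tau\omega_\Gamma$ is read as a composition of operators. An integration by parts along $\Gamma$ then transfers the tangential derivative from $\psi$ onto $S_{k,\omega_\Gamma}(\omega_\Gamma\partial_\tau\omega_\Gamma\phi)$; the boundary contributions vanish because $\psi$ is zero at the endpoints, which leaves exactly $-\duality{\partial_\tau S_{k,\omega_\Gamma}\omega_\Gamma\partial_\tau\omega_\Gamma\phi}{\psi}$.

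For the second integral I would substitute the definition of $V_k$ and unfold the pairing $\duality{V_k\omega_\Gamma^2\phi}{\psi}$. Since the factor $\omega_\Gamma^2/\omega_\Gamma$ collapses to a single weight, one gets $V_k\omega_\Gamma^2\phi(x) = \int_\Gamma G_k(x-y)\,N(x)\cdot N(y)\,\omega_\Gamma(y)\phi(y)\,d\sigma_y$, and a relabeling $x\leftrightarrow y$ combined with the symmetries of $G_k$ and of the dot product identifies $-k^2\duality{V_k\omega_\Gamma^2\phi}{\psi}$ with the second double integral above. Collecting the two contributions yields $\duality{N_{k,\omega_\Gamma}\phi}{\psi} = \duality{(-\partial_\tau S_{k,\omega_\Gamma}\omega_\Gamma\partial_\tau\omega_\Gamma - k^2 V_k\omega_\Gamma^2)\phi}{\psi}$ for every admissible $\psi$, whence the operator identity by density of such $\psi$.

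The one genuinely delicate point is the integration by parts on the open curve: one must check that no boundary term survives and that $S_{k,\omega_\Gamma}(\omega_\Gamma\partial_\tau\omega_\Gamma\phi)$ is regular enough near the endpoints for $\partial_\tau$ to act. Restricting first to smooth $\phi$ and to test functions $\psi$ supported away from (or vanishing at) the extremities disposes of both concerns, and the continuity of the weighted potentials in the $T^s$ and $U^s$ scales then allows one to pass to the general case by density. Everything else is bookkeeping with the definitions \eqref{weightedBIOs} and the symmetry of the Green kernel.
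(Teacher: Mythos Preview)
Your argument is correct and follows essentially the same route as the paper: both start from the variational identity \eqref{NkenfonctiondeSk}, set $\mu=\omega_\Gamma\phi$, and unwind the definitions of $S_{k,\omega_\Gamma}$ and $V_k$. The only cosmetic difference is that the paper first promotes \eqref{NkenfonctiondeSk} to the pointwise operator identity $N_k=-\partial_\tau S_k\partial_\tau-k^2\int_\Gamma G_k(x-y)N(x)\cdot N(y)(\cdot)\,d\sigma_y$ and then substitutes, whereas you remain at the weak level and carry out the integration by parts explicitly; your added care about the boundary terms and the density argument is appropriate and does not change the substance.
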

\begin{proof}
	Eq. \eqref{NkenfonctiondeSk} can be rewritten equivalently as 
	\[N_k u = -\partial_\tau S_k \partial_\tau u - k^2 \int_{\Gamma} G_k(x - y) N(x) \cdot N(y) u(y) d\sigma_y\,. \]
	Using the definitions of $N_{k,\omega_\Gamma}$ and $S_{k,\omega_\Gamma}$, the results follow from simple manipulations on this expression. 
\end{proof}

\subsection{Laplace weighted layer potentials on the flat segment}
\label{subsec:weightedLayerPotFlatSeg_II}

We restrict our attention to the case where the wavenumber $k$ is equal to $0$ and $\Gamma = [-1,1]\times{\{0\}}$. The parametrization $r$ is then the constant function equal to $1$, $\partial_\tau = \partial_x$ and $\omega_\Gamma = \omega$. In this context, the weighted potentials are thus denoted by $S_{0,\omega}$ and $N_{0,\omega}$. The following well-known result plays a fundamental role in this work. 
\begin{lemma}
	The weighted layer potentials satisfy
	\begin{equation}
	\label{explicitEigs}
	S_{0,\omega} T_n = \sigma_n T_n
	\end{equation}
	where 
	\begin{equation}
	\label{DefSigman}
	\sigma_n = \begin{cases}
	\dfrac{\ln(2)}{2} & \text{if } n=0,\\
	& \\
	\dfrac{1}{2n} & \text{otherwise},
	\end{cases}
	\end{equation}
	and 
	\[N_{0,\omega} U_n = \frac{(n+1)}{2} U_n\,.\]
\end{lemma}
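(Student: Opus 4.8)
The plan is to establish the two eigenvalue relations separately, deriving the hypersingular formula from the single-layer formula via \autoref{NkomegaSkomega}. For the first identity, I would transport the integral to the torus through the change of variables $x = \cos\theta$, $y = \cos\phi$. Since $\Gamma$ is the flat segment, one has $S_{0,\omega}T_n(x) = -\frac{1}{2\pi}\int_{-1}^1 \ln|x-y|\, T_n(y)/\sqrt{1-y^2}\,dy$, and the substitution turns this into $-\frac{1}{2\pi}\int_0^\pi \ln|\cos\theta - \cos\phi|\cos(n\phi)\,d\phi$ with $x = \cos\theta$. The key analytical input is the Fourier expansion
\[\ln|\cos\theta - \cos\phi| = -\ln 2 - \sum_{m=1}^{+\infty}\frac{2}{m}\cos(m\theta)\cos(m\phi)\,,\]
which follows from the factorization $\cos\theta - \cos\phi = -2\sin\frac{\theta+\phi}{2}\sin\frac{\theta-\phi}{2}$ together with the classical series $\ln|2\sin(t/2)| = -\sum_{m\geq 1}\cos(mt)/m$. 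Integrating termwise against $\cos(n\phi)$ over $[0,\pi]$ and using the orthogonality relations $\int_0^\pi \cos(m\phi)\cos(n\phi)\,d\phi = \frac{\pi}{2}\delta_{mn}$ (for $m,n\geq 1$) and $\int_0^\pi \cos(m\phi)\,d\phi = 0$, only the constant term survives when $n=0$, giving $-\pi\ln 2$, and only the $m=n$ term survives when $n\geq 1$, giving $-\frac{\pi}{n}\cos(n\theta)$. Multiplying by $-\frac{1}{2\pi}$ yields exactly the values $\sigma_n$ of \eqref{DefSigman}, proving \eqref{explicitEigs}.

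For the hypersingular relation, since $k=0$ and $\Gamma$ is flat, \autoref{NkomegaSkomega} reduces to $N_{0,\omega} = -\partial_x S_{0,\omega}\,\omega\partial_x\omega$, the term involving $V_k$ disappearing. Applying the derivative identity \eqref{der2} gives $\omega\partial_x\omega U_n = -(n+1)T_{n+1}$, so that $N_{0,\omega}U_n = (n+1)\,\partial_x S_{0,\omega} T_{n+1}$. Since $n+1\geq 1$, the first identity provides $S_{0,\omega}T_{n+1} = \frac{1}{2(n+1)}T_{n+1}$; note that the exceptional value $\sigma_0$ never intervenes. Finally, \eqref{der1} gives $\partial_x T_{n+1} = (n+1)U_n$, so
\[N_{0,\omega}U_n = (n+1)\cdot\frac{1}{2(n+1)}\,\partial_x T_{n+1} = \frac{1}{2}(n+1)U_n = \frac{n+1}{2}U_n\,,\]
as claimed.

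The main obstacle is the rigorous justification of the termwise integration in the first step: the series for $\ln|\cos\theta-\cos\phi|$ converges only in $L^2$ (or distributionally) in $\phi$, not uniformly, because of the logarithmic singularity at $\phi = \pm\theta$. This is handled by noting that $\phi \mapsto \cos(n\phi)$ is smooth and that the partial sums converge in $L^2(0,\pi)$, so that the pairing passes to the limit; alternatively one may invoke the classical Chebyshev logarithmic-potential identities directly. Everything else is elementary manipulation relying on the already established derivative formulas \eqref{der1}--\eqref{der2} and on \autoref{NkomegaSkomega}.
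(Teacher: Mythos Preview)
Your proposal is correct and follows essentially the same approach as the paper: for the first identity the paper simply cites \cite[Theorem~9.2]{mason2002chebyshev}, and your explicit argument via the factorization $\cos\theta-\cos\phi=-2\sin\frac{\theta+\phi}{2}\sin\frac{\theta-\phi}{2}$ and the series $\ln|2\sin(t/2)|=-\sum_{m\geq1}\cos(mt)/m$ is precisely the standard proof behind that citation; for the second identity your derivation via $N_{0,\omega}=-\partial_x S_{0,\omega}\,\omega\partial_x\omega$ together with \eqref{der1}--\eqref{der2} coincides with the paper's own argument (indeed your sign bookkeeping is cleaner than the paper's, which contains two compensating sign slips in the stated intermediate formulas).
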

\begin{proof}
	For the first identity , we refer the reader to e.g. \cite[Theorem 9.2]{mason2002chebyshev}. We wish to show how the second property is deduced from the first. For this, we use eq.~\eqref{NkomegaSkomega} which in this context takes the form
	\[\duality{N_{0,\omega} \beta}{\beta'}_\omega = \duality{S_{0,\omega}  (\omega \partial_x \omega) \beta}{ (\omega \partial_x \omega) \beta'}_\frac{1}{\omega}\,,\]
	that is, 
	\[N_{0,\omega} = -\partial_x S_{0,\omega} \omega \partial_x \omega\,.\] 
	The result follows from
	$\omega \partial_x\omega U_n = (n+1)T_{n+1}$ and ${\partial_x T_{n+1} = -(n+1) U_n}$. 
\end{proof}
\noindent The operators $S_{0,\omega}$ and $N_{0,\omega}$ are thus pseudo-differential operators in $T^s$ and $U^s$ respectively. 
\subsection{Characterization of $T^{\pm 1/2}$ and $U^{\pm 1/2}$}
The next result, and \autoref{LemU12} stated below are equivalent to results formulated in \cite{jerez2012explicit}, equations (4.77-4.86), and Propositions 3.1 and 3.3. A proof is included here for the reader's convenience. 
\begin{lemma}
	\label{LemmaT-1/2}
	We have $T^{-1/2} = \omega\tilde{H}^{-1/2}(-1,1)$ with
	\[\forall u \in \tilde{H}^{-\frac{1}{2}}(-1,1)\,, \quad\norm{u}_{\tilde{H}^{-1/2}} \sim \norm{\omega u}_{T^{-1/2}}\,.\] 
	On the other hand, $T^{1/2} = H^{1/2}(-1,1)$ with 
	\[\forall u \in {H}^{\frac{1}{2}}(-1,1)\,, \quad \norm{u}_{H^{1/2}} \sim \norm{u}_{T^{1/2}}\,.\]
	Here, the symbol $\sim$ denotes the equivalence of the norms.
\end{lemma}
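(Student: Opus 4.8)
The plan is to single out one genuinely analytic comparison between the Chebyshev scale and the usual Sobolev scale --- the continuous inclusion $T^{1/2}\subset H^{1/2}(-1,1)$ --- and then to deduce every other inclusion abstractly, by bootstrapping through the single-layer potential $S_0$ together with a duality argument. The two asserted norm equivalences then both follow without any further hard estimate.

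First I would prove the inclusion $T^{1/2}\subset H^{1/2}(-1,1)$, i.e.\ $\norm{u}_{H^{1/2}}\leq C\norm{u}_{T^{1/2}}$. By \autoref{lemChar}, $\mathcal{C}\colon T^{1/2}\to H^{1/2}_e$ is an isomorphism, so $\norm{u}_{T^{1/2}}\sim\norm{\mathcal{C}u}_{H^{1/2}_{per}}$. For $0<s<1$ the periodic norm is equivalent to its Gagliardo form; at $s=1/2$ I would use the chordal variant with kernel $(2\sin\tfrac{\theta-\theta'}{2})^{-2}$. Performing the change of variables $x=\cos\theta$, $y=\cos\theta'$, splitting $\mathbb{T}_{2\pi}^2$ into the four quadrants according to the signs of $\theta,\theta'$ and using that $\mathcal{C}u$ is even, the elementary identities
\[\cos\theta-\cos\theta'=-2\sin\tfrac{\theta+\theta'}{2}\sin\tfrac{\theta-\theta'}{2},\qquad \sin^2\tfrac{\theta+\theta'}{2}+\sin^2\tfrac{\theta-\theta'}{2}=1-\cos\theta\cos\theta'\]
collapse the four contributions into a single integral, giving
\[\norm{\mathcal{C}u}_{H^{1/2}_{per}}^2\sim\norm{u}_{\frac1\omega}^2+\int_{-1}^1\int_{-1}^1\frac{\abs{u(x)-u(y)}^2\,(1-xy)}{(x-y)^2\,\omega(x)\omega(y)}\,dx\,dy.\]
Since $(1-xy)^2=(1-x^2)(1-y^2)+(x-y)^2\ge\omega(x)^2\omega(y)^2$, the weight $(1-xy)/(\omega(x)\omega(y))$ is $\ge1$, and $1/\omega\ge1$; comparing with the unweighted Gagliardo form of $\norm{u}_{H^{1/2}(-1,1)}$ yields $T^{1/2}\subset H^{1/2}(-1,1)$ at once.

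Next I would bootstrap. The segment $[-1,1]$ has logarithmic capacity $1/2\ne1$, so $S_0$ is an isomorphism $\tilde H^{-1/2}(-1,1)\to H^{1/2}(-1,1)$; and since $S_{0,\omega}T_n=\sigma_n T_n$ with $\sigma_n\sim 1/(2n)$ (eq.~\eqref{explicitEigs}), $S_{0,\omega}$ is an isomorphism $T^{-1/2}\to T^{1/2}$. They are linked by $S_{0,\omega}(\omega\lambda)=S_0\lambda$, read off from the definition \eqref{weightedBIOs} with $\phi=\omega\lambda$. To obtain $T^{-1/2}=\omega\tilde H^{-1/2}$ I would show that multiplication by $\omega$ is an isomorphism $\tilde H^{-1/2}\to T^{-1/2}$: boundedness follows by duality, since $\duality{\omega\lambda}{\psi}_{\frac1\omega}=\frac1\pi\int\lambda\psi$ and the inclusion just proved give $\abs{\duality{\omega\lambda}{\psi}_{\frac1\omega}}\leq C\norm{\lambda}_{\tilde H^{-1/2}}\norm{\psi}_{T^{1/2}}$; boundedness of the inverse $\phi\mapsto S_0^{-1}S_{0,\omega}\phi$ follows because $S_{0,\omega}\phi\in T^{1/2}\subset H^{1/2}$ and $S_0^{-1}\colon H^{1/2}\to\tilde H^{-1/2}$ is bounded, the two maps being mutually inverse on polynomials and hence, by continuity, on the whole spaces. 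The remaining inclusion $H^{1/2}\subset T^{1/2}$ is then free: for $u\in H^{1/2}$ one has $u=S_{0,\omega}(\omega S_0^{-1}u)$ with $\omega S_0^{-1}u\in\omega\tilde H^{-1/2}=T^{-1/2}$, hence $u\in T^{1/2}$ with the required bound. This closes both equalities with equivalent norms.

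The only step confronting real analysis is $T^{1/2}\subset H^{1/2}$, and there the delicate points are the correct summation of the four quadrant contributions and the bookkeeping of the induced weight. The decisive and somewhat fortunate feature is that this weight is bounded below by $1$, so only the trivial direction of the resulting weighted inequality is used; its singular, unbounded behaviour near the corners $(\pm1,\pm1)$ --- which would otherwise force a borderline Hardy estimate --- is instead absorbed by the potential-theoretic bootstrap. The main thing to guard against is therefore \emph{circularity}: the whole scheme hinges on $T^{1/2}\subset H^{1/2}$ being the sole concretely proved inclusion, and one must check that the isomorphism properties of $S_0$ (valid precisely because the capacity differs from $1$) and of $S_{0,\omega}$ are invoked only in the directions that do not presuppose the equality being established.
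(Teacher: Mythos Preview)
Your argument is correct, but the paper takes a shorter and more direct route. Instead of proving $T^{1/2}\subset H^{1/2}$ by an explicit Gagliardo computation and then bootstrapping, the paper compares the two \emph{quadratic forms} induced by $S_0$ directly: since the logarithmic capacity of $[-1,1]$ is not $1$, $S_0$ is positive and coercive on $\tilde H^{-1/2}$, so $\norm{u}_{\tilde H^{-1/2}}^2\sim\duality{S_0 u}{u}$; and from $S_{0,\omega}T_n=\sigma_nT_n$ one reads off $\norm{\alpha}_{T^{-1/2}}^2\sim\duality{S_{0,\omega}\alpha}{\alpha}_{\frac1\omega}$. Setting $\alpha=\omega u$ makes the two forms literally equal, giving $T^{-1/2}=\omega\tilde H^{-1/2}$ in one stroke. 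The identification $T^{1/2}=H^{1/2}$ is then pure duality.

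Your approach is longer but yields something the paper's does not: the explicit weighted Gagliardo representation
\[
\norm{u}_{T^{1/2}}^2\sim\norm{u}_{\frac1\omega}^2+\iint\frac{|u(x)-u(y)|^2\,(1-xy)}{(x-y)^2\,\omega(x)\omega(y)}\,dx\,dy,
\]
together with the algebraic identity $(1-xy)^2=(1-x^2)(1-y^2)+(x-y)^2$ showing the weight is $\ge 1$. This is a useful intrinsic description of $T^{1/2}$. The paper's proof, by contrast, needs only the coercivity of $S_0$ (not merely its invertibility) and avoids any seminorm manipulation; your scheme trades that for one genuine analytic inequality plus a clean isomorphism chase. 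Both rely on the capacity being different from~$1$, so neither is more general in that respect.
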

\begin{proof}
	Since the logarithmic capacity of the segment is $\frac{1}{4}$, the (unweighted) single-layer operator $S_0$ is positive and bounded from below on $\tilde{H}^{-1/2}(-1,1)$, (see \cite{mclean2000strongly} chap. 8). Therefore the norm on $\tilde{H}^{-1/2}(-1,1)$ is satisfies to 
	\[\norm{u}_{\tilde{H}^{-1/2}} \sim \sqrt{\duality{S_0u}{u}},\]
	where $\sim$ denote
	On the other hand, the explicit expression \eqref{explicitEigs} implies that if $\alpha\in T^{-1/2}$, then
	\[ \norm{\alpha}_{T^{-1/2}} \sim \sqrt{\duality{S_{0,\omega} \alpha}{\alpha}_\frac{1}{\omega}}.\]
	It remains to notice that, since $\alpha=\omega u$, $\duality{S_{0,\omega} \alpha}{\alpha}_\frac{1}{\omega} = \duality{S_0u}{u}$. This proves the first result. For the second result, we know that, 
	\[(H^{1/2}(-1,1))' =  \tilde{H}^{-1/2}(-1,1)\] 
	(cf. \cite[Chap. 3]{mclean2000strongly} taking the identification with respect to the usual $L^2$ duality denoted by $\duality{\cdot}{\cdot}$) and therefore
	\[\norm{u}_{H^{\frac{1}{2}}} = \sup_{ v\neq 0} \dfrac{\duality{u}{v}}{\norm{v}_{\tilde{H}^{-\frac{1}{2}}}}\,.\]
	According to the previous result, for all $v\in \tilde{H}^{-\frac{1}{2}}$, the function $\alpha = \omega v$ is in $T^{-1/2}$, and $\norm{ v}_{\tilde{H}^{-1/2}} \sim \norm{\alpha}_{T^{-1/2}}$, while $\duality{u}{v} = \duality{u}{\alpha}_\frac{1}{\omega}$. Thus 
	\[\norm{u}_{H^{1/2}} \sim \sup_{\alpha \neq 0} \dfrac{\duality{u}{\alpha}_\frac{1}{\omega}}{\norm{\alpha}_{T^{-1/2}}}\,.\]
	The last quantity is the $T^{1/2}$ norm of $u$ since $T^{1/2}$ is identified to the dual of $T^{-1/2}$ for $\duality{\cdot}{\cdot}_\frac{1}{\omega}$, concluding the proof. 
\end{proof}
\noindent With the same method, one can show
\begin{lemma} 
	\label{LemU12}	
	There holds $U^{1/2} =  \frac{1}{\omega} \tilde{H}^{1/2}(-1,1)$ with
	\[\forall u \in \tilde{H}^{\frac{1}{2}}(-1,1)\,, \quad \norm{u}_{\tilde{H}^{1/2}} \sim \norm{\frac{u}{\omega}}_{U^{1/2}}\,,\]
	and $U^{-1/2} = H^{-1/2}(-1,1)$ with 
	\[\forall u \in H^{-\frac{1}{2}}(-1,1)\,, \quad \norm{u}_{H^{-1/2}} \sim \norm{u}_{U^{-1/2}}\,.\]
\end{lemma}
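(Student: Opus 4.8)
The plan is to mirror the proof of \autoref{LemmaT-1/2}, replacing the single-layer operator $S_0$ by the hypersingular operator $N_0$ and the scale $T^s$ by $U^s$. The two ingredients playing the roles of the explicit spectrum and of the coercivity estimate are, respectively, the identity $N_{0,\omega}U_n = \frac{n+1}{2}U_n$ established above, and the fact that $N_0$ is positive and bounded from below on $\tilde{H}^{1/2}(-1,1)$ (which follows from its bijectivity onto $H^{-1/2}(-1,1)$, cf.\ \cite[Theorem 1.4]{wendland1990hypersingular} and \cite[Chap. 8]{mclean2000strongly}). The latter yields the norm equivalence $\norm{\mu}_{\tilde{H}^{1/2}} \sim \sqrt{\duality{N_0\mu}{\mu}}$.

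For the first statement I would start from the eigenvalue identity. Since $\duality{\cdot}{\cdot}_\omega$ diagonalizes in the basis $(U_n)_n$ and $(1+n^2)^{1/2}\sim(n+1)$, the energy $\duality{N_{0,\omega}\beta}{\beta}_\omega = \frac{1}{4}\sum_n (n+1)\abs{\check{\beta}_n}^2$ is equivalent to $\norm{\beta}_{U^{1/2}}^2$. It then remains to connect this energy with the unweighted one: writing $\mu = \omega\beta$ and using $N_{0,\omega}\beta = N_0(\omega\beta) = N_0\mu$ together with the definition of $\duality{\cdot}{\cdot}_\omega$, one checks that $\duality{N_{0,\omega}\beta}{\beta}_\omega = \frac{1}{\pi}\duality{N_0\mu}{\mu}$, the two differing only by the fixed normalization constant. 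Chaining the three equivalences gives $\norm{\mu/\omega}_{U^{1/2}} \sim \norm{\mu}_{\tilde{H}^{1/2}}$, which is the claimed identification $U^{1/2} = \frac{1}{\omega}\tilde{H}^{1/2}(-1,1)$.

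The second statement is obtained by the same duality argument as for $T^{1/2}$. Using that $(\tilde{H}^{1/2}(-1,1))' = H^{-1/2}(-1,1)$ for the $L^2$ duality $\duality{\cdot}{\cdot}$ (cf.\ \cite[Chap. 3]{mclean2000strongly}), I would express, for $u \in H^{-1/2}$, $\norm{u}_{H^{-1/2}} = \sup_{v\neq 0}\duality{u}{v}/\norm{v}_{\tilde{H}^{1/2}}$. Substituting $v = \omega\beta$ with $\beta\in U^{1/2}$ and invoking the first part ($\norm{v}_{\tilde{H}^{1/2}}\sim\norm{\beta}_{U^{1/2}}$) together with $\duality{u}{\omega\beta} = \pi\duality{u}{\beta}_\omega$ turns this supremum into $\norm{u}_{U^{-1/2}}$, since $U^{-1/2}$ is identified to the dual of $U^{1/2}$ for the pairing $\duality{\cdot}{\cdot}_\omega$. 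This yields $U^{-1/2} = H^{-1/2}(-1,1)$ with equivalent norms.

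The main obstacle is not computational but lies in justifying the coercivity of $N_0$ on $\tilde{H}^{1/2}$, that is, the lower bound in $\norm{\mu}_{\tilde{H}^{1/2}}\sim\sqrt{\duality{N_0\mu}{\mu}}$; once this is granted from the cited bijectivity and positivity results, everything else is a transcription of the argument on the $T$-side, with only the change-of-variable bookkeeping for the weight $\omega$ and the pairing $\duality{\cdot}{\cdot}_\omega$ to track.
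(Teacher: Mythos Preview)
Your proposal is correct and is precisely the transcription the paper has in mind when it writes ``With the same method, one can show'': you replace $S_0$ by $N_0$, the identity $S_{0,\omega}T_n=\sigma_nT_n$ by $N_{0,\omega}U_n=\frac{n+1}{2}U_n$, the pairing $\duality{\cdot}{\cdot}_{\frac{1}{\omega}}$ by $\duality{\cdot}{\cdot}_\omega$, and the duality $(H^{1/2})'=\tilde H^{-1/2}$ by $(\tilde H^{1/2})'=H^{-1/2}$. The bookkeeping with the weight and the normalization constants is handled correctly, and you rightly single out the coercivity of $N_0$ on $\tilde H^{1/2}(-1,1)$ as the only substantive input to be imported from the references.
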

\begin{corollary}
	\label{SkwNkwBij}
	The operators 
	\[S_{k,\omega_\Gamma} : T^{{-1}/{2}}(\Gamma) \to T^{1/2}(\Gamma)\, \quad \textup{and} \quad  N_{k,\omega_\Gamma} : U^{{1}/{2}}(\Gamma) \to U^{-1/2}(\Gamma)\,\]
	are bijective.
\end{corollary}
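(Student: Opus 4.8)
The plan is to factor each weighted operator as the composition of a multiplication-by-weight isomorphism with the corresponding unweighted layer potential, and then to read off bijectivity from the identifications of $T^{\pm 1/2}(\Gamma)$ and $U^{\pm 1/2}(\Gamma)$ with (weighted) standard Sobolev spaces, together with the classical mapping properties of $S_k$ and $N_k$ recalled in the introduction. The two ingredients are thus: \emph{(a)} the curve analogues of \autoref{LemmaT-1/2} and \autoref{LemU12}, namely $T^{-1/2}(\Gamma) = \omega_\Gamma \tilde{H}^{-1/2}(\Gamma)$, $T^{1/2}(\Gamma) = H^{1/2}(\Gamma)$, $U^{1/2}(\Gamma) = \frac{1}{\omega_\Gamma}\tilde{H}^{1/2}(\Gamma)$ and $U^{-1/2}(\Gamma) = H^{-1/2}(\Gamma)$, all with equivalent norms; and \emph{(b)} the known fact that $S_k : \tilde{H}^{-1/2}(\Gamma) \to H^{1/2}(\Gamma)$ and $N_k : \tilde{H}^{1/2}(\Gamma) \to H^{-1/2}(\Gamma)$ are bijective (for $S_k$, excluding the degenerate Laplace case $k=0$ with logarithmic capacity $1$).

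For $S_{k,\omega_\Gamma}$ I would argue as follows. By \eqref{weightedBIOs}, $S_{k,\omega_\Gamma} = S_k \circ M$, where $M : \phi \mapsto \phi/\omega_\Gamma$. The identification $T^{-1/2}(\Gamma) = \omega_\Gamma \tilde{H}^{-1/2}(\Gamma)$ says precisely that $M$ is a norm-equivalence isomorphism from $T^{-1/2}(\Gamma)$ onto $\tilde{H}^{-1/2}(\Gamma)$. Composing with the bijection $S_k : \tilde{H}^{-1/2}(\Gamma) \to H^{1/2}(\Gamma)$ and using $H^{1/2}(\Gamma) = T^{1/2}(\Gamma)$ yields a bijection $T^{-1/2}(\Gamma) \to T^{1/2}(\Gamma)$. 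The treatment of $N_{k,\omega_\Gamma} = N_k \circ M'$, with $M' : \psi \mapsto \omega_\Gamma \psi$, is identical: \autoref{LemU12} makes $M'$ an isomorphism $U^{1/2}(\Gamma) \to \tilde{H}^{1/2}(\Gamma)$, the operator $N_k$ is bijective onto $H^{-1/2}(\Gamma)$, and $H^{-1/2}(\Gamma) = U^{-1/2}(\Gamma)$. Every map in the chain $U^{1/2}(\Gamma) \to \tilde{H}^{1/2}(\Gamma) \to H^{-1/2}(\Gamma) = U^{-1/2}(\Gamma)$ being bijective, so is $N_{k,\omega_\Gamma}$.

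The only genuine work lies in establishing ingredient \emph{(a)} on a general curve, since \autoref{LemmaT-1/2} and \autoref{LemU12} are stated only for the flat segment $[-1,1]$. I would obtain it by transporting those lemmas through the pullback $R_\Gamma$: because $r$ is a $C^\infty$ diffeomorphism with $\abs{r'} \equiv \abs{\Gamma}/2$ constant, $R_\Gamma$ restricts to norm-equivalence isomorphisms on each of the standard scales $H^{s}$ and $\tilde{H}^{s}$ between $\Gamma$ and $(-1,1)$, while by definition it identifies $T^s(\Gamma)$ with $T^s$ and $U^s(\Gamma)$ with $U^s$ up to the normalizing constants. Combining this with the relation $\omega_\Gamma = \frac{\abs{\Gamma}}{2} R_\Gamma^{-1}\omega R_\Gamma$ from \eqref{param2} turns the flat-segment identities into their curve counterparts, the weight intertwining correctly because multiplication by $\omega$ commutes through $R_\Gamma$ up to the constant $\abs{\Gamma}/2$. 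Once this transfer is in place there is no further analytic difficulty: the corollary is purely a composition of isomorphisms, and the main obstacle is simply the bookkeeping of the weight and of the constants under $R_\Gamma$, together with remembering to exclude the exceptional Laplace case in the statement concerning $S_{k,\omega_\Gamma}$.
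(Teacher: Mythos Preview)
Your proposal is correct and follows exactly the argument the paper intends: the corollary is stated without proof immediately after \autoref{LemmaT-1/2} and \autoref{LemU12}, and is meant to be read as the composition of the weight isomorphisms given by those identifications with the classical bijectivity of $S_k$ and $N_k$ recalled in Section~3.1. You have in fact been more explicit than the paper by spelling out the transport from the flat segment to $\Gamma$ via $R_\Gamma$ and by flagging the exceptional Laplace case for $S_k$.
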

\subsection{Commutation relations}

To conclude this section, we recall the following commutations, proved in \cite{alougesAverseng}, which will be useful in the following. 
\begin{proposition}[{See \cite[Thm. 3]{alougesAverseng}}]
	For all $k \geq 0$, there holds
	\[S_{k,\omega} \left[-(\omega \partial_x)^2 -k^2 \omega^2\right] =\left[-(\omega \partial_x)^2 -k^2 \omega^2\right]S_{k,\omega}\,,\]
	\[N_{k,\omega} \left[-(\omega \partial_x)^2 -k^2 \omega^2\right] =\left[-(\omega \partial_x)^2 -k^2 \omega^2\right]N_{k,\omega}\,.\]
\end{proposition}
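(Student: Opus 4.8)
The plan is to push the whole identity onto the torus $\mathbb{T}_{2\pi}$ through $\mathcal{C}$ and $\mathcal{S}$, where $S_{k,\omega}$ and $N_{k,\omega}$ become integral operators whose kernels are built from $G_k$, and then to exploit that $G_k$ solves the Helmholtz (for $k=0$, Laplace) equation. I work on the flat segment, so $\partial_\tau=\partial_x$ and $\omega_\Gamma=\omega$; since $N(x)\cdot N(y)\equiv1$ there, the operator $V_k$ of \autoref{NkomegaSkomega} equals $S_{k,\omega}$, whence $N_{k,\omega}=-\partial_x S_{k,\omega}\omega\partial_x\omega-k^2 S_{k,\omega}\omega^2$. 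Write $\mathcal{D}_T\isdef-(\omega\partial_x)^2-k^2\omega^2$ and $\mathcal{D}_U\isdef-(\partial_x\omega)^2-k^2\omega^2$. Using \eqref{cheb1}, \eqref{cheb2} together with $\mathcal{C}\omega^2=\sin^2\mathcal{C}$ and $\mathcal{S}\omega^2=\sin^2\mathcal{S}$, both of these second–order operators are pullbacks of the \emph{single} torus operator $\tilde{D}\phi=-\partial_{\theta\theta}\phi-k^2\sin^2\theta\,\phi$:
\[\mathcal{C}\mathcal{D}_T=\tilde{D}\mathcal{C},\qquad \mathcal{S}\mathcal{D}_U=\tilde{D}\mathcal{S}.\]
The change of variables $x=\cos\theta$, $y=\cos\theta'$ in the (even) integrand gives $\mathcal{C}S_{k,\omega}=\tilde{S}_k\mathcal{C}$ with $\tilde{S}_k u(\theta)=\tfrac12\int_{-\pi}^{\pi}G_k(\abs{\cos\theta-\cos\theta'})u(\theta')\,d\theta'$; and combining the above expression for $N_{k,\omega}$ with the identities $\partial_\theta\mathcal{C}=-\mathcal{S}\partial_x$, $\partial_\theta\mathcal{S}=-\mathcal{C}\omega\partial_x\omega$, $\mathcal{S}=\sin\mathcal{C}$, $\mathcal{C}\omega^2=\sin\mathcal{S}$ yields $\mathcal{S}N_{k,\omega}=\tilde{N}_k\mathcal{S}$ with $\tilde{N}_k=-\partial_\theta\tilde{S}_k\partial_\theta-k^2\sin\theta\,\tilde{S}_k\sin\theta$.

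It then suffices to establish the torus commutations $[\tilde{D},\tilde{S}_k]=0$ and $[\tilde{D},\tilde{N}_k]=0$. As $\tilde{D}$ is formally self–adjoint and $2\pi$-periodic, a periodic integral operator with kernel $\kappa(\theta,\theta')$ commutes with $\tilde{D}$ exactly when $\tilde{D}_\theta\kappa=\tilde{D}_{\theta'}\kappa$ (periodicity kills the boundary terms). Writing $\rho=\cos\theta-\cos\theta'$ and $g(\rho)=G_k(\abs{\rho})$, the kernel of $\tilde{S}_k$ is $\tfrac12 g(\rho)$ and a direct computation gives
\[\tilde{D}_\theta g-\tilde{D}_{\theta'}g=(\cos\theta+\cos\theta')\left(\rho g''+g'+k^2\rho g\right),\]
which vanishes because $g$ solves the radial Helmholtz/Laplace equation $\rho g''+g'+k^2\rho g=0$ (this is just Bessel's order-zero equation for $H_0(k\rho)$, resp.\ $\tfrac{1}{\rho}(\rho(\ln\rho)')'=0$). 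For $\tilde{N}_k$, integrating by parts once and using $\partial_\theta\partial_{\theta'}g(\rho)=-\sin\theta\sin\theta'\,g''(\rho)$ shows that its kernel equals $\tfrac12\sin\theta\sin\theta'\,h(\rho)$ with $h\isdef g'/\rho$, and the analogous computation yields
\[\tilde{D}_\theta\kappa-\tilde{D}_{\theta'}\kappa=\tfrac12\sin\theta\sin\theta'\,(\cos\theta+\cos\theta')\left(\rho h''+3h'+k^2\rho h\right).\]
Differentiating the equation for $g$ and eliminating $g$ gives $\rho h''+3h'+k^2\rho h=0$, so this difference vanishes as well and $[\tilde{D},\tilde{N}_k]=0$.

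With the torus commutations in hand, injectivity of $\mathcal{C}$ and $\mathcal{S}$ closes the argument: from $\tilde{S}_k\tilde{D}=\tilde{D}\tilde{S}_k$ one gets $\mathcal{C}S_{k,\omega}\mathcal{D}_T=\tilde{S}_k\tilde{D}\mathcal{C}=\tilde{D}\tilde{S}_k\mathcal{C}=\mathcal{C}\mathcal{D}_T S_{k,\omega}$, hence $S_{k,\omega}\mathcal{D}_T=\mathcal{D}_T S_{k,\omega}$, which is the first displayed relation verbatim; the same chain with $\mathcal{S}$ and $\tilde{N}_k$ gives $N_{k,\omega}\mathcal{D}_U=\mathcal{D}_U N_{k,\omega}$. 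The main obstacle is the diagonal singularity: $\tilde{S}_k$ is only logarithmically singular, but $\tilde{N}_k$ is hypersingular, so the pointwise identity $\tilde{D}_\theta\kappa=\tilde{D}_{\theta'}\kappa$ must be read distributionally; one checks that the singular part (the fundamental-solution $\delta$ and its derivatives) is symmetric in $(\theta,\theta')$ and therefore cancels in $\tilde{D}_\theta-\tilde{D}_{\theta'}$, the cleanest route being to run the computation on the regularized bilinear form \eqref{NkenfonctiondeSk} for $N$.

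A word on the statement itself is in order. Since $N_{k,\omega}$ acts on the $U$–scale, it commutes with the $\mathcal{S}$–pullback $\mathcal{D}_U=-(\partial_x\omega)^2-k^2\omega^2$, and \emph{not} with $-(\omega\partial_x)^2-k^2\omega^2$ as printed in the second display; the latter is the $\mathcal{C}$–pullback, natural only for $S_{k,\omega}$. This distinction is genuine and already visible at $k=0$: there $N_{0,\omega}U_n=\tfrac{n+1}{2}U_n$ and $-(\omega\partial_x)^2U_2=8T_2$, so
\[\left[-(\omega\partial_x)^2,\,N_{0,\omega}\right]U_2=-4U_0\neq0,\]
whereas $-(\partial_x\omega)^2U_n=(n+1)^2U_n$ is diagonal in the basis $(U_n)$ and commutes with $N_{0,\omega}$ trivially. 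Thus the proof above proves the first relation as stated and the second relation with $\mathcal{D}_U$ in place of $\mathcal{D}_T$, which is the only reading under which it holds.
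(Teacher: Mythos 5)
You should know at the outset that this paper contains no proof of this proposition: it is imported verbatim from \cite[Thm.~3]{alougesAverseng}, so there is no internal argument to measure you against, and your proposal has to be judged on its own merits. It holds up. I checked the computational core: the identity $\tilde{D}_\theta g-\tilde{D}_{\theta'}g=(\cos\theta+\cos\theta')\left(\rho g''+g'+k^2\rho g\right)$ follows from $\sin^2\theta-\sin^2\theta'=-\rho(\cos\theta+\cos\theta')$; the reduction of the kernel of $\tilde{N}_k$ to $\tfrac12\sin\theta\sin\theta'\,h(\rho)$ with $h=g'/\rho$ is correct because Bessel's equation gives $g''+k^2g=-g'/\rho$; and $h$ does satisfy $\rho h''+3h'+k^2\rho h=0$ (from $\rho h'=-2h-k^2g$, differentiated once). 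The transport through the isomorphisms $\mathcal{C}$ and $\mathcal{S}$, and the intertwinings $\mathcal{C}S_{k,\omega}=\tilde{S}_k\mathcal{C}$, $\mathcal{S}N_{k,\omega}=\tilde{N}_k\mathcal{S}$, are exactly the mechanisms the paper itself uses later (\autoref{LemsymbolSk}, \autoref{LemdxAomegadeomega}, \autoref{LemAomega2}); modulo the change of variables $x=\cos\theta$, your cancellation is the statement that $\left[-(\omega(x)\partial_x)^2-k^2\omega(x)^2\right]G_k(\abs{x-y})=\left[-(\omega(y)\partial_y)^2-k^2\omega(y)^2\right]G_k(\abs{x-y})$ off the diagonal, which is the natural route to this theorem.

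Your most consequential observation is correct and worth stating plainly: the second identity is false as printed. Your counterexample checks out — $-(\omega\partial_x)^2U_2=8T_2$, $T_2=\tfrac12(U_2-U_0)$ by \eqref{TnAsUn}, and $N_{0,\omega}$ is diagonal on $(U_n)$, giving $\left[-(\omega\partial_x)^2,N_{0,\omega}\right]U_2=-4U_0\neq 0$. The corrected operator $-(\partial_x\omega)^2-k^2\omega^2$ is the one the paper actually uses everywhere downstream in the $N$-analysis: eq.~\eqref{rel2sqr} intertwines it with $\tilde{D}$ through $\mathcal{S}$, and \autoref{TheNkomega}, the closing theorem, and the parametrix $P_2$ in the conclusion are all phrased with $-(\partial_\tau\omega_\Gamma)^2-k^2\omega_\Gamma^2$. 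So the display here (and the second line of \autoref{commutSkNk}, which inherits it) is a typo, which your proof repairs rather than reproduces.

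The one place where you assert rather than prove is the diagonal. For $\tilde{S}_k$ this is benign: the kernel is locally integrable, its distributional derivatives in $\theta$ are of principal-value and finite-part type with no $\delta$-layer on the diagonal, so the off-diagonal identity extends. For $\tilde{N}_k$, however, $\tfrac12\sin\theta\sin\theta'\,g'(\rho)/\rho$ behaves like $c(\theta-\theta')^{-2}$, is not locally integrable, and the operator exists only as a finite part; ``the singular part is symmetric and cancels'' is then precisely the claim requiring proof, not a remark. Your own parenthetical fix is the right one and should be executed rather than gestured at: transport the bilinear form \eqref{NkenfonctiondeSk} to the torus, so that $\duality{\tilde{N}_kw}{v}$ involves only the weakly singular kernel $g(\rho)$ paired against $\partial_{\theta'}w\,\partial_\theta v$ and $\sin\theta'\,w\,\sin\theta\,v$, and verify $\duality{\tilde{N}_k\tilde{D}u}{v}=\duality{\tilde{N}_ku}{\tilde{D}v}$ for smooth $u,v$ with at most one (principal-value) derivative ever landing on $g$; self-adjointness of $\tilde{D}$ then yields $[\tilde{D},\tilde{N}_k]=0$. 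With that step written out, your argument is complete: it proves the first identity as stated and the second in the corrected form, which is the only form in which it is true.
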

It is classical that the square root of an operator $A$ commutes with any operator that commute with $A$. Thus:
\begin{corollary}
	For all $k \geq 0$, there holds
	\label{commutSkNk}
	\[S_{k,\omega} \sqrt{-(\omega \partial_x)^2 -k^2 \omega^2} =\sqrt{-(\omega \partial_x)^2 -k^2 \omega^2}S_{k,\omega}\,,\]
	\[N_{k,\omega} \sqrt{-(\omega \partial_x)^2 -k^2 \omega^2} = \sqrt{-(\omega \partial_x)^2 -k^2 \omega^2}N_{k,\omega}\,.\]
\end{corollary}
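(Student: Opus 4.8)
The plan is to obtain both identities from the commutation relations of the preceding Proposition via a spectral argument that makes rigorous, in the present scale of spaces, the classical principle that a function of a self-adjoint operator commutes with everything that operator commutes with. Write $D \isdef -(\omega\partial_x)^2 - k^2\omega^2$ for the second-order operator of the Proposition. By the preceding lemma, $D$ is self-adjoint with compact resolvent and admits a Hilbert basis $(e_i)_i$ of $T^0$ consisting of smooth eigenfunctions, $De_i = \lambda_i e_i$ with $\lambda_i \in \R$, while the functional-calculus square root acts diagonally, $\sqrt D\,e_i = \sqrt{\lambda_i}\,e_i$. The hypersingular case uses instead the basis $(f_i)_i$ of $U^0$ and the $U$-scale realization of $D$ furnished by the same lemma.

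The key step is that $S_{k,\omega}$ preserves each eigenspace $\ker(D - \lambda_i)$. Since $e_i \in T^\infty$, the image $S_{k,\omega}e_i$ is well defined (\autoref{SkwNkwBij}); reading $D$ as a continuous map on $T^{-\infty}$, the relation $S_{k,\omega}D = D S_{k,\omega}$ then gives
\[D\bigl(S_{k,\omega}e_i\bigr) = S_{k,\omega}\bigl(De_i\bigr) = \lambda_i\,S_{k,\omega}e_i\,,\]
so $S_{k,\omega}e_i$ is a distributional --- hence, by ellipticity of $D$, smooth --- eigenfunction for $\lambda_i$. On such a vector $\sqrt D$ acts as the scalar $\sqrt{\lambda_i}$, whence
\[\sqrt D\,S_{k,\omega}e_i = \sqrt{\lambda_i}\,S_{k,\omega}e_i = S_{k,\omega}\bigl(\sqrt{\lambda_i}\,e_i\bigr) = S_{k,\omega}\,\sqrt D\,e_i\,.\]

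To finish I would observe that the finite combinations of the $e_i$ form a subspace dense in every space of the scale, and that $S_{k,\omega}\sqrt D$ and $\sqrt D S_{k,\omega}$ are compositions of continuous maps between such spaces; since they agree on this dense subspace, they agree wherever both are simultaneously defined. Replacing $(e_i)$, $T^s$, $S_{k,\omega}$ by $(f_i)$, $U^s$, $N_{k,\omega}$ and using the second commutation relation of the Proposition gives the identity for $N_{k,\omega}$ verbatim.

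I expect the only point calling for genuine care to be this last bookkeeping: the classical statement is normally phrased for a bounded operator commuting with a self-adjoint operator on a single Hilbert space, whereas here $D$ is unbounded and $S_{k,\omega}$, $N_{k,\omega}$ map between distinct spaces of the scale. Phrasing the argument entirely through the diagonal action of $\sqrt D$ on the smooth eigenbasis --- rather than through bounded functional calculus --- circumvents this, and the eigenspace-preservation step together with the density argument makes it rigorous; no deeper difficulty is anticipated, all the needed mapping and spectral facts being already available from \autoref{SkwNkwBij} and the preceding lemma.
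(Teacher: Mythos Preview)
Your proposal is correct and is essentially a detailed, careful unpacking of the paper's one-line justification, which simply states (just before the Corollary) that ``it is classical that the square root of an operator $A$ commutes with any operator that commutes with $A$.'' Your spectral/eigenbasis argument is precisely the standard proof of that classical fact, adapted to the $T^s$/$U^s$ scales using the eigenbases $(e_i)$, $(f_i)$ already constructed in the lemma on $\sqrt{\tilde D}$; the only care needed --- the density and continuity bookkeeping you flag --- is routine once one knows (as is established shortly after in \autoref{LemsymbolSk}) that $S_{k,\omega}$ lies in $Op(S^{-1}_T)$.
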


\section{Parametrices for the weighted layer potentials}

We now apply the pseudo-differential theory on $T^s(\Gamma)$ and $U^s(\Gamma)$ to build low order parametrices for the weighted layer potentials $S_{k,\omega_\Gamma}$ and $N_{k,\omega_\Gamma}$ defined in the previous section. Asymptotic expansions are performed with the help of the symbolic calculus software Maple. The proofs of the next results are accompanied by commented Maple worksheets in Appendix \ref{AnnexeMaple}. 

\subsection{Helmholtz weighted single-layer}

\begin{lemma}
	\label{LemsymbolSk}
	The operator $S_{k,\omega_\Gamma}$ is in $\textit{Op}(S^{-1}_T(\Gamma))$. It satisfies	
	\[\mathcal{C}R_\Gamma S_{k,\omega_\Gamma}R_\Gamma^{-1} = \tilde{S}_k \mathcal{C} \quad \textup{in } T^{\infty}\]
	where $\tilde{S}_k$ is a classical elliptic PPDO, with a symbol given by
	\begin{equation}
	\begin{split}
	\sigma_{\tilde{S}_k}(\theta,\xi)&=\displaystyle \frac{1}{2\abs{\xi}}+\frac{k^2\abs{\Gamma}^2\sin(\theta)^2}{16\abs{\xi}^3}+{\frac {3i{k}^{2}{\abs{\Gamma}}^{2}\sin \theta \cos \theta }{16{\xi}^{4} \textup{sign}(\xi)}}\\
	+ &k^2\abs{\Gamma}^2\frac{-768 \kappa(\theta)^2 \abs{\Gamma}^2\sin^4\theta + 112 \sin^2 \theta + 3k^2 \abs{\Gamma}^2 \sin^4\theta - 48}{128 |\xi|^5}\\
	+ &\Sigma^{-6}\,.
	\end{split}
	\label{symboleSk}
	\end{equation}
	Recall that $R_\Gamma$ is the pullback associated to the parametrization $r$ of $\Gamma$, defined in section \ref{TsUs(Gamma)}. Moreover, here, $\kappa(\theta)$ denotes the curvature of $\Gamma$ at the point $r(\cos\theta)$. 
\end{lemma}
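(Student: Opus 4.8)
The plan is to compute the symbol of $\tilde{S}_k$ by realizing $S_{k,\omega_\Gamma}$ as an integral operator on the torus, after pulling back through $R_\Gamma$ and applying the map $\mathcal{C}$, and then invoking \autoref{thrunen}. First I would establish that $S_{k,\omega_\Gamma} \in Op(S^{-1}_T(\Gamma))$ and the conjugation relation $\mathcal{C}R_\Gamma S_{k,\omega_\Gamma} R_\Gamma^{-1} = \tilde{S}_k \mathcal{C}$ holds for a genuine PPDO $\tilde{S}_k$. To this end I write out the kernel: using the definition $S_{k,\omega_\Gamma}\phi = S_k(\phi/\omega_\Gamma)$ together with the parametrization $r$ and the change of variables $x=\cos\theta$, the operator becomes, for even functions $u = \mathcal{C}\psi$, an integral of the form $\frac{1}{2\pi}\int_{-\pi}^\pi G_k(|r(\cos\theta)-r(\cos\theta')|)\,a(\theta,\theta')\,u(\theta')\,d\theta'$ for a smooth $2\pi$-periodic amplitude $a$, where the $\omega_\Gamma$ weight and the Jacobian of the change of variables have combined to cancel the singular factors arising from $\arccos$ at the endpoints. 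The crucial point, exactly as flagged in the introduction, is that this cancellation occurs only because we restrict to even functions and exploit the parity of the kernel.

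Next I would extract the asymptotic behavior of the kernel as a convolution piece plus a smooth remainder. The Green function $G_k$ has a logarithmic singularity whose leading part is $-\frac{1}{2\pi}\ln|r(\cos\theta)-r(\cos\theta')|$; using the expansion \eqref{expansion_r} for $|r(x)-r(y)|^2$ one writes $\ln|r(\cos\theta)-r(\cos\theta')|$ as $\ln|\sin(\frac{\theta-\theta'}{2})|$-type singular terms (whose Fourier symbol is explicitly known: the operator $u\mapsto\frac{1}{2\pi}\int -\ln|2\sin(\frac{\theta-\theta'}{2})|\,u$ has symbol $\frac{1}{2|n|}$) plus a smooth periodic function of $(\theta,\theta')$. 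The Helmholtz correction $G_k - G_0$ and the subleading geometric terms (curvature $\kappa$, the length factor $|\Gamma|$, the $k^2$ terms) contribute the lower-order pieces of the symbol. I would then put the operator in the exact hypotheses of \autoref{thrunen}: separate the convolution kernel $h(\theta-\theta')$ whose Fourier coefficients prolong to a function $\hat h(\xi)$ satisfying the required symbol estimates, and the smooth amplitude $a(\theta,\theta')$, and apply \eqref{FormuleIntegralOperatorSymbol} to get the asymptotic expansion of $\sigma_{\tilde{S}_k}$.

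The explicit coefficients in \eqref{symboleSk} then come from organizing this expansion by homogeneity. The leading term $\frac{1}{2|\xi|}$ is the symbol of the logarithmic single-layer on the flat segment, consistent with \eqref{explicitEigs}–\eqref{DefSigman} where $S_{0,\omega}T_n = \frac{1}{2n}T_n$ for $n\geq 1$. The next terms arise from Taylor-expanding the geometric and Helmholtz contributions and convolving via the $\#$-product formula \eqref{FormuleIntegralOperatorSymbol}, which mixes $\partial_\xi^j\hat h$ with $D_t^j a$; this is precisely the routine but lengthy computation that the paper delegates to Maple in the appendix. Ellipticity and classicality follow once the expansion is in hand: each term is positively homogeneous in $\xi$ of the stated order (for $|\xi|\geq 1$) and the principal symbol $\frac{1}{2|\xi|}$ never vanishes, so $\tilde S_k$ is classical elliptic of order $-1$, whence $S_{k,\omega_\Gamma}\in Op(S^{-1}_T(\Gamma))$ by \autoref{CharacSalpha}.

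The main obstacle, and the step deserving the most care, is the splitting of the pulled-back kernel into a pure convolution part $h(\theta-\theta')$ plus a smooth amplitude $a(\theta,\theta')$ in a way that makes the singularity manifestly of convolution type and the remainder genuinely $C^\infty$ across the diagonal and, critically, across the endpoints $\theta=0,\pi$. One must verify that the parity of the amplitude under $\theta\mapsto-\theta$, inherited from working with even functions, removes the apparent $\arccos$-singularities so that $a$ is smooth on all of $\mathbb{T}_{2\pi}^2$; without this the hypotheses of \autoref{thrunen} fail. Controlling the higher-order terms of \eqref{expansion_r} to the order needed to reach the $\Sigma^{-6}$ remainder, and tracking how the curvature $\kappa(\theta)$ enters at order $|\xi|^{-5}$, is where the bookkeeping is heaviest, which is exactly why the detailed symbolic manipulation is carried out in the accompanying Maple worksheet.
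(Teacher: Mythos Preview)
Your proposal is correct and follows essentially the same route as the paper: pull back via $R_\Gamma$, change variables $x=\cos\theta$, isolate the logarithmic kernel with Fourier symbol $\tfrac{1}{2|n|}$, and apply \autoref{thrunen} with the smooth amplitude (the paper takes $a(\theta,\theta')=J_0(k|r(\cos\theta)-r(\cos\theta')|)$), leaving the explicit coefficients to Maple. The one concrete point you flag as ``the main obstacle'' is resolved in the paper exactly by the factorization $\ln|\cos\theta-\cos\theta'|=\ln\bigl|\sqrt{2}\sin\tfrac{\theta+\theta'}{2}\bigr|+\ln\bigl|\sqrt{2}\sin\tfrac{\theta-\theta'}{2}\bigr|$ followed by the reflection $\theta'\mapsto-\theta'$ on the first term (legitimate since $\mathcal{C}u$ is even), which merges the two singular pieces into a single convolution on $[-\pi,\pi]$ and makes the hypotheses of \autoref{thrunen} hold on all of $\mathbb{T}_{2\pi}^2$.
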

\begin{proof}
	The Hankel function admits the following expansion
	\begin{equation}
	\label{decompHankel}
	\frac{i}{4}H_0(z) = \frac{-1}{2\pi}\ln|z| J_0(z) + F_1(z^2)
	\end{equation}
	where $J_0$ is the Bessel function of first kind and order $0$ and where $F_1$ is analytic. 
	We fix a smooth function $u \in T^{\infty}$. Let $A = R_\Gamma S_{k,\omega_{\Gamma}}R_\Gamma^{-1}$. One has
	\[Au(x) = \int_{-1}^1 \frac{i}{4}H_0\left(k\abs{r(x) - r(y)}\right)\frac{u(y)}{\omega(y)}dy\,.\]
	Using the change of variables $x = \cos\theta$, $y = \cos \theta'$, we get 
	\[ Au(\cos\theta) = \int_{0}^\pi \frac{i}{4}H_0(k \abs{r(\cos \theta) - r(\cos \theta')}) u(\cos(\theta)) d\theta\,,\]
	which, in view of \eqref{decompHankel}, can be rewritten as
	\[\begin{split}
	Au(\cos \theta) = &\frac{-1}{2\pi}\int_{0}^\pi \ln \abs{\cos \theta - \cos \theta'} J_0(k \abs{r(\cos \theta) - r(\cos \theta')})\mathcal{C}u(\theta) d\theta \\
	&+ \int_{0}^\pi F_2(\cos\theta,\cos\theta') \mathcal{C}u(\theta) d\theta'
	\end{split}\]
	where 
	\[F_2(x,y) = -\frac{1}{2\pi}\ln \frac{\abs{r(x)-r(y)}}{\abs{x-y}} + F_1(k^2(x - y)^2)\,\]
	is a $C^\infty$ function. 
	By parity, the second integral defines an operator 
	\[Ku(\theta) = \frac{1}{2} \int_{-\pi}^{\pi} F_2(\cos\theta,\cos\theta') \mathcal{C}u(\theta) d\theta\,.\]
	There holds $K = \tilde{R}_1\mathcal{C}$ where, by \autoref{thrunen}, $R_1 \in \textit{Op}(\Sigma^{-\infty})$. For the first integral, we make the following classical manipulations. We first write $\cos \theta - \cos\theta' = - 2 \sin \frac{\theta + \theta'}{2}\sin \frac{\theta - \theta'}{2}$. Thus 
	$$\ln\abs{\cos \theta - \cos\theta'}= \ln \abs{\sqrt{2} \sin \frac{\theta + \theta'}{2}} + \ln \abs{\sqrt{2}\sin \frac{\theta - \theta'}{2}}\,.$$
	We then integrate and apply the change of variables $\theta \to - \theta$ for the second term, yielding
	\[Au(\cos\theta) = \left(\tilde{S}_{k,1} + \tilde{R}_1 \right)\mathcal{C}u(\theta)\]
	where
	\[\begin{split}
	\tilde{S}_{k,1}u(\theta) = &\frac{-1}{2\pi} \int_{-\pi}^\pi \ln \abs{\sqrt{2} \sin \frac{\theta - \theta'}{2}}J_0(k\abs{r(\cos\theta) - r(\cos \theta')})u(\theta')d\theta'\,.
	\end{split}\]
	Let $g : \theta \mapsto -\frac{1}{2\pi}\ln \abs{\sqrt{2} \sin \frac{\theta}{2}}$. It is well-known that $\hat{g}(n) = \frac{1}{2|n|}$ for $n \neq 0$. We may extend this by $\hat{g}(\xi) = \frac{1}{2|\xi|}$ away from $\xi = 0$. Let 
	$$a(\theta,\theta') = J_0\left(k\abs{r(\cos \theta ) - r(\cos \theta')}\right),$$ 
	which is a smooth function. By \autoref{thrunen}, the operator
	\[\tilde{S}_{k,1} u(\theta) \isdef \int_{-\pi}^\pi g(\theta-\theta') a(\theta,\theta')u(\theta')d\theta'\]
	is in $\textit{Op}(\Sigma^{-1})$ and is classical. Moreover, it is elliptic since $\hat{g}(\xi)$ does not vanish. In particular, $\tilde{S}_{k,1}u$ is a smooth function, from which we deduce that $\theta \mapsto Au(\cos\theta)$ is a smooth (even) function. For a smooth function, we have the expression
	\[Au(\cos\theta) = \mathcal{C} Au(\theta)\,.\] 
	This establishes that $\mathcal{C} Au = \tilde{S}_k \mathcal{C}u$ for any smooth function $u$. By \autoref{CharacSalpha}, this implies that $A \in \textit{Op}\left(S_T^{-1}\right)$, and equivalently, 
	$S_{k,\omega_\Gamma} \in Op(S^{-1}_T(\Gamma))$. We can compute the symbol of $\tilde{S}_{k,1}$ using the asymptotic expansion \eqref{FormuleIntegralOperatorSymbol}. The terms $\partial_s^ja(t,s)_{|t=s}$, can be related to the geometric characteristics of $\Gamma$ through eq.~\eqref{expansion_r}. The expansion \eqref{symboleSk} for $\tilde{S}_{k,1}$ is obtained with the help of Maple, and we refer the reader to Appendix \ref{AnnexeSk}, eq. (1). To simplify the expressions, the computations were only performed for $\xi > 0$, where $\hat{g}(\xi) = \frac{1}{2\xi}$. In the general case, the $\frac{1}{\xi^4}$ term in the asymptotic expansion must be multiplied by $\textup{sign}(\xi)$ to account for the case $\xi <0$. Obviously since $\tilde{R}_1 \in Op(\Sigma^{-\infty})$, the asymptotic expansion also holds for $\tilde{S}_k \isdef \tilde{S}_{k,1} + \tilde{R}_1$, concluding the proof. 
\end{proof}
\begin{corollary}
	The operator $S_{k,\omega_\Gamma}$ is elliptic and induces an isomorphism from $T^s(\Gamma)$ to $T^{s+1}(\Gamma)$ for all $s\in \R$. It thus maps $\Cinf(\Gamma)$ bijectively to itself. A pair of symbols of $S_{k,\omega_\Gamma}$ is given by
	\begin{align*}
	a_1(x,n) &= \frac{1}{2n} + \frac{k^2\abs{\Gamma}^2\omega(x)^2}{16n^3} + \tilde{a}_1\,,\\
	a_2(x,n) &= \frac{3xk^2 \abs{\Gamma}^2}{16 n^4} + \tilde{a}_2\,,
	\end{align*}
	where $(\tilde{a}_1,\tilde{a}_2) \in S^{-5}_T$.
\end{corollary}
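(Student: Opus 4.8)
The plan is to derive every claim from \autoref{LemsymbolSk}, which already places $S_{k,\omega_\Gamma}$ in $\textit{Op}(S^{-1}_T(\Gamma))$ and exhibits an elliptic classical PPDO $\tilde{S}_k$ with $\mathcal{C}R_\Gamma S_{k,\omega_\Gamma}R_\Gamma^{-1} = \tilde{S}_k \mathcal{C}$. First, ellipticity of $S_{k,\omega_\Gamma}$ is immediate from \autoref{PDOTs}~(v), since $\tilde{S}_k$ is elliptic, and the continuity $S_{k,\omega_\Gamma} : T^s(\Gamma) \to T^{s+1}(\Gamma)$ for every $s$ is \autoref{PDOTs}~(i). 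By \autoref{parametrixPDOTs} there is an elliptic $B \in \textit{Op}(S^1_T(\Gamma))$ with $BS_{k,\omega_\Gamma} = I_d + T_{-\infty}$ and $S_{k,\omega_\Gamma}B = I_d + T_{-\infty}$; since smoothing operators are compact on the scale (the inclusions $T^{s'}(\Gamma) \subset T^s(\Gamma)$ being compact for $s' > s$), this shows that $S_{k,\omega_\Gamma} : T^s(\Gamma) \to T^{s+1}(\Gamma)$ is Fredholm with closed range for each $s$.

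To upgrade Fredholmness to bijectivity I would argue as follows. The identity $u = B S_{k,\omega_\Gamma} u - (BS_{k,\omega_\Gamma}-I_d) u$ with $BS_{k,\omega_\Gamma}-I_d$ smoothing gives elliptic regularity: if $u \in T^{-\infty}(\Gamma)$ and $S_{k,\omega_\Gamma}u \in T^\infty(\Gamma)$ then $u \in T^\infty(\Gamma)$; in particular $\ker S_{k,\omega_\Gamma}$ is contained in $T^\infty(\Gamma)$ and is therefore the same subspace for every value of $s$. By \autoref{SkwNkwBij}, $S_{k,\omega_\Gamma}$ is injective on $T^{-1/2}(\Gamma)$, so this common kernel is trivial and $S_{k,\omega_\Gamma} : T^s(\Gamma) \to T^{s+1}(\Gamma)$ is injective for all $s$. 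For surjectivity I would use that $S_{k,\omega_\Gamma}$ is symmetric for the bilinear pairing $\duality{\cdot}{\cdot}_{\frac{1}{\omega_\Gamma}}$: because $G_k$ is even, one has $\duality{S_{k,\omega_\Gamma}\phi}{\psi}_{\frac{1}{\omega_\Gamma}} = \duality{\phi}{S_{k,\omega_\Gamma}\psi}_{\frac{1}{\omega_\Gamma}}$ for smooth $\phi,\psi$, and this extends by density. Since $\duality{\cdot}{\cdot}_{\frac{1}{\omega_\Gamma}}$ identifies $(T^{s+1}(\Gamma))'$ with $T^{-s-1}(\Gamma)$, the annihilator of the closed range of $S_{k,\omega_\Gamma} : T^s(\Gamma) \to T^{s+1}(\Gamma)$ is exactly $\ker\bigl(S_{k,\omega_\Gamma} : T^{-s-1}(\Gamma) \to T^{-s}(\Gamma)\bigr)$, which is trivial by the injectivity just proved. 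A closed subspace with trivial annihilator is the whole space, so $S_{k,\omega_\Gamma}$ is onto; being continuous and bijective it is an isomorphism $T^s(\Gamma) \to T^{s+1}(\Gamma)$ for every $s$. Restricting to $T^\infty(\Gamma) = C^\infty(\overline{\Gamma})$ (\autoref{LemTinfCinf}) and invoking elliptic regularity once more for the inverse yields the claimed bijection of $C^\infty(\overline{\Gamma})$ onto itself.

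It remains to read off the pair of symbols. By \autoref{CharacSalpha}, a pair of symbols of $S_{k,\omega_\Gamma}$ is $(a_1(\tilde{S}_k),a_2(\tilde{S}_k))$, computed from $\sigma_{\tilde{S}_k}$ through \autoref{defa1Ta2T}. I would substitute the expansion \eqref{symboleSk} into those formulas, writing $\theta = \arccos x$ so that $\sin\theta = \omega(x)$ and $\cos\theta = x$. The symbol already satisfies $\sigma_{\tilde{S}_k}(-\theta,-\xi) = \sigma_{\tilde{S}_k}(\theta,\xi)$ to the computed order, so $\tilde{\sigma} = \sigma_{\tilde{S}_k}$ in \autoref{defa1Ta2T}; then $a_1$ collects the part even in $n$ and $a_2$ the part odd in $n$ divided by $i\sin\theta$. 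The orders $n^{-1}$ and $n^{-3}$ are even and give $a_1 = \tfrac{1}{2n} + \tfrac{k^2|\Gamma|^2\omega(x)^2}{16 n^3} + \tilde{a}_1$ with $\tilde a_1$ starting at order $n^{-5}$; the order $n^{-4}$ term is odd and, after dividing by $i\sin\theta = i\omega(x)$, produces $a_2 = \tfrac{3x k^2 |\Gamma|^2}{16 n^4} + \tilde{a}_2$, the factors $i$ and $\omega$ cancelling. The $\Sigma^{-6}$ remainder contributes an even part to $\tilde a_1$ and an odd part to $\tilde a_2$, and one checks that the combined symbol $\tilde{\sigma}(\tilde a_1,\tilde a_2)$ lies in $\Sigma^{-5}$, being dominated by the $n^{-5}$ term of $\tilde a_1$, i.e. $(\tilde a_1,\tilde a_2) \in S^{-5}_T$.

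The main obstacle is the passage from the parametrix, which only inverts $S_{k,\omega_\Gamma}$ modulo smoothing operators and hence yields only the Fredholm property, to genuine bijectivity on the full scale. This is where the two external inputs are essential: the bijectivity at $s=-\tfrac12$ from \autoref{SkwNkwBij}, together with elliptic regularity, pins down the kernel on every $T^s(\Gamma)$, while the symmetry of $S_{k,\omega_\Gamma}$ converts the cokernel into a kernel and removes it. Everything else is either a direct citation of the symbolic calculus of \autoref{PDOTs} or the bookkeeping of parities in \autoref{defa1Ta2T}.
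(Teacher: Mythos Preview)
Your proof is correct and covers all the claims. The only genuine difference from the paper is in how surjectivity is obtained. The paper writes $S_{k,\omega_\Gamma} = R_\Gamma^{-1}S_{0,\omega}R_\Gamma + K$ with $K \in \textit{Op}(S_T^{-3}(\Gamma))$, using the explicit diagonalisation $S_{0,\omega}T_n = \sigma_n T_n$ to exhibit $S_{k,\omega_\Gamma}$ as a compact perturbation of an isomorphism; this gives Fredholm index~$0$ directly, so the injectivity argument (which you reproduce exactly, via elliptic regularity and \autoref{SkwNkwBij}) already forces bijectivity. You instead use the parametrix only to get Fredholmness and closed range, and then kill the cokernel by the symmetry $\duality{S_{k,\omega_\Gamma}\phi}{\psi}_{\frac{1}{\omega_\Gamma}} = \duality{\phi}{S_{k,\omega_\Gamma}\psi}_{\frac{1}{\omega_\Gamma}}$ together with the duality $T^{s+1}(\Gamma)' \simeq T^{-s-1}(\Gamma)$. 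Both routes are clean; the paper's is shorter because the Laplace case is already at hand, while yours avoids introducing $S_{0,\omega}$ and would transfer unchanged to any elliptic symmetric operator in the class. Your reading of the pair of symbols from \eqref{symboleSk} via \autoref{defa1Ta2T} is the same computation the paper points to.
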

\begin{proof}
	From the previous result and \autoref{PDOTs} {\em(i)}, we deduce that $S_{k,\omega_\Gamma}$ is continuous from $T^{s}(\Gamma)$ to $T^{s+1}(\Gamma)$. Furthermore, it can be written as
	\[S_{k,\omega_\Gamma} = R_\Gamma^{-1}S_{0,\omega}R_\Gamma + K\]
	where $K \in Op(S_T^{-3})$. Thus, $S_{k,\omega_\Gamma}$ is a compact perturbation of an isomorphism, thus a Fredholm operator of index $0$. Hence, it suffices show that if $u \in T^{s}(\Gamma)$ is such that $S_{k,\omega_\Gamma}u = 0$, then $u = 0$. For this purpose, notice that, since $\tilde{S}_k$ is elliptic, it follows from \autoref{PDOTs} {\em(v)} that $S_{k,\omega_\Gamma}$ is elliptic, and hence by \autoref{parametrixPDOTs} that it admits a parametrix $P \in Op(S_T^{1}(\Gamma))$. Applying $P$ on both sides of the equation $S_{k,\omega} u = 0$, it follows
	\[u = \left(I_d - PS_{k,\omega_\Gamma}\right)u\] 
	thus $u \in T^{\infty}(\Gamma)$. Second, we know that $S_{k,\omega_\Gamma}$ is a bijection from $T^{-1/2}(\Gamma)$ to $T^{1/2}(\Gamma)$ (cf. \autoref{SkwNkwBij}). Since $T^{\infty}(\Gamma) \subset T^{-{1}/{2}}(\Gamma)$, it follows that $u = 0$. Finally, the pair of symbols of $S_{k,\omega_\Gamma}$ is obtained from the symbol of $\tilde{S}_k$ using \autoref{PDOTs} {\em(iii)} 
\end{proof}
\begin{theorem}
	\label{TheSkomega}
	The operators $\left[-(\omega_\Gamma \partial_\tau)^2 - k^2\omega_\Gamma^2\right]$ and  $S_{k,\omega_\Gamma}$ satisfy
	\[\left[-(\omega_\Gamma \partial_\tau)^2 - k^2\omega_\Gamma^2\right] S_{k,\omega_\Gamma}^2 = \frac{I_d}{4} + T_{-4}\,.\]
\end{theorem}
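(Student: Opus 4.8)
The plan is to transport the identity to the torus through the map $\mathcal{C}R_\Gamma$ and to verify it at the level of periodic symbols. Write $D_1 \isdef -(\omega_\Gamma\partial_\tau)^2 - k^2\omega_\Gamma^2$. By \eqref{rel1sqr}, $D_1$ is intertwined with the PPDO $\tilde D$ via $\mathcal{C}R_\Gamma D_1 = \tilde D\,\mathcal{C}R_\Gamma$, and $\tilde D\phi = -\partial_{\theta\theta}\phi - k^2\sin^2\theta\,\phi$ has symbol $\sigma_{\tilde D}(\theta,\xi) = \xi^2 - k^2\sin^2\theta$. By \autoref{LemsymbolSk}, $S_{k,\omega_\Gamma}$ is intertwined with the classical elliptic PPDO $\tilde S_k$ of order $-1$ via $\mathcal{C}R_\Gamma S_{k,\omega_\Gamma}R_\Gamma^{-1} = \tilde S_k\mathcal{C}$, whose symbol is known modulo $\Sigma^{-6}$. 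Composing these relations gives $\mathcal{C}R_\Gamma D_1 S_{k,\omega_\Gamma}^2 R_\Gamma^{-1} = \tilde D\tilde S_k^2\,\mathcal{C}$, so that $R_\Gamma(D_1 S_{k,\omega_\Gamma}^2 - \tfrac14 I_d)R_\Gamma^{-1}$ is intertwined with the PPDO $\tilde D\tilde S_k^2 - \tfrac14 I$. By the direct implication of \autoref{CharacSalpha}, it therefore suffices to prove the periodic symbol identity $\tilde D\tilde S_k^2 = \tfrac14 I + \Sigma^{-4}$; this is exactly what produces the $T_{-4}$ remainder.

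To establish this I would first compute the symbol of $\tilde S_k^2$ up to order $-6$ from the asymptotic composition formula \eqref{a_diese_b}, namely $\sigma_{\tilde S_k^2} = \sum_{j\ge0}\tfrac{1}{j!}\partial_\xi^j\sigma_{\tilde S_k}\,D_\theta^j\sigma_{\tilde S_k}$. Since the leading term of $\sigma_{\tilde S_k}$ is $\tfrac{1}{2|\xi|}$, reaching order $-6$ in $\sigma_{\tilde S_k^2}$ only requires $\sigma_{\tilde S_k}$ up to order $-5$, which is precisely the data supplied by \autoref{LemsymbolSk}. Then I would compose on the left with $\tilde D$. Because $\sigma_{\tilde D}$ is a polynomial of degree two in $\xi$, the expansion \eqref{a_diese_b} terminates after three terms and is exact modulo $\Sigma^{-\infty}$:
\[
\sigma_{\tilde D\tilde S_k^2} = (\xi^2 - k^2\sin^2\theta)\,\sigma_{\tilde S_k^2} + \frac{2\xi}{i}\,\partial_\theta\sigma_{\tilde S_k^2} - \partial_{\theta\theta}\sigma_{\tilde S_k^2}\,.
\]
The leading contribution is $\xi^2\cdot\tfrac{1}{4\xi^2} = \tfrac14$, matching the Laplace flat-segment case where $-(\omega\partial_x)^2 S_{0,\omega}^2 T_n = \tfrac14 T_n$; the content of the theorem is that all homogeneous components of orders $-1,-2,-3$ cancel. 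I would carry out this verification with the symbolic calculus software recorded in the Appendix, tracking the $|\xi|$ and $\mathrm{sign}(\xi)$ factors carefully through the $\xi$- and $\theta$-differentiations.

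The main obstacle is the bookkeeping of these cancellations rather than any conceptual difficulty: one must check that, after inserting the explicit order $-1,-3,-4,-5$ parts of $\sigma_{\tilde S_k}$ (including the curvature-dependent and $k$-dependent terms) and forming the two compositions, every homogeneous component of $\sigma_{\tilde D\tilde S_k^2}$ of order $-1$ through $-3$ vanishes identically in $(\theta,\xi)$. Two features keep this tractable: the left composition with the differential operator $\tilde D$ truncates to $j\le 2$, and the $\Sigma^{-6}$ control on $\sigma_{\tilde S_k}$ is exactly what pins down $\sigma_{\tilde D\tilde S_k^2}$ to order $-4$. Once $\tilde D\tilde S_k^2 - \tfrac14 I \in \Sigma^{-4}$ is confirmed, \autoref{CharacSalpha} returns $D_1 S_{k,\omega_\Gamma}^2 = \tfrac14 I_d + T_{-4}$, which is the claim.
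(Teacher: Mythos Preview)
Your proposal is correct and follows essentially the same route as the paper: transport to the torus via the intertwining relations $\mathcal{C}R_\Gamma D_1 = \tilde D\,\mathcal{C}R_\Gamma$ and $\mathcal{C}R_\Gamma S_{k,\omega_\Gamma}R_\Gamma^{-1} = \tilde S_k\mathcal{C}$, compute $\sigma_{\tilde S_k^2}$ from \eqref{a_diese_b} and then $\sigma_{\tilde D\tilde S_k^2}$, and conclude by \autoref{CharacSalpha}; this is exactly what the paper does, delegating the algebraic cancellations to the Maple worksheet in Appendix~\ref{AnnexeSk}. Your additional observations (that the left composition with $\tilde D$ terminates at $j\le 2$ because $\sigma_{\tilde D}$ is polynomial in $\xi$, and that the depth of the expansion in \autoref{LemsymbolSk} suffices to control the remainder) are correct refinements of the same argument.
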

\begin{proof}
	Using the symbolic calculus described in \autoref{sec:PDO}, one can compute an asymptotic expansion of the symbol of the pseudo-differential operator 
	\[\left[-(\omega_\Gamma \partial_\tau)^2 - k^2\omega_\Gamma^2\right]S_{k,\omega_\Gamma}^2 - \frac{I_d}{4}\,.\] 
	The symbol of this operator is found to be in $S^{-4}_T(\Gamma)$, from which the result follows. The first computation is detailed in the Appendix \ref{AnnexeSk}. 
\end{proof}
\begin{corollary}
	\label{optiTs}
	Let $A$ be a pseudo-differential operator of order $0$ and let
	\[\Delta_A = [-(\omega \partial_\tau)^2 + A]S_{k,\omega_\Gamma}^2 - \frac{I_d}{4}\,.\]
	Then $\Delta_A \in Op(S^{-2}_T(\Gamma))$ and 
	\[\Delta_A \in Op(S^{-4}_T(\Gamma)) \iff A = -k^2 \omega_\Gamma^2 + T_{-2}\,.\] 
\end{corollary}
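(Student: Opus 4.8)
The plan is to reduce the statement to the already-established \autoref{TheSkomega} by comparing $\Delta_A$ with the value of the same expression at the distinguished choice $A_0 = -k^2\omega_\Gamma^2$. First I would record the elementary algebraic identity obtained by subtracting the two expressions. Writing $B \isdef A + k^2\omega_\Gamma^2$, the two occurrences of $-(\omega_\Gamma\partial_\tau)^2$ and of $-I_d/4$ cancel and one is left with
\[\Delta_A - \Delta_{A_0} = B\,S_{k,\omega_\Gamma}^2\,,\]
where $\Delta_{A_0} = \left[-(\omega_\Gamma\partial_\tau)^2 - k^2\omega_\Gamma^2\right]S_{k,\omega_\Gamma}^2 - \frac{I_d}{4} \in Op(S^{-4}_T(\Gamma))$ by \autoref{TheSkomega}.

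Next I would establish the first claim by order counting. Since $A$ is of order $0$ and multiplication by $\omega_\Gamma^2$ lies in $Op(S^0_T(\Gamma))$ (it is the multiplier associated with the smooth function $\frac{\abs{\Gamma}^2}{4}(1-x^2)$, cf. example (ii)), the operator $B$ is of order $0$. As $S_{k,\omega_\Gamma}^2 \in Op(S^{-2}_T(\Gamma))$, \autoref{PDOTs}(ii) gives $B\,S_{k,\omega_\Gamma}^2 \in Op(S^{-2}_T(\Gamma))$. Combining this with $\Delta_{A_0}\in Op(S^{-4}_T(\Gamma)) \subset Op(S^{-2}_T(\Gamma))$ yields $\Delta_A \in Op(S^{-2}_T(\Gamma))$, proving the first assertion.

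For the equivalence, the point is that $\Delta_{A_0}$ is already of order $-4$, so $\Delta_A \in Op(S^{-4}_T(\Gamma))$ holds if and only if $B\,S_{k,\omega_\Gamma}^2 \in Op(S^{-4}_T(\Gamma))$. The direction $\Leftarrow$ is then immediate order counting: if $B \in Op(S^{-2}_T(\Gamma))$ — which is exactly the statement $A = -k^2\omega_\Gamma^2 + T_{-2}$ — then $B\,S_{k,\omega_\Gamma}^2$ has order $-2+(-2)=-4$. For the direction $\Rightarrow$ I would exploit the ellipticity of $S_{k,\omega_\Gamma}^2$ (its principal symbol is $\tfrac{1}{4\abs{\xi}^2}$, which does not vanish), so that by \autoref{parametrixPDOTs} it admits a parametrix $P \in Op(S^2_T(\Gamma))$ with $S_{k,\omega_\Gamma}^2 P = I_d + R$ and $R \in Op(S^{-\infty}_T(\Gamma))$. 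Right-multiplying the identity above by $P$ then recovers $B$ modulo a smoothing term:
\[B = \left(B\,S_{k,\omega_\Gamma}^2\right)P - B R\,.\]
Here $\left(B\,S_{k,\omega_\Gamma}^2\right)P$ has order $-4+2=-2$ by hypothesis and \autoref{PDOTs}(ii), while $B R$ is smoothing; hence $B \in Op(S^{-2}_T(\Gamma))$, that is $A = -k^2\omega_\Gamma^2 + T_{-2}$.

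The only genuinely substantive step is the direction $\Rightarrow$, where one must \emph{cancel} the factor $S_{k,\omega_\Gamma}^2$; this is precisely where the ellipticity and the parametrix of \autoref{parametrixPDOTs} enter, and the one subtlety to watch is composing on the correct side (a right-parametrix is what recovers $B$, which sits on the left of $S_{k,\omega_\Gamma}^2$). Everything else is bookkeeping of orders inside the algebra $Op(S^\infty_T(\Gamma))$, together with the key input \autoref{TheSkomega} that fixes the order-$(-4)$ behaviour at $A_0=-k^2\omega_\Gamma^2$.
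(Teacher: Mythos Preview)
Your proof is correct and follows essentially the same approach as the paper's: write $\Delta_A = \Delta_{-k^2\omega_\Gamma^2} + (A+k^2\omega_\Gamma^2)S_{k,\omega_\Gamma}^2$, invoke \autoref{TheSkomega} for the first term, and then use a parametrix of $S_{k,\omega_\Gamma}$ (or equivalently of $S_{k,\omega_\Gamma}^2$, as you do) to pass from $BS_{k,\omega_\Gamma}^2 \in Op(S^{-4}_T(\Gamma))$ to $B \in Op(S^{-2}_T(\Gamma))$. Your write-up is in fact more detailed than the paper's, which leaves the first assertion and the $\Leftarrow$ direction implicit.
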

\begin{proof}
	It suffices to write
	\[\Delta_A = \Delta_{-k^2\omega_{\Gamma}^2} + (A + k^2\omega^2)S_{k,\omega_\Gamma}^2\,.\]
	By the previous theorem, the first term in the rhs is in $Op(S^{-4}_T(\Gamma))$. Therefore, 
	\[\Delta_A \in Op(S_T^{-4}(\Gamma)) \iff (A + k^2\omega_\Gamma^2)S_{k,\omega_\Gamma}^2 \in Op(S^{-4}_T(\Gamma))\,.\]
	Using again a parametrix of ${S}_{k,\omega_\Gamma}$, we deduce
	\[\Delta_A \in Op(S_T^{-4}(\Gamma)) \iff (A + k^2\omega_\Gamma^2) \in Op(S^{-2}_T(\Gamma))\,\]
	which proves the result. 
\end{proof}
Recall the definition $[A,B] \isdef AB - BA$. 
\begin{lemma}
	\label{lemCommutator}
	The commutator
	\[\left[ S_{k,\omega_\Gamma}, \sqrt{-(\omega\partial_\tau)^2 -k^2 \omega_{\Gamma}^2}\right]\]
	is in $Op(S^{-5}_T(\Gamma))$. 
\end{lemma}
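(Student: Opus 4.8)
The plan is to reduce everything to the scalar relation of \autoref{TheSkomega} and then run an elliptic bootstrap, rather than to expand the symbol of the commutator directly (which is the alternative, Maple-assisted route). Throughout, write $S := S_{k,\omega_\Gamma}$, $D := -(\omega_\Gamma\partial_\tau)^2 - k^2\omega_\Gamma^2$ and $P := \sqrt{D}$, the operator appearing in the statement. By the corollary following \autoref{LemsymbolSk}, $S \in \textit{Op}(S^{-1}_T(\Gamma))$ is elliptic, so by \autoref{parametrixPDOTs} it admits a parametrix $Q \in \textit{Op}(S^{1}_T(\Gamma))$ with $QS = I_d + T_{-\infty}$. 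The square-root results of \autoref{subsec:squareRoot}, together with \autoref{CharacSalpha}, give $P \in \textit{Op}(S^{1}_T(\Gamma))$, and, being defined by functional calculus, $P$ commutes exactly with $D$. Setting $C := [S,P]$, \autoref{PDOTs}\,(iv) yields a priori only $C \in \textit{Op}(S^{-1}_T(\Gamma))$; the task is to gain four orders.

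The first step is to establish the much sharper bound $[S^2,P] \in \textit{Op}(S^{-6}_T(\Gamma))$. \autoref{TheSkomega} reads $D\,S^2 = \tfrac14 I_d + T_{-4}$; composing on the left with a parametrix $G \in \textit{Op}(S^{-2}_T(\Gamma))$ of the elliptic operator $D$ turns this into $S^2 = \tfrac14 G + T_{-6}$. Next, $[G,P]$ is smoothing: writing $DG = I_d + \rho$ with $\rho \in \textit{Op}(S^{-\infty}_T(\Gamma))$ and using $DP = PD$, one gets $D[G,P] = \rho P - P\rho \in \textit{Op}(S^{-\infty}_T(\Gamma))$, and applying $G$ on the left (together with $GD = I_d + T_{-\infty}$) propagates this to $[G,P]$ itself. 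Consequently $[S^2,P] = \tfrac14[G,P] + [\,S^2 - \tfrac14 G,\,P\,]$ lies in $\textit{Op}(S^{-6}_T(\Gamma))$, the second term being a commutator of an order $-6$ operator with the order $1$ operator $P$.

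The second step is the bootstrap that transfers this cancellation from $S^2$ to $S$. Using $[S^2,P] = SC + CS$ and $[S,C] = SC - CS$, if $C \in \textit{Op}(S^{c}_T(\Gamma))$ then $2SC = [S^2,P] + [S,C] \in \textit{Op}(S^{\max(-6,\,c-2)}_T(\Gamma))$, since $[S,C]$ has order $c-2$ ($S$ has order $-1$, by \autoref{PDOTs}\,(iv)). Left-multiplying by $Q$ and using $QS = I_d + T_{-\infty}$ gives $C \in \textit{Op}(S^{\max(-5,\,c-1)}_T(\Gamma))$. Starting from $c=-1$ and iterating, the order of $C$ drops by one at each pass, $-1 \to -2 \to -3 \to -4 \to -5$, and then stabilizes, proving $C = [S_{k,\omega_\Gamma},P] \in \textit{Op}(S^{-5}_T(\Gamma))$.

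The delicate point is this last bootstrap: the naive estimate gives only order $-1$, so the four-order gain must be extracted entirely from the single quadratic identity of \autoref{TheSkomega}. The mechanism is that $S^2$ coincides with the function $\tfrac14 D^{-1}$ of $D$ up to order $-6$, so its commutator with $P = \sqrt{D}$ nearly vanishes; ellipticity of $S$ then lets one ``divide by $S$'' to pass from $S^2$ to $S$. The care needed lies in tracking which factor in each commutator lowers the order and in checking that the recursion caps exactly at $-5$ rather than stalling earlier. As a consistency check, the symbol of $\tilde{S}_k$ computed in \autoref{LemsymbolSk} agrees with that of $\tfrac12 D^{-1/2}$ through order $-4$, so that $S - \tfrac12 P^{-1}$ is of order $-5$ and $[S,P] = [\,S - \tfrac12 P^{-1},\,P\,]$ is indeed of order $-5$.
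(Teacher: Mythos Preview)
Your proof is correct, and it takes a genuinely different route from the paper's.

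The paper's argument is much shorter and hinges on an \emph{exact} commutation result on the flat segment (\autoref{commutSkNk}, itself imported from \cite{alougesAverseng}): one writes $R_\Gamma S_{k,\omega_\Gamma} R_\Gamma^{-1} = S_{|\Gamma|k,\omega} + K$ with $K\in\textit{Op}(S^{-5}_T)$ by reading off the symbol expansion in \autoref{LemsymbolSk} (the curvature $\kappa$ first appears at order $-5$), and observes that $R_\Gamma$ conjugates the square root on $\Gamma$ to the flat-segment square root. Since $S_{|\Gamma|k,\omega}$ commutes exactly with that square root, the full commutator reduces to $[K,\sqrt{\cdot}]$, which is of order $-5+1-1=-5$ by \autoref{PDOTs}\,(iv).

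Your approach, by contrast, never leaves the curve and never invokes the exact flat-segment commutation. You use only the quadratic identity of \autoref{TheSkomega} to show that $S^2$ agrees with $\tfrac14 G$ (a parametrix of $D$) modulo order $-6$, deduce $[S^2,P]\in\textit{Op}(S^{-6}_T(\Gamma))$ from $[G,P]\in\textit{Op}(S^{-\infty}_T(\Gamma))$, and then run an elegant elliptic bootstrap via $2SC=[S^2,P]+[S,C]$ to descend from order $-1$ to order $-5$. This is longer but has the advantage of being self-contained within the paper: it does not rely on the external commutation result of \cite{alougesAverseng}, only on symbolic calculus and \autoref{TheSkomega}. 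Incidentally, the ``consistency check'' you append at the end (comparing $S$ with $\tfrac12 P^{-1}$ through order $-4$) is itself a third, shorter argument, closer in spirit to the paper's line of reasoning.
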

\begin{proof}
	By considering the symbols of the operators, one can check that
	\[R_\Gamma S_{k,\omega_{\Gamma}}R_\Gamma^{-1} = S_{|\Gamma|k,\omega} + K\]
	where $K \in Op(S_T^{-5})$. Moreover, 
	\[R_\Gamma(-(\omega\partial_\tau)^2 -k^2 \omega_{\Gamma}^2)R_\Gamma^{-1} = -(\omega \partial_x)^2 - (k|\Gamma|)^2 \omega^2\,.\]
	Recalling the commutation relation stated in \autoref{commutSkNk}, this gives
	\[R_\Gamma\left[ S_{k,\omega_\Gamma}, -(\omega\partial_\tau)^2 -k^2 \omega_{\Gamma}^2\right]R_\Gamma^{-1} = \left[ K, -(\omega\partial_x)^2 -(k \abs{\Gamma})^2 \omega^2\right]\,.\]
	Obviously, $-(\omega \partial_x)^2 - (k\abs{\Gamma})^2 \omega^2 \in Op(S^{2}_T)$ so the result follows from \autoref{PDOTs} {\em (iv)}.
\end{proof}
\begin{theorem}
	\label{resultatChaud}
	There holds
	\[\sqrt{-(\omega_\Gamma \partial_\tau)^2 - k^2 \omega_\Gamma^2}S_{k,\omega_\Gamma} = \frac{I_d}{2} + T_{-4}\,.\]
\end{theorem}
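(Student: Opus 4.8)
The plan is to introduce the abbreviations $D \isdef -(\omega_\Gamma \partial_\tau)^2 - k^2\omega_\Gamma^2$ and $P \isdef \sqrt{D}\,S_{k,\omega_\Gamma}$, to identify $P$ as an operator of order $0$ with principal symbol $\tfrac12$, to compute $P^2$ and reduce it to \autoref{TheSkomega}, and finally to extract the square root at the symbol level by an elementary bootstrap.

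First I would place $P$ in the calculus. By the square-root lemma established above one has $\mathcal{C}R_\Gamma \sqrt{D} = \sqrt{\tilde D}\,\mathcal{C}R_\Gamma$, where $\sqrt{\tilde D}$ is a classical elliptic PPDO of order $1$ with principal symbol $n$; hence \autoref{CharacSalpha} gives $\sqrt D \in Op(S^1_T(\Gamma))$. Since $S_{k,\omega_\Gamma} \in Op(S^{-1}_T(\Gamma))$ by \autoref{LemsymbolSk}, \autoref{PDOTs}~(ii) yields $P \in Op(S^0_T(\Gamma))$. Combining $\mathcal{C}R_\Gamma\sqrt D = \sqrt{\tilde D}\mathcal{C}R_\Gamma$ with $\mathcal{C}R_\Gamma S_{k,\omega_\Gamma}R_\Gamma^{-1} = \tilde S_k\mathcal{C}$ from \autoref{LemsymbolSk} shows that the PPDO transported from $P$ is $\sqrt{\tilde D}\,\tilde S_k$, whose principal symbol is $n\cdot\tfrac{1}{2n} = \tfrac12$. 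Consequently $P - \tfrac{I_d}{2} \in Op(S^{-1}_T(\Gamma))$. Note that the principal symbol equals $+\tfrac12$, and not $-\tfrac12$, precisely because the two principal symbols $n$ and $\tfrac{1}{2n}$ are positive; this sign is what will later select the correct root.

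Next I would compute $P^2 = \sqrt D\,S_{k,\omega_\Gamma}\sqrt D\,S_{k,\omega_\Gamma}$. Writing $S_{k,\omega_\Gamma}\sqrt D = \sqrt D\,S_{k,\omega_\Gamma} + [S_{k,\omega_\Gamma},\sqrt D]$ and substituting in the central factor gives
\[
P^2 = D\,S_{k,\omega_\Gamma}^2 + \sqrt D\,[S_{k,\omega_\Gamma},\sqrt D]\,S_{k,\omega_\Gamma}.
\]
By \autoref{lemCommutator} the commutator is in $Op(S^{-5}_T(\Gamma))$, so the second summand lies in $Op(S^{-5}_T(\Gamma)) \subset Op(S^{-4}_T(\Gamma))$ by \autoref{PDOTs}~(ii), while $D\,S_{k,\omega_\Gamma}^2 = \tfrac{I_d}{4} + T_{-4}$ by \autoref{TheSkomega}. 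Therefore $P^2 = \tfrac{I_d}{4} + T_{-4}$.

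Finally I would extract the root algebraically. Setting $Q \isdef P - \tfrac{I_d}{2} \in Op(S^{-1}_T(\Gamma))$ and using that $I_d$ commutes with $Q$,
\[
\frac{I_d}{4} + T_{-4} = P^2 = \Big(\frac{I_d}{2} + Q\Big)^2 = \frac{I_d}{4} + Q + Q^2,
\]
so that $Q = R - Q^2$ for some $R \in Op(S^{-4}_T(\Gamma))$. A two-step bootstrap using \autoref{PDOTs}~(ii) then closes the argument: $Q \in Op(S^{-1}_T(\Gamma))$ forces $Q^2 \in Op(S^{-2}_T(\Gamma))$, whence $Q = R - Q^2 \in Op(S^{-2}_T(\Gamma))$; this in turn gives $Q^2 \in Op(S^{-4}_T(\Gamma))$ and so $Q = R - Q^2 \in Op(S^{-4}_T(\Gamma))$. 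Thus $P = \tfrac{I_d}{2} + T_{-4}$, which is the assertion.

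The heart of the matter, and the only step that is not bookkeeping, is the identity $P^2 = \tfrac{I_d}{4} + T_{-4}$: it rests on the Maple-assisted symbol computation of \autoref{TheSkomega} together with the order-$(-5)$ commutator bound of \autoref{lemCommutator}, which is exactly what keeps the curvature-induced failure of $\sqrt D$ and $S_{k,\omega_\Gamma}$ to commute from spoiling the order-$(-4)$ remainder. Locating $P$ in $Op(S^0_T(\Gamma))$, fixing the sign of its principal symbol, and the final bootstrap are all routine.
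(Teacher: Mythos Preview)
Your proof is correct and follows essentially the same route as the paper: both reduce to $P^2 = \tfrac{I_d}{4} + T_{-4}$ via \autoref{TheSkomega} and the commutator bound of \autoref{lemCommutator}, then extract the root using that $P + \tfrac{I_d}{2}$ has principal symbol $1$. The only cosmetic difference is in this last extraction: the paper writes $(A+\tfrac{I_d}{2})(A-\tfrac{I_d}{2}) \in T_{-4}$ and applies a parametrix of the elliptic factor $A+\tfrac{I_d}{2}$ in one shot, whereas you expand $(\tfrac{I_d}{2}+Q)^2$ and run a two-step bootstrap on $Q = R - Q^2$; these are equivalent standard devices.
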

\begin{proof}
	One has
	\[S_{k,\omega_\Gamma}^2 \left[-(\omega_\Gamma \partial_\tau)^2 - k^2 \omega_\Gamma^2\right] = \frac{I_d}{4} + T_{-4}\,.\]
	Let us denote 
	\[A = S_{k,\omega_\Gamma} \sqrt{-(\omega_\Gamma \partial_\tau)^2 - k^2 \omega_\Gamma^2}\,.\] 
	Exploiting \autoref{lemCommutator}, we deduce that
	\[K \isdef \left(A + \frac{I_d}{2}\right)\left(A - \frac{I_d}{2}\right)\]
	is in $Op(S^{-4}_{T}(\Gamma))$. 
	Furthermore, one has
	\[\mathcal{C}R_\Gamma\left(A + \frac{I_d}{2}\right)R_\Gamma^{-1} = \left(\tilde{S_k} \sqrt{\tilde{D}} + \frac{I_d}{2} \right)\mathcal{C} \quad \textup{in } T^{\infty}\]
	by \autoref{lemCalculDesSqr}. Notice that the PPDO 
	$\tilde{B} = \tilde{S}_k \sqrt{\tilde{D}} + \frac{I_d}{2}$
	is classical of order $0$ with a principal symbol given by $\sigma_{0}(\theta,n) = 1$. Thus, it is elliptic. Therefore, by \autoref{parametrixPDOTs}, $A + I_d$ admits a parametrix $P$ of order $0$. If $R$ is the smoothing operator such that
	\[P(A + I_d) = I_d + R\,,\]
	we then have
	\[A-I_d = PK - R(A + I_d)\,.\]
	It is straightforward to check that the operator on the rhs is of order $-4$, from which the result follows.   
\end{proof}
\subsection{Neumann problem}

We saw in \autoref{NkomegaSkomega} that $N_{k,\omega_\Gamma} = N_1 - k^2N_2$ where 
\[N_1 = -\partial_\tau S_{k,\omega} \omega_\Gamma \partial_\tau \omega_\Gamma\]
and $N_2 = V_k\omega_\Gamma^2$ with
\[V_ku(x) = \int_{\Gamma} \frac{G_k(x-y)N(x)\cdot N(y) u(y)}{\omega_\Gamma(y)}d\sigma_y\,.\]
\begin{lemma}
	\label{LemsymbolN1}
	The operator $N_1$ is in $\textit{Op}(S_U^2(\Gamma))$ and  
	\[\mathcal{S}R_\Gamma N_1R_\Gamma^{-1} = \tilde{N}_1 \mathcal{S} \quad \textup{ in } U^{\infty}\]
	where $\tilde{N}_1$ is a classical and elliptic PPDO with a symbol $\sigma_{\tilde{N}_1}$ satisfying
	\begin{equation}
	\sigma_{\tilde{N}_1}(\theta,\xi) = \frac{|\xi|}{2} + \frac{1}{16}\frac{k^2 \abs{\Gamma}^2 \sin^2(\theta)}{|\xi|} + i\frac{k^2 \abs{\Gamma}^2 \sin\theta\cos\theta}{16\xi^2\textup{sign}(\xi)} + \Sigma^{-3}\,.
	\label{symboleN1}
	\end{equation}
	As a consequence, $N_1$ admits the pair of symbols
	\begin{align}
	a_1(x,n) &= \frac{n}{2} + \frac{k^2 \abs{\Gamma}^2 \omega(x)^2}{16n} + \tilde{a}_1\,,\\
	a_2(x,n) &= \frac{k^2\abs{\Gamma}^2x}{16n^2} + \tilde{a}_2\,,
	\end{align}
	where $(\tilde{a}_1,\tilde{a}_2) \in S^{-3}_U$.
\end{lemma}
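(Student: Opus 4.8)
The plan is to recognize that $N_1 = -\partial_\tau S_{k,\omega_\Gamma}\omega_\Gamma\partial_\tau\omega_\Gamma$ is exactly of the form treated in \autoref{LemdxAomegadeomega}, with $A = S_{k,\omega_\Gamma}$. Since \autoref{LemsymbolSk} gives $S_{k,\omega_\Gamma} \in \textit{Op}(S_T^{-1}(\Gamma))$ together with a classical elliptic PPDO $\tilde{S}_k$ satisfying $\mathcal{C}R_\Gamma S_{k,\omega_\Gamma}R_\Gamma^{-1} = \tilde{S}_k \mathcal{C}$, \autoref{LemdxAomegadeomega} applies with $\alpha = -1$ and yields at once that $N_1 \in \textit{Op}(S_U^{1}(\Gamma)) \subset \textit{Op}(S_U^2(\Gamma))$, together with the conjugation relation
\[\mathcal{S}R_\Gamma N_1 R_\Gamma^{-1} = -\partial_\theta \tilde{S}_k \partial_\theta \mathcal{S} \quad \textup{in } U^{\infty}\,.\]
Thus the whole statement reduces to analysing the PPDO $\tilde{N}_1 \isdef -\partial_\theta \tilde{S}_k \partial_\theta$, whose symbol must be read off from the known expansion \eqref{symboleSk}.

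For the symbol I would act directly on the Fourier basis $e_n$. Since $\partial_\theta e_n = in\,e_n$ and $\tilde{S}_k e_n = \sigma_{\tilde{S}_k}(\theta,n)e_n$, Leibniz' rule gives
\[\tilde{N}_1 e_n = \left(n^2 \sigma_{\tilde{S}_k}(\theta,n) - in\,\partial_\theta \sigma_{\tilde{S}_k}(\theta,n)\right)e_n\,,\]
so that the prolongated symbol is
\[\sigma_{\tilde{N}_1}(\theta,\xi) = \xi^2\, \sigma_{\tilde{S}_k}(\theta,\xi) - i\xi\,\partial_\theta \sigma_{\tilde{S}_k}(\theta,\xi)\,.\]
Substituting \eqref{symboleSk} and collecting terms order by order produces \eqref{symboleN1}: the term $\xi^2 \cdot \frac{1}{2\abs{\xi}}$ gives the principal part $\frac{\abs{\xi}}{2}$; the term $\xi^2 \cdot \frac{k^2\abs{\Gamma}^2\sin^2\theta}{16\abs{\xi}^3}$ gives the order $-1$ contribution; and the order $-2$ contribution arises from combining $\xi^2$ applied to the $\frac{1}{\xi^4}$ term of $\sigma_{\tilde{S}_k}$ with $-i\xi\,\partial_\theta$ applied to its $\frac{1}{\abs{\xi}^3}$ term, everything below being absorbed into $\Sigma^{-3}$. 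I expect this bookkeeping — in particular tracking the $\textup{sign}(\xi)$ factors and checking that the two order $-2$ pieces enter with coefficients $\tfrac{3}{16}$ and $-\tfrac{2}{16}$ and combine into $\tfrac{1}{16}$ — to be the main (and only genuinely delicate) computation; this is where the Maple assistance mentioned in the paper is used.

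Classicality of $\tilde{N}_1$ is inherited from $\tilde{S}_k$ (classical by \autoref{LemsymbolSk}) and the fact that $\pm\partial_\theta$ are classical differential operators, while ellipticity follows since the principal symbol $\frac{\abs{\xi}}{2}$ is positive homogeneous of degree $1$ and does not vanish for $\xi \neq 0$. Finally, to obtain the pair of symbols I would invoke \autoref{CharacSalphaU}: the conjugation relation above shows that $(a_1(\tilde{N}_1), a_2(\tilde{N}_1))$, defined through \autoref{defa1Ta2T}, is a pair of symbols for $N_1$. One checks term by term that $\sigma_{\tilde{N}_1}$ already enjoys the symmetry $\sigma(\theta,\xi) = \sigma(-\theta,-\xi)$, so the symmetrization in \autoref{defa1Ta2T} is trivial; then, writing $\cos\theta = x$ and $\sin\theta = \omega(x)$, the parts even in $n$ (namely $\frac{n}{2}$ and $\frac{k^2\abs{\Gamma}^2\omega^2}{16n}$) assemble into $a_1$, while the part odd in $n$ (the $\textup{sign}(n)$ term) gives, after division by $2i\sqrt{1-x^2}$, the claimed $a_2(x,n) = \frac{k^2\abs{\Gamma}^2 x}{16n^2}$, with the remainders collected into $(\tilde{a}_1,\tilde{a}_2) \in S^{-3}_U$.
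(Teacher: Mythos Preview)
Your proposal is correct and follows essentially the same route as the paper: invoke \autoref{LemdxAomegadeomega} with $A=S_{k,\omega_\Gamma}$ (using \autoref{LemsymbolSk}) to obtain $\tilde{N}_1=-\partial_\theta\tilde{S}_k\partial_\theta$, then compute its symbol by the composition formula and extract $(a_1,a_2)$ via \autoref{defa1Ta2T}/\autoref{CharacSalphaU}. The only cosmetic difference is that you write out the exact Leibniz identity $\sigma_{\tilde{N}_1}=\xi^2\sigma_{\tilde{S}_k}-i\xi\,\partial_\theta\sigma_{\tilde{S}_k}$ (the expansion \eqref{a_diese_b} terminating because $\partial_\theta$ is differential) and do the $\tfrac{3}{16}-\tfrac{2}{16}$ bookkeeping by hand, whereas the paper delegates this step to the Maple worksheet in Appendix~\ref{AnnexeNk}.
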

\begin{proof}
	This result is obtained by symbolic calculus combining \autoref{LemsymbolSk} and \autoref{LemdxAomegadeomega}. See appendix \ref{AnnexeNk} eq. (1). 
\end{proof}
\noindent A small adaptation of the proof of \autoref{LemsymbolSk} yields the following result (see Appendix \ref{AnnexeNk} eq. (2))
\begin{lemma}
	\label{LemsymbolVk}
	The operator $V_k$ is in $\textit{Op}(S_T^{-1}(\Gamma))$ and 
	\[\mathcal{C}R_\Gamma V_kR_\Gamma^{-1} = \tilde{V}_k \mathcal{C} \quad \textup{in } T^{\infty}\]
	where $\tilde{V}_k$ is a classical and elliptic PPDO with a symbol $\sigma_{\tilde{V}_k}$ satisfying
	\[\sigma_{\tilde{V}_k} = \frac{1}{2|\xi|} + \Sigma^{-3}\,.\]
	$V_k$ thus admits the pair of symbols
	\begin{align*}
	a_1(x,n) &= \frac{1}{2n} + \tilde{a}_1(x,n)\,,\\
	a_2(x,n) &= \tilde{a}_2(x,n)\,,
	\end{align*}
	where $(\tilde{a}_1,\tilde{a}_2) \in S^{-3}_T$.	
\end{lemma}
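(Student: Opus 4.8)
The plan is to follow the proof of \autoref{LemsymbolSk} almost verbatim, the only structural novelty being the smooth scalar factor $N(x)\cdot N(y)$ in the kernel of $V_k$, which is simply absorbed into the amplitude of the resulting convolution operator. First I would set $A=R_\Gamma V_k R_\Gamma^{-1}$ and insert the Hankel splitting \eqref{decompHankel}, $\frac{i}{4}H_0(z)=-\frac{1}{2\pi}\ln|z|\,J_0(z)+F_1(z^2)$. Carrying out the change of variables $x=\cos\theta$, $x'=\cos\theta'$ one finds
\[
A v(\cos\theta)=\int_{-1}^1\frac{G_k(r(\cos\theta)-r(\cos\theta'))\,n(\cos\theta)\cdot n(\cos\theta')\,v(\cos\theta')}{\omega(\cos\theta')}\,dx',
\]
so that, compared with \autoref{LemsymbolSk}, the amplitude acquires the extra factor $n(\cos\theta)\cdot n(\cos\theta')$, which is $C^\infty$ and equal to $1$ on the diagonal. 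As before, the analytic part $F_1$ together with the regularized logarithm $-\frac{1}{2\pi}\ln\frac{|r(\cos\theta)-r(\cos\theta')|}{|\cos\theta-\cos\theta'|}$ (both still multiplied by the smooth factor) yield an operator of the form $\tilde R_1\mathcal{C}$ with $C^\infty$ kernel, hence $\tilde R_1\in\textit{Op}(\Sigma^{-\infty})$ by \autoref{thrunen} applied with $h\equiv 1$.

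For the singular logarithmic part I would use $\ln|\cos\theta-\cos\theta'|=\ln|\sqrt2\sin\frac{\theta+\theta'}{2}|+\ln|\sqrt2\sin\frac{\theta-\theta'}{2}|$ and the substitution $\theta'\to-\theta'$ exactly as in \autoref{LemsymbolSk}, producing a convolution operator $\tilde V_{k,1}$ with kernel $g(\theta-\theta')a(\theta,\theta')$, where $g(\theta)=-\frac{1}{2\pi}\ln|\sqrt2\sin\frac{\theta}{2}|$ has $\hat g(\xi)=\frac{1}{2|\xi|}$ and
\[
a(\theta,\theta')=n(\cos\theta)\cdot n(\cos\theta')\,J_0\!\left(k|r(\cos\theta)-r(\cos\theta')|\right)
\]
is $C^\infty$ in both arguments (since $J_0$ is even and analytic, the Bessel factor depends smoothly on $\cos\theta,\cos\theta'$ through $|r(\cos\theta)-r(\cos\theta')|^2$). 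By \autoref{thrunen}, $\tilde V_{k,1}\in\textit{Op}(\Sigma^{-1})$ with the asymptotic expansion \eqref{FormuleIntegralOperatorSymbol}; it is classical and elliptic because $\hat g(\xi)=\frac{1}{2|\xi|}$ never vanishes. Setting $\tilde V_k=\tilde V_{k,1}+\tilde R_1$ and invoking \autoref{CharacSalpha} then gives $\mathcal{C}R_\Gamma V_kR_\Gamma^{-1}=\tilde V_k\mathcal{C}$ in $T^\infty$ and $V_k\in\textit{Op}(S_T^{-1}(\Gamma))$.

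The symbol computation is where the only real care is needed, and it is the step I expect to be the main obstacle: one must check that the first correction is $\Sigma^{-3}$, not $\Sigma^{-2}$. The leading $j=0$ term of \eqref{FormuleIntegralOperatorSymbol} is $\hat g(\xi)\,a(\theta,\theta)=\frac{1}{2|\xi|}$, since $a(\theta,\theta)=|n(\cos\theta)|^2J_0(0)=1$. The $\Sigma^{-2}$ contribution comes from the $j=1$ term $\partial_\xi\hat g(\xi)\,D_t a(t,\theta)|_{t=\theta}$, and it vanishes: differentiating $|n(\cos t)|^2\equiv 1$ gives $\partial_t[n(\cos t)\cdot n(\cos\theta)]|_{t=\theta}=(\partial_t n)\cdot n=0$, while the first $t$-derivative of $J_0(k|r(\cos t)-r(\cos\theta)|)$ vanishes on the diagonal because $|r(\cos t)-r(\cos\theta)|^2$ has a double zero there (cf. \eqref{expansion_r}) and $J_0'(0)=0$. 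Hence $D_t a(t,\theta)|_{t=\theta}=0$ and $\sigma_{\tilde V_k}=\frac{1}{2|\xi|}+\Sigma^{-3}$; the higher-order terms are produced by the symbolic calculus in Maple (Appendix \ref{AnnexeNk}, eq. (2)).

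Finally, the pair of symbols is read off from \autoref{defa1Ta2T}. Since the principal part $\frac{1}{2|n|}$ is even in $n$ and independent of $\theta$, the symmetrization leaves it unchanged, so $a_1(x,n)=\frac{1}{2n}+\tilde a_1(x,n)$ and $a_2(x,n)=\tilde a_2(x,n)$ with $(\tilde a_1,\tilde a_2)\in S^{-3}_T$, as claimed.
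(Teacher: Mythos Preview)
Your proposal is correct and follows exactly the approach indicated by the paper, which merely states that the result is ``a small adaptation of the proof of \autoref{LemsymbolSk}'' and refers to the Maple computation in Appendix~\ref{AnnexeNk}. In fact you supply more detail than the paper does, in particular the clean verification that the $j=1$ term in \eqref{FormuleIntegralOperatorSymbol} vanishes (via $n\cdot\partial_t n=0$ and the double zero of $|r(\cos t)-r(\cos\theta)|^2$ on the diagonal), which is precisely what forces the remainder to lie in $\Sigma^{-3}$ rather than $\Sigma^{-2}$.
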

\noindent Applying \autoref{LemAomega2}, we deduce
\begin{corollary}
	The operator $N_2$ is in $\textit{Op}(S^{-1}_U(\Gamma))$ and satisfies
	\[\mathcal{S} R_\Gamma N_2R_\Gamma^{-1} = \tilde{N}_2 \mathcal{S} \quad \textup{in } U^{\infty}\]
	where $\tilde{N}_2$ is a classical PPDO with a symbol $\sigma_{\tilde{N}_2}$ satisfying
	\begin{equation}
	\sigma_{\tilde{N}_2} = \frac{\sin^2\theta}{2\abs{\xi}} + i\frac{\sin\theta \cos \theta}{2 \xi^2\textup{sign}(\xi)} + \Sigma^{-3}\,.
	\label{symboleN2}
	\end{equation}
	A pair of symbols for $N_2$ is thus
	\begin{align*}
	a_1(x,n) &= \frac{\omega(x)^2}{2n} + \tilde{a}_1 \,,\\
	a_2(x,n) &= \frac{x}{2n^2} + \tilde{a}_2\,,
	\end{align*}
	where $(\tilde{a}_1,\tilde{a}_2) \in S^{-3}_U$. 
\end{corollary}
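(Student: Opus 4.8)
The plan is to obtain this corollary as a direct application of \autoref{LemAomega2} to the operator $V_k$, followed by an explicit but routine symbolic computation. Since $V_k \in \textit{Op}(S_T^{-1}(\Gamma))$ by \autoref{LemsymbolVk} and $N_2 = V_k \omega_\Gamma^2$, \autoref{LemAomega2} immediately yields $N_2 \in \textit{Op}(S_U^{-1}(\Gamma))$ together with the intertwining relation $\mathcal{S}R_\Gamma N_2 R_\Gamma^{-1} = \sin \tilde{V}_k \sin \mathcal{S}$ in $U^{\infty}$, where $\tilde{V}_k$ is the classical elliptic PPDO of \autoref{LemsymbolVk} satisfying $\mathcal{C}R_\Gamma V_k R_\Gamma^{-1} = \tilde{V}_k \mathcal{C}$, and $\sin$ denotes multiplication by $\sin\theta$. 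I would therefore set $\tilde{N}_2 \isdef \sin \tilde{V}_k \sin$. Being the composition of the classical PPDO $\tilde{V}_k$ with two copies of the multiplication operator $\sin$ (a classical PPDO of order $0$ with symbol $\sin\theta$), the operator $\tilde{N}_2$ is itself a classical PPDO of order $-1$. Note that it is \emph{not} elliptic, since its principal symbol $\frac{\sin^2\theta}{2\abs{\xi}}$ vanishes at $\theta \in \{0,\pi\}$; this is consistent with the statement asking only for a classical symbol, not an elliptic one.

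The core of the argument is then the computation of $\sigma_{\tilde{N}_2}$ from the asymptotic expansion \eqref{a_diese_b}. The key simplification is that the symbol of $\sin$ is $\sin\theta$, which is independent of $\xi$: composing with $\sin$ on the left merely multiplies the symbol by $\sin\theta$, since only the $j=0$ term survives. It thus suffices to expand $\tilde{V}_k \sin$ and multiply the result by $\sin\theta$. In the expansion of $\tilde{V}_k \sin$ the $j=0$ term gives $\frac{\sin\theta}{2\abs{\xi}}$; the $j=1$ term is $\partial_\xi\!\left(\frac{1}{2\abs{\xi}}\right) D_\theta \sin\theta = -\frac{\textup{sign}(\xi)}{2\xi^2}(-i\cos\theta) = i\frac{\cos\theta\,\textup{sign}(\xi)}{2\xi^2}$; and every term with $j \geq 2$, together with the contribution of the $\Sigma^{-3}$ remainder of $\sigma_{\tilde{V}_k}$, lies in $\Sigma^{-3}$. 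Multiplying by $\sin\theta$ and rewriting $\textup{sign}(\xi) = 1/\textup{sign}(\xi)$ produces exactly \eqref{symboleN2}.

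Finally, I would read off the pair of symbols $(a_1,a_2)$ for $N_2$ by feeding $\sigma_{\tilde{N}_2}$ into the maps of \autoref{defa1Ta2T}. A direct check shows that $\sigma_{\tilde{N}_2}(\theta,\xi) = \sigma_{\tilde{N}_2}(-\theta,-\xi)$: the first term is even, while in the second the sign changes of $\sin\theta\cos\theta$ and of $\textup{sign}(\xi)$ cancel. Hence the averaged symbol $\tilde{\sigma}$ of \autoref{defa1Ta2T} equals $\sigma_{\tilde{N}_2}$ itself, and setting $\theta = \arccos x$ (so that $\cos\theta = x$ and $\sin\theta = \omega(x) \geq 0$) gives $a_1(x,n) = \frac{\omega(x)^2}{2n} + \tilde{a}_1$ and $a_2(x,n) = \frac{x}{2n^2} + \tilde{a}_2$, the leading terms coming from the explicit part of \eqref{symboleN2}. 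The remainders $(\tilde{a}_1,\tilde{a}_2)$ lie in $S^{-3}_U$ precisely because the corresponding part of $\sigma_{\tilde{N}_2}$ is in $\Sigma^{-3}$.

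I do not expect any substantial obstacle: once \autoref{LemAomega2} is invoked the result is essentially a transcription. The only points requiring minor care are the bookkeeping of $\textup{sign}(\xi)$ and of the negative powers of $\abs{\xi}$ in the $j=1$ term, and checking that no contribution of order $-1$ or $-2$ is lost when passing the symbol through the two multiplication operators.
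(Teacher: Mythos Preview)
Your proposal is correct and follows essentially the same route as the paper: invoke \autoref{LemAomega2} with $A=V_k$ to obtain $\tilde{N}_2=\sin\tilde{V}_k\sin$, then compute the symbol by the composition formula~\eqref{a_diese_b} and read off the pair $(a_1,a_2)$ via \autoref{defa1Ta2T}. The only difference is cosmetic: the paper delegates the symbolic computation of $\sigma_{\tilde N_2}$ to Maple (Appendix~\ref{AnnexeNk}, eq.~(3)), whereas you carry out the two-term expansion by hand, correctly exploiting that $\partial_\xi^j(\sin\theta)=0$ for $j\geq 1$ so that left multiplication by $\sin$ simply multiplies the symbol.
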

\begin{theorem}
	\label{TheNkomega}
	The operato  $N_{k,\omega_\Gamma}$ is elliptic and satisfies
	\[N_{k,\omega_\Gamma}^2 = \frac{1}{4}\left[-(\partial_\tau \omega_\Gamma)^2 - k^2 \omega_\Gamma^2\right] + U_{-2}\,. \]
\end{theorem}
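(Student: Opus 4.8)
The plan is to follow the strategy already used for \autoref{TheSkomega} and \autoref{resultatChaud}: transport everything to the torus through the intertwiner $\mathcal S$ and compare homogeneous components of periodic symbols. First I would record the structural decomposition $N_{k,\omega_\Gamma} = N_1 - k^2 N_2$, where $N_1$ has principal symbol $\frac{|\xi|}{2}$ (\autoref{LemsymbolN1}) and $N_2 \in \textit{Op}(S^{-1}_U(\Gamma))$ (the corollary following \autoref{LemsymbolVk}). By \autoref{PDOUs} this gives $N_{k,\omega_\Gamma} \in \textit{Op}(S^1_U(\Gamma))$ with nonvanishing principal symbol $\frac{|\xi|}{2}$, so $N_{k,\omega_\Gamma}$ is elliptic and $N_{k,\omega_\Gamma}^2 \in \textit{Op}(S^2_U(\Gamma))$. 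Since $D_2 := -(\partial_\tau \omega_\Gamma)^2 - k^2\omega_\Gamma^2$ is intertwined by $\mathcal S R_\Gamma$ with the PPDO $\tilde D$ of symbol $\xi^2 - k^2\sin^2\theta$ (see \eqref{rel2sqr}), one also has $\tfrac14 D_2 \in \textit{Op}(S^2_U(\Gamma))$, and the claimed identity is exactly the assertion that the symbols of $N_{k,\omega_\Gamma}^2$ and $\tfrac14 D_2$ agree modulo $\Sigma^{-2}$.

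For the core computation I would work on the torus. By \autoref{LemdxAomegadeomega} and \autoref{LemAomega2}, $N_{k,\omega_\Gamma}$ is intertwined by $\mathcal S R_\Gamma$ with the PPDO $\tilde N = -\partial_\theta \tilde S_k \partial_\theta - k^2 \sin \tilde V_k \sin$, so its symbol $\sigma_{\tilde N} = \sigma_{\tilde N_1} - k^2 \sigma_{\tilde N_2}$ is known modulo $\Sigma^{-3}$ from \autoref{LemsymbolN1} and the corollary following \autoref{LemsymbolVk}. I would then compute $\sigma_{\tilde N} \# \sigma_{\tilde N}$ modulo $\Sigma^{-2}$ via \eqref{a_diese_b} (only the terms $j = 0,1,2,3$ contribute at this accuracy) and transport the result back through \autoref{PDOUs}(iii). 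It then remains to verify the three identities: the homogeneous component of order $2$ equals $\tfrac14\xi^2$, the components of orders $1$ and $-1$ vanish, and the component of order $0$ equals $-\tfrac14 k^2\sin^2\theta$. Together these say $\sigma_{\tilde N}\#\sigma_{\tilde N} - \tfrac14(\xi^2 - k^2\sin^2\theta) \in \Sigma^{-2}$, which is the theorem. As for the previous results, this finite verification is carried out with Maple.

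A cleaner conceptual route explains the mechanism and links the result to \autoref{resultatChaud}. Transporting \autoref{resultatChaud} by $\mathcal C R_\Gamma$ gives $\sqrt{\tilde D}\,\tilde S_k = \frac{I_d}{2} + \Sigma^{-4}$ on even functions, hence $\tilde S_k = \tfrac12(\sqrt{\tilde D})^{-1} + \Sigma^{-5}$. Inserting this into $\tilde N_1 = -\partial_\theta \tilde S_k\partial_\theta$, and using that $\partial_\theta$ commutes with $\tilde D$ up to order zero while $\tilde V_k$ and $\tilde S_k$ share the principal symbol $\frac{1}{2|\xi|}$, one finds that the $k^2$-contributions of $\tilde N_1$ and of $k^2\tilde N_2$ do \emph{not} cancel but combine precisely into the subprincipal part of $\tfrac12\sqrt{\tilde D}$; in particular $\sigma_{\tilde N}$ and $\tfrac12\sigma_{\sqrt{\tilde D}}$ agree at orders $1$ and $-1$. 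This motivates proving the stronger statement $N_{k,\omega_\Gamma} = \tfrac12\sqrt{D_2} + U_{-3}$. Using the commutation $[N_{k,\omega_\Gamma}, \sqrt{D_2}] = 0$ from \autoref{commutSkNk} (which, even if it held only modulo a smoothing remainder, would be harmless here), one has the factorization
\[N_{k,\omega_\Gamma}^2 - \tfrac14 D_2 = \left(N_{k,\omega_\Gamma} - \tfrac12\sqrt{D_2}\right)\left(N_{k,\omega_\Gamma} + \tfrac12\sqrt{D_2}\right)\,,\]
and since $N_{k,\omega_\Gamma} + \tfrac12\sqrt{D_2}$ is elliptic of order $1$ (principal symbol $|\xi|$), it suffices to establish $N_{k,\omega_\Gamma} - \tfrac12\sqrt{D_2} \in \textit{Op}(S^{-3}_U(\Gamma))$, mirroring the parametrix argument of \autoref{resultatChaud}.

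The main obstacle, in either route, is the control of the non-principal (imaginary, $a_2$-type) components of the symbols at orders around $-2$. In the direct route this means carrying the $\#$-product of $\sigma_{\tilde N}$ accurately enough that its order $0$ and order $-1$ parts are obtained exactly, keeping track of the interplay between the curvature- and $k^2$-dependent terms of \autoref{LemsymbolN1}; in the structural route it requires the lower-order symbol of the symbolic square root $\sqrt{\tilde D}$, obtained by solving $\sigma \# \sigma = \xi^2 - k^2\sin^2\theta$ order by order, and then matching it against $\sigma_{\tilde N}$ down to order $-2$. Both are routine but error-prone by hand, which is why they are best delegated to the symbolic computation, exactly as in the proofs of \autoref{TheSkomega} and \autoref{resultatChaud}.
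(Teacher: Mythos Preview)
Your direct route is exactly the paper's proof: transport to the torus via $\mathcal S R_\Gamma$, observe that the principal symbol of $\tilde N_1 - k^2\tilde N_2$ is $\tfrac{|\xi|}{2}$ so $N_{k,\omega_\Gamma}$ is elliptic, and then check by symbolic calculus (Maple) that $\sigma_{\tilde N}\#\sigma_{\tilde N} - \tfrac14(\xi^2 - k^2\sin^2\theta) \in \Sigma^{-2}$.

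Your ``conceptual route'' is interesting but reverses the paper's logical order. The paper first establishes $N_{k,\omega_\Gamma}^2 = \tfrac14 D_2 + U_{-2}$ by direct symbol computation (this theorem), and only \emph{afterwards} derives the square-root statement $N_{k,\omega_\Gamma} = \tfrac12\sqrt{D_2} + U_{-3}$ via the factorization/parametrix argument you describe (mirroring \autoref{resultatChaud}). Your proposal to go the other way---prove $N_{k,\omega_\Gamma} - \tfrac12\sqrt{D_2}\in \textit{Op}(S^{-3}_U)$ first and deduce the present theorem from the factorization---is legitimate in principle, but it buys you nothing: you still need a symbolic computation of comparable depth (the subprincipal expansion of $\sqrt{\tilde D}$ matched against $\sigma_{\tilde N}$), and you additionally need a commutator lemma analogous to \autoref{lemCommutator} for $N_{k,\omega_\Gamma}$ on a general curve, which \autoref{commutSkNk} only supplies on the flat segment. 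Your parenthetical ``even if it held only modulo a smoothing remainder'' understates this: for the factorization to be exact you need the commutator to vanish, and otherwise you must control its order, which is an extra step the direct route avoids.
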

\begin{proof}
	We have
	\[\mathcal{C}R_\Gamma N_{k,\omega} R_\Gamma^{-1} = \left(\tilde{N_1} - k^2 \tilde{N_2}\right)\mathcal{C} \quad \textup{ in } U^{\infty}\]
	where both $\tilde{N_1}$ and $\tilde{N_2}$ are classical. Moreover, the principal symbol of $\tilde{N_1} - k^2 \tilde{N_2}$ is $\frac{\xi}{2}$, thus $N_{k,\omega_\Gamma}$ is elliptic by \autoref{PDOUs} {\em(v)}. 
	We have asymptotic expansions available for the symbols of the operators $N_{k,\omega_\Gamma}$ and $\left[-(\partial_\tau \omega_\Gamma )^2 - k^2\omega_\Gamma^2\right]$. We can thus compute an asymptotic expansion of the symbol of the operator $N_{k,\omega_\Gamma}^2 - \left[-(\partial_\tau \omega_\Gamma )^2 - k^2\omega_\Gamma^2\right]$
	which turns out to be in $S^{-2}_U(\Gamma)$, giving the result. The details of the computations can be found in Appendix \ref{AnnexeNk}. 
\end{proof}
\noindent Reasoning with a parametrix as in the previous section one can prove the following results
\begin{corollary}
	\label{optiUs}
	Let $A \in Op(S_U^{0}(\Gamma))$ and let 
	\[\Delta_A = N_{k,\omega_\Gamma}^2 - \frac{1}{4}\left[-(\partial_\tau \omega_\Gamma )^2 + A\right]\,.\]
	Then $\Delta_A \in Op(S_U^{0}(\Gamma))$ and moreover
	\[\Delta_A \in Op(S^{-2}_U(\Gamma)) \iff A = -k^2 \omega_\Gamma^2 + U_{-2}\,.\]
\end{corollary}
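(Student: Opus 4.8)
The plan is to mirror the proof of \autoref{optiTs}, the key structural simplification being that here the order-zero operator $A$ enters $\Delta_A$ \emph{linearly} rather than composed with an order $-2$ factor, so that no parametrix is in fact required. The single substantial input is \autoref{TheNkomega}, which I read as the assertion that $\Delta_{-k^2\omega_\Gamma^2} \in Op(S_U^{-2}(\Gamma))$, since by definition
\[\Delta_{-k^2\omega_\Gamma^2} = N_{k,\omega_\Gamma}^2 - \frac{1}{4}\left[-(\partial_\tau \omega_\Gamma)^2 - k^2\omega_\Gamma^2\right]\,.\]

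First I would record the algebraic identity
\[\Delta_A = \Delta_{-k^2\omega_\Gamma^2} - \frac{1}{4}\left(A + k^2\omega_\Gamma^2\right)\,,\]
obtained simply by subtracting the displayed expressions for $\Delta_A$ and $\Delta_{-k^2\omega_\Gamma^2}$; the operator $N_{k,\omega_\Gamma}^2$ and the order-two terms $-\tfrac14\bigl(-(\partial_\tau \omega_\Gamma)^2\bigr)$ cancel, leaving only the order-zero remainder $-\tfrac14(A + k^2\omega_\Gamma^2)$.

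Next I would check that $A + k^2\omega_\Gamma^2 \in Op(S_U^0(\Gamma))$. The term $A$ is of order $0$ by hypothesis, and multiplication by $\omega_\Gamma^2$ is of order $0$ in the $U^s$ calculus: this follows from \autoref{LemAomega2} applied to the identity $I_d \in Op(S_T^0(\Gamma))$, for which $I_d\,\omega_\Gamma^2 = \omega_\Gamma^2$. (Equivalently, via \autoref{CharacSalphaU}, one has $\mathcal{S}R_\Gamma \omega_\Gamma^2 R_\Gamma^{-1} = \tilde{B}\,\mathcal{S}$ with $\tilde{B}$ the multiplication by the smooth even symbol $\tfrac{|\Gamma|^2}{4}\sin^2\theta \in \Sigma^0$.) Since $\Delta_{-k^2\omega_\Gamma^2} \in Op(S_U^{-2}(\Gamma)) \subset Op(S_U^0(\Gamma))$, the identity above exhibits $\Delta_A$ as a sum of two order-zero operators, which gives the first assertion.

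Finally, the equivalence is immediate from the same identity. Because $\Delta_{-k^2\omega_\Gamma^2}$ already lies in $Op(S_U^{-2}(\Gamma))$,
\[\Delta_A \in Op(S_U^{-2}(\Gamma)) \iff A + k^2\omega_\Gamma^2 \in Op(S_U^{-2}(\Gamma))\,,\]
and the right-hand condition is exactly the statement $A = -k^2\omega_\Gamma^2 + U_{-2}$ by the convention introduced in \autoref{DefOpUs}. The only point requiring any care is the order-zero character of $\omega_\Gamma^2$, which is routine; in particular, and in contrast with the single-layer case of \autoref{optiTs}, no inversion or parametrix of $N_{k,\omega_\Gamma}$ is needed here.
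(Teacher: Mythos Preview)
Your proof is correct and follows essentially the same decomposition as the paper suggests (mirroring \autoref{optiTs}). The paper's one-line indication reads ``reasoning with a parametrix as in the previous section,'' but as you correctly observe, the linear entry of $A$ in $\Delta_A$ makes the parametrix step superfluous for this particular corollary; the parametrix argument is genuinely needed only for the subsequent square-root theorem.
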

\begin{theorem}
	There holds
	\[N_{k,\omega_\Gamma} = \frac{1}{2}\sqrt{-(\partial_\tau \omega_\Gamma)^2 - k^2 \omega_\Gamma^2} + U_{-3}\,.\]
\end{theorem}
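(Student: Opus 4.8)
The plan is to reproduce, in the scale $(U^s(\Gamma))_s$, the parametrix argument used for \autoref{resultatChaud}. Write $D \isdef -(\partial_\tau \omega_\Gamma)^2 - k^2\omega_\Gamma^2$; recall that $\sqrt{D}$ is a well-defined element of $Op(S^1_U(\Gamma))$ with principal symbol $\abs{\xi}$, while $N_{k,\omega_\Gamma} \in Op(S^1_U(\Gamma))$ has principal symbol $\frac{\abs{\xi}}{2}$ by \autoref{TheNkomega}. The goal is to prove $N_{k,\omega_\Gamma} - \frac12\sqrt{D} \in Op(S^{-3}_U(\Gamma))$, and the starting point is the algebraic identity, obtained from $(\sqrt{D})^2 = D$,
\[
K \isdef \left(N_{k,\omega_\Gamma} + \tfrac12\sqrt{D}\right)\left(N_{k,\omega_\Gamma} - \tfrac12\sqrt{D}\right) = N_{k,\omega_\Gamma}^2 - \tfrac14 D - \tfrac12\left[N_{k,\omega_\Gamma},\sqrt{D}\right].
\]

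First I would establish that $K \in Op(S^{-2}_U(\Gamma))$. The term $N_{k,\omega_\Gamma}^2 - \frac14 D$ lies in $Op(S^{-2}_U(\Gamma))$ directly by \autoref{TheNkomega}, so the crux is the commutator $[N_{k,\omega_\Gamma},\sqrt{D}]$; controlling it is the main obstacle, and I would handle it exactly as in \autoref{lemCommutator}. Conjugating by $R_\Gamma$, I write $R_\Gamma N_{k,\omega_\Gamma} R_\Gamma^{-1}$ as its flat-segment counterpart $N_{|\Gamma|k,\omega}$ plus a correction $K'$; since the expansion of \autoref{LemsymbolN1} exhibits no curvature term before order $-3$, one has $K' \in Op(S^{-3}_U)$. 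Now $R_\Gamma\sqrt{D}R_\Gamma^{-1} = \sqrt{R_\Gamma D R_\Gamma^{-1}}$ because the square root commutes with conjugation, and the flat operator $N_{|\Gamma|k,\omega}$ commutes with this square root by the commutation relation of \autoref{commutSkNk}. Hence the commutator collapses to $[K',\sqrt{R_\Gamma D R_\Gamma^{-1}}]$, which by \autoref{PDOUs}~(iv) lies in $Op(S^{-3}_U)$. Transporting back, $[N_{k,\omega_\Gamma},\sqrt{D}] \in Op(S^{-3}_U(\Gamma)) \subset Op(S^{-2}_U(\Gamma))$, and therefore $K \in Op(S^{-2}_U(\Gamma))$.

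Next I would use ellipticity to single out the desired factor. The operator $N_{k,\omega_\Gamma} + \frac12\sqrt{D}$ has principal symbol $\frac{\abs{\xi}}{2} + \frac{\abs{\xi}}{2} = \abs{\xi}$, hence is elliptic of order $1$; by \autoref{PDOUs}~(v) it admits a parametrix $P \in Op(S^{-1}_U(\Gamma))$ with $P\left(N_{k,\omega_\Gamma} + \frac12\sqrt{D}\right) = I_d + R$ and $R \in Op(S^{-\infty}_U(\Gamma))$. Left-multiplying the identity for $K$ by $P$ yields
\[
N_{k,\omega_\Gamma} - \tfrac12\sqrt{D} = PK - R\left(N_{k,\omega_\Gamma} - \tfrac12\sqrt{D}\right).
\]
Since $P \in Op(S^{-1}_U(\Gamma))$ and $K \in Op(S^{-2}_U(\Gamma))$, the product $PK$ is of order $-3$ by \autoref{PDOUs}~(ii), while the term involving the smoothing operator $R$ is itself smoothing. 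Thus $N_{k,\omega_\Gamma} - \frac12\sqrt{D} \in Op(S^{-3}_U(\Gamma))$, which is precisely $N_{k,\omega_\Gamma} = \frac12\sqrt{D} + U_{-3}$.

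The whole argument hinges on the commutator estimate, which in turn relies on the flat-segment operator commuting exactly with $\sqrt{D}$ and on curvature entering the symbol of $N_{k,\omega_\Gamma}$ only at order $-3$. As an independent check, one may instead verify the statement symbolically: through the isomorphism $\mathcal{S}R_\Gamma$ it is equivalent to $\sigma_{\tilde{N}_1} - k^2\sigma_{\tilde{N}_2} = \frac12\,\sigma_{\sqrt{\tilde{D}}} + \Sigma^{-3}$, and comparing the expansions of \autoref{LemsymbolN1} with the square-root calculus applied to $\sigma_{\tilde{D}} = \xi^2 - k^2\sin^2\theta$ shows that the terms of orders $1,-1$ and $-2$ indeed coincide.
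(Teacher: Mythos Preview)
Your proposal is correct and follows essentially the same route the paper indicates (``Reasoning with a parametrix as in the previous section''): you transpose the argument of \autoref{resultatChaud} to the $U^s$ scale, factoring $K=(N_{k,\omega_\Gamma}+\tfrac12\sqrt{D})(N_{k,\omega_\Gamma}-\tfrac12\sqrt{D})$, bounding the commutator via the flat-segment reduction of \autoref{lemCommutator} and \autoref{commutSkNk}, and then applying the parametrix of the elliptic factor. The only cosmetic point is that your curvature remark should invoke the full expansion of $N_{k,\omega_\Gamma}=N_1-k^2N_2$ (both \autoref{LemsymbolN1} and the corollary on $N_2$) rather than \autoref{LemsymbolN1} alone.
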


\section{Conclusion}

In this work, we have set forth a pseudo-differential analysis for the single and hypersingular (weighted) layer potentials on open curves, $S_{k,\omega_\Gamma}$ and $N_{k,\omega_\Gamma}$. By this analysis, we can recover the symbols of the layer potentials and perform some manipulations that are usual in the domain of pseudo-differential calculus. This allows to prove that the operators 
\[P_1 = \sqrt{-(\omega_\Gamma \partial_\tau)^2 -k^2 \omega^2_\Gamma}, \quad P_2 = [-(\partial_\tau\omega_\Gamma )^2 -k^2 \omega^2_\Gamma]^{-\frac{1}{2}}\]
are low order parametrices for the layer potentials $S_{k,\omega_\Gamma}$ and $N_{k,\omega_\Gamma}$ respectively, and justifies the preconditioning method exposed in \cite{alougesAverseng}. In particular, \autoref{optiTs} and \autoref{optiUs} show that the terms $-k^2 \omega_\Gamma^2$ in $P_1$ and $P_2$ are the best one among other possible first order corrections. 

The analysis heavily relies on some explicit formulas available for $k = 0$, namely 
\begin{equation}
\label{explLaplace}
S_{0,\omega} T_n = \frac{1}{2n} T_n, \quad N_{0,\omega} U_n = \frac{(n+1)}{2}U_n\,.
\end{equation}
Other works have exploited those relations to build closed form inverses for the Laplace potentials $S_{0}$ and $N_{0}$ e.g. \cite{jerez2012explicit}. For $k \neq 0$, the authors of the previous work have suggested to use the Laplace inverses as preconditioners for the Helmholtz layer potentials. We believe that, to refine this approach and correctly capture the suitable dependence in $k$ of the operators, the pseudo-differential route is almost unavoidable, since explicit formulas comparable to \eqref{explLaplace} are not known as soon as $k \neq 0$. We have demonstrated in \cite{alougesAverseng} the importance of such a correction in $k$ in several numerical examples. 

\noindent Possible future directions include
\begin{itemize}
	\item[-] The generalization of the approach presented here to 3 space dimensions. The pseudo-differential calculus on the 2-sphere is not as simple as the one on the torus, but the latter has mainly been used here, in place of the fully general pseudo-differential theory on manifolds, as a convenience to simplify the presentation and especially avoid coordinate charts. Thus, this program should be realizable without too many difficulties. 
	\item[-] The analysis of the preconditioning strategy proposed in \cite{bruno2012second}. It could be possible, by symbolic calculus, to compute the symbol of $S_{k,\omega_\Gamma} N_{k,\omega_\Gamma}$ although the two operators do not belong to the same scales. This is the object of ongoing work. We expect that the remainder is at least of order $-4$, as suggested by the numerical results exposed in \cite{alougesAverseng}. 
\end{itemize}

\paragraph{Acknowledgement:}
	I wish to thank Pr. Fran\c{c}ois Alouges for his patient support and valuable help during the elaboration of this work. I also wish to thank Pr. Ralf Hiptmair for his helpful advices regarding the presentation.

\section{ Appendix : Symbolic Calculus}

\label{AnnexeMaple}

\subsection{Single layer potential}

\label{AnnexeSk}

\noindent\includegraphics[scale = 0.6]{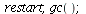}

\noindent Procedure for the (usual) symbolic calculs:\\

\noindent
\includegraphics[scale = 0.6]{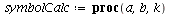}\\\includegraphics[scale = 0.6]{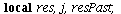}\\\includegraphics[scale = 0.6]{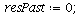}\\\includegraphics[scale = 0.6]{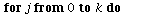}\\\includegraphics[scale = 0.6]{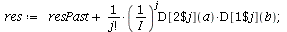}\\\includegraphics[scale = 0.6]{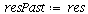}\\\includegraphics[scale = 0.6]{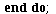}\\\includegraphics[scale = 0.6]{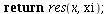}\\\includegraphics[scale = 0.6]{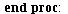}~\\

\noindent
Symbol of the operator $-(\omega \partial_x)^2 - k^2\omega^2)$

\noindent
\includegraphics[scale = 0.6]{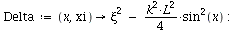}~

\noindent Taylor expansion of $|r(\cos t) - r(\cos s)|$ where $r : [-1,1] \to \Gamma$ is a parametrization such that $|r'(x)| = \frac{L}{2}$ for all $x$. $L$ is the length of $\Gamma$ and $C(t)$ is the curvature. $K$ denotes the unknown constant in the next order. 

\noindent
\includegraphics[scale = 0.6]{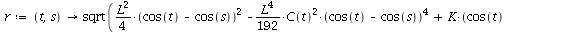}\\\includegraphics[scale = 0.6]{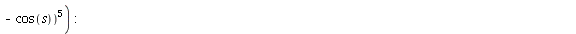}~

\noindent Taylor expansion of $b(t,s) =
J_0(k|r(\cos t) - r(\cos s)|)$. 

\includegraphics[scale = 0.6]{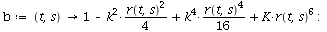}~

\noindent We use the following procedure to compute an asymptotic
expansion of the symbol of $\tilde{S}_k$.
$K$ denotes the unknown constant in the
next order. \\

\noindent \textbf{\includegraphics[scale = 0.6]{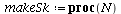}\\\includegraphics[scale = 0.6]{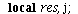}\\\includegraphics[scale = 0.6]{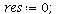}\\\includegraphics[scale = 0.6]{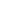}\\\includegraphics[scale = 0.6]{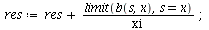}\\\includegraphics[scale = 0.6]{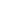}\\\includegraphics[scale = 0.6]{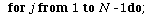}\\\includegraphics[scale = 0.6]{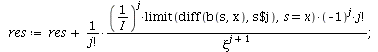}\\\includegraphics[scale = 0.6]{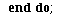}\\\includegraphics[scale = 0.6]{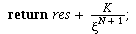}\\\includegraphics[scale = 0.6]{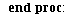}}~

\noindent We obtain the following asymptotic expansion up to order
$6$:

\noindent \includegraphics[scale = 0.6]{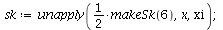}~

\begin{longtable}[c]{@{}ll@{}}
	\toprule\addlinespace
	\includegraphics[scale = 0.6]{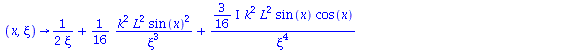}\\\includegraphics[scale = 0.6]{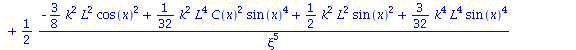}\\\includegraphics[scale = 0.6]{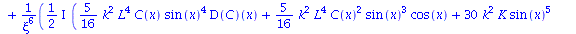}\\\includegraphics[scale = 0.6]{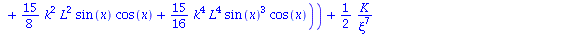}
	& (1)
	\\\addlinespace
	\bottomrule
\end{longtable}

\noindent We can thus compute the symbol of the operator
$\tilde{S}_k^2$
using symbolic calculus, and keep the terms up to order
$-6$.~

\noindent \textbf{\includegraphics[scale = 0.6]{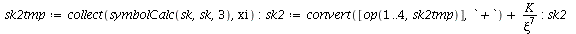}\\\includegraphics[scale = 0.6]{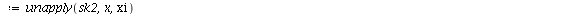}}~

\begin{longtable}[c]{@{}ll@{}}
	\toprule\addlinespace
	\includegraphics[scale = 0.6]{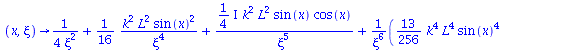}\\
	\includegraphics[scale = 0.6]{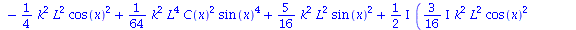}\\
	\includegraphics[scale = 0.6]{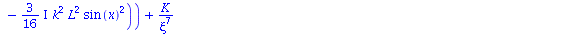}
	& (2)
	\\\addlinespace
	\bottomrule
\end{longtable}

\noindent We now apply symbolic calculus to compute an asymptotic expansion of the
symbol of the composition 
$\tilde{D}_k\tilde{S}_k^2$, keeping only the
first two terms~

\noindent \textbf{\includegraphics[scale = 0.6]{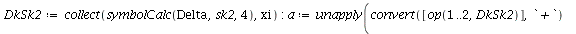}\\\includegraphics[scale = 0.6]{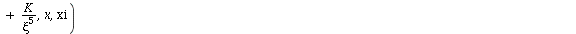}}~

\begin{longtable}[c]{@{}ll@{}}
	\toprule\addlinespace
	\includegraphics[scale = 0.6]{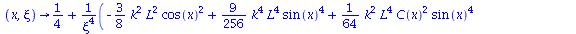}\\\includegraphics[scale = 0.6]{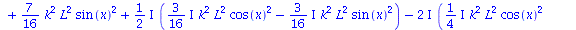}\\\includegraphics[scale = 0.6]{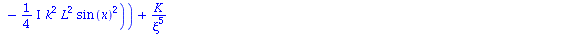}
	& (3)
	\\\addlinespace
	\bottomrule
\end{longtable}

\noindent We see that this is of the form
$\frac{1}{4} + \sigma$ where $\sigma \in \Sigma^{-4}$.

\subsection{Hypersingular operator}

\label{AnnexeNk}

\noindent \textbf{\includegraphics[scale = 0.6]{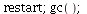}}\\

\noindent Procedure for the (usual) symbolic calculs:\\

\noindent \textbf{\includegraphics[scale = 0.6]{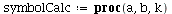}\\\includegraphics[scale = 0.6]{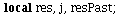}\\\includegraphics[scale = 0.6]{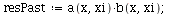}\\\includegraphics[scale = 0.6]{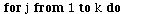}\\\includegraphics[scale = 0.6]{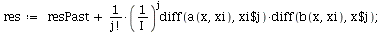}\\\includegraphics[scale = 0.6]{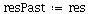}\\\includegraphics[scale = 0.6]{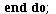}\\\includegraphics[scale = 0.6]{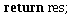}\\\includegraphics[scale = 0.6]{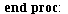}}\\

\noindent Symbols of the operators \( (\partial_x
\omega)^2 - k^2 \omega^2
\), \(u(x) \mapsto
\sin(x) u(x)\) and \(
\partial_\theta\):\\

\noindent \textbf{\includegraphics[scale = 0.6]{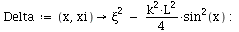}}

\noindent \textbf{\includegraphics[scale = 0.6]{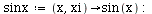}}

\noindent \textbf{\includegraphics[scale = 0.6]{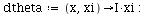}}\\

\noindent Taylor expansion of \(|r(\cos t)
- r(\cos s)|\) where
\(r :[-1,1] \to
\Gamma\) is a parametrization such that \( |r'(x)| = \frac{L}{2}\) for all \(x\). \(L\) is the length of \( \Gamma \) and \(C(t)\) is the curvature. \(K\) denotes the unknown constant in the next order:\\

\noindent \textbf{\includegraphics[scale = 0.6]{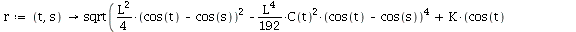}\\\includegraphics[scale = 0.6]{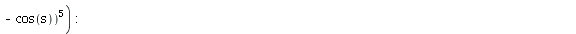}}\\

\noindent Taylor expansion of \( b(t,s) = J_0(k
|r(\cos t) - r(\cos s)|)
n(\cos t) \cdot n(\cos s)
\) where \(n(t)\) is the
normal vector at the point \(r(t)\):\\

\noindent \textbf{\includegraphics[scale = 0.6]{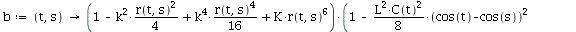}\\\includegraphics[scale = 0.6]{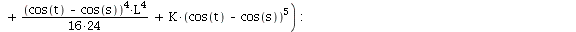}}\\

\noindent We first compute an asymptotic expansion up to order
\(3\) of the symbol of \(
\tilde{N}_1 =
-\partial_{\theta}\tilde{S}_k
\partial_{\theta}\). We
already know the symbol of \(
\tilde{S}_k \)\\

\noindent \textbf{\includegraphics[scale = 0.6]{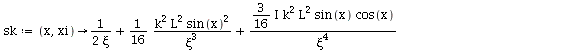}\\\includegraphics[scale = 0.6]{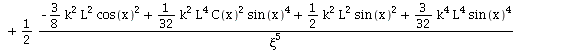}\\\includegraphics[scale = 0.6]{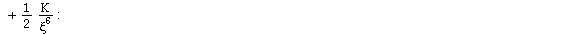}}\\

\noindent so we just need to use the usual symbol calculus. We obtain the
following symbol:

\noindent \textbf{\includegraphics[scale = 0.6]{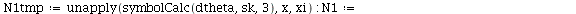}\\\includegraphics[scale = 0.6]{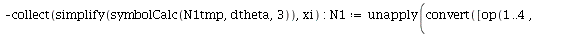}\\\includegraphics[scale = 0.6]{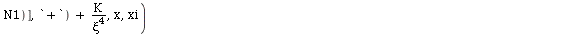}}

\begin{longtable}[c]{@{}ll@{}}
	\toprule\addlinespace
	\includegraphics[scale = 0.6]{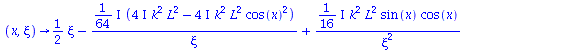}\\\includegraphics[scale = 0.6]{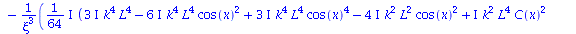}\\\includegraphics[scale = 0.6]{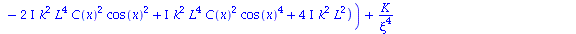}
	& (1)
	\\\addlinespace
	\bottomrule
\end{longtable}

\noindent We then turn to the computation of an asymptotic expansion of the symbol of \(\tilde{N}_2\). We start with a procedure to compute the symbol of the operator
\(\tilde{V}_k\):\\

\noindent \textbf{\includegraphics[scale = 0.6]{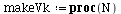}\\\includegraphics[scale = 0.6]{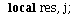}\\\includegraphics[scale = 0.6]{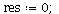}\\\includegraphics[scale = 0.6]{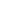}\\\includegraphics[scale = 0.6]{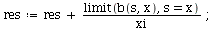}\\\includegraphics[scale = 0.6]{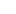}\\\includegraphics[scale = 0.6]{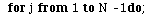}\\\includegraphics[scale = 0.6]{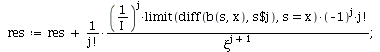}\\\includegraphics[scale = 0.6]{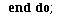}\\\includegraphics[scale = 0.6]{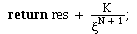}\\\includegraphics[scale = 0.6]{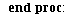}}\\

\noindent The following symbol is obtained for
\(\tilde{V}_k\):

\noindent \textbf{\includegraphics[scale = 0.6]{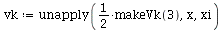}}

\begin{longtable}[c]{@{}ll@{}}
	\toprule\addlinespace
	\includegraphics[scale = 0.6]{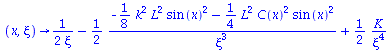} & (2)
	\\\addlinespace
	\bottomrule
\end{longtable}

\noindent The operator
\(\tilde{N}_2\) is then
obtained by multiplying left and right by the operator
\(u(x) \mapsto u(x)sin(x)
\). We obtain the following asymptotic expantion up to
order \(3\).\\

\noindent \textbf{\includegraphics[scale = 0.6]{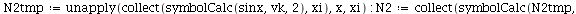}\\\includegraphics[scale = 0.6]{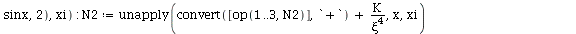}}

\begin{longtable}[c]{@{}ll@{}}
	\toprule\addlinespace
	\includegraphics[scale = 0.6]{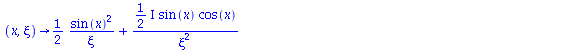}\\\includegraphics[scale = 0.6]{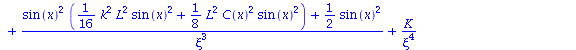}
	& (3)
	\\\addlinespace
	\bottomrule
\end{longtable}

\noindent The symbol of \( \tilde{N}_k =
\tilde{N}_1 - \frac{k^2
	L^2}{4} \tilde{N}_2\) is thus,
retaining only the terms up to order
\(3\):\\

\noindent \textbf{\includegraphics[scale = 0.6]{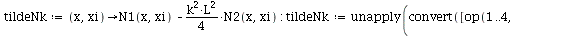}\\\includegraphics[scale = 0.6]{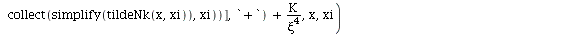}}

\begin{longtable}[c]{@{}ll@{}}
	\toprule\addlinespace
	\includegraphics[scale = 0.6]{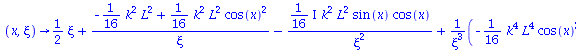}\\\includegraphics[scale = 0.6]{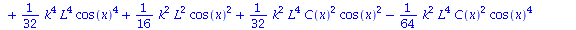}\\\includegraphics[scale = 0.6]{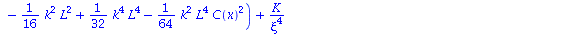}
	& (4)
	\\\addlinespace
	\bottomrule
\end{longtable}

\noindent We can now compute the symbol of
\(\tilde{N}_k^2 \) by
usual symbolic calculus, retaining terms up to order
\(2\).\\

\noindent \textbf{\includegraphics[scale = 0.6]{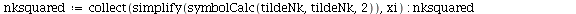}\\\includegraphics[scale = 0.6]{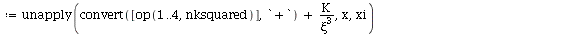}}

\begin{longtable}[c]{@{}ll@{}}
	\toprule\addlinespace
	\includegraphics[scale = 0.6]{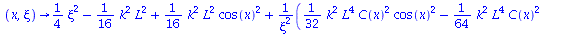}\\\includegraphics[scale = 0.6]{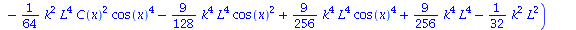}\\\includegraphics[scale = 0.6]{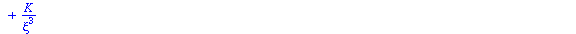}
	& (5)
	\\\addlinespace
	\bottomrule
\end{longtable}

\noindent The difference \( \tilde{N}_k^2 -
\frac{1}{4}\tilde{D}_k
\),\\

\noindent \textbf{\includegraphics[scale = 0.6]{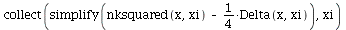}}

\begin{longtable}[c]{@{}ll@{}}
	\toprule\addlinespace
	\includegraphics[scale = 0.6]{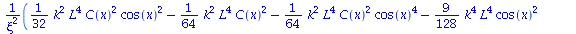} \\
	\includegraphics[scale = 0.6]{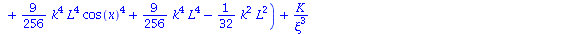}
	& (6)
	\\\addlinespace
	\bottomrule
\end{longtable}

\noindent is in \(\Sigma^{-2}\)

\end{document}